\theoremstyle{definition}
\newtheorem{defi}{Definition}[section]
\newtheorem{assumption}[defi]{Assumption}
\theoremstyle{remark}
\newtheorem{rem}[defi]{Remark}
\theoremstyle{plain}
\newtheorem{lemma}[defi]{Lemma}
\newtheorem{coro}[defi]{Corollary}
\newtheorem{teo}[defi]{Theorem}
\newtheorem{prop}[defi]{Proposition}
\newtheorem{conj}[defi]{Conjecture}
\DeclareMathOperator{\Pic}{Pic}
\DeclareMathOperator{\End}{End}
\DeclareMathOperator{\Frob}{Frob}
\DeclareMathOperator{\Norm}{Norm}
\DeclareMathOperator{\Hom}{Hom}
\DeclareMathOperator{\Div}{Div}
\DeclareMathOperator{\Gal}{Gal}
\DeclareMathOperator{\GL}{GL}
\DeclareMathOperator{\SL}{SL}
\DeclareMathOperator{\Sel}{Sel}
\DeclareMathOperator{\M}{M}
\newcommand{\res}{\mathrm{res}}
\newcommand{\cyc}{{\rom{cyc}}}
\newcommand{\cor}{\mathrm{cor}}
\newcommand{\ord}{\mathrm{ord}}
\newcommand{\new}{\mathrm{new}}
\newcommand{\ab}{\mathrm{ab}}
\newcommand{\tame}{\mathrm{tame}}
\newcommand{\wild}{\mathrm{wild}}
\newcommand{\wt}{\widetilde}
\newcommand{\T}{\mathbb T}
\newcommand{\p}{\boldsymbol p}
\newcommand{\Ta}{\mathrm{Ta}}
\newcommand{\D}{\mathbf D}
\newcommand{\B}{\mathbb B}
\newcommand{\Q}{\mathbb Q}
\newcommand{\N}{\mathbb N}
\newcommand{\Z}{\mathbb Z}
\newcommand{\R}{\mathbb R}
\newcommand{\C}{\mathbb C}
\newcommand{\rom}{\mathrm}
\newcommand{\fr}{\mathfrak}
\newcommand{\cl}{\mathcal}
\newcommand{\longmono}{\mbox{$\lhook\joinrel\longrightarrow$}}
\newcommand{\longepi}{\mbox{$\relbar\joinrel\twoheadrightarrow$}}
\newcommand{\smallmat}[4]{\bigl(\begin{smallmatrix}#1&#2\\#3&#4\end{smallmatrix}\bigr)}
\newcommand{\dirlim}{\mathop{\varinjlim}\limits}
\newcommand{\invlim}{\mathop{\varprojlim}\limits}
\begin{document}

\title{Quaternion algebras, Heegner points and the arithmetic of Hida families}

\author{Matteo Longo and Stefano Vigni}

\address{M. L.: Dipartimento di Matematica Pura e Applicata, Universit{\`a} di Padova, Via Trieste 63, 35121 Padova, Italy}
\email{mlongo@math.unipd.it}
\address{S. V.: Departament de Matem{\`a}tica Aplicada II, Universitat Polit{\`e}cnica de Catalunya, C. Jordi Girona 1-3, 08034 Barcelona, Spain}
\email{stefano.vigni@upc.edu}

\subjclass[2000]{11F11, 11R23}
\keywords{Hida families, Shimura curves, Heegner points, Selmer groups}

\begin{abstract}
Given a newform $f$, we extend Howard's results on the variation of Heegner points in the Hida family of $f$ to a general quaternionic setting. More precisely, we build big Heegner points and big Heegner classes in terms of compatible families of Heegner points on towers of Shimura curves. The novelty of our approach, which systematically exploits the theory of optimal embeddings, consists in treating both the case of \emph{definite} quaternion algebras and the case of \emph{indefinite} quaternion algebras in a uniform way. We prove results on the size of Nekov\'a\v{r}'s extended Selmer groups attached to suitable big Galois representations and we formulate two-variable Iwasawa main conjectures both in the definite case and in the indefinite case. Moreover, in the definite case we propose refined conjectures \emph{{\`a} la} Greenberg on the vanishing at the critical points of (twists of) the $L$-functions of the modular forms in the Hida family of $f$ living on the same branch as $f$.
\end{abstract}

\maketitle

\tableofcontents

\section{Introduction}

The purpose of this work is to extend Howard's results on the variation of Heegner points in Hida families of modular forms (\cite{ho}) to a general quaternionic setting. Analogues of the constructions by Howard of systems of big Heegner points on towers of classical modular curves have been proposed by Fouquet (\cite{fouquet}, \cite{fouquet2}) for Shimura curves attached to \emph{indefinite} quaternion algebras over totally real number fields; on the contrary, the case where modular curves need to be replaced by Shimura curves coming from \emph{definite} quaternion algebras has never been investigated. However, the philosophy behind the so-called ``parity conjectures'' suggests that the definite and indefinite cases are equally significant from an arithmetic point of view, so it would be desirable to have both sides of the quaternionic setting well understood and developed. With this goal in mind, in this article we offer a systematic construction of big Heegner points and classes attached to Hida families which treats simultaneously both the definite case and the indefinite case over $\Q$, and we study the arithmetic of the relevant extended Selmer groups as defined by Nekov\'a\v{r}. Now let us describe the subject of the paper more in detail.

Fix an integer $N$, a prime $p\nmid 6N$ and an ordinary $p$-stabilized newform
\[ f(q)=\sum_{n=1}^\infty a_nq^n\in S_k\bigl(\Gamma_0(Np),\omega^j\bigr) \]
where $\omega$ is the Teichm\"uller character and $j\equiv k\pmod{2}$. Let $F$ be a finite extension of $\Q_p$ containing all the eigenvalues of the Hecke operators acting on $f$ and let $\cl O_F$ denote its ring of integers. Assume also that the residual representation attached to $f$ is absolutely irreducible.

Fix an imaginary quadratic field $K$ of discriminant prime to $Np$ and consider the factorization $N=N^+N^-$ induced by $K$: a prime number $\ell$ divides $N^+$ (respectively, $N^-$) if and only if $\ell$ splits (respectively, is inert) in $K$. Assume throughout that $N^-$ is square-free and say that we are in the \emph{definite} (respectively, \emph{indefinite}) case if the number of primes dividing $N^-$ is odd (respectively, even). For simplicity, in this introduction we suppose that $p$ does not divide the class number of $K$.

Hida's theory (\cite{h-iwasawa}, \cite{h-galois}) incorporates the modular form $f$ and the $p$-adic Galois representation $\rho_f:G_\Q:=\Gal(\bar\Q/\Q)\rightarrow\GL_2(F)$ attached to $f$ by Deligne into an analytic family of modular forms and Galois representations. More precisely, Hida defines the \emph{universal ordinary Hecke algebra} $\fr h_\infty$ by taking the inverse limit over $m$ of the (classical) Hecke algebras $\fr h_m$ over $\cl O_F$ acting on weight $2$ cusp forms with coefficients in $\cl O_F$ of level $\Gamma_1(Np^m)$ and then projecting to the ordinary part. Out of $\fr h_\infty$ one then constructs a local domain $\cl R$, finite and flat over the Iwasawa algebra $\Lambda:=\cl O_F[\![1+p\Z_p]\!]$, such that certain prime ideals $\fr p$ of $\cl R$ (called \emph{arithmetic}) correspond to modular forms $f_\fr p$ of suitable weight $k_\fr p$, level $\Gamma_1(Np^{m_\fr p})$ and character $\psi_\fr p$ with coefficients in the residue field $F_\fr p$ of the localization of $\cl R$ at $\fr p$; moreover, $f_{\bar{\fr p}}=f$ for a certain arithmetic prime $\bar{\fr p}$ of weight $k$. Finally, taking inverse limits over $m$ of the $p$-adic Tate modules of the Jacobian varieties of the modular curves $X_1(Np^m)$ one can introduce a $G_\Q$-representation ${\bf T}$ which is free of rank two over $\cl R$ and has the property that $V_\fr p:={\bf T}\otimes_{\cl R}F_\fr p$ is a twist of the representation $V(f_\fr p)$ associated with $f_\fr p$.

\subsection{Big Selmer groups}

In recent years, the systematic study of certain Selmer groups attached to the $G_\Q$-representation ${\bf T}$ has been pursued, among others, by Nekov\'a\v{r} and Plater (\cite{np}), Nekov\'a\v{r} (\cite{Ne-selmer}), Ochiai (\cite{ochiai}), Howard (\cite{ho}) and Delbourgo (\cite{delbourgo}). More precisely, the $G_\Q$-representation ${\bf T}$ admits a twist ${\bf T}^\dagger$ which has a perfect alternating pairing ${\bf T}^\dagger\times{\bf T^\dagger}\rightarrow \cl R(1)$, and for every arithmetic prime $\fr p$ of $\cl R$ the representation $V_\fr p^\dagger:={\bf T}^\dagger\otimes_\cl RF_\fr p$ is a self-dual twist of $V(f_\fr p)$. Then, using Nekov\'a\v{r}'s theory of Selmer complexes (\cite{Ne-selmer}), for any number field $L$ one can define extended Selmer groups $\wt H^1_f\bigl(L,{\bf T}^\dagger\bigr)$ and $\wt H^1_f\bigl(L,V_\fr p^\dagger\bigr)$, whose arithmetic is the main theme of the present paper.

Now we briefly sketch the work of Howard which was the original inspiration for our article. In order to study the arithmetic of Nekov\'a\v{r}'s Selmer groups, when all primes $\ell|N$ split in $K$ (i.e., when $N^-=1$) Howard introduced in \cite{ho} canonical cohomology classes
\[\fr X_c\in\wt H^1_f\bigl(H_c,{\bf T}^\dagger\bigr), \]
which he calls ``big Heegner points'', where $c\geq1$ is an integer prime to $N$ and $H_c$ is the ring class field of $K$ of conductor $c$. These classes are constructed by taking an inverse limit of cohomology classes arising from Heegner points in the Jacobians of classical modular curves via Kummer maps, and satisfy suitable Euler system relations in the sense of Kolyvagin: see \cite[\S\S 2.2--2.4]{ho}. These objects are used to obtain various results on the arithmetic of the above-mentioned Selmer groups; in particular, a vertical nonvanishing theorem (generalizing results of Cornut and Vatsal in \cite{cv}) is proved in \cite[\S 3.1 and \S 3.2]{ho}, while an horizontal nonvanishing conjecture is formulated in \cite[\S 3.4]{ho}. Moreover, in \cite[Conjecture 3.3.1]{ho} Howard proposes a two-variable Iwasawa main conjecture for $\wt H^1_{f,{\rm Iw}}\bigl(K_\infty,{\bf T}^\dagger\bigr)$ which extends the Heegner point main conjecture formulated by Perrin-Riou in \cite{p-r}. Here
\[ \wt H^1_{f,{\rm Iw}}\bigl(K_\infty,{\bf T}^\dagger\bigr):=\invlim\wt H^1_f\bigl(K_n,{\bf T}^\dagger\bigr) \]
is a module over the Iwasawa algebra $\cl R_\infty:=\mathcal R[\![G_\infty]\!]$ attached to the Galois group $G_\infty$ of the anticyclotomic $\Z_p$-extension $K_\infty$ of $K$ as described in \cite[\S 3.3]{ho}, and $K_n$ is the $n$-th layer of $K_\infty$, i.e. the subfield of $K_\infty$ such that $\Gal(K_n/K)\simeq\Z/p^n\Z$.

In this paper we are interested in results and conjectures of the type described above in the more general case where one allows for the existence of primes dividing $N$ which are inert in $K$. In other words, the integer $N^-$ is not necessarily equal to $1$. In the \emph{indefinite} case (i.e., when the number of primes dividing $N^-$ is even) our constructions and results should be compared with those obtained by Fouquet in \cite{fouquet} and \cite{fouquet2} for Shimura curves over totally real fields; on the contrary, as far as we know the \emph{definite} case (corresponding to an odd number of primes dividing $N^-$) is considered here for the first time. The ability of performing constructions which apply equally well to both the two cases is the most significant novelty in our approach, and we hope that this represents a first step towards the development in a Hida context of a theory of Bertolini--Darmon type (\cite{bd*}, \cite{BD-annals-146}, \cite{BD}, \cite{BD-inv2}), where the interplay between definite and indefinite settings (manifesting itself, for example, via \v{C}erednik's interchange of invariants, congruences between special values, explicit reciprocity laws) plays a crucial role for studying the arithmetic of modular forms.

In the rest of the introduction we give a brief description of the paper, referring to the main body of the text for all details.

\subsection{Families of optimal embeddings on Shimura curves}

Let $B$ denote the quaternion algebra over $\Q$ of discriminant $N^-$ (thus $B$ is split at the archimedean place $\infty$ of $\Q$ in the indefinite case and is ramified at $\infty$ in the definite case) and for every integer $m\geq 0$ choose an Eichler order $R_m$ of $B$ of level $N^+p^m$ such that $R_m\subset R_{m-1}$ for all $m\geq 1$. If the hat denotes adelizations, one then defines open compact subgroups $U_m\subset\widehat R_m^\times$ by imposing an extra $\Gamma_1(p^m)$-level structure on $\widehat R_m$ and considers the Shimura curves $\wt X_m$ associated with $U_m$ (precise definitions in terms of double cosets are given in \S \ref{section-definite-shimura} and \S \ref{section-indefinite-shimura}). In the definite case these are disjoint unions of genus $0$ curves defined over $\Q$, while in the indefinite case they are compact Riemann surfaces admitting canonical models over $\Q$. For any integer $c\geq1$ prime to $N$ and the discriminant of $K$ we define the Heegner points of conductor $c$ on $\wt X_m$ as those pairs $[(g,f)]$ in the subset
\[ \wt X_m^{(K)}:=U_m\big\backslash\bigl(\widehat B^\times\times\Hom(K,B)\bigr)\big/B^\times\subset\wt X_m(\C) \]
such that $f$ is an {optimal} embedding of the order $\cl O_c$ of $K$ of conductor $c$ into the Eichler order $B\cap(g^{-1}\widehat R_mg)$ of $B$ and the local component $f_p$ of $f$
takes optimally the elements of $\mathcal O_c\otimes\Z_p$ congruent to 1 modulo $p^m\mathcal O_K\otimes\Z_p$ to
the local component of $U_m$ at $p$
(see Definition \ref{def-Heegner-points} for a more precise statement). In \S \ref{section-fields-of-rationality} we prove that in the indefinite case these Heegner points are rational over $H_c(\boldsymbol\mu_{p^m})$, where $H_c$ is the ring class field of $K$ of conductor $c$ and $\boldsymbol\mu_{p^m}$ is the group of $p^m$-th roots of unity. If $\fr a\in\widehat K^\times$ and $\hat f:\widehat K^\times\rightarrow\widehat B^\times$ is the adelization of $f$, in both the definite and the indefinite cases the map
\[ [(g,f)]\longmapsto\bigl[\bigl(g\hat f(\fr a),f\bigr)\bigr] \]
induces a (free) action of $\Gal(H_c({\boldsymbol \mu}_{p^m})/K)$ on the set of Heegner points of conductor $c$. Furthermore, the group $\Div\bigl(\wt X_m\bigr)$ of divisors on $\wt X_m$ is endowed with an action of the usual Hecke operators $T_\ell$ for primes $\ell\nmid Np^m$ and $U_\ell$ for primes $\ell|Np^m$ and of diamond operators $\langle\ell\rangle$ for $\ell\in(\Z/p^m\Z)^\times$. In Section \ref{section-explicit-construction} we provide an explicit construction of suitably compatible families of Heegner points on our tower of Shimura curves. The main features of our system of points are summarized by the following

\begin{teo} \label{thm-A}
For every integer $m\geq0$ and every integer $c\geq1$ prime to $N$ and the discriminant of $K$ there is a Heegner point $\wt P_{c,m}\in\wt X_m^{(K)}$ of conductor $cp^m$, rational over $H_{cp^m}({\boldsymbol \mu}_{p^m})$ in the indefinite case, such that the following conditions are satisfied.
\begin{enumerate}
\item\emph{Vertical compatibility.} If $m\geq2$ then the equality
\[U_p\bigl(\wt P_{c,m-1}\bigr)=\wt\alpha_{m,\ast}\bigl(\mathrm{tr}_{H_{cp^m}(\boldsymbol\mu_{p^m})/H_{cp^{m-1}}(\boldsymbol\mu_{p^m})}(\wt P_{c,m})\bigr) \]
holds in $\Div\bigl(\wt X_{m-1}\bigr)$, where $\wt\alpha_{m,\ast}$ is obtained from the covering map
$\wt\alpha_m:\wt X_m\rightarrow \wt X_{m-1}$.
\item\emph{Horizontal compatibility.} Let $m\geq1$ and $n\geq1$ be integers. Then the equality
\[ U_p\bigl(\wt P_{cp^{n-1},m}\bigr)=\mathrm{tr}_{H_{cp^{m+n}}(\boldsymbol\mu_{p^{m+n}})/H_{cp^{m+n-1}}(\boldsymbol\mu_{p^{m+n}})}\bigl(\wt P_{cp^n,m}\bigr) \]
holds in $\Div\bigl(\wt X_m\bigr)$. Furthermore, assuming $\mathcal O_{cp^m}^\times=\{\pm 1\}$, for primes $\ell\nmid cNp$ which are inert in $K$ one has
\[ T_\ell\bigl(\wt P_{c,m}\bigr)=\mathrm{tr}_{H_{c\ell p^m}(\boldsymbol{\mu}_{p^m})/H_{cp^m}(\boldsymbol{\mu}_{p^m})}\bigl(\wt P_{c\ell,m}\bigr). \]
\item \emph{Galois compatibility.} Set $p^\ast:=(-1)^{(p-1)/2}p$, let $\epsilon_\cyc:G_\Q\rightarrow\Z_p^\times$ be the $p$-adic cyclotomic character and let $\vartheta:\Gal\bigl(\bar\Q/\Q(\sqrt{p^\ast})\bigr)\rightarrow\Z_p^\times/\{\pm1\}$
be the unique continuous homomorphism such that $\vartheta^2$ coincides with the restriction of
$\epsilon _\cyc$. Then for all $\sigma\in\Gal(\bar\Q/H_{cp^m})$ the equality
\[ \wt P_{c,m}^\sigma=\langle\vartheta(\sigma)\rangle\wt P_{c,m} \]
holds in $\Div\bigl(\wt X_m\bigr)$.
\end{enumerate}
\end{teo}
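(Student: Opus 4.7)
The plan is to construct the system $\{\wt P_{c,m}\}$ explicitly as double cosets built from carefully chosen optimal embeddings, and then to verify each of the three compatibilities by reducing it to an identity in $\Div(\wt X_m)$ that can be checked against the adelic descriptions of the Hecke operators and the Galois action. First I would fix, once and for all, a $\Q$-algebra embedding $f:K\hookrightarrow B$ with $f(\mathcal O_K)\subset R_0$, which exists because $B$ is the quaternion algebra of discriminant $N^-$ determined by $K$ and Eichler's theory of optimal embeddings supplies the integrality. For each pair $(c,m)$ I then specify local components of an adele $g_{c,m}\in\widehat B^\times$: at primes $\ell\nmid cp$ take $g_{c,m,\ell}=1$; at $\ell\mid c$ conjugate by a local uniformizer dictated by the splitting of $\ell$ in $K$ so as to raise the local conductor to $v_\ell(c)$; and at $p$ choose $g_{c,m,p}$ (using the splitting of $p$ in $K$) so that $f$ gives an optimal embedding of $\mathcal O_{cp^m}$ into $B\cap g_{c,m}^{-1}\widehat R_m g_{c,m}$ sending the elements of $\mathcal O_{cp^m}\otimes\Z_p$ congruent to $1$ modulo $p^m$ into the $\Gamma_1(p^m)$-part of $U_m$. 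Setting $\wt P_{c,m}:=[(g_{c,m},f)]$ then produces a Heegner point of conductor $cp^m$ in the sense of Definition \ref{def-Heegner-points}, and the rationality statement in the indefinite case is exactly what is proved in \S\ref{section-fields-of-rationality}.

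For vertical and horizontal compatibility, the strategy is to match two expressions in $\Div(\wt X_{\bullet})$ which both arise as sums over a $p$-element index set. For the vertical identity, I would unwind $U_p$ at $p$ as a sum over the $p$ cosets of the local double coset defining it, and in parallel unwind the composition $\wt\alpha_{m,*}\circ\mathrm{tr}_{H_{cp^m}(\boldsymbol\mu_{p^m})/H_{cp^{m-1}}(\boldsymbol\mu_{p^m})}$ as a sum over Galois coset representatives acting on $\wt P_{c,m}$. Shimura reciprocity realizes the latter via right multiplication of $g_{c,m}$ by $\hat f(\fr a)$ for suitable local ideles $\fr a$, and the two $p$-fold sums are then identified through a direct calculation comparing the $p$ cosets of $U_m\subset U_{m-1}$ at $p$ with the $p$ Galois translates. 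The horizontal statement follows by the same mechanism, after replacing the level shift $m\to m-1$ by the conductor shift $cp^n\to cp^{n-1}$ and noting that locally at $p$ these two shifts are implemented by the same uniformizer. The $T_\ell$-relation at inert primes $\ell\nmid cNp$ is verified separately: the hypothesis $\mathcal O_{cp^m}^\times=\{\pm1\}$ identifies divisor classes with honest points, and the local Eichler order at $\ell$ being $\mathcal O_{K,\ell}$-maximal matches $T_\ell(\wt P_{c,m})$ with the trace of $\wt P_{c\ell,m}$.

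Finally, Galois compatibility amounts to computing the action of $\Gal(\bar\Q/H_{cp^m})$ on $\wt P_{c,m}$. Via Shimura reciprocity, $\sigma\in\Gal(\bar\Q/H_{cp^m})$ acts through multiplication by $\hat f(\fr a)$ for some $\fr a\in\widehat K^\times$ representing $\sigma|_{K^{\ab}}$; since $\sigma$ fixes $H_{cp^m}$, the adelic class of $\fr a$ is captured by its $p$-component modulo $(1+p^m\mathcal O_K\otimes\Z_p)^\times$, and the extra $\Gamma_1(p^m)$-structure inside $U_m$ translates this residual action into the diamond operator $\langle u\rangle$, with $u\in(\Z/p^m\Z)^\times$ the image of $\fr a_p$ under the appropriate corner projection. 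The norm relation $\Norm_{K/\Q}(\fr a)=\epsilon_\cyc(\sigma)$ then forces $u^2\equiv\epsilon_\cyc(\sigma)\pmod{p^m}$, which pins down $u$ up to sign; the sign ambiguity disappears once one restricts to $\Gal(\bar\Q/\Q(\sqrt{p^*}))$ (where $\sqrt{\epsilon_\cyc}$ becomes single-valued modulo $\pm 1$) and is absorbed by the global unit $-1\in\mathcal O_{cp^m}^\times$ acting trivially on the double coset. The main obstacle is precisely this sign bookkeeping: one has to show that the noncanonical choices entering the construction of $f$ and of the local basis at $p$ conspire to produce exactly the character $\vartheta$, rather than some twist of it, so that the final formula is intrinsic to the Heegner data and not to the auxiliary choices.
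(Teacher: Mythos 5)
Your plan reproduces the paper's essential construction---explicit local optimal embeddings assembled into an adelic double coset, with the compatibilities checked against the adelic formulas for $T_\ell$, $U_p$, and Artin reciprocity---but there are two things to flag.

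First, a gap: the assertion that a global embedding $f:K\hookrightarrow B$ with $f(\mathcal O_K)\subset R_0$ exists (i.e.\ that $f$ optimally embeds $\mathcal O_K$ into the \emph{fixed} Eichler order $R_0$) is not in general correct in the definite case. Eichler's theory guarantees global optimal embeddings of $\mathcal O_K$ into \emph{some} Eichler order of level $N^+$, but when $R_0$ has class number $>1$ (as is typical for definite $B$) all such embeddings may lie in a conjugacy class other than that of $R_0$. Your subsequent normalization $g_{c,m,\ell}=1$ for $\ell\nmid cp$ then forces the local conditions to fail at the offending primes. The paper avoids this by fixing an arbitrary $g:K\hookrightarrow B$, building local corrections $a_{\ell,n}$ and $a_p^{(c,m)}$, assembling an adele $a^{(c,m)}$, and applying strong approximation (Proposition~\ref{loc-glob-emb-prop}) to write $a^{(c,m)}=u_{c,m}\gamma^{(c,m)}b_{c,m}$; only then is $f^{(c,m)}:=b_{c,m}gb_{c,m}^{-1}$ optimal into $B\cap(\gamma^{(c,m)})^{-1}\widehat R_m\gamma^{(c,m)}$, and the point $\wt P_{c,m}=[(u_{c,m}\gamma^{(c,m)},f^{(c,m)})]=[(a^{(c,m)},g)]$ is the same double coset you intend. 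Your construction is repaired by simply allowing nontrivial $g_{c,m,\ell}$ at all primes.

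Second, an organizational difference in the Hecke compatibilities: you propose a term-by-term matching of the $p$-fold (or $(\ell+1)$-fold) sums. The paper factors out the orbit observation (Propositions~\ref{prop-Galois-Hecke-tilde-X} and~\ref{T-operator-1}) that if $\wt Q$ is any \emph{single} point in the support of $U_p(\wt P)$ (resp.\ $T_\ell(\wt P)$) and $\wt P$ has the right conductor, then the Galois trace of $\wt Q$ over the relevant extension already equals the full divisor, because the Galois group acts freely and has the correct order. The verifications in \S\ref{compatible-Hecke-families} then reduce to one-line adelic identities---$a^{(c,m)}=\hat\pi_0 a^{(c,m-1)}$, $a^{(cp^h,m)}=a^{(cp^{h-1},m+1)}$, $a^{(c\ell,m)}=\hat\lambda_0 a^{(c,m)}$---placing a single candidate in the support. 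Your direct matching is feasible but considerably heavier; also, it would anyway require the same freeness of the Galois action that drives the orbit lemma. Your sketch of the Galois compatibility coincides with the paper's argument in \S\ref{compatible-Galois-families}: Artin reciprocity, extraction of $\vartheta(\sigma)$ up to sign from the norm relation, and absorption of the sign via the trivial action of $\langle-1\rangle$ on divisors.
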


\begin{proof} Part (1) is Proposition \ref{Hecke-relation-for-X}, part (2) is Proposition \ref{horizontal-hecke-X} plus Proposition \ref{T-operator} and part (3) is equality \eqref{eq5}. \end{proof}

The existence of compatible sequences of CM points as in Theorem \ref{thm-A} was shown by Howard in \cite{ho} when the $\wt X_m$ are classical modular curves (using the interpretation of modular curves as moduli spaces for elliptic curves with suitable level structures) and by Fouquet in \cite{fouquet} for Shimura curves attached to indefinite quaternion algebras over totally real fields having exactly one split archimedean place.

As remarked before, in this paper our families of CM points are introduced via a systematic use of the theory of optimal embeddings as described in \cite{gross-2} and \cite{bd*}, and this approach (although technically more intricate than those of Howard and Fouquet) has the advantage of offering a uniform setting for dealing with both the definite case and the indefinite case, as in \cite{bd*}. The importance of dealing with the definite case as well when studying the representation associated with a Hida family stems from the fact that, conjecturally, the indefinite case should take care of situations in which the rank of the Selmer group is \emph{odd}, while the definite case should describe \emph{even} rank settings. Observe, moreover, that the definite case cannot be treated by means of the tools developed in \cite{ho}, \cite{fouquet} and \cite{fouquet2}.

We use these families to construct big Heegner points and classes that are the counterparts of those defined in \cite{ho}, and then we prove results and formulate conjectures which generalize those obtained in \emph{loc. cit.} by Howard. In the rest of the introduction we focus our attention on the main results obtained in our work. For clarity of exposition, it will be convenient to treat the definite case and the indefinite case separately.

\subsection{The definite case}

As already observed, an adequate setting for dealing with arbitrary quaternion algebras represents the newest contribution of the paper.

The literature on the arithmetic of (extended) Selmer groups attached to modular forms of \emph{arbitrary} weight which are associated with forms on definite quaternion algebras via the Jacquet--Langlands correspondence is not so vast as that on the indefinite case (for instance, no analogue in rank $0$ of the results of Nekov\'a\v{r} in \cite{nk-inv} is available); as a consequence, the applications of our Theorem \ref{thm-A} we can presently offer are either conjectural or conditional.

Let $w\in\{\pm1\}$ be the common root number of the $L$-functions of the twisted forms $f_\fr p^\dagger$ (see \S \ref{section-bounding-def} for the definition) for all but finitely many arithmetic primes $\fr p$ of $\cl R$, and for every arithmetic prime $\fr p$ let $F_\fr p$ be the residue field of the localization of $\cl R$ at $\fr p$. It is expected that for almost all arithmetic primes $\fr p$ of $\cl R$ the dimension over $F_\fr p$ of $\wt H^1_f\bigl(K,V_\fr p^\dagger\bigr)$ is $0$ if $w=1$ and is $2$ if $w=-1$. Let us focus our attention on the case $w=1$. We consider the Hecke modules
\begin{equation} \label{eq-intro-2}
J_m:=\cl O_F\bigl[U_m\backslash\widehat B^\times/B^\times\bigr]\simeq\Pic\bigl(\wt X_m\bigr)\otimes_\Z\cl O_F
\end{equation}
and define the inverse limit $J_\infty:=\invlim J_m$ with respect to the canonical projection maps. By the Jacquet--Langlands correspondence, the ordinary part $J_\infty^\ord$ of this $\cl O_F$-module is endowed with an action of the $N^-$-new quotient $\T_\infty$ of the universal ordinary Hecke algebra $\fr h_\infty^\ord$. One can then introduce the finitely generated $\cl R$-module ${\bf J}:=J_\infty^\ord\otimes_{\T_\infty}\cl R$. Under a reasonable hypothesis (Assumption \ref{ass-def}), we prove that ${\bf J}$ is free of rank one over $\cl R$ and fix an isomorphism
\begin{equation} \label{eq-intro-1}
{\bf J}\simeq\cl R.
\end{equation}
The compatible sequence of Heegner points on the tower of definite Shimura curves whose existence is guaranteed by Theorem \ref{thm-A} can be combined with isomorphisms \eqref{eq-intro-2} to produce canonical elements in ${\bf J}$. By isomorphism \eqref{eq-intro-1} one then obtains an element $\cl J_0\in\cl R$. In light of the conjectural formulas for the dimension of Selmer groups that we recalled above, we predict that if $w=1$ then $\cl J_0\not=0$ (Conjecture \ref{conj-def-1}). In fact, the element $\cl J_0$ is the counterpart in our context of the divisor introduced by Gross in \cite[\S 11]{gross-2}, hence it is naturally expected to be related to the special values of the $L$-functions over $K$ of the forms $f_\fr p^\dagger$. When $w=1$ the functional equations suggest that these special values are non-zero for almost all arithmetic primes $\fr p$, whence the (conjectural) non-triviality of $\cl J_0$. We elaborate on this circle of ideas in \S \ref{section-vanishing}, where we propose general conjectures on the vanishing of the special values of twists of the (classical) $L$-functions over $K$ of the modular forms living on the Hida branch of $f$. These conjectures can be viewed as a refinement of the conjectures on the generic analytic rank of the forms in the Hida family made by Greenberg in \cite{greenberg}.

For every arithmetic prime $\fr p$ of $\cl R$ let $\pi_\fr p:\cl R\rightarrow F_\fr p$ be the canonical map. The following is (part of) Conjecture \ref{conj-def}.

\begin{conj} \label{conj-B}
Suppose that $w=1$. If $\pi_\fr p(\cl J_0)\not=0$ then $\wt H^1_f\bigl(K,V_\fr p^\dagger\bigr)=0$.
\end{conj}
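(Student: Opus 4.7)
The plan is to follow the strategy developed by Bertolini--Darmon for weight $2$ forms in their work on the definite case of the Birch--Swinnerton-Dyer conjecture, and to read Conjecture \ref{conj-B} as the ``rank zero'' counterpart, at a single arithmetic specialization, of the sort of Kolyvagin-style Selmer bound that a big Heegner point produces in the indefinite case. In particular, the proposed proof has two distinct parts: an \emph{analytic} step, identifying $\pi_\fr p(\cl J_0)$ with a central critical $L$-value, and an \emph{arithmetic} step, converting non-vanishing of that $L$-value into the vanishing of the Selmer group.

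First I would unwind the specialization $\pi_\fr p(\cl J_0) \in F_\fr p$. By construction $\cl J_0$ arises from the inverse system of Heegner points on the tower $\wt X_m$ of definite Shimura curves provided by Theorem \ref{thm-A}, viewed inside $J_m \simeq \Pic(\wt X_m) \otimes_\Z \cl O_F$, passed through the ordinary Hida projector to ${\bf J} = J_\infty^\ord \otimes_{\T_\infty} \cl R$, and then transported to $\cl R$ via the isomorphism \eqref{eq-intro-1}. Under $\pi_\fr p$, this element should correspond to the image of a Heegner-type divisor on $\wt X_{m_\fr p}$ in the $f_\fr p$-isotypic quotient of the relevant Hecke module. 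The natural comparison result is Gross's formula and its Jacquet--Langlands-transferred generalizations: the square of this image, suitably normalized by periods and local factors, computes the central critical value $L(f_\fr p^\dagger/K, k_\fr p/2)$. Consequently $\pi_\fr p(\cl J_0) \neq 0$ should be equivalent to the non-vanishing of this $L$-value, which is precisely the link anticipated in \S \ref{section-vanishing} and which motivates Conjecture \ref{conj-def-1}.

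Given this analytic identification, the arithmetic step is to invoke a rank zero implication of the form
\[
L(f_\fr p^\dagger/K, k_\fr p/2) \neq 0 \;\Longrightarrow\; \wt H^1_f\bigl(K, V_\fr p^\dagger\bigr) = 0.
\]
In weight $2$ this is the main theorem of Bertolini--Darmon, obtained via congruences between modular forms of different levels, level-raising, and an explicit reciprocity law relating the image of the definite Heegner-type class in the ordinary part of a Jacobian over a quaternionic Shimura curve to the algebraic part of the $L$-value at the central point. Under appropriate hypotheses on the residual representation (absolute irreducibility, as already assumed, plus typical technical conditions ruling out congruences with Eisenstein series), this machinery should in principle descend from the Hida family to each arithmetic specialization $\fr p$.

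The hard part, and the reason for stating the result as a conjecture, is precisely that this rank zero implication is not currently available for forms of \emph{arbitrary} weight in the definite quaternionic setting: as the authors point out, no analogue of Nekov\'a\v{r}'s \cite{nk-inv} exists in the definite case, and Bertolini--Darmon's arguments, tied as they are to the geometry of Jacobians of Shimura curves in weight $2$, do not immediately generalize. A genuine proof attempt would therefore have to either (i) lift Bertolini--Darmon's level-raising and explicit reciprocity laws to the level of the full Hida family, working with the $\cl R$-adic representation ${\bf T}^\dagger$ and then specializing at $\fr p$, or (ii) establish a one-sided divisibility in an anticyclotomic main conjecture for ${\bf T}^\dagger$ in the definite case (the natural quaternionic analogue of Howard's Conjecture 3.3.1) and then specialize it. In either approach the main obstacle is the development, in a weight-varying quaternionic context, of the congruence and reciprocity machinery that underlies the rank zero result in weight $2$; everything else in the plan above is essentially bookkeeping around Theorem \ref{thm-A} and the isomorphism \eqref{eq-intro-1}.
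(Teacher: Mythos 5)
The statement is a conjecture in the paper, not a theorem; the authors do not supply a proof but only the motivational discussion in \S\ref{section-bounding-def} (identifying $\cl J_0$ as the Hida-theoretic analogue of Gross's divisor $c_{\boldsymbol 1}$ and hence $\pi_\fr p(\cl J_0)$ with a central $L$-value) together with the remark that a proof should come by extending the techniques of Bertolini--Darmon \cite{BD} and Longo--Vigni \cite{lv}. Your proposal reproduces exactly this strategy and correctly pinpoints the same obstruction (no definite-case analogue of Nekov\'a\v{r}'s theorem for arbitrary weight), so it is fully consistent with the paper's own treatment.
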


We expect that Conjecture \ref{conj-B} can be proved (at least for arithmetic primes of weight $2$) by suitably extending the arguments of \cite{BD} and \cite{lv} to the case of forms with non-trivial character. In general, we can say that this conjecture plays in our definite setting a similar role to one of Nekov\'a\v{r}'s theorems (\cite{nk-inv}) in the context considered by Howard. More precisely, just as Nekov\'a\v{r}'s work extends the classical results of Kolyvagin (see, e.g., \cite{gross}), Conjecture \ref{conj-B} should be viewed as a generalization of (a portion of) the theory of Bertolini and Darmon (\cite{BD-annals-146}, \cite{BD}) to modular forms of higher weight. In this direction, see work in progress by Chida (\cite{chida1}, \cite{chida2}). Now Theorem \ref{teo-def} can be stated as follows.
\begin{teo} \label{thm-C}
Suppose that $w=1$ and assume Conjecture \ref{conj-B}. If $\cl J_0\not=0$ then $\wt H^1_f\bigl(K,{\bf  T}^\dagger\bigr)$ is a torsion $\cl R$-module.
\end{teo}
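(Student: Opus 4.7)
The plan is to show that the generic $\cl R$-rank of $M:=\wt H^1_f\bigl(K,\mathbf T^\dagger\bigr)$ is zero; since $\cl R$ is a local Noetherian integral domain, this is equivalent to $M$ being a torsion $\cl R$-module. The argument combines a control (specialization) inequality for extended Selmer groups with a Zariski density argument, and is the definite counterpart of the strategy pursued in \cite{ho} in the classical setting.

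First I would establish, in the framework of Nekov\'a\v{r}'s Selmer complexes \cite{Ne-selmer}, a specialization bound of the form
\[ \mathrm{rank}_{\cl R}\,M \;\leq\; \dim_{F_\fr p}\wt H^1_f\bigl(K,V_\fr p^\dagger\bigr), \]
valid for all but finitely many arithmetic primes $\fr p$ of $\cl R$. This is obtained by analyzing the base-change map $M\otimes_{\cl R}F_\fr p\to\wt H^1_f\bigl(K,V_\fr p^\dagger\bigr)$ induced by the tautological short exact sequence $0\to\mathbf T^\dagger\xrightarrow{\fr p}\mathbf T^\dagger\to V_\fr p^\dagger\to 0$: for almost all $\fr p$ the local conditions at $p$ (dictated by the ordinary filtration of $\mathbf T^\dagger$), at the archimedean place, and at the primes $\ell\mid N^+N^-$ are compatible with $\otimes_{\cl R}F_\fr p$, so that the kernel of the above map is controlled, and by Nakayama $\dim_{F_\fr p}M\otimes_\cl RF_\fr p$ is bounded below by the generic rank $\mathrm{rank}_\cl R M$.

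Next, since $\cl J_0\neq 0$ and $\cl R$ is a domain, the vanishing locus $V(\cl J_0)\subset\Spec(\cl R)$ is a proper Zariski-closed subset. Arithmetic primes are Zariski dense in $\Spec(\cl R)$ (they accumulate in every irreducible component of every Hida branch), so there exist infinitely many arithmetic primes $\fr p$ lying outside both $V(\cl J_0)$ and the finite exceptional set of the control inequality. For any such $\fr p$, Conjecture \ref{conj-B} yields $\wt H^1_f\bigl(K,V_\fr p^\dagger\bigr)=0$, and the control inequality then forces $\mathrm{rank}_{\cl R}\,M=0$, as desired.

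The main obstacle is the control inequality itself. One must verify that the local conditions defining $\wt H^1_f\bigl(K,\mathbf T^\dagger\bigr)$ specialize well at arithmetic primes---in particular that at primes of $K$ above $N^-$ (where $B$ ramifies and a unique prime of $K$ lies above each such $\ell$) the quaternionic local conditions commute with $\otimes_{\cl R}F_\fr p$, and that the ordinary filtration of $\mathbf T^\dagger$ at $p$ restricts correctly to that of $V_\fr p^\dagger$. For the case $N^-=1$ this is carried out in \cite[\S 2]{ho} following Nekov\'a\v{r}'s general formalism, and the adaptation to the definite quaternionic setting---while technically delicate because of the contribution of primes dividing $N^-$---should present no essentially new difficulty once the Selmer data of $\mathbf T^\dagger$ have been unraveled.
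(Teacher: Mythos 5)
Your proposal is correct and follows essentially the same route as the paper. The paper also combines the non-vanishing of $\pi_{\fr p}(\cl J_0)$ for almost all arithmetic $\fr p$ (via Proposition \ref{R-prop}, which is your Zariski-density statement) with Conjecture \ref{conj-B} to obtain $\wt H^1_f\bigl(K,V_\fr p^\dagger\bigr)=0$ for almost all $\fr p$, and then descends this to a torsion statement about $\wt H^1_f\bigl(K,{\bf T}^\dagger\bigr)$. Where you argue via an (only sketched) specialization inequality $\operatorname{rank}_{\cl R}M\leq\dim_{F_\fr p}\wt H^1_f\bigl(K,V_\fr p^\dagger\bigr)$, the paper instead invokes the short exact sequence of Nekov\'a\v{r}'s theory
\[ 0\longrightarrow M_\fr p/\fr pM_\fr p\longrightarrow\wt H^1_f\bigl(K,V_\fr p^\dagger\bigr)\longrightarrow\wt H^2_f\bigl(K,{\bf T}^\dagger\bigr)_\fr p[\fr p]\longrightarrow 0, \]
(quoting the proof of \cite[Corollary 3.4.3]{ho}), which yields the injectivity of $M_\fr p/\fr pM_\fr p\hookrightarrow\wt H^1_f\bigl(K,V_\fr p^\dagger\bigr)$ for \emph{all} arithmetic $\fr p$, not just almost all, and sidesteps the need to separately verify compatibility of local conditions under specialization (the step you rightly flag as the ``main obstacle''). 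The conclusion is then drawn via \cite[Lemma 2.1.7]{ho} (a non-torsion element would survive modulo almost all arithmetic $\fr p$) rather than via your lower bound $\dim_{F_\fr p}M\otimes_{\cl R}F_\fr p\geq\operatorname{rank}_{\cl R}M$; the two are equivalent for finitely generated modules over the domain $\cl R$.
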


Another application of Theorem \ref{thm-A} is to the formulation of a conjecture in the Iwasawa theory of our Hida family. Set $G_n:=\Gal(K_n/K)$ for all integers $n\geq1$; essentially by corestricting to the finite layers of $K_\infty$, one also gets elements $\cl Q_n^\sigma\in\cl R$ for all $\sigma\in G_n$ and all $n\geq1$. The compatibility properties of these points allow us to define
\[ \theta_n:=\alpha_p^{-n}\sum_{\sigma\in G_n}\cl Q_{n}^\sigma\otimes\sigma^{-1}\in\cl R[G_n],\qquad\theta_\infty:=\invlim_n\theta_n\in\cl R_\infty \]
where $\alpha_p\in\cl R^\times$ is the image of the Hecke operator $U_p$ under the natural map $\fr h_\infty^\ord\rightarrow\cl R$. As in \S \ref{section-iwasawa-def}, one introduces an $\cl R_\infty$-module $\wt H^1_{f,{\rm Iw}}\bigl(K_\infty,{\bf A}^\dagger\bigr)$ where ${\bf A}^\dagger:=\Hom\bigl({\bf T}^\dagger,{\boldsymbol \mu}_{p^\infty}\bigr)$. Finally, write $x\mapsto x^*$ for the involution of $\cl R_\infty$ given by $\sigma\mapsto\sigma^{-1}$ on group-like elements. The following statement (which is Conjecture \ref{main-def-conj}) must be seen as a main conjecture of Iwasawa theory in the definite setting.
\begin{conj} \label{conj-D}
Assume that the local ring $\cl R$ is regular. The group $\wt H^1_{f,{\rm Iw}}\bigl(K_\infty,{\bf A}^\dagger\bigr)$ is a finitely generated torsion module over $\cl R_\infty$ and there is an equality
\[ \bigl(\theta_\infty\cdot\theta_\infty^*\bigr)={\rm Char}_{\cl R_\infty}\Big(\wt H^1_{f,{\rm Iw}}\bigl(K_\infty,{\bf A}^\dagger\bigr)^\vee\Big) \]
of ideals of $\cl R_\infty$.
\end{conj}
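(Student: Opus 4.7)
The plan is to adapt to the Hida family setting the two-sided divisibility strategy that, in the classical case, combines the anticyclotomic Euler/Kolyvagin system of Bertolini--Darmon with Skinner--Urban--Wan style Eisenstein congruences. Since the conclusion is stated as a conjecture, the goal of a proof proposal can only be to reduce the statement to inputs that one expects but whose proofs are presently out of reach in full generality; the main obstacle will be the divisibility opposite to the one produced by Heegner-type constructions.

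For the torsion statement, I would first establish a control result in the formalism of Nekov\'a\v{r}'s Selmer complexes, comparing $\wt H^1_{f,\mathrm{Iw}}\bigl(K_\infty,\mathbf{A}^\dagger\bigr)$ with its finite-level specializations $\wt H^1_f\bigl(K,V_\fr p^\dagger\bigr)$ at a Zariski-dense set of arithmetic primes $\fr p$. Under the hypothesis $w=1$, Theorem \ref{thm-C} reduces the vanishing of these finite-level groups to the non-vanishing of $\cl J_0$, which is in turn implied by the non-triviality of $\theta_\infty$ together with Conjecture \ref{conj-B}. The assumed regularity of $\cl R$ (hence of $\cl R_\infty$ after choice of a topological generator of $G_\infty$) will then permit us to pass from vanishing at a dense set of height-one primes to torsion of the global Iwasawa-theoretic module, via the standard structure theory for finitely generated modules over a regular local ring.

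For the divisibility $\bigl(\theta_\infty\cdot\theta_\infty^*\bigr)\subseteq\mathrm{Char}_{\cl R_\infty}\bigl(\wt H^1_{f,\mathrm{Iw}}(K_\infty,\mathbf{A}^\dagger)^\vee\bigr)$, the strategy is to view $\theta_\infty$ as the anticyclotomic $p$-adic $L$-function \`a la Bertolini--Darmon attached to our Hida branch, and to promote the system of points produced by Theorem \ref{thm-A} into a system of annihilators for the Pontryagin dual of the Selmer group. Concretely, I would: (i) raise the level at appropriate Kolyvagin primes and use \v{C}erednik's exchange of invariants to match our definite system with a companion system of Heegner points on an indefinite Shimura curve; (ii) extract from the latter Galois cohomology classes in the Iwasawa cohomology of $\mathbf{T}^\dagger$; (iii) prove an explicit reciprocity law of Bertolini--Darmon type relating the local images at the Kolyvagin primes of these classes to the values $\cl Q_n^\sigma$ that assemble into $\theta_\infty$. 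A Kolyvagin argument in the Hida family, in the spirit of Howard (\cite{ho}) and Fouquet (\cite{fouquet}), then converts this reciprocity into the desired divisibility; the appearance of the norm-type element $\theta_\infty\cdot\theta_\infty^*$, rather than $\theta_\infty$ alone, reflects the self-duality of $\mathbf{T}^\dagger$ under the pairing $\mathbf{T}^\dagger\times\mathbf{T}^\dagger\to\cl R(1)$ and the Galois-equivariance recorded in Theorem \ref{thm-A}(3).

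The reverse divisibility is the principal obstacle. Following the Skinner--Urban--Wan paradigm, one would need to construct congruences between cuspidal and Klingen-type Eisenstein Hida families on a larger reductive group (such as $\mathrm{GU}(2,2)$ or $\mathrm{GSp}_4$) and extract Galois cohomology classes whose local behaviour at $p$ matches exactly the Selmer conditions defining $\wt H^1_{f,\mathrm{Iw}}$. Transporting such a construction into our quaternionic Hida-family framework demands freeness and control results for the appropriate ordinary cohomology, a non-trivial analysis of the local conditions at primes dividing $Np$, and a lifting to the two-variable setting over $\cl R_\infty$. Once both divisibilities are established the passage to the equality of characteristic ideals is formal, again by the structure theory available thanks to the regularity hypothesis on $\cl R$.
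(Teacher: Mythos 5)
This statement is Conjecture~\ref{main-def-conj} in the paper, and the paper deliberately offers no proof of it: the authors formulate it as a two-variable anticyclotomic main conjecture in the definite quaternionic setting and only compare it to the conjectures of Perrin-Riou, Howard, Bertolini--Darmon and Delbourgo. There is therefore no ``paper's own proof'' against which to measure your argument; what you have written is a strategy sketch, and you are candid about that. As a sketch it is sensible in broad outline (one divisibility via Euler/Kolyvagin systems and an explicit reciprocity law, the opposite one via Eisenstein congruences on a bigger group, all of this in the framework of Nekov\'a\v{r}'s Selmer complexes), and the framing matches the paper's own indications that the shape of the conjecture is ``reminiscent of the constructions performed by Bertolini and Darmon.''

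Two points deserve attention, though. First, your torsion argument leans on Theorem~\ref{thm-C}, which is proved only under the hypothesis $w=1$ and conditionally on Conjecture~\ref{conj-B}; Conjecture~\ref{main-def-conj} makes no such sign restriction. In the $w=-1$ case the finite-level Selmer groups $\wt H^1_f\bigl(K,V_{\fr p}^\dagger\bigr)$ are expected to be $2$-dimensional rather than zero, so the naive control-and-density reduction you describe does not produce torsion of $\wt H^1_{f,\mathrm{Iw}}\bigl(K_\infty,\mathbf{A}^\dagger\bigr)^\vee$; one instead has to exploit that in the anticyclotomic tower the corank stays bounded while the height of the Iwasawa algebra grows, which is a genuinely different argument and cannot be phrased purely in terms of specializations over $K$. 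Second, even for $w=1$ the jump from ``$\wt H^1_f\bigl(K,V_{\fr p}^\dagger\bigr)=0$ for a dense set of $\fr p$'' to ``$\wt H^1_{f,\mathrm{Iw}}\bigl(K_\infty,\mathbf{A}^\dagger\bigr)$ is $\cl R_\infty$-torsion'' needs a control statement not only in the $\cl R$-variable but also along the $G_\infty$-variable; you should make that two-variable control theorem explicit rather than implicit, since it is precisely where additional hypotheses (e.g.\ on ramification at primes dividing $N^-$, and on the exceptional set of arithmetic primes) are likely to enter.
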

Here the symbol ${}^\vee$ denotes the Pontryagin dual and the product $\theta_\infty\cdot\theta_\infty^*$ is interpreted as a $p$-adic $L$-function. Note that these definitions are reminiscent of the constructions performed by Bertolini and Darmon in, e.g., \cite{bd*} and \cite{BD}.

\subsection{The indefinite case}

This is the direct generalization of the classical modular curves setting originally studied by Howard in \cite{ho}, and has also been considered, along a different line of investigation, in \cite{fouquet} and \cite{fouquet2} by Fouquet (who works in the broader context of Shimura curves attached to indefinite quaternion algebras over totally real fields). The reader is suggested to compare our approach to Fouquet's, since the goals and results of his work and of ours are of different (and, in many respects, complementary) natures.

By taking the inverse limit of the $p$-adic Tate modules of the Jacobian varieties of $\wt X_m$ as in \cite{h-galois}, we construct a $G_\Q$-representation ${\bf T}_{\rm Sh}$ which is free of rank two over $\cl R$, and prove isomorphisms of $G_\Q$-modules ${\bf T}\simeq{\bf T}_{\rm Sh}$ and ${\bf T}^\dagger\simeq{\bf T}_{\rm Sh}^\dagger$. Following \cite{ho}, the compatible sequence of Heegner points of Theorem \ref{thm-A} can then be used to define cohomology classes $\kappa_c\in H^1\bigl(H_c,{\bf T}^\dagger\bigr)$. In this general quaternionic setting, the problem of showing that these classes belong to Nekov\'a\v{r}'s Selmer group presents extra complications. More precisely, due to the possible presence of primes dividing $N$ which are inert in $K$ and so split completely in $H_c/K$ for all $c$ (prime to $N$), we are only able to show that $\lambda\cdot\kappa_c\in\wt H^1_f\bigl(H_c,{\bf T}^\dagger\bigr)$ for any choice of $\lambda\in\cl R$ in the annihilator of the $\cl R$-torsion module $\prod_{\ell\mid N^-}H^1\bigl(K_\ell,{\bf T}^\dagger\bigr)_{\rm tors}$. We fix once and for all a non-zero $\lambda$ in this annihilator and define, in analogy with \cite{ho}, the classes
\[ \fr X_c:=\lambda\cdot\kappa_c\in\wt H^1_f\bigl(H_c,{\bf T}^\dagger\bigr),\qquad\fr Z_0:=\cor_{H_1/K}(\fr X_1)\in\wt H^1_f\bigl(K,{\bf T}^\dagger\bigr). \]
The following two results generalize theorems of Howard (for the definition of ``non-exceptional primes'', in the sense of Mazur--Tate--Teitelbaum, see \S \ref{section-Selmer-groups}).
\begin{teo} \label{thm-E}
Let $\fr p$ be a non-exceptional arithmetic prime of $\cl R$ with trivial character and even weight. If $\fr Z_0$ has non-trivial image in $\wt H^1_f\bigl(K,V_\fr p^\dagger\bigr)$ then $\dim_{F_\fr p}\wt H^1_f\bigl(K,V_\fr p^\dagger\bigr)=1$.
\end{teo}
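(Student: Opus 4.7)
The strategy is to apply the Kolyvagin--Nekov\'a\v{r} Euler system machinery to the system of classes $\{\fr X_c\}$, following the template of Howard's argument in the classical modular curve case (\cite[\S 2.2]{ho}). Non-triviality of the image of $\fr Z_0$ in $\wt H^1_f\bigl(K,V_\fr p^\dagger\bigr)$ already provides the lower bound $\dim_{F_\fr p}\wt H^1_f\bigl(K,V_\fr p^\dagger\bigr)\geq 1$; the content of the theorem is therefore the matching upper bound.

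First, I would specialize the classes $\fr X_c\in \wt H^1_f\bigl(H_c,{\bf T}^\dagger\bigr)$ along the projection $\pi_\fr p:\cl R\rightarrow F_\fr p$ to obtain cohomology classes $\fr X_c(\fr p)\in\wt H^1_f\bigl(H_c,V_\fr p^\dagger\bigr)$. Theorem \ref{thm-A}, combined with the definition of $\fr X_c$ as a $\lambda$-multiple of the inverse limit of Kummer images of the Heegner points $\wt P_{c,m}$, then produces an anticyclotomic Euler system over $K$: the horizontal compatibility yields the expected norm relation between $\cor_{H_{c\ell}/H_c}\fr X_{c\ell}(\fr p)$ and $T_\ell\fr X_c(\fr p)$ for primes $\ell\nmid cNp$ inert in $K$, while the vertical and Galois compatibilities, together with the hypotheses that $\fr p$ has trivial character and even weight, ensure that the behaviour of $\fr X_c(\fr p)$ under $\Gal(H_c/K)$ matches the usual Heegner pattern modulo $\fr p$.

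Working modulo a large power $\fr p^M$, I would then introduce the set of Kolyvagin primes $\ell\nmid Np$ (rational primes inert in $K$ on which $\Frob_\ell$ acts on the residual representation attached to ${\bf T}^\dagger/\fr p{\bf T}^\dagger$ with eigenvalues $\pm 1$ mod a suitable ideal), construct the standard Kolyvagin derivative classes $D_c\fr X_c(\fr p)$ for squarefree products $c=\ell_1\cdots\ell_r$ of such primes, and verify that they come from global cohomology classes in $H^1\bigl(K,V_\fr p^\dagger/\fr p^M\bigr)$ satisfying the correct Selmer-type local conditions away from the primes dividing $c$. The Poitou--Tate global duality sequence then pairs these derivative classes against an arbitrary Selmer class, and a standard induction on the number of Kolyvagin primes needed to annihilate any prescribed element of $\wt H^1_f\bigl(K,V_\fr p^\dagger\bigr)$ forces the bound $\dim_{F_\fr p}\wt H^1_f\bigl(K,V_\fr p^\dagger\bigr)\leq 1$ as soon as $\fr Z_0(\fr p)\neq 0$.

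The principal obstacle is twofold. First, Nekov\'a\v{r}'s extended Kolyvagin machine requires the residual representation to be sufficiently non-degenerate: absolute irreducibility (already assumed from the outset) together with the non-exceptional hypothesis at $\fr p$ are precisely what guarantees a large enough supply of Kolyvagin primes and controls the local duality at $p$ needed in the extended Selmer framework. Second, and more specific to our quaternionic context, the primes $\ell\mid N^-$ are inert in $K$ and split completely in the ring class field tower, so they force a non-trivial modification of the local conditions at $\ell$ and are the reason for introducing the auxiliary element $\lambda$ in the definition of $\fr X_c$; one has to check that this multiplication by $\lambda$ does not destroy the sharpness of the Euler system bound, exploiting that $F_\fr p$ is a \emph{field} so that $\lambda$-torsion issues collapse at the specialized level. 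Once these points are settled, the Kolyvagin descent delivers the stated upper bound and completes the proof.
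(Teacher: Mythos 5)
Your proposal takes essentially the same route as the paper. The paper's own proof is terse — it simply observes that the big Heegner classes $\{\fr X_c\}$ specialize via $\pi_\fr p$ to an Euler system for $V_\fr p^\dagger$ with the same formal properties as the system Nekov\'a\v{r} built for Kuga--Sato varieties, and then invokes \cite[\S\S 6--13]{nk-inv} exactly as in \cite[Theorem 3.4.2]{ho}. Your paragraph on Kolyvagin primes, derivative classes, and Poitou--Tate descent is just an unpacking of what that machinery does, and your remark that the non-triviality hypothesis forces $\pi_\fr p(\lambda)$ to be a unit in the field $F_\fr p$ (so the choice of $\lambda$ is harmless at the specialized level) correctly identifies the one genuinely new quaternionic subtlety; the authors make the analogous observation in a remark following Theorem \ref{teo-final-indefinite}.
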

This is Theorem \ref{teo-final-indefinite} in the text.
\begin{teo} \label{thm-F}
If $\fr Z_0$ is not $\cl R$-torsion then $\wt H^1_f\bigl(K,{\bf T}^\dagger\bigr)$ is an $\cl R$-module of rank one.
\end{teo}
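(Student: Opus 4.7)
The plan is to prove the two inequalities $\mathrm{rank}_\cl R \wt H^1_f\bigl(K,{\bf T}^\dagger\bigr) \geq 1$ and $\mathrm{rank}_\cl R \wt H^1_f\bigl(K,{\bf T}^\dagger\bigr) \leq 1$ separately. The lower bound is immediate: since $\cl R$ is a local domain and $\fr Z_0 \in \wt H^1_f\bigl(K,{\bf T}^\dagger\bigr)$ is non-torsion by hypothesis, the cyclic submodule $\cl R \cdot \fr Z_0$ is free of rank one.

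For the upper bound, I would proceed by specialization at arithmetic primes and invoke Theorem \ref{thm-E}. Write $M := \wt H^1_f\bigl(K,{\bf T}^\dagger\bigr)$ and $r := \mathrm{rank}_\cl R M$. The first step is a control theorem stating that, for all but finitely many arithmetic primes $\fr p$ of $\cl R$, there is a natural specialization map
\[
M \otimes_{\cl R} F_\fr p \longrightarrow \wt H^1_f\bigl(K,V_\fr p^\dagger\bigr)
\]
whose kernel and cokernel have $F_\fr p$-dimension bounded uniformly in $\fr p$. Combined with the generic fact that $\dim_{F_\fr p} M \otimes_\cl R F_\fr p \geq r$ for almost all $\fr p$, this yields the inequality $\dim_{F_\fr p} \wt H^1_f\bigl(K,V_\fr p^\dagger\bigr) \geq r$ for almost all $\fr p$.

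Next, I would use a density argument to locate primes where Theorem \ref{thm-E} applies. The non-exceptional arithmetic primes of trivial character and even weight are Zariski-dense in $\Spec \cl R$; since $\fr Z_0$ is non-torsion and $\cl R$ is a domain, the locus where $\fr Z_0$ has zero image in the quotient $M \otimes_\cl R F_\fr p$ is a proper Zariski-closed subset, and the (bounded) injectivity failure in the control theorem still leaves infinitely many primes $\fr p$ at which the image of $\fr Z_0$ in $\wt H^1_f\bigl(K,V_\fr p^\dagger\bigr)$ is nonzero. Theorem \ref{thm-E} then forces $\dim_{F_\fr p} \wt H^1_f\bigl(K,V_\fr p^\dagger\bigr) = 1$ at each such $\fr p$, hence $r \leq 1$.

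The principal obstacle is establishing the control theorem with sharp enough bounds on kernel and cokernel. The difficulty is concentrated at the primes $\ell \mid N^-$, where Nekov\'a\v{r}'s local conditions are not merely ``unramified'' and whose pathological behavior already appeared in the construction of $\fr Z_0$ via the auxiliary element $\lambda$. One should adapt Howard's control argument from \cite{ho} to the present quaternionic setting, carefully tracking the local contributions at the inert ramified primes; the remaining specialization-and-density steps should then go through without essential change.
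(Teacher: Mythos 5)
Your plan is the same as the paper's: the paper simply invokes Howard's proof of \cite[Corollary 3.4.3]{ho} with Theorem~\ref{teo-final-indefinite} in place of \cite[Theorem 3.4.2]{ho}, and that argument is precisely ``specialize at arithmetic primes, apply the rank-one theorem, conclude''. Two points in your write-up should be tightened, though, because as stated they would not close the argument.

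First, ``kernel and cokernel bounded uniformly in $\fr p$'' is not enough. If the kernel of $M\otimes_{\cl R}F_{\fr p}\to\wt H^1_f\bigl(K,V_\fr p^\dagger\bigr)$ could have dimension up to some $C>0$, you would only get $\dim_{F_\fr p}\wt H^1_f\bigl(K,V_\fr p^\dagger\bigr)\ge r-C$, which does not yield $r\le 1$. What one actually uses (see the short exact sequence displayed in the proof of Theorem~\ref{teo-def}, quoted from \cite[Corollary 3.4.3]{ho}) is that the specialization map
\[
\wt H^1_f\bigl(K,{\bf T}^\dagger\bigr)_\fr p\big/\fr p\wt H^1_f\bigl(K,{\bf T}^\dagger\bigr)_\fr p\longrightarrow\wt H^1_f\bigl(K,V_\fr p^\dagger\bigr)
\]
is \emph{injective}, with cokernel inside $\wt H^2_f\bigl(K,{\bf T}^\dagger\bigr)_\fr p[\fr p]$. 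Since $\cl R_\fr p$ is a DVR for arithmetic $\fr p$ and the torsion submodule of the finitely generated module $M$ is supported on finitely many primes, $M_\fr p$ is free for almost all arithmetic $\fr p$, so $\dim_{F_\fr p}M\otimes_{\cl R}F_\fr p=r$ for almost all $\fr p$. Injectivity then gives $r\le\dim_{F_\fr p}\wt H^1_f\bigl(K,V_\fr p^\dagger\bigr)$, which is $1$ at the good primes by Theorem~\ref{teo-final-indefinite}.

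Second, the concern you raise about having to ``carefully track the local contributions at the inert ramified primes'' in the control theorem is misplaced. The injectivity above is not a control theorem in the usual delicate sense; it is a formal piece of Nekov\'a\v{r}'s formalism (the long exact sequence of the Selmer complex attached to the coefficient sequence $0\to{\bf T}^\dagger\xrightarrow{\,\fr p\,}{\bf T}^\dagger\to V_\fr p^\dagger\to 0$), and it holds uniformly regardless of which local conditions one imposes at $\ell\mid N^-$. The difficulty at those primes is entirely concentrated in showing that the classes $\kappa_c$ land in the extended Selmer group in the first place (Proposition~\ref{prop-classes-selmer}, and the introduction of $\lambda\in\fr a_\cl R$); once $\fr Z_0\in\wt H^1_f\bigl(K,{\bf T}^\dagger\bigr)$ is established, the specialization machinery is formal. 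Finally, the ``density argument'' you invoke is in the paper replaced by the concrete statement of \cite[Lemma 2.1.7]{ho}: for a finitely generated $\cl R$-module $M$ and a non-torsion $x\in M$, one has $x\notin\fr p M_\fr p$ for all but finitely many arithmetic primes $\fr p$. Since the non-exceptional arithmetic primes of trivial character and even weight form an infinite set (only finitely many are excluded), Theorem~\ref{teo-final-indefinite} applies to almost all $\fr p$ where $\fr Z_0$ specializes nontrivially, and the proof closes.
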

We prove this statement in Theorem \ref{teo-final-indefinite2}, and we expect the condition on the class $\fr Z_0$ to be always true. We finally remark that a main conjecture of Iwasawa theory is proposed in Conjecture \ref{main-indef-conj}; this can be viewed as the counterpart of Conjecture \ref{conj-D} in the indefinite setting and extends both the conjecture \cite[Conjecture 3.3.1]{ho} of Howard and the classical Heegner point main conjecture for elliptic curves formulated by Perrin-Riou in \cite{p-r}.

\subsubsection*{Acknowledgements} We would like to thank Massimo Bertolini, Ga\"{e}tan Chenevier, Haruzo Hida, Ben Howard and Jan Nekov\'a\v{r} for helpful discussions and correspondence on some of the topics of this paper. We are also grateful to Olivier Fouquet for his interest in our work and his helpful remarks. Last but not least, we especially wish to express our gratitude to the anonymous referee for the extremely careful reading of earlier versions of this article: his or her constructive criticism and several valuable suggestions led us to correct a few mistakes, completely rethink some parts and improve the overall exposition in a significant way. 

\section{Towers of Shimura curves} \label{towers-sec}

For any ring $A$ denote by $\widehat A:=A\otimes_\Z\prod_{\ell}\Z_\ell$ its profinite completion, where the product is over all prime numbers $\ell$, by $A_\ell:=A\otimes\Z_\ell$ its $\ell$-adic completion at a prime number $\ell$ and by $A_\infty:=A\otimes \R$ its archimedean completion. An element $x\in\widehat A$ is denoted by $(x_\ell)_\ell$.

Let $N^-$ be a positive square free integer and $N^+$ a positive integer prime to $N^-$. Define
\[ N:=N^+N^- \]
and let $p\nmid N$ be an odd prime number. Denote by $B$ the (unique, up to isomorphism) quaternion algebra over $\Q$ of discriminant $N^-$. If the number of primes dividing $N^-$ is odd (respectively, even) then $B$ is definite (respectively, indefinite), that is, $B_\infty$ is isomorphic to the Hamilton skew field (respectively, to the matrix algebra $\rom{M}_2(\R)$). Fix once and for all an isomorphism
\[ \phi_p:B_p\overset\simeq\longrightarrow\rom{M}_2(\Q_p) \]
of $\Q_p$-algebras. Moreover, for every integer $m\geq0$ let $R_m\subset B$ be an Eichler order of level $N^+p^m$ such that $R_{j+1}\subset R_j$ for all $j\geq0$ and
\[ \phi_p(R_m\otimes\Z_p)=\left\{\begin{pmatrix}a&b\\c&d\end{pmatrix}\in\M_2(\Z_p)\:\Big\vert\:c\equiv0\pmod{p^m}\right\}. \]
Finally, for all $m\geq 0$ let $U_m\subset \widehat R_m^\times$ be the subgroup of elements $(x_\ell)_\ell$ with $\phi_p(x_p)\equiv\bigl(\begin{smallmatrix}1&b\\0&d\end{smallmatrix}\bigr)\pmod{p^m}$ for some $b\in\Z_p$ and some $d\in\Z_p^\times$.

\bigskip

\noindent\emph{Convention.} In order not to burden the notation, in the rest of the paper we will often identify $B_p$ with $\mathrm{M}_2(\Q_p)$ via the isomorphism $\phi_p$ -- we will do so according to convenience, without explicit warning. Thus the reader should always bear in mind that when we write, for example, ``the adele $b\in\widehat B$ has $p$-component $b_p$ equal to $\smallmat\alpha\beta\gamma\delta\in\mathrm{M}_2(\Q_p)$'' we really mean that $b_p$ is equal to $\phi_p^{-1}\bigl(\smallmat\alpha\beta\gamma\delta\bigr)$.

\subsection{Definite Shimura curves} \label{section-definite-shimura}

Let $B$ be \emph{definite}. Denote by $\mathbb P=\mathbb P_{N^-}$ the curve of genus $0$ defined over $\Q$ by setting
\[ \mathbb P(A):=\bigl\{x\in B\otimes_\Q A\mid x\not=0,\;\rom{Norm}(x)=\rom{Trace}(x)=0\bigr\}\big/A^\times \]
for any $\Q$-algebra $A$, where $\Norm$ and $\rom{Trace}$ are the reduced norm and trace of $B\otimes_\Q A$. The group $B^\times$ acts on $\mathbb P$ by conjugation and this action is algebraic and defined over $\Q$. Note that $\mathbb P(\C)$ is canonically identified with $\Hom_\R(\C,B_\infty)$, where $\Hom_\R$ denotes homomorphisms of $\R$-algebras. Explicitly, if $z\mapsto\bar z$ denotes complex conjugation then with each embedding $f:\C\rightarrow B_\infty$ one associates the image $x_f$ of the unique $\C$-line on the quadric $\bigl\{x\in B\otimes\C\mid\rom{Norm}(x)=\rom{Trace}(x)=0\}$ on which $f(\C^\times)$ acts via the character $y\mapsto\bar y/y$. Observe that $x_f$ is one of the two fixed points of $f(\C^\times)$ acting on $\mathbb P(\C)$. In fact, this recipe allows one to identify $\mathbb P(K)$ with $\Hom_\Q(K,B)$ for any imaginary quadratic field $K$ (cf. \cite[p. 131]{gross-2}). Define the \emph{definite Shimura curve of level $R_m$} (respectively, $U_m$) \emph{and discriminant $N^-$} to be the double coset space
\[ X_m:=\widehat R^\times_m\backslash(\widehat B^\times\times\mathbb P)/B^\times\qquad\text{(respectively, $\widetilde X_m:=U_m\backslash(\widehat B^\times\times\mathbb P)/B^\times$)}, \]
where $\widehat R^\times_m$ and $U_m$ act by left multiplication on $\widehat B^\times$ and trivially on $\mathbb P$, while $B^\times$ acts by conjugation on $\mathbb P$ and by right multiplication on $\widehat B^\times$.

If $K$ is an imaginary quadratic field write
\[ X_m^{(K)}:=\widehat R^\times_m\big\backslash\bigl(\widehat B^\times\times\mathbb P(K)\bigr)\big/B^\times,\qquad\widetilde X_m^{(K)}:=U_m\big\backslash\bigl(\widehat B^\times\times\mathbb P(K)\bigr)\big/B^\times. \]
As remarked in \cite[p. 131]{gross-2}, $X_m^{(K)}=X_m(K)$ and $\wt X_m^{(K)}=\wt X_m(K)$. However, in the following we use the above symbols in order to keep our notation uniform with the one adopted in the indefinite case (see below), where the points in $X_m^{(K)}$ or $\wt X_m^{(K)}$ are in general rational only over (abelian) extensions of $K$.

Choose representatives $g_1,\dots,g_{h(m)}$ and $\tilde g_1,\dots,\tilde g_{\tilde h(m)}$ of the double cosets $\widehat R^\times_m\backslash \widehat B^\times/B^\times$ and $U_m\backslash\widehat B^\times/B^\times$, respectively. Define the finite arithmetic groups
\[ \Gamma_m^i:=g_i^{-1}\widehat R^\times_mg_i\cap B^\times,\qquad \wt\Gamma_m^j:=\tilde g_j^{-1}U_m\tilde g_j\cap B^\times \]
with $i\in\{1,\dots,h(m)\}$ and $j\in\{1,\dots,\tilde h(m)\}$. Then $X_m$ and $\wt X_m$ can be expressed as disjoint unions
\[ X_m=\coprod_{i=1}^{h(m)}\mathbb P/\Gamma_m^i,\qquad\wt X_m=\coprod_{i=1}^{\tilde h(m)}\mathbb P/\wt\Gamma_m^i \]
of curves of genus $0$.

\subsection{Indefinite Shimura curves} \label{section-indefinite-shimura}

Let $B$ be \emph{indefinite}. In this case, for all $m\geq0$ both $\widehat R^\times_m\backslash\widehat B^\times/B^\times$ and $U_m\backslash\widehat B^\times/B^\times$ consist of a single class. Fix an isomorphism $\phi_\infty:B_\infty\overset{\simeq}{\longrightarrow}\rom{M}_2(\R)$; then $\phi_\infty(R_m^\times)$ is a discrete subgroup of $\GL_2(\R)$. Denote by $\Gamma_m$ the subgroup of $\phi_\infty(R^\times_m)$ consisting of matrices with determinant $1$ and let $\widetilde\Gamma_m$ be the analogous subgroup of $\phi_\infty(U_m\cap B)$. Define the Riemann surfaces
\[ Y_m(\C):=\cl H/\Gamma_m, \qquad \widetilde Y_m:=\cl H/\widetilde \Gamma_m \]
where $\cl H$ is the complex upper half plane and the groups $\Gamma_m$ and $\widetilde \Gamma_m$ act on $\cl H$ by M\"obius
(i.e., fractional linear) transformations. Let $X_m(\C)$ (respectively, $\widetilde X_m(\C)$) denote the Baily--Borel compactification of $Y_m(\C)$ (respectively, $\widetilde Y_m(\C)$). If $B\not=\rom M_2(\Q)$ then $X_m(\C)=Y_m(\C)$ and $\widetilde X_m(\C)=\widetilde Y_m(\C)$. The Riemann surface $X_m(\C)$ (respectively, $\widetilde X_m(\C)$) has a model over $\Q$ which will be denoted by $X_m$ (respectively, $\widetilde X_m$) and referred to as the \emph{indefinite Shimura curve of level $R_m$} (respectively, $U_m$) \emph{and discriminant $N^-$}. Setting $\mathbb P:=\C-\R$ for the union of the complex upper and lower half-planes yields
\[ Y_m(\C)=\widehat R^\times_m\backslash(\widehat B^\times\times\mathbb P)/B^\times,\qquad \widetilde Y_m(\C)=U_m\backslash(\widehat B^\times \times\mathbb P)/B^\times \]
where, as above, $\widehat R^\times_m$ and $U_m$ act by left multiplication on $\widehat B^\times$ and trivially on $\mathbb P$, while $B^\times$ acts by M\"{o}bius transformations via $\phi_\infty$ on $\mathbb P$ and by right multiplication on $\widehat B^\times$. Observe that there is a $B^\times$-equivariant identification $\mathbb P=\Hom_\R(\C,B_\infty)$ (here $B^\times$ acts on the homomorphisms by conjugation): similarly to the definite case, with an embedding $f:\C\rightarrow B_\infty$ we associate the unique fixed point of $f(\C^\times)$ lying in the upper half-plane, i.e., the fixed point $x_f$ such that the induced action of $f(\C^\times)$ on the cotangent space of $\mathbb P$ at $x_f$ is via the character $y\mapsto\bar y/y$. 
For any imaginary quadratic field $K$ fix an embedding $K\hookrightarrow\C$; so there are injections
\[ X_m^{(K)}:=\widehat R^\times_m\big\backslash\bigl(\widehat B^\times\times\Hom_\Q(K,B)\bigr)\big/B^\times\; \longmono\; X_m(\C), \]
\[ \wt X_m^{(K)}:=U_m\big\backslash\bigl(\widehat B^\times\times\Hom_\Q(K,B)\bigr)\big/B^\times\; \longmono\;\widetilde X_m(\C) \]
induced by the map $\Hom_\Q(K,B)\rightarrow\Hom_\R(\C,B_\infty)$ which is obtained by extending scalars from $\Q$ to $\R$. Actually, the subsets $X_m^{(K)}$ and $\wt X_m^{(K)}$ are contained in $X_m(\bar\Q)$ and $\wt X_m(\bar\Q)$, respectively, where $\bar\Q$ is the algebraic closure of $\Q$ in $\C$.

As a piece of notation, both in the definite case and in the indefinite case write $\Div(X_m)$ and $\Div(\wt X_m)$ for the groups of divisors on the Riemann surfaces $X_m(\C)$ and $\wt X_m(\C)$, respectively.

\subsection{The tower of curves} \label{tower-subsubsec}

The inclusions $R_{m+1}\subset R_m$, $U_{m+1}\subset U_m$ and $U_m\subset R_m$ for $m\geq 0$ yield a commutative diagram of curves
\begin{equation} \label{tower}
\xymatrix@C=40pt{\dots\ar[r]^-{\widetilde\alpha_{m+1}}&\widetilde X_m\ar[r]^-{\widetilde\alpha_m}\ar[d]^-{\beta_m}&\widetilde X_{m-1} \ar[r]^-{\widetilde\alpha_{m-1}} \ar[d]^-{\beta_{m-1}}&\dots\\
\dots\ar[r]^-{\alpha_{m+1}}&X_m\ar[r]^-{\alpha_m}&X_{m-1}\ar[r]^-{\alpha_{m-1}}&\dots}
\end{equation}
in which all maps are finite coverings that are defined over $\Q$.

\subsection{Hecke operators} \label{section-Hecke-operators}

We briefly review the standard description of the Hecke operators $T_\ell$ and $U_p$ in the case of our interest. Let $m\geq0$ be an integer and let $\ell$ be a prime number which does not divide $Np^m$. In particular, the case $m=0$ and $\ell=p$ is allowed. For all $a\in\{0,\dots,\ell-1\}$ denote by $\hat\lambda_a\in\widehat B^\times$ the idele whose $\ell$-component is equal to $\smallmat 1a0\ell$ and whose components at all other places are equal to $1$. Similarly, let $\hat\lambda_\infty$ be the idele whose $\ell$-component is equal to $\smallmat\ell001$ and all other components are $1$. Then
\[ \widehat R_m^\times\hat\lambda_0\widehat R_m^\times=\bigcup_{a=0}^{\ell-1}\widehat R_m^\times\hat\lambda_a\cup R_m^\times\hat\lambda_{\infty},\qquad U_m\hat\lambda_0 U_m=\bigcup_{a=0}^{\ell-1}U_m\hat\lambda_a\cup U_m\hat\lambda_\infty. \]
The action of $T_\ell$ on $\Div(X_m)$ and $\Div(\wt X_m)$ can be defined as
\[ T_\ell\bigl([(g,f)]\bigr):=\sum_{a=0}^{\ell-1}\bigl[\bigl(\hat\lambda_ag,f\bigr)\bigr]+\bigl[\bigl(\hat\lambda_\infty g,f\bigr)\bigr]. \]
The action of the Hecke operator $U_p$ on $\Div(X_m)$ and $\Div(\wt X_m)$ for $m\geq1$ will be especially important for us. For all $a\in\{0,\dots,p-1\}$ denote by $\hat\pi_a\in\widehat B^\times$ the idele whose $p$-component is equal to $\smallmat 1a0p$ and whose components at all other places are equal to $1$. Then
\[ \widehat R_m^\times\hat\pi_0\widehat R_m^\times=\bigcup_{a=0}^{p-1}\widehat R_m^\times\hat\pi_a, \qquad U_m\hat\pi_0U_m=\bigcup_{a=0}^{p-1}U_m\hat\pi_a. \]
The action of $U_p$ on $\Div(X_m)$ and $\Div(\wt X_m)$ is given by
\[ U_p\bigl([(g,f)]\bigr):=\sum_{a=0}^{p-1}[(\hat\pi_a g,f)]. \]
Observe that, as pointed out also in \cite[\S 1.5]{bd*}, the single terms in the sums expressing $T_\ell$ and $U_p$ depend on the choice of representative for $[(g,f)]$, but their collections do not.

\section{Heegner points}

Let $K$ be an imaginary quadratic field of discriminant $D=D_K$ prime to $pN$ and denote by $\cl O_K$ its ring of algebraic integers. Assume that the following \emph{Heegner hypothesis} is satisfied:
\begin{itemize}
\item a prime number $\ell$ divides $N^+$ (respectively, $N^-$) only if $\ell$ splits (respectively, is inert) in $K$.
\end{itemize}
No conditions are imposed on $p$.

\subsection{Heegner points} \label{Heegner-subsec}

Denote by $\mathcal O_K$ the ring of integers of $K$. For any integer $c\geq1$ prime to $N$ let $\mathcal O_c:=\Z+c\mathcal O_K$ be the order of $K$ of conductor 
$c$ and let $H_c$ denote the ring class field of $K$ of conductor $c$.  

For any order $\cl O\subset K$ and any Eichler order $R\subset B$, a morphism $f\in\Hom_\Q(K,B)$ is said to be an \emph{optimal embedding of $\cl O$ in $R$} if
\[ f(\cl O)=R\cap f(K) \qquad \text{(i.e., $f^{-1}(R)=\cl O$)}. \] 
We say  that a point $P=[(g,f)]\in X_m^{(K)}$ for some integer $m\geq0$ is a \emph{Heegner point of conductor $c$ on $X_m$} if $f$ is an optimal embedding of $\cl O_c$ into the Eichler order $g^{-1}\widehat R_mg\cap B$. Note that both in the definite and in the indefinite case Heegner points are contained in $X_m(\bar\Q)$. More precisely, suppose that $P$ is a Heegner point of conductor $c$ on $X_m$: if $B$ is definite then $P\in X_m(K)$, while if $B$ is indefinite then $P\in X_m(H_c)$. In the indefinite case Heegner points on $X_m$ are well known to satisfy Shimura's reciprocity law (\cite[Theorem 9.6]{Sh}) describing the action of $\Gal(H_c/K)$. 
 
For the next definition, for all integers $m\geq 0$ let $U_{m,p}$ denote the $p$-component of $U_m$.

\begin{defi} \label{def-Heegner-points}
We say that a point $\wt P=[(g,f)]\in \wt X_m^{(K)}$ is a \emph{Heegner point of conductor $c$ on $\wt X_m$} if $\beta_m\bigl(\wt P\bigr)\in X_m^{(K)}$ is a Heegner point of conductor $c$ and
\[ f_p^{-1}\bigl(f_p\bigl((\mathcal O_c\otimes\Z_p)^\times\bigr)\cap g_p^{-1}U_{m,p}g_p\bigr)=
(\mathcal O_c\otimes\Z_p)^\times\cap(1+p^m\mathcal O_K\otimes\Z_p)^\times. \]
\end{defi}

In other words, Heegner points on $\wt X_m$ are lifts of Heegner points on $X_m$ satisfying a suitable local condition at $p$. This condition will be used to study the field of rationality of Heegner points on $\widetilde X_m$. 

\subsection{Fields of rationality} \label{section-fields-of-rationality}

Define an action of $\Gal(K^{\rm ab}/K)\simeq\widehat K^\times/K^\times$ on $\wt X_m^{(K)}$ by the formula
\[ P^\fr a:=\bigl[\bigl(g\hat f(\fr a),f\bigr)\bigr] \]
for all $\fr a\in \widehat K^\times/K^\times$ and all $P=[(g,f)]\in\wt X_m^{(K)}$. Define
\[ Z_m:=\bigl\{a=(a_\ell)\in \widehat{\cl O}_{cp^m}^\times\mid a_p\equiv1\mod{p^m(\mathcal O_K\otimes\Z_p)}\bigr\}. \]
If $[(g,f)]\in \wt X_m^{(K)}$ is a Heegner point of conductor $cp^m$ then it follows directly from Definition \ref{def-Heegner-points} that
\begin{equation} \label{z-m-eq}
Z_m=\hat f^{-1}\bigl(\hat f(\widehat{\mathcal O}_{cp^m}^\times)\cap g^{-1}U_m g\bigr).
\end{equation}
For any number field $F$ denote by $I_F$ its idele group (so $\widehat F^\times$ is the finite part of $I_F$). Write $\wt H_{cp^m}$ for the class field of $Z_{m,\infty}:=Z_m\times\C^\times$, so that
\[ \Gal(\wt H_{cp^m}/K)\simeq\widehat K^\times/K^\times Z_m. \]

\begin{prop}
Let $P\in\wt X_m^{(K)}$ be a Heegner point of conductor $cp^m$. Then
\begin{enumerate}
\item $P \in H^0\bigl(\Gal(K^{\rm ab}/\wt H_{cp^m}),\wt X_m(K)\bigr)$ in the definite case;
\item $P\in\widetilde X_m\bigl(\wt H_{cp^m}\bigr)$ in the indefinite case.
\end{enumerate}
\end{prop}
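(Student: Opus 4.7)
The plan is to reduce both cases to the single algebraic statement that the formal double-coset action
\[
P^{\fr a}=\bigl[\bigl(g\hat f(\fr a),f\bigr)\bigr]
\]
of $\widehat K^\times/K^\times$ on $\wt X_m^{(K)}$ is trivial on the image of $K^\times Z_m$. Since $\Gal(K^{\rm ab}/K)\simeq\widehat K^\times/K^\times$ by the reciprocity map, the class field identification $\Gal(\wt H_{cp^m}/K)\simeq\widehat K^\times/K^\times Z_m$ then identifies the stabilizer subgroup $\Gal(K^{\rm ab}/\wt H_{cp^m})$ with the image of $K^\times Z_m$ in $\widehat K^\times/K^\times$, which is exactly the subgroup we want to act trivially. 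The two cases of the proposition then differ only in how this formal action is matched with a genuine Galois action.

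First I would verify that $\alpha\in K^\times$ acts trivially. Here $\hat f(\alpha)=f(\alpha)\in B^\times$, so using the right $B^\times$-action on $\widehat B^\times\times\mathbb P$ with $b=f(\alpha)$ we get
\[
\bigl[\bigl(gf(\alpha),f\bigr)\bigr]=\bigl[\bigl(gf(\alpha),f(\alpha)^{-1}\,f\,f(\alpha)\bigr)\bigr]=\bigl[(g,f)\bigr],
\]
the middle equality holding because $f(\alpha)$ centralizes $f(K)$ (since $K$ is commutative), so the conjugation action of $f(\alpha)$ on $f$ is trivial. Next I would verify that $\fr a\in Z_m$ acts trivially; this is where the refined local condition at $p$ in Definition \ref{def-Heegner-points} is used, via the resulting identity \eqref{z-m-eq}. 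That identity gives $\hat f(Z_m)\subseteq g^{-1}U_mg$, so $\hat f(\fr a)=g^{-1}ug$ for some $u\in U_m$ and hence $g\hat f(\fr a)=ug$. The left $U_m$-action on $\widehat B^\times$ (trivial on $\mathbb P$) then yields $[(ug,f)]=[(g,f)]$, proving $P^{\fr a}=P$.

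For the definite case, the statement is then immediate: the formal action of $\widehat K^\times/K^\times$ on $\wt X_m^{(K)}=\wt X_m(K)$ factors through $\widehat K^\times/K^\times Z_m\simeq\Gal(\wt H_{cp^m}/K)$ by the two triviality results above, which is exactly what the asserted $H^0$-condition says. For the indefinite case, one needs the additional input of Shimura's reciprocity law on $\wt X_m$: it identifies the geometric Galois action of $\sigma\in\Gal(K^{\rm ab}/K)$ on the CM point $P$ with the formal double-coset action $P^{\fr a_\sigma}$, where $\fr a_\sigma$ is the finite idele corresponding to $\sigma$ via the reciprocity isomorphism. Once Shimura's reciprocity law is in place, the triviality statements above yield $P^\sigma=P$ for every $\sigma\in\Gal(K^{\rm ab}/\wt H_{cp^m})$, i.e.\ $P\in\wt X_m\bigl(\wt H_{cp^m}\bigr)$.

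The main obstacle is the indefinite case, and specifically the upgrade of the classical reciprocity law from $X_m$ (recalled in \S \ref{Heegner-subsec}) to the auxiliary $\Gamma_1(p^m)$-level cover $\wt X_m$; one has to check that the extra local datum at $p$ carried by a Heegner point on $\wt X_m$ is transported by the Galois action to the idele-theoretic datum $\hat f(\fr a_\sigma)$ on the nose, which is what makes $\wt H_{cp^m}$ (defined via $Z_m$) precisely the field of definition. A secondary bookkeeping issue is the sign convention in the reciprocity map, but it will be immaterial here since $K^\times Z_m$ is stable under $\fr a\mapsto\fr a^{-1}$ and our triviality verification is symmetric.
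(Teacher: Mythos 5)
Your proposal is correct and follows the same route as the paper: the paper's proof consists of exactly the two observations you make precise (that the formal $\widehat K^\times/K^\times$-action is trivial on $K^\times Z_m$, which you verify directly from \eqref{z-m-eq} and the right $B^\times$-action, and that in the indefinite case the geometric Galois action on $P$ over $K^{\rm ab}$ matches the formal one), citing \cite[Lemma 3.11]{cv} precisely for the rationality/reciprocity input you flag as the remaining obstacle. Your version is just a more explicit unpacking of the paper's two-line argument.
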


\begin{proof} Use the fact that $P$ is fixed by the action of $\Gal(K^\rom{ab}/\wt H_{cp^m})$ and that in the indefinite case $P$ is rational over $K^\rom{ab}$ by, for example, \cite[Lemma 3.11]{cv}. \end{proof}

We give a more explicit description of $\wt H_{cp^m}$. As a general notation, for every integer $n\geq1$ let $\boldsymbol\mu_n$ be the $n$-th roots of unity. Set $p^\ast:=(-1)^{(p-1)/2}p$.

\begin{prop} \label{rationality-pro}
$\wt H_{cp^m}=H_{cp^m}(\boldsymbol\mu_{p^m})$.
\end{prop}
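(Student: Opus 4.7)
The proof uses idelic class field theory. By construction $\wt H_{cp^m}/K$ corresponds to the open subgroup $K^\times\cdot Z_{m,\infty}\subset I_K$, so $\Gal(\wt H_{cp^m}/K)\simeq\widehat K^\times/K^\times Z_m$. I would prove (i) the inclusion $H_{cp^m}(\boldsymbol\mu_{p^m})\subseteq\wt H_{cp^m}$, and then (ii) equality by matching degrees over $K$.

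For (i), the containment $H_{cp^m}\subseteq\wt H_{cp^m}$ is immediate from $Z_m\subseteq\widehat{\mathcal O}_{cp^m}^\times$. For $K(\boldsymbol\mu_{p^m})\subseteq\wt H_{cp^m}$ it suffices to check that each $a\in Z_m$ (with trivial infinite part) acts trivially on $\boldsymbol\mu_{p^m}$ via Artin reciprocity: the composition $\epsilon_\cyc\circ\mathrm{rec}_K$ factors through $N_{K/\Q}$, and the condition $a_p\equiv 1\pmod{p^m(\mathcal O_K\otimes\Z_p)}$ gives $N_{K_p/\Q_p}(a_p)\equiv 1\pmod{p^m}$, while the components $a_\ell\in\mathcal O_{K,\ell}^\times$ for $\ell\neq p$ act trivially on $p$-power roots of unity.

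For (ii), using $K^\times\cap\widehat{\mathcal O}_{cp^m}^\times=\mathcal O_{cp^m}^\times$ one obtains the exact sequence
\[ 1\to\widehat{\mathcal O}_{cp^m}^\times/(\mathcal O_{cp^m}^\times Z_m)\to\widehat K^\times/K^\times Z_m\to\widehat K^\times/K^\times\widehat{\mathcal O}_{cp^m}^\times\to 1, \]
so that $[\wt H_{cp^m}:H_{cp^m}]=|\widehat{\mathcal O}_{cp^m}^\times/(\mathcal O_{cp^m}^\times Z_m)|$. As $Z_m$ and $\widehat{\mathcal O}_{cp^m}^\times$ agree outside $p$, a local computation at $p$ (for example via the filtration by $1+p^m(\mathcal O_{cp^m}\otimes\Z_p)$ together with the isomorphism $\mathcal O_{cp^m}/p^m\mathcal O_{cp^m}\simeq(\Z/p^m)[\epsilon]/(\epsilon^2)$) yields $|\widehat{\mathcal O}_{cp^m}^\times/Z_m|=(p-1)p^{m-1}$; since $\mathcal O_{cp^m}^\times=\{\pm 1\}$ for $cp^m>1$ and $-1\notin Z_m$ (as $p$ is odd), the image of $\mathcal O_{cp^m}^\times$ has order $2$, giving $[\wt H_{cp^m}:H_{cp^m}]=(p-1)p^{m-1}/2$ for $m\geq 1$ (the case $m=0$ is trivial). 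On the other side, $K\cap\Q(\boldsymbol\mu_{p^m})=\Q$ (since $p\nmid D_K$), hence $[K(\boldsymbol\mu_{p^m}):K]=(p-1)p^{m-1}$, and the remaining task is to show $K(\boldsymbol\mu_{p^m})\cap H_{cp^m}=K(\sqrt{p^*})$.

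The inclusion $K(\sqrt{p^*})\subseteq H_{cp^m}$ is a local norm computation: the quadratic Kronecker character $\chi_{p^*}$ cutting out $\Q(\sqrt{p^*})/\Q$ restricts via the norm to the character $\chi_{p^*}\circ N_{K/\Q}$ of $\widehat K^\times/K^\times$, and any element of $(\mathcal O_{cp^m}\otimes\Z_p)^\times$ has shape $a+p^m\beta$ with $a\in\Z_p^\times$, so its local norm is $\equiv a^2\pmod p$, a square modulo $p$; hence $\chi_{p^*}$ vanishes on $\widehat{\mathcal O}_{cp^m}^\times$. For the reverse inclusion $L:=K(\boldsymbol\mu_{p^m})\cap H_{cp^m}\subseteq K(\sqrt{p^*})$, one notes that $L/\Q$ sits inside both the abelian extension $K(\boldsymbol\mu_{p^m})/\Q$ (on which complex conjugation $\tau$ commutes with $\Gal(K(\boldsymbol\mu_{p^m})/K)$) and the generalized dihedral extension $H_{cp^m}/\Q$ (on which $\tau$ acts on $\Gal(H_{cp^m}/K)$ by inversion), so $\tau$ acts on $\Gal(L/K)$ both trivially and as $-1$, forcing $\Gal(L/K)$ to have exponent dividing $2$; since $\Gal(K(\boldsymbol\mu_{p^m})/K)\simeq(\Z/p^m\Z)^\times$ is cyclic, its only nontrivial quotient of exponent $\leq 2$ has order $2$, corresponding to $K(\sqrt{p^*})$. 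Thus $[H_{cp^m}(\boldsymbol\mu_{p^m}):H_{cp^m}]=(p-1)p^{m-1}/2=[\wt H_{cp^m}:H_{cp^m}]$, completing the proof. The main subtlety lies in pinning down $K(\boldsymbol\mu_{p^m})\cap H_{cp^m}=K(\sqrt{p^*})$, which combines the local vanishing of $\chi_{p^*}$ on $\widehat{\mathcal O}_{cp^m}^\times$ with the cyclotomic-versus-dihedral contrast for complex conjugation.
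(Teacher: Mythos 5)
Your proof is correct, but it takes a genuinely different route from the paper's. The paper establishes the \emph{opposite} inclusion directly, namely $\wt H_{cp^m}\subset H_{cp^m}(\boldsymbol\mu_{p^m})$, via a norm computation on $V_m\cap\widehat{\cl O}_{cp^m}^\times$ (showing $\Norm_{K_p/\Q_p}(x_p)\equiv\alpha_p^2\pmod{p^m}$ forces $\alpha_p\equiv\pm 1$, hence $x\in K^\times Z_m$), and then only needs a degree \emph{bound} $[H_{cp^m}(\boldsymbol\mu_{p^m}):H_{cp^m}]\leq\varphi(p^m)/2$, which follows at once from $\Q(\sqrt{p^\ast})\subset H_{cp^m}$. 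You instead prove $H_{cp^m}(\boldsymbol\mu_{p^m})\subseteq\wt H_{cp^m}$ directly --- which is the easier inclusion, since $Z_m$ is explicitly given and one simply checks it acts trivially on $\boldsymbol\mu_{p^m}$ via $\epsilon_\cyc\circ\mathrm{rec}_K=\epsilon_\cyc\circ\mathrm{rec}_\Q\circ N_{K/\Q}$ --- and then compute $[H_{cp^m}(\boldsymbol\mu_{p^m}):H_{cp^m}]$ \emph{exactly} by proving $K(\boldsymbol\mu_{p^m})\cap H_{cp^m}=K(\sqrt{p^\ast})$. Your dihedral-versus-abelian argument under complex conjugation for the containment $K(\boldsymbol\mu_{p^m})\cap H_{cp^m}\subseteq K(\sqrt{p^\ast})$ is a clean piece of extra information the paper never needs. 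The index computation $[\wt H_{cp^m}:H_{cp^m}]=\varphi(p^m)/2$ agrees with the paper in substance. One small caveat: the parenthetical via $\cl O_{cp^m}\otimes\Z_p/p^m(\cl O_{cp^m}\otimes\Z_p)\simeq(\Z/p^m\Z)[\epsilon]/(\epsilon^2)$ is a red herring here --- the quotient of $(\cl O_{cp^m}\otimes\Z_p)^\times$ by $1+p^m(\cl O_{cp^m}\otimes\Z_p)$ has order $(p-1)p^{2m-1}$, not $(p-1)p^{m-1}$; what you actually need is the quotient by $(\cl O_{cp^m}\otimes\Z_p)^\times\cap\bigl(1+p^m(\cl O_K\otimes\Z_p)\bigr)$, best computed (as the paper does) via the map $\alpha+p^m\beta\mapsto\alpha\bmod p^m$ onto $(\Z/p^m\Z)^\times$. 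Your stated answer $(p-1)p^{m-1}$ is nonetheless correct, so this does not affect the proof.
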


\begin{proof} Write $\Gal(K/\Q)\simeq I_\Q/\Q^\times C$ where $C:=\Norm_{K/\Q}I_K$ is the norm group of $K$. Define
\[ W_m:=\prod_{\ell\neq p}\Z_\ell^\times\times\{\alpha\in\Z_p^\times\mid\alpha\equiv 1\pmod{p^m}\} \]
and set $W_{m,\infty}:=W_m\times\R_+$ where $\R_+$ is the group of positive real numbers. The extension $K(\boldsymbol\mu_{p^m})/\Q$ is abelian, and since $\Gal(\Q(\boldsymbol\mu_{p^m})/\Q) \simeq I_\Q/\Q^\times W_{m,\infty}$ it follows by global class field theory (cf. \cite[Ch. IV, Theorem 7.1]{neu}) that
\[ \Gal(K(\boldsymbol\mu_{p^m})/\Q)\simeq I_\Q/\Q^\times (C\cap W_{m,\infty}). \]
Now $\Gal(K(\boldsymbol\mu_{p^m})/K)\simeq\widehat K^\times/K^\times V_m$ where $V_m$ denotes the finite part of the norm group $\Norm_{K(\boldsymbol\mu_{p^m})/K}\bigl(I_{K(\boldsymbol\mu_{p^m})}\bigr)$. Hence
\begin{equation} \label{eq0}
\Gal(H_{cp^m}(\boldsymbol\mu_{p^m})/K)\simeq\widehat K^\times/K^\times(V_m\cap\widehat{\cl O}_{cp^m}^\times).
\end{equation}
Since the finite part of $\Norm_{K(\boldsymbol\mu_{p^m})/\Q}\bigl(I_{K(\boldsymbol\mu_{p^m})}\bigr)$ equals $\Norm_{K/\Q}(V_m)$ and
\[ \Q^\times\Norm_{K(\boldsymbol\mu_{p^m})/\Q}\bigl(I_{K(\boldsymbol\mu_{p^m})}\bigr)=\Q^\times (C\cap W_{m,\infty}), \]
it follows that
\[ V_m\subset\bigl\{x\in \widehat K^\times\mid\Norm_{K/\Q}(x)\in \Q^\times W_m\bigr\}. \]
Let $x\in V_m\cap\widehat{\cl O}_{cp^m}^\times$ and write $x=\alpha+cp^m\beta$ with $\alpha\in\widehat\Z$ and $\beta\in\widehat{\cl O}_K$. Then $\Norm_{K/\Q}(x)\in \Q^\times W_m\cap\widehat\Z^\times=W_m$. On the other hand, locally at $p$ one has the congruence
\[ \Norm_{K_p/\Q_p}(x_p)\equiv\alpha_p^2\pmod{p^m}. \]
It follows that $\alpha_p\equiv\pm 1\pmod{p^m}$, and we get the inclusion $K^\times(V_m\cap\widehat{\cl O}_{cp^m}^\times)\subset K^\times Z_m$. Isomorphism \eqref{eq0} finally yields
\begin{equation} \label{eq1}
\wt H_{cp^m}\subset H_{cp^m}(\boldsymbol\mu_{p^m}).
\end{equation}
It is easily seen that the Galois group $\Gal(H_{cp^m}(\boldsymbol\mu_{p^m})/H_{cp^m})$ is isomorphic to $\widehat{\cl O}_{cp^m}^\times/{\cl O}_{cp^m}^\times Z_m$. Since $\widehat{\cl O}_{cp^m}^\times/Z_m$ is isomorphic to $(\Z/p^m\Z)^\times$ via the map which sends $a=(a_q)_q\in\widehat{\cl O}_{cp^m}^\times$ to $a_p\pmod{p^m}\in(\Z/p^m\Z)^\times$, and $\cl O_{cp^m}^\times=\{\pm 1\}$ for $m\geq 1$, we get that
\begin{equation} \label{eq2}
\bigl[\wt H_{cp^m}:H_{cp^m}\bigr]=\varphi(p^m)/2.
\end{equation}
The result follows from \eqref{eq1} and \eqref{eq2} upon noticing that  $[H_{cp^m}(\boldsymbol\mu_{p^m}):H_{cp^m}]\leq\varphi(p^m)/2$ because $\Q(\sqrt{p^\ast})\subset H_{cp^m}$. \end{proof}

In light of Proposition \ref{rationality-pro}, from now on we adopt the explicit notation $H_{cp^m}(\boldsymbol\mu_{p^m})$ in place of the shorthand $\wt H_{cp^m}$. The reason for doing so is that whenever $p|c$ we have $\wt H_{cp^m}\neq\wt H_{(c/p)p^{m+1}}$, so the previous notation would be ambiguous.

\subsection{Hecke relations on $\wt X_m$}

Let $r,s\geq1$ be integers. Then
\[ \Gal\bigl(H_{cp^s}(\boldsymbol\mu_{p^r})/K\bigr)\simeq\widehat K^\times\big/K^\times\bigl(V_r\cap\widehat{\cl O}_{cp^s}^\times\bigr) \]
where $V_r$ is the finite part of the norm group $\Norm_{K(\boldsymbol\mu_{p^r})/K}\bigl(I_{K(\boldsymbol\mu_{p^r})}\bigr)$. Hence for every pair of integers $t,u$ with $t\geq s$ and $u\geq r$ there is an isomorphism
\[ K^\times\bigl(V_r\cap\widehat{\cl O}_{cp^s}^\times\bigr)\big/K^\times\bigl(V_u\cap\widehat{\cl O}_{cp^t}^\times\bigr)\overset{\simeq}{\longrightarrow}\Gal\bigl(H_{cp^t}(\boldsymbol\mu_{p^u})/H_{cp^s}(\boldsymbol\mu_{p^r})\bigr). \]
As pointed out in the proof of Proposition \ref{rationality-pro}, every element $x=\alpha+cp^s\beta\in V_r\cap\widehat{\cl O}_{cp^s}^\times$ (with $\alpha\in\widehat\Z$ and $\beta\in\widehat{\cl O}_K$) satisfies the local conditions
\[ {\rm Norm}_{K_p/\Q_p}(x_p)\equiv\alpha_p^2\pmod{p^r},\qquad {\rm Norm}_{K_p/\Q_p}(x_p)\equiv 1\pmod{p^r}. \]
Let $\sigma\in\Gal\bigl(H_{cp^{n+1}}(\boldsymbol\mu_{p^{n+1}})/H_{cp^n}(\boldsymbol\mu_{p^{n+1}})\bigr)$ be represented by the idele $\fr a_\sigma\in\widehat{\cl O}_{cp^n}^\times$. By the above discussion, we have
\begin{equation} \label{cong-loc-p-eq}
\fr a_\sigma=\alpha+cp^n\beta,\qquad\alpha_p\equiv 1\pmod{p^n}.
\end{equation}

\begin{prop} \label{prop-Galois-Hecke-tilde-X}
Let $\wt P$ be a Heegner point of conductor $cp^n$ on $\wt X_m$ for some $n\geq m\geq 1$ and let $\wt Q\in\wt X_m^{(K)}$ belong to the support of $U_p(\wt P)$. Then
\[ U_p\bigl(\wt P\bigr)=\mathrm{tr}_{H_{cp^{n+1}}(\boldsymbol\mu_{p^{n+1}})/H_{cp^n}(\boldsymbol\mu_{p^{n+1}})}\bigl(\wt Q\bigr) \]
in $\Div\bigl(\wt X_m\bigr)$.
\end{prop}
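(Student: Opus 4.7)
The plan is to compare both sides of the identity as divisors of degree $p$ on $\widetilde X_m$. Writing $\widetilde P=[(g,f)]$, one has $U_p(\widetilde P)=\sum_{a=0}^{p-1}[(\hat\pi_a g,f)]$, and each summand is a Heegner point of conductor $cp^{n+1}$ because $n\geq 1$ genuinely increases the $p$-part of the optimal conductor when passing from $g^{-1}\widehat R_m g\cap B$ to $(\hat\pi_a g)^{-1}\widehat R_m(\hat\pi_a g)\cap B$. The degree of $H_{cp^{n+1}}(\boldsymbol\mu_{p^{n+1}})$ over $H_{cp^n}(\boldsymbol\mu_{p^{n+1}})$ is likewise $p$, as follows from the class-field-theoretic computation in the proof of Proposition \ref{rationality-pro}. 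Thus both sides are sums of $p$ Heegner points, and it suffices to show that the Galois conjugates of $\widetilde Q$ exhaust the support of $U_p(\widetilde P)$, each appearing with multiplicity one.

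The heart of the argument is a local calculation at $p$. Without loss of generality write $\widetilde Q=[(\hat\pi_a g,f)]$ for some $a\in\{0,\dots,p-1\}$. For $\sigma$ represented by $\fr a_\sigma=\alpha+cp^n\beta\in\widehat{\cl O}_{cp^n}^\times$ with $\alpha_p\equiv 1\pmod{p^n}$ as in \eqref{cong-loc-p-eq}, I aim to produce $a'(\sigma)\in\{0,\dots,p-1\}$ satisfying
\[ \widetilde Q^\sigma=\bigl[\bigl(\hat\pi_a g\hat f(\fr a_\sigma),f\bigr)\bigr]=\bigl[(\hat\pi_{a'(\sigma)} g,f)\bigr], \]
that is, such that $u:=\hat\pi_a\,(g\hat f(\fr a_\sigma)g^{-1})\,\hat\pi_{a'(\sigma)}^{-1}$ lies in $U_m$. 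At primes $\ell\neq p$ the adeles $\hat\pi_a$ and $\hat\pi_{a'}$ are trivial, and optimality of $f$ places $g_\ell\hat f(\fr a_\sigma)_\ell g_\ell^{-1}$ in $R_{m,\ell}^\times=U_{m,\ell}$, since $\hat f(\fr a_\sigma)_\ell\in f_\ell(\cl O_{cp^n,\ell}^\times)\subset g_\ell^{-1}R_{m,\ell}^\times g_\ell$. At $p$, the congruence $\alpha_p\equiv 1\pmod{p^n}$ combined with $n\geq m$ and the Heegner condition of Definition \ref{def-Heegner-points} place $v_p:=g_p\hat f(\fr a_\sigma)_p g_p^{-1}$ in $U_{m,p}$; an explicit matrix computation using $\pi_a=\smallmat 1a0p$ and the mod-$p^m$ shape of $v_p$ then pins down $a'(\sigma)$ modulo $p$ as the unique residue that makes the $(2,1)$-entry of $\pi_a v_p\pi_{a'}^{-1}$ land in $\Z_p$.

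The proof concludes by counting. The assignment $\sigma\mapsto a'(\sigma)$ is a map between two sets of cardinality $p$, and it is injective because the action of $\Gal\bigl(H_{cp^{n+1}}(\boldsymbol\mu_{p^{n+1}})/H_{cp^n}(\boldsymbol\mu_{p^{n+1}})\bigr)$ on Heegner points of conductor $cp^{n+1}$ on $\widetilde X_m$ is free; this freeness follows from an adelic description of the stabilizer in the spirit of \eqref{z-m-eq} (suitably adapted to conductor $cp^n$ on level $\widetilde X_m$) together with $\cl O_{cp^{n+1}}^\times=\{\pm 1\}$. A counting argument then yields bijectivity, and hence the desired trace identity.

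The step I expect to be most delicate is the explicit local matrix calculation at $p$: one must carefully track the congruences satisfied by the entries of $v_p$ that are forced by the Heegner condition, and then verify that $\pi_a v_p\pi_{a'}^{-1}$ meets the defining congruences of $U_{m,p}$ for a uniquely determined $a'\in\{0,\dots,p-1\}$. A secondary subtlety concerns the analog of \eqref{z-m-eq} for Heegner points of conductor $cp^n$ on $\widetilde X_m$ when $n>m$, which is not literally recorded in the text and must be extracted from the local condition of Definition \ref{def-Heegner-points}.
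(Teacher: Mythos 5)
Your proof takes essentially the same route as the paper's: show, using \eqref{cong-loc-p-eq}, the Heegner condition of Definition \ref{def-Heegner-points}, and the hypothesis $n\geq m$, that $\hat f(\fr a_\sigma)\in g^{-1}U_mg$, so that every Galois conjugate $\wt Q^\sigma$ lies in the support of $U_p(\wt P)$, and then conclude by the degree count $\bigl[H_{cp^{n+1}}(\boldsymbol\mu_{p^{n+1}}):H_{cp^n}(\boldsymbol\mu_{p^{n+1}})\bigr]=p$. Two small comments on the details. First, where you propose an explicit matrix calculation to pin down the index $a'(\sigma)$, the paper sidesteps this by quoting the coset decomposition $U_m\hat\pi_0 U_m=\bigcup_{a'}U_m\hat\pi_{a'}$ from \S\ref{section-Hecke-operators}: once one has $x\in U_m$ with $\wt Q^\sigma=[(\hat\pi_a x g,f)]$, the fact that $\hat\pi_a x\in U_m\hat\pi_0 U_m$ immediately places $\wt Q^\sigma$ in the support of $U_p(\wt P)$. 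If you do carry out the matrix computation with $\pi_a=\smallmat 1a0p$ and $v_p\in U_{m,p}$, note that the constraining entry of $\pi_a v_p\pi_{a'}^{-1}$ is the $(1,2)$-entry (which acquires a division by $p$), not the $(2,1)$-entry as you wrote; the $(2,1)$-entry is automatically in $p^{m+1}\Z_p$. Second, you are more careful than the paper in the final counting step, where you observe that one needs the Galois action on $\wt Q$ to be free in order to conclude; the paper leaves this implicit, relying on the free-action remark in the introduction, so your instinct to flag and justify it is a welcome improvement in rigor (though the stabilizer computation should be for conductor $cp^{n+1}$, the conductor of $\wt Q$, rather than $cp^n$).
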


\begin{proof} Let $\wt P=[(g,f)]$, so that $\wt Q=[(\hat\pi_ag,f)]$ for a certain $a\in\{0,\dots,p-1\}$. Hence, with notation as above, $\wt Q^\sigma=\bigl[\bigl(\hat\pi_ag\hat f(\fr a_\sigma),f\bigr)\bigr]$. Since $\wt P$ is a Heegner point of conductor $cp^n$ and $n\geq m$, it follows from \eqref{z-m-eq} and \eqref{cong-loc-p-eq} that $\hat f(\fr a_\sigma)\in g^{-1}U_mg$. Thus there esists $x\in U_m$ such that $\wt Q^\sigma=[(\hat\pi_axg,f)]$, whence
\begin{equation} \label{eq17}
\wt Q^\sigma\in U_p\bigl(\wt P\bigr).
\end{equation}
Since the fields $H_{cp^{n+1}}$ and $H_{cp^n}(\boldsymbol\mu_{p^{n+1}})$ are linearly disjoint over $H_{cp^n}$, the projection
\[ \Gal\bigl(H_{cp^{n+1}}(\boldsymbol\mu_{p^{n+1}})/H_{cp^n}(\boldsymbol\mu_{p^{n+1}})\bigr)\longrightarrow\Gal(H_{cp^{n+1}}/H_{cp^n}) \]
is an isomorphism. Therefore the order of $\Gal\bigl(H_{cp^{n+1}}(\boldsymbol\mu_{p^{n+1}})/H_{cp^n}(\boldsymbol\mu_{p^{n+1}})\bigr)$ is $p$, and the claim of the proposition follows from \eqref{eq17}. \end{proof}

For simplicity, for the next proposition assume that $\cl O_{cp^m}^\times=\{\pm1\}$ (this excludes only the cases where $c=1$, $m=0$ and $K=\Q(\sqrt{-1})$ or $K=\Q(\sqrt{-3})$). 

\begin{prop} \label{T-operator-1}
Let $m\geq0$. Fix a prime $\ell\nmid Np^mc$ which is inert in $K$. Let $\wt P$ be a Heegner point of conductor $c\ell p^m$ on $\wt X_m$ and let $\wt Q\in\wt X_m^{(K)}$ belong to the support of $T_\ell(\wt P)$. Then
\[ T_\ell\bigl(\wt P\bigr)=\mathrm{tr}_{H_{c\ell p^m}(\boldsymbol{\mu}_{p^m})/H_{cp^m}(\boldsymbol{\mu}_{p^m})}\bigl(\wt Q\bigr) \]
in $\Div\bigl(\wt X_m\bigr)$. 
\end{prop}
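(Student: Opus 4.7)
The plan is to generalize the argument of Proposition \ref{prop-Galois-Hecke-tilde-X} (which handles $U_p$) to $T_\ell$, with the main extra ingredient being a local coset computation at $\ell$. First I would identify the relevant Galois group: by linear disjointness of the $\ell$-ramified extension $H_{c\ell p^m}/H_{cp^m}$ and the $p$-ramified extension $H_{cp^m}(\boldsymbol\mu_{p^m})/H_{cp^m}$, restriction yields an isomorphism
\[ \Gal\bigl(H_{c\ell p^m}(\boldsymbol\mu_{p^m})/H_{cp^m}(\boldsymbol\mu_{p^m})\bigr)\simeq\Gal(H_{c\ell p^m}/H_{cp^m}), \]
which is cyclic of order $\ell+1$ (by the ring class field degree formula: $\ell$ inert gives $\ell-\chi_K(\ell)=\ell+1$, the assumption $\cl O_{cp^m}^\times=\{\pm1\}$ eliminating unit-quotient corrections).

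Next, writing $\wt P=[(g,f)]$ and $\wt Q=[(\hat\lambda g,f)]$ with $\hat\lambda\in\{\hat\lambda_0,\dots,\hat\lambda_{\ell-1},\hat\lambda_\infty\}$, pick $\sigma$ represented by an idele $\fr a_\sigma\in V_m\cap\widehat{\cl O}_{cp^m}^\times$, where $V_m$ is the finite part of $\Norm_{K(\boldsymbol\mu_{p^m})/K}(I_{K(\boldsymbol\mu_{p^m})})$. As in the proof of Proposition \ref{prop-Galois-Hecke-tilde-X}, write $\fr a_\sigma=\alpha+cp^m\beta$ with $\alpha_p\equiv 1\pmod{p^m}$. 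The central claim is that $\hat f(\fr a_\sigma)\in g^{-1}U_mg$: at $p$ this follows from the local condition on $\alpha_p$ via Definition \ref{def-Heegner-points}; at all $v\neq\ell,p$ from optimality of $f_v$ for $\cl O_{cp^m,v}^\times$; and at $\ell$, crucially using that the conductor of $\wt P$ at $\ell$ is trivial, $f_\ell$ optimally embeds $\cl O_{K,\ell}$ into $g_\ell^{-1}\M_2(\Z_\ell)g_\ell$, so $f_\ell(\cl O_{K,\ell}^\times)\subset g_\ell^{-1}U_{m,\ell}g_\ell$.

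Writing $g\hat f(\fr a_\sigma)=ug$ with $u\in U_m$, we get $\wt Q^\sigma=[(\hat\lambda ug,f)]$, and it remains to identify $\hat\lambda u$ with an element of $U_m\hat\lambda'$ for some $\hat\lambda'\in\{\hat\lambda_a\}$. Locally away from $\ell$ this is automatic since $\hat\lambda_v=1$. At $\ell$, one uses that the $\ell+1$ matrices $\{\hat\lambda_{a,\ell}\}$ simultaneously form a system of both left and right coset representatives of $\GL_2(\Z_\ell)$ in the double coset $\GL_2(\Z_\ell)\smallmat 100\ell \GL_2(\Z_\ell)$; since $u_\ell\in\GL_2(\Z_\ell)$, the product $\hat\lambda_\ell u_\ell$ lies in a unique such left coset $\GL_2(\Z_\ell)\hat\lambda'_\ell$, determining $\hat\lambda'$. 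Thus $\wt Q^\sigma=[(\hat\lambda' g,f)]$ lies in the support of $T_\ell(\wt P)$.

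Finally, the map $\sigma\mapsto\wt Q^\sigma$ is injective: if $\wt Q^{\sigma_1}=\wt Q^{\sigma_2}$ then $\sigma_1\sigma_2^{-1}$ fixes $\wt Q$, and the free action of $\Gal(H_{c\ell p^m}(\boldsymbol\mu_{p^m})/K)$ on Heegner points of conductor $c\ell p^m$ (cited in the introduction, using $\cl O_{c\ell p^m}^\times=\{\pm 1\}$) forces $\sigma_1=\sigma_2$. Since both the Galois group and the support of $T_\ell(\wt P)$ have cardinality $\ell+1$, the assignment is bijective, yielding the claimed divisor identity. The main obstacle is the local combinatorics at $\ell$, in particular verifying that $\{\hat\lambda_a\}$ genuinely doubles as a set of left and right coset representatives and handling the boundary case $\hat\lambda_\infty$ separately — a short but technical computation that one must carry out explicitly by row- and column-reduction of $\smallmat 1a0\ell u\hat\lambda_{a'}^{-1}$ into $\GL_2(\Z_\ell)$.
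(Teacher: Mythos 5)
Your proposal follows the same strategy as the paper's (very brief) proof: show that each Galois conjugate $\wt Q^\sigma$ lies in the support of $T_\ell(\wt P)$ by a coset computation modelled on the $U_p$ case of Proposition \ref{prop-Galois-Hecke-tilde-X}, identify $\Gal\bigl(H_{c\ell p^m}(\boldsymbol\mu_{p^m})/H_{cp^m}(\boldsymbol\mu_{p^m})\bigr)$ with $\Gal(H_{c\ell p^m}/H_{cp^m})$ of order $\ell+1$ by linear disjointness, and conclude by comparing cardinalities. You supply the local details at $\ell$ and the injectivity-via-free-action step that the paper compresses into ``arguing exactly as in the proof of Proposition \ref{prop-Galois-Hecke-tilde-X}\,\dots\, because the two divisors are both sums of $\ell+1$ points.''

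One thing you should flag explicitly rather than resolve silently: you assert that ``the conductor of $\wt P$ at $\ell$ is trivial, [so] $f_\ell$ optimally embeds $\cl O_{K,\ell}$,'' whereas the stated hypothesis gives $\wt P$ conductor $c\ell p^m$, so $f_\ell$ optimally embeds $\cl O_{c\ell p^m}\otimes\Z_\ell=\Z_\ell+\ell(\cl O_K\otimes\Z_\ell)$, \emph{not} $\cl O_{K,\ell}$. With that hypothesis the central claim $\hat f(\fr a_\sigma)\in g^{-1}U_mg$ fails at $\ell$, since $\fr a_{\sigma,\ell}$ ranges over $\cl O_{K,\ell}^\times$ while $f_\ell$ only carries $(\cl O_\ell\otimes\Z_\ell)^\times$ into $g_\ell^{-1}\GL_2(\Z_\ell)g_\ell$; also, with $\wt P$ of conductor $c\ell p^m$, the $\ell+1$ points in $T_\ell(\wt P)$ have mixed $\ell$-conductors ($1$ of conductor $cp^m$ and $\ell$ of conductor $c\ell^2 p^m$), so they cannot be Galois conjugate. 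The reading your proof actually uses, namely $\wt P$ of conductor $cp^m$, is the one consistent with the application in Proposition \ref{T-operator} (where $T_\ell$ is applied to $\wt P_{c,m}$ of conductor $cp^m$ and $\wt Q=\wt P_{c\ell,m}$), with the analogue Proposition \ref{prop-Galois-Hecke-tilde-X} (where $\wt P$ has the \emph{lower} conductor), and with the classical relation $T_\ell x_c=\mathrm{tr}_{H_{c\ell}/H_c}(x_{c\ell})$. In short: the hypothesis ``conductor $c\ell p^m$'' in the statement appears to be a misprint for ``conductor $cp^m$,'' and under that corrected reading your proof is correct.
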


\begin{proof} Arguing exactly as in the proof of Proposition \ref{prop-Galois-Hecke-tilde-X}, it can be shown that if $\sigma\in\Gal\bigl(H_{c\ell p^m}(\boldsymbol{\mu}_{p^m})/H_{cp^m}(\boldsymbol{\mu}_{p^m})\bigr)$ then
\begin{equation} \label{eq17bis}
\wt Q^\sigma\in T_\ell\bigl(\wt P\bigr).
\end{equation}
On the other hand, since the fields $H_{c\ell p^m}$ and $H_{cp^m}(\boldsymbol{\mu}_{p^m})$ are linearly disjoint over $H_{cp^m}$, the canonical projection induces an isomorphism
\begin{equation} \label{gal-T}
\Gal\bigl(H_{c\ell p^m}(\boldsymbol{\mu}_{p^m})/H_{cp^m}(\boldsymbol{\mu}_{p^m})\bigr)\overset\simeq\longrightarrow\Gal\bigl(H_{c\ell p^m}/H_{cp^m}\bigr).
\end{equation}
The claim of the proposition follows by combining \eqref{eq17bis} and \eqref{gal-T} because the two divisors are both sums of $\ell+1$ points. \end{proof}

\section{Families of Heegner points} \label{section-explicit-construction}

The purpose of this section is to construct a family of Heegner points on the tower of Shimura curves which satisfies suitable compatibility properties with respect to the natural covering maps in the tower. These points will be the building blocks in our definition of big Heegner points and classes that will be performed in Section \ref{big-heegner-section}. Unlike what is done in \cite{ho}, to achieve our goal we systematically adopt the language of optimal embeddings, and this approach allows us to treat in a uniform way both the definite and the indefinite case.

\subsection{Choice of local conditions} \label{heegner-points-explicit-subsec}

In order to introduce the systems of Heegner points that we shall work with, we need to recall some auxiliary results and definitions. As a preliminary remark, the Heegner hypothesis and \cite[Theorems 1 and 2]{ogg} ensure that the set of Heegner points of conductor $cp^m$ on $X_m$ is not empty.

Let $\cl O$ be an order of $K$ and $R$ an order of $B$. Let $\ell$ be a prime number. Define $K_\ell:=K\otimes_\Z \Z_\ell$ and $B_\ell:=B\otimes_\Z\Z_\ell$. An injective homomorphism $\varphi:K_\ell\hookrightarrow B_\ell$ of $\Q_\ell$-algebras is said to be an \emph{optimal embedding} of $\cl O\otimes \Z_\ell$ into $R\otimes \Z_\ell$ if
\[ \varphi(\cl O\otimes\Z_\ell)=\varphi(K_\ell)\cap(R\otimes\Z_\ell)\qquad (\text{i.e., $\varphi^{-1}(R\otimes\Z_\ell)=\cl O\otimes\Z_\ell$}). \]
Two optimal embeddings $\varphi$ and $\psi$ of $\cl O\otimes\Z_\ell$ into $R\otimes \Z_\ell$ are said to be \emph{equivalent} if there exists an element $u\in(R\otimes \Z_\ell)^\times$ such that $\varphi=u^{-1}\psi u$.

If $f:K\hookrightarrow B$ is an injective homomorphism of $\Q$-algebras and $\ell$ is a prime number, denote by $f_\ell=f\otimes\text{id}_{\Z_\ell}:K_\ell\hookrightarrow B_\ell$ the homomorphism which is obtained from $f$ by extension of scalars. The next lemma says that, for a global injection, the property of being an optimal embedding is local.

\begin{lemma} \label{lemma-locale-globale}
An injective homomorphism of $\Q$-algebras $f:K\hookrightarrow B$ is an optimal embedding of $\cl O$ into $R$ if and only if $f_\ell$ is an optimal embedding of $\cl O\otimes\Z_\ell$ into $R\otimes\Z_\ell$ for all primes $\ell$.
\end{lemma}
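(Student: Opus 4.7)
The plan is to reduce the statement to a standard flatness/local-global argument about lattices in the $\Q$-vector space $K$. Set $\mathcal{O}' := f^{-1}(R)$; since $R$ is a $\Z$-lattice in $B$ and $f$ is injective, $\mathcal{O}'$ is a $\Z$-lattice in $K$, and it is a subring containing $\Z$, hence an order of $K$. Thus both $\mathcal{O}$ and $\mathcal{O}'$ are orders in $K$, and the lemma amounts to the equivalence
\[ \mathcal{O} = \mathcal{O}' \quad \Longleftrightarrow \quad \mathcal{O}\otimes\Z_\ell = f_\ell^{-1}(R\otimes\Z_\ell) \text{ for all primes } \ell. \]

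For the forward direction, I would exploit the flatness of $\Z_\ell$ over $\Z$. Composing $f$ with the projection $B \twoheadrightarrow B/R$ gives a homomorphism of $\Z$-modules $K \to B/R$ whose kernel is precisely $\mathcal{O}' = f^{-1}(R)$, so there is an exact sequence
\[ 0 \longrightarrow \mathcal{O}' \longrightarrow K \longrightarrow B/R. \]
Tensoring with $\Z_\ell$ (exact) and using that $(B/R)\otimes\Z_\ell \simeq (B\otimes\Z_\ell)/(R\otimes\Z_\ell)$, I get $\mathcal{O}'\otimes\Z_\ell = f_\ell^{-1}(R\otimes\Z_\ell)$. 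If $\mathcal{O}=\mathcal{O}'$, this yields the local optimality condition for every $\ell$.

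For the converse, I would invoke the local-global principle for full $\Z$-lattices in a $\Q$-vector space: if $M$ and $N$ are such lattices with $M\otimes\Z_\ell = N\otimes\Z_\ell$ inside $K\otimes\Z_\ell$ for every prime $\ell$, then $M=N$. This follows because $M/(M\cap N)$ is a finitely generated torsion $\Z$-module whose $\ell$-primary part matches $(M\otimes\Z_\ell)/(M\cap N)\otimes\Z_\ell$, which vanishes by hypothesis; similarly for $N/(M\cap N)$. Applied to $M=\mathcal{O}$ and $N=\mathcal{O}'$ (using the identity $\mathcal{O}'\otimes\Z_\ell = f_\ell^{-1}(R\otimes\Z_\ell)$ established above), the hypothesis gives $\mathcal{O}=\mathcal{O}'$, as desired.

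No step presents a serious obstacle: the argument is essentially a formal consequence of the flatness of $\Z \hookrightarrow \Z_\ell$ and the local-global principle for lattices. The only mildly delicate point is verifying that $f^{-1}(R)$ is indeed a full $\Z$-lattice in $K$ (so that the local-global principle applies), which uses the injectivity of $f$ together with the fact that $R$ is an order in $B$.
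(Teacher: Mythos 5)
Your proof is correct, and in fact somewhat more self-contained than what the paper offers: the paper dismisses the statement as a ``routine verification'' and cites Pizer's Lemma 4.9, which uses the elementary divisor theorem (diagonalizing a lattice $\cl O'$ against $\cl O$ by choosing compatible bases). You instead reduce the claim to two clean general facts about the flat extension $\Z\hookrightarrow\Z_\ell$: first, that tensoring the exact sequence $0\to f^{-1}(R)\to K\to B/R$ with $\Z_\ell$ identifies $f^{-1}(R)\otimes\Z_\ell$ with $f_\ell^{-1}(R\otimes\Z_\ell)$ (so the two orders $\cl O$ and $\cl O':=f^{-1}(R)$ can be compared entirely at the level of their localizations); and second, the local-global principle for full $\Z$-lattices in a $\Q$-vector space. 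This avoids any explicit basis manipulation and makes the role of flatness transparent. One point you rightly flag but could make fully explicit: $\cl O'=f^{-1}(R)\cong f(K)\cap R$ is a full $\Z$-lattice in $K$ because $R$ spans $B$ over $\Q$, so intersecting it with the $\Q$-subalgebra $f(K)$ yields a lattice of full rank; together with it being a unital subring, this gives that $\cl O'$ is an order, which is what licenses the local-global comparison of $\cl O$ and $\cl O'$. With that observation spelled out, the argument is complete and matches the intent (if not the cited technique) of the paper's proof.
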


\begin{proof} A routine verification; see \cite[Lemma 4.9]{Pi2} for a quick proof using the elementary divisor theorem. \end{proof}

Let $R$ be an Eichler order of $B$, let $I_1,\dots,I_h$ be representatives of all the distinct classes of left $R$-ideals and denote by $R_i$ the right order of $I_i$ for $i=1,\dots,h$. The number $h$ depends only on the level of $R$ and the discriminant of the quaternion algebra, and the set $\{R_1,\dots,R_h\}$ is a system of representatives for all the $R$-conjugacy classes of Eichler orders in $B$ with the same level as $R$. For every $i\in\{1,\dots,h\}$ fix an element $\gamma_i\in\widehat B^\times$ such that $\widehat R_i=\gamma_i^{-1}\widehat R\gamma_i$ and write $\gamma_{i,\ell}$ for the $\ell$-component of $\gamma_i$ at a prime $\ell$.

\begin{prop} \label{loc-glob-emb-prop}
Let $\cl O$ be an order of $K$ and $R$ an Eichler order of $B$, and let $\{\varphi_\ell\}_\ell$ be a collection of optimal embeddings of $\cl O\otimes\Z_\ell$ into $R\otimes\Z_\ell$ for all primes $\ell$. Then there exists an optimal embedding $f:K\hookrightarrow B$ of $\cl O$ into $R_i$ for some $i\in\{1,\dots,h\}$ such that $\gamma_{i,\ell}f_\ell\gamma_{i,\ell}^{-1}$ is equivalent to $\varphi_\ell$ for all $\ell$.
\end{prop}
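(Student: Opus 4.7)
The proof proceeds via a local-to-global lifting argument. The plan is to start with an auxiliary global embedding $f_0\colon K\hookrightarrow B$ of $\Q$-algebras and then modify it appropriately. Such an $f_0$ exists because the Heegner hypothesis ensures the existence of local embeddings $K_\ell\hookrightarrow B_\ell$ at every place (the data $\varphi_\ell$ already provides them at finite primes, while $K_\infty\hookrightarrow B_\infty$ holds in both the definite and the indefinite case), and the local-to-global principle for embeddings of quadratic fields into quaternion algebras then yields a global $f_0$; alternatively, the non-emptiness of the set of Heegner points of conductor $cp^m$ on $X_m$ recalled at the start of \S\ref{heegner-points-explicit-subsec} via \cite[Theorems 1 and 2]{ogg} already supplies one.

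At each prime $\ell$ the Skolem--Noether theorem produces some $b_\ell\in B_\ell^\times$ with $b_\ell^{-1}f_{0,\ell}b_\ell=\varphi_\ell$. I would then show that $(b_\ell)_\ell$ can be promoted to an adele in $\widehat B^\times$: outside a finite set of primes (namely, those avoiding $N^-N^+$ and the conductor of $\cl O$), both $R\otimes\Z_\ell$ and $\cl O\otimes\Z_\ell$ are unramified maximal orders, and the classical local theory of optimal embeddings asserts that all optimal embeddings of $\cl O\otimes\Z_\ell$ into $R\otimes\Z_\ell$ lie in a single $(R\otimes\Z_\ell)^\times$-equivalence class; at such primes $b_\ell$ may therefore be chosen in $(R\otimes\Z_\ell)^\times$. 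This produces $b\in\widehat B^\times$ satisfying $b^{-1}\widehat f_0\,b=\widehat\varphi$ as maps $\widehat K\rightarrow\widehat B$.

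Next I would apply the double coset decomposition. Since $\{\gamma_i\}_{i=1}^h$ represents $\widehat R^\times\backslash\widehat B^\times/B^\times$, inversion shows that $\{\gamma_i^{-1}\}$ represents $B^\times\backslash\widehat B^\times/\widehat R^\times$, so there is a decomposition $b=\beta\gamma_i^{-1}r$ with $\beta\in B^\times$, $r\in\widehat R^\times$, for some index $i\in\{1,\dots,h\}$. Setting $f(k):=\beta^{-1}f_0(k)\beta$ gives a new global embedding whose adelization satisfies $\widehat f=\beta^{-1}\widehat f_0\,\beta$. Substituting this into $b^{-1}\widehat f_0\,b=\widehat\varphi$ yields the adelic identity $\gamma_i\widehat f\gamma_i^{-1}=r\widehat\varphi\,r^{-1}$, i.e., $\gamma_{i,\ell}f_\ell\gamma_{i,\ell}^{-1}=r_\ell\varphi_\ell r_\ell^{-1}$ at every prime $\ell$, with $r_\ell\in(R\otimes\Z_\ell)^\times$. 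This is precisely the required equivalence. Optimality of $f$ into $R_i$ then follows from Lemma \ref{lemma-locale-globale}: since $R_i\otimes\Z_\ell=\gamma_{i,\ell}^{-1}(R\otimes\Z_\ell)\gamma_{i,\ell}$, optimality of $f_\ell$ into $R_i\otimes\Z_\ell$ is equivalent to optimality of $\gamma_{i,\ell}f_\ell\gamma_{i,\ell}^{-1}$ into $R\otimes\Z_\ell$, which holds because it is $(R\otimes\Z_\ell)^\times$-conjugate to the optimal embedding $\varphi_\ell$.

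The step I expect to require the most care is the verification in the second paragraph that the Skolem--Noether conjugators $b_\ell$ may be chosen in $(R\otimes\Z_\ell)^\times$ at almost all primes, so that $b$ genuinely assembles into an adele; this reduces to the classical unicity of optimal embeddings at good primes but must be invoked precisely. Once this local bookkeeping is in place, the remaining manipulations with the double coset decomposition and Lemma \ref{lemma-locale-globale} are purely formal.
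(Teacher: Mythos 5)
Your proof is essentially the paper's proof, phrased with the conjugators oriented the opposite way and the double coset decomposition stated directly rather than under the heading of strong approximation; the logical content is identical. The paper likewise starts from an auxiliary global embedding $g:K\hookrightarrow B$ (existence via the Heegner hypothesis), applies Skolem--Noether at each prime, assembles the local conjugators into an idele $a$ by choosing them to be units at almost all primes, decomposes $a=u\gamma_i b$ with $u\in\widehat R^\times$, $b\in B^\times$, and sets $f:=bgb^{-1}$; the final step is Lemma~\ref{lemma-locale-globale}. Your $b_{\text{student}}$ is the paper's $a^{-1}$, your $\beta$ is the paper's $b^{-1}$, your $r$ is the paper's $u^{-1}$, and the verification $\gamma_{i,\ell}f_\ell\gamma_{i,\ell}^{-1}=r_\ell\varphi_\ell r_\ell^{-1}$ matches the paper's displayed computation.

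The one point you flag as needing care is indeed where an extra sentence is required, and the paper makes it explicit: to conclude that the Skolem--Noether conjugator $b_\ell$ can be chosen in $(R\otimes\Z_\ell)^\times$ at almost all $\ell$, it is not enough to know that all optimal embeddings of $\cl O\otimes\Z_\ell$ into $R\otimes\Z_\ell$ are $(R\otimes\Z_\ell)^\times$-equivalent; one must also check that $f_{0,\ell}$ is itself such an optimal embedding at almost all $\ell$. This holds because $f_0(\cl O)$ is contained in some maximal order of $B$ whose $\ell$-adic completion agrees with $R\otimes\Z_\ell$ for almost all $\ell$, and $\cl O\otimes\Z_\ell$ is maximal in $K_\ell$ away from the conductor, forcing optimality. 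Adding that sentence closes the only visible gap; everything else in your argument is correct and matches the paper.
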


\begin{proof} This is essentially a consequence of Eichler's trace formula (\cite[Ch. III, Th{\'e}or{\`e}me 5.11]{vi}). For later use, we give here a direct proof (see \cite[Ch. III, \S 5]{vi} or \cite[\S 3]{Pi} for more details). By the construction of $B$, there exists an injective homomorphism $g:K\hookrightarrow B$ of $\Q$-algebras. By the Skolem--Noether theorem, for every prime $\ell$ there exists $a_\ell\in B_\ell^\times$ such that $g_\ell=a_\ell^{-1}\varphi_\ell a_\ell$. For almost all primes $\ell$ which do not divide the discriminant of $B$, the level of $R$ and the conductor of $\cl O$ the map $g_\ell$ is an optimal embedding of $\cl O\otimes\Z_\ell$ into $R\otimes\Z_\ell$: this is so because $g(\cl O)$ is contained in a maximal order whose $\ell$-adic completion is equal to $R\otimes\Z_\ell$ for almost all $\ell$. Hence we can assume that $a_\ell\in (R\otimes\Z_\ell)^\times$ for almost all $\ell$; in fact, by \cite[Ch.  II, \S 3]{vi}, if $\ell$ does not divide the discriminant of $B$ and the level of $R$ there is only one equivalence class of optimal embeddings of $\cl O\otimes\Z_\ell$ into $R\otimes\Z_\ell$. Write $a$ for the idele $(a_\ell)_\ell$. By the strong approximation theorem, there exist a unique index $i\in\{1,\dots,h\}$, a global element $b\in B^\times$ and a unit $u\in\widehat R^\times$ such that $a=u\gamma_ib$. Then $f:=bgb^{-1}$ is a global embedding of $K$ into $B$ such that $f_\ell$ is conjugate to $\varphi_\ell$ for all primes $\ell$. In fact, for every prime $\ell$ one has
\[ \gamma_{i,\ell}f_\ell\gamma_{i,\ell}^{-1}= \gamma_{i,\ell}bg_\ell b^{-1}\gamma_{i,\ell}^{-1}= (\gamma_{i,\ell}ba_\ell^{-1})\varphi_\ell(\gamma_{i,\ell}ba_\ell^{-1})^{-1}=u_\ell^{-1}\varphi_\ell u_\ell, \]
which shows that $\gamma_{i,\ell}f_\ell\gamma_{i,\ell}^{-1}$ is equivalent to $\varphi_\ell$. In particular, $f_\ell$ is an optimal embedding of $\cl O\otimes\Z_\ell$ into $\gamma_{i,\ell}^{-1}(R\otimes\Z_\ell)\gamma_{i,\ell}=R_i\otimes\Z_\ell$ for every prime $\ell$, hence $f$ is an optimal embedding of $\cl O$ into $R_i$ by Lemma \ref{lemma-locale-globale}. \end{proof}

For all integers $k,m\geq0$ define the Eichler order $R_k^{(m)}$ of level $N^+p^k$ by the following local conditions:
\[ R_k^{(m)}\otimes\Z_\ell=R_0\otimes\Z_\ell \qquad \text{for all $\ell\neq p$}; \]
\[ R_k^{(m)}\otimes\Z_p=\bigl(\begin{smallmatrix}\Z_p&p^{k-m}\Z_p\\p^m\Z_p&\Z_p\end{smallmatrix}\bigr)=\smallmat01{-p^m}0(R_k\otimes\Z_p)\smallmat0{-p^{-m}}10. \] 
In particular, we have $R_m=R_0\cap R_0^{(m)}$ and $R_m^{(m)}=R_m$.

Proposition \ref{loc-glob-emb-prop} reduces the construction of global optimal embeddings to that of local ones. In the following the local component at $p$ is studied. Let $p^h$ be the power of $p$ dividing $c$ exactly (i.e., $p^h|c$ but $p^{h+1}\nmid c$). Write $K=\Q(\sqrt{-D})$ with $D>0$, so $K_p=\Q_p(\sqrt{-D})$ if $p$ is inert in $K$ and $K_p=\Q_p\oplus\Q_p$ if $p$ is split in $K$. Then we consider the following embeddings $\psi_p^{(c)}:K_p\hookrightarrow B_p$:

\bigskip

\noindent 1) $p$ \underline{inert}
\[ \begin{array}{ccc}
   K_p & \longrightarrow & B_p\\[2mm]
   \alpha+\sqrt{-D}\beta & \longmapsto & \Big(\begin{smallmatrix}\alpha&-D\beta p^h\\\beta/p^h&\alpha\end{smallmatrix}\Big);
   \end{array} \]
\noindent 2) $p$ \underline{split}
\[ \begin{array}{ccc}
   K_p & \longrightarrow & B_p\\[2mm]
   (\alpha,\beta) & \longmapsto & \Big(\begin{smallmatrix}\alpha&0\\(\alpha-\beta)/p^h&\beta\end{smallmatrix}\Big).
   \end{array} \]
Recall that $U_{n,p}$ denotes the $p$-component of $U_n$. For all integers $n\geq0$ an easy calculation shows that
\begin{itemize}
\item $\psi^{(c)}_p$ is an optimal embedding of $\mathcal O_{cp^n}\otimes\Z_p$ into $R_n\otimes\Z_p$;
\item $(\psi_p^{(c)})^{-1}\bigl(\psi_p^{(c)}\bigl((\mathcal O_{cp^n}\otimes\Z_p)^\times\bigr)\cap U_{n,p}\bigr)=(\mathcal O_{cp^n}\otimes\Z_p)^\times\cap(1+p^n\mathcal O_K\otimes\Z_p)^\times$.
\end{itemize}
Define
\begin{equation} \label{def-varphi}
\varphi_p^{(c,m)}:=\smallmat01{-p^m}0\psi_p^{(c)}\smallmat0{-p^{-m}}10
\end{equation}
and
\[ U_{n,p}^{(m)}:=\smallmat 01{-p^m}0 U_{n,p}\smallmat 0{-p^{-m}}10. \]
For all integers $m,n\geq0$ it follows from the above equations for $\psi_p^{(c)}$ and the definition of $R_n^{(m)}$ that
\begin{itemize}
\item $\varphi^{(c,m)}_p$ is an optimal embedding of of $\mathcal O_{cp^n}\otimes\Z_p$ into $R_n^{(m)}\otimes\Z_p$;
\item $(\varphi_p^{(c,m)})^{-1}\bigl(\varphi_p^{(c,m)}\bigl((\mathcal O_{cp^n}\otimes\Z_p)^\times\bigr)\cap U_{n,p}^{(m)}\bigr)=(\mathcal O_{cp^n}\otimes\Z_p)^\times\cap(1+p^n\mathcal O_K\otimes\Z_p)^\times$.
\end{itemize}

\begin{lemma} \label{admissible-prop-2}
Fix an integer $m\geq0$. Then
\begin{enumerate}
\item $\varphi_p^{(c,m)}$ is an optimal embedding of $\cl O_{cp^m}\otimes\Z_p$ into $R_m\otimes\Z_p$.
\item $(\varphi_p^{(c,m)})^{-1}\bigl(\varphi_p^{(c,m)}\bigl((\mathcal O_{cp^m}\otimes\Z_p)^\times\bigr)\cap U_{m,p}\bigr)=(\mathcal O_{cp^m}\otimes\Z_p)^\times\cap(1+p^m\mathcal O_K\otimes\Z_p)^\times$.
\end{enumerate}
\end{lemma}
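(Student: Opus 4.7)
The plan is to deduce Lemma \ref{admissible-prop-2} from the two bullet lists established just above the statement, specializing the parameter $n$ to equal $m$, together with a direct matrix computation when the fourth bullet does not literally apply.

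For part~(1), I simply take $n=m$ in the third bullet point (``$\varphi_p^{(c,m)}$ is an optimal embedding of $\mathcal O_{cp^n}\otimes\Z_p$ into $R_n^{(m)}\otimes\Z_p$''). Since the identity $R_m^{(m)}=R_m$ is recorded in the text immediately after the definition of $R_k^{(m)}$, this yields (1) at once.

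Part~(2) is slightly more delicate because, although the fourth bullet (with $n=m$) looks tempting, conjugating by $\smallmat{0}{1}{-p^m}{0}$ swaps the two diagonal entries of $U_{m,p}$, so $U_{m,p}^{(m)}\neq U_{m,p}$ and the bullet does not apply verbatim. The cleanest route is therefore to write out $\varphi_p^{(c,m)}$ explicitly via \eqref{def-varphi} and the two cases of $\psi_p^{(c)}$. A short calculation gives
\[
\varphi_p^{(c,m)}\bigl(\alpha+\sqrt{-D}\beta\bigr)=\smallmat{\alpha}{-\beta/p^{h+m}}{p^{m+h}D\beta}{\alpha}\quad\text{(inert case)},
\]
\[
\varphi_p^{(c,m)}\bigl(\alpha,\beta\bigr)=\smallmat{\beta}{-(\alpha-\beta)/p^{h+m}}{0}{\alpha}\quad\text{(split case)}.
\]
Here the condition $\alpha+\sqrt{-D}\beta\in\mathcal O_{cp^m}\otimes\Z_p$ (respectively $(\alpha,\beta)\in\mathcal O_{cp^m}\otimes\Z_p$) already forces $\beta\equiv0\pmod{cp^m}$ (respectively $\alpha\equiv\beta\pmod{cp^m}$), so the off-diagonal entries lie in $\Z_p$.

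Reading off the condition to land in $U_{m,p}$ (upper unipotent mod $p^m$ on the $(1,1)$ entry, with lower-left entry divisible by $p^m$) then reduces in the inert case to $\alpha\equiv1\pmod{p^m}$ and in the split case to $\beta\equiv1\pmod{p^m}$. In either case, combined with the fact that $\alpha+\sqrt{-D}\beta$ (resp.\ $(\alpha,\beta)$) already lies in $\mathcal O_{cp^m}\otimes\Z_p$, this is easily seen to be equivalent to membership in $(1+p^m\mathcal O_K\otimes\Z_p)^\times$: for the inert case because $\beta\equiv0\pmod{p^m}$ is automatic, and for the split case because $\alpha\equiv\beta\pmod{p^m}$ is automatic, so one missing congruence is free. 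Hence the two sides of the equality in (2) coincide.

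The only step requiring care is the split case, where one must notice that the $(1,1)$ entry of $\varphi_p^{(c,m)}(\alpha,\beta)$ is $\beta$ rather than $\alpha$, and that the condition $\alpha\equiv\beta\pmod{cp^m}$ (in particular $\pmod{p^m}$) is precisely what converts the one-sided constraint $\beta\equiv1\pmod{p^m}$ imposed by $U_{m,p}$ into the two-sided constraint characterizing $1+p^m\mathcal O_K\otimes\Z_p$; this small but crucial observation is the main (and essentially only) obstacle in the proof.
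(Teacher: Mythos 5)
Your proposal is correct, and part (1) matches the paper exactly. For part (2), the paper's argument is terser: it simply asserts the identity $\varphi_p^{(c,m)}\bigl((\mathcal O_{cp^m}\otimes\Z_p)^\times\bigr)\cap U_{m,p}=\varphi_p^{(c,m)}\bigl((\mathcal O_{cp^m}\otimes\Z_p)^\times\bigr)\cap U_{m,p}^{(m)}$ and then cites the fourth bullet with $n=m$. You correctly observe that $U_{m,p}^{(m)}\neq U_{m,p}$ for $m\geq1$, so the fourth bullet is not applicable verbatim; but the paper never claims group equality, only equality of the two intersections with the image of $(\mathcal O_{cp^m}\otimes\Z_p)^\times$. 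Your explicit matrix computation is precisely what is needed to justify that asserted identity: in the inert case the two diagonal entries of $\varphi_p^{(c,m)}(\alpha+\sqrt{-D}\beta)$ are both $\alpha$, so the two conjugate conditions coincide outright; in the split case the two diagonal entries are $\beta$ and $\alpha$, and the constraint $\alpha\equiv\beta\pmod{p^{h+m}}$ built into $\mathcal O_{cp^m}\otimes\Z_p$ makes the two conditions equivalent. So the content is the same; you have simply unpacked the one line that the paper leaves to the reader. One small notational slip: the congruence forced by membership in $\mathcal O_{cp^m}\otimes\Z_p$ is modulo $p^{h+m}$ (with $p^h\|c$), not modulo $cp^m$, though since everything is $p$-adic this makes no difference to the argument.
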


\begin{proof} Since $R_m^{(m)}=R_m$ and
\[ \varphi_p^{(c,m)}\bigl((\mathcal O_{cp^m}\otimes\Z_p)^\times\bigr)\cap U_{m,p}=\varphi_p^{(c,m)}\bigl((\mathcal O_{cp^m}\otimes\Z_p)^\times\bigr)\cap U_{m,p}^{(m)}, \] 
both claims are immediate consequences of the above formulas for $n=m$. \end{proof}

For every prime $\ell\not=p$ choose an optimal embedding $\varphi_{\ell,0}:K_\ell\hookrightarrow B_\ell$ of $\cl O_K\otimes\Z_\ell$ into $R_m\otimes\Z_\ell=R_0\otimes\Z_\ell$: this can be done by \cite[Theorem 2]{ogg}. If $\ell\nmid Np$ then for all integers $n\geq1$ fix also optimal embeddings $\varphi_{\ell,n}:K_\ell\hookrightarrow B_\ell$ of $\cl O_{\ell^n}\otimes\Z_\ell$ into $R_0\otimes\Z_\ell$. For any prime $\ell$ set $\pi_\ell:=\smallmat 100\ell$. If $\ell$ is inert in $K$ and $\ell\nmid Np$ then choose $\varphi_{\ell,0}$ and $\varphi_{\ell,1}$ in such a way that $\varphi_{\ell,1}=\pi_\ell\varphi_{\ell,0}\pi_\ell^{-1}$ (for example, this can explicitly be done by adopting definitions analogous to those of $\varphi_p^{(c,1)}$ and $\varphi_p^{(c,0)}$). Suppose that we have fixed $\varphi_{\ell,n}$ for prime $\ell\neq p$ and integers $n\geq0$ so that all the above conditions are fulfilled. Let $c\geq1$ be an integer prime to $N$ and the discriminant of $K$. For primes $\ell\neq p$ set $\varphi_\ell^{(c)}:=\varphi_{\ell,n(\ell)}$ where $\ell^{n(\ell)}$ is the maximal power of $\ell$ dividing $c$. We remark that, with these choices, $\varphi_\ell^{(c)}=\varphi_\ell^{(c')}$ whenever $\ell\nmid cc'$. 

\subsection{Compatible families of Heegner points} \label{compatible-Heegner-subsec}

We now use the proof of Proposition \ref{loc-glob-emb-prop} to globalize the local choices performed 
in \S \ref{heegner-points-explicit-subsec}. To begin with, fix an injection $g:K\hookrightarrow B$ of $\Q$-algebras. Choose elements $a_{\ell,n}\in B_\ell^\times$ and $a^{(c,m)}_p\in B_p^\times$ such that
\begin{itemize}  
\item $g_\ell=a_{\ell,n}^{-1}\varphi_{\ell,n} a_{\ell,n}$ for $\ell\neq p$ and $n\geq0$;
\item $g_p=(a^{(c,m)}_p)^{-1}\varphi_p^{(c,m)}a^{(c,m)}_p$.
\end{itemize} 
We can perform the above choices at $p$ and at primes $\ell\nmid Npc$ which are inert in $K$ as follows. First, assume that $p\nmid c$. Let $a^{(c,0)}_p$ be an arbitrary element satisfying the above relation for $m=0$. Since $\varphi_p^{(c,m)}=\pi_p\varphi_p^{(c,m-1)}\pi^{-1}_p$, we can define inductively
\begin{equation} \label{+1} 
a^{(c,m)}_p:=\pi_pa^{(c,m-1)}_p
\end{equation} 
for all $m\geq1$. Keeping the condition $p\nmid c$, since $\varphi_p^{(cp^{h-1},m+1)}=\varphi_p^{(cp^h,m)}$ for all integers $m\geq0$ and $h\geq1$, we can also define inductively 
\begin{equation} \label{+2} 
a^{(cp^h,m)}_p:=a^{(cp^{h-1},m+1)}_p
\end{equation} 
for all $m\geq0$ and $h\geq1$. The two conditions above define $a_p^{(c,m)}$ for all $c\geq 1$ and $m\geq 0$. Next, as in the rest of the paper, let $c\geq1$ be an integer prime to $N$. Suppose the prime $\ell$ is inert in $K$ and $\ell\nmid Npc$. In this case, recall that we have chosen $\varphi_{\ell,1}$ and $\varphi_{\ell,0}$ so that $\varphi_{\ell,1}=\pi_\ell\varphi_{\ell,0}\pi^{-1}_\ell$. Therefore we can fix an arbitrary $a_{\ell,0}$ so that $g_\ell=a_{\ell,0}^{-1}\varphi_{\ell,0} a_{\ell,0}$ and define $a_{\ell,1}:=\pi_\ell a_{\ell,0}$.  

Fix an integer $c\geq1$ as before and recall that if $\ell$ is a prime then $\ell^{n(\ell)}$ denotes the exact power of $\ell$ dividing $c$. Fix also an integer $m\geq1$ and define global elements $a^{(c,m)}\in\widehat B^\times$ by the following local conditions:
\begin{itemize}
\item the $\ell$-component of $a^{(c,m)}$ is equal to $a_{\ell,n(\ell)}$ for all $\ell\not=p$; 
\item the $p$-component of $a^{(c,m)}$ is equal to the $a_p^{(c,m)}$ chosen before. 
\end{itemize}
Thus for primes $\ell\nmid Npc$ which are inert in $K$ there is an equality 
\begin{equation} \label{+3}
a^{(c\ell,m)}_\ell=\pi_\ell a^{(c,m)}_\ell.
\end{equation} 
Observe that if $\ell\nmid cc'$ or $\ell$ divides both $c$ and $c'$ to the same power then $a^{(c,m)}_\ell=a^{(c',m)}_\ell$ for all $m\geq0$. Recall the idele $\hat\pi_0\in\widehat B^\times$ introduced in \S \ref{section-Hecke-operators}, with $p$-component equal to $\pi_p$ and all other components equal to $1$; then combining the definition of $a^{(c,m)}$ with \eqref{+1} and \eqref{+2} yields equalities
\begin{equation} \label{a-1}
a^{(c,m)}=\hat\pi_0a^{(c,m-1)}
\end{equation}
and
\begin{equation} \label{a-2}
a^{(cp^h,m)}=a^{(cp^{h-1},m+1)}
\end{equation}
for all $m\geq0$ and $h\geq1$. Similarly, if $\ell\nmid Npc$ and $\hat\lambda_0\in\widehat B^\times$ is the idele of \S \ref{section-Hecke-operators} with all components equal to $1$ except the $\ell$-component which is equal to $\pi_\ell$ then \eqref{+3} gives the equality
\begin{equation} \label{a-3}
a^{(c\ell,m)}=\hat\lambda_0a^{(c,m)}
\end{equation}
for all $m\geq0$. 

Denote by $R_{m,1},\dots,R_{m,h(m)}$ the right orders of a set of representatives of the left $R_m$-ideals and take $\gamma_{m,1},\dots,\gamma_{m,h(m)}\in\widehat B^\times$ such that $\widehat R_{m,i}=\gamma_{m,i}^{-1}\widehat R_m\gamma_{m,i}$ for all $i\in\{1,\dots,h(m)\}$. The set 
\[\mathfrak S_m:=\left\{\gamma_{m,1},\dots,\gamma_{m,h(m)}\right\}\] is a complete set of representatives for the double coset space $\widehat R_m^\times\backslash\widehat B^\times/B^\times$. 
Write 
\begin{equation} \label{+5}
a^{(c,m)}=u_{c,m}\gamma^{(c,m)}b_{c,m}
\end{equation} 
with $u_{c,m}\in\widehat R_m^\times$, $\gamma^{(c,m)}\in\mathfrak S_m$ and $b_{c,m}\in B^\times$. Then 
define \[f^{(c,m)}=b_{c,m}gb_{c,m}^{-1}.\] Note in particular that we obtain the equality
\begin{equation} \label{+4}  
g=b^{-1}_{c,m}f^{(c,m)}b_{c,m}=b^{-1}_{c',m'}f^{(c',m')}b_{c',m'}
\end{equation} 
for all $c,c',m,m'$.  Now the local embedding 
\[ {\gamma^{(c,m)}_\ell}f^{(c,m)}_\ell(\gamma^{(c,m)}_\ell)^{-1} \] 
is equivalent to $\varphi_\ell^{(c)}$ for every $\ell\neq p$ and to $\varphi^{(c,m)}_p$ for $\ell=p$.  

\begin{prop} \label{prop-globale}
Fix an integer $m\geq0$. Then
\begin{enumerate}
\item $f ^{(c,m)}$ is an optimal embedding of $\cl O_{cp^m}$ into $B\cap(\gamma^{(c,m)})^{-1}\widehat R_m\gamma^{(c,m)}$;
\item We have the equality: 
\[(f_p^{(c,m)})^{-1}\bigl(f_p^{(c,m)}\bigl((\mathcal O_{cp^m}\otimes\Z_p)^\times\bigr)\cap(\gamma_p^{(c,m)})^{-1}U_{m,p}\gamma^{(c,m)}_p\bigr)=(\mathcal O_{cp^m}\otimes\Z_p)^\times\cap(1+p^m\mathcal O_K\otimes\Z_p)^\times.\]
\end{enumerate}
\end{prop}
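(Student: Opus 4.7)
The plan is to reduce both assertions to the local computations already carried out in \S\ref{heegner-points-explicit-subsec} and then, for part (1), to globalize via Lemma \ref{lemma-locale-globale}. The bridge is the explicit conjugation relation obtained from the strong approximation decomposition \eqref{+5} exactly as in the proof of Proposition \ref{loc-glob-emb-prop}: at every prime $\ell$ one has
\[
\gamma^{(c,m)}_\ell f^{(c,m)}_\ell (\gamma^{(c,m)}_\ell)^{-1} = (u_{c,m})_\ell^{-1}\,\tilde\varphi_\ell\,(u_{c,m})_\ell,
\]
where $(u_{c,m})_\ell\in(R_m\otimes\Z_\ell)^\times$ and $\tilde\varphi_\ell:=\varphi_\ell^{(c)}$ for $\ell\neq p$ while $\tilde\varphi_p:=\varphi_p^{(c,m)}$.

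For part (1), I would simply invoke the elementary fact that optimality of an embedding of $\cl O_{cp^m}\otimes\Z_\ell$ into $R_m\otimes\Z_\ell$ is preserved under conjugation by units of $R_m\otimes\Z_\ell$. By construction $\varphi_\ell^{(c)}$ is an optimal embedding of $\cl O_{\ell^{n(\ell)}}\otimes\Z_\ell=\cl O_{cp^m}\otimes\Z_\ell$ (for $\ell\neq p$) into $R_0\otimes\Z_\ell=R_m\otimes\Z_\ell$, while at $p$ the optimality of $\varphi_p^{(c,m)}$ into $R_m\otimes\Z_p$ is precisely Lemma \ref{admissible-prop-2}(1). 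It follows that $f^{(c,m)}_\ell$ is an optimal embedding of $\cl O_{cp^m}\otimes\Z_\ell$ into $(\gamma^{(c,m)}_\ell)^{-1}(R_m\otimes\Z_\ell)\gamma^{(c,m)}_\ell$ at every prime, and Lemma \ref{lemma-locale-globale} yields the global statement, the local completions of $B\cap(\gamma^{(c,m)})^{-1}\widehat R_m\gamma^{(c,m)}$ being precisely these.

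For part (2), applying the conjugation relation at $p$ shows that $f^{(c,m)}_p(x)\in(\gamma^{(c,m)}_p)^{-1}U_{m,p}\gamma^{(c,m)}_p$ if and only if $\varphi_p^{(c,m)}(x)\in (u_{c,m})_p\,U_{m,p}\,(u_{c,m})_p^{-1}$. The key remaining point, and essentially the only non-formal step, is to check that $U_{m,p}$ is normal in $(R_m\otimes\Z_p)^\times$. Using the description of $R_m\otimes\Z_p$ via $\phi_p$, any element of $(R_m\otimes\Z_p)^\times$ is upper triangular modulo $p^m$ with invertible diagonal, and a direct matrix computation shows that conjugation by such an element preserves both the condition that the lower-left entry lies in $p^m\Z_p$ and the condition that the upper-left entry is congruent to $1$ modulo $p^m$ --- that is, the two conditions cutting out $U_{m,p}$. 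Since $(u_{c,m})_p\in(R_m\otimes\Z_p)^\times$, the right-hand side collapses to $U_{m,p}$, and the condition becomes $\varphi_p^{(c,m)}(x)\in U_{m,p}$, at which point Lemma \ref{admissible-prop-2}(2) gives exactly the required equality. The main (and really only) obstacle is this normality verification at $p$; everything else is either immediate from the construction or has been packaged into the preparatory lemmas.
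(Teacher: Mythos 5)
Your proof is correct and follows essentially the same route as the paper: part (1) falls out of the construction of $f^{(c,m)}$ via the globalization argument of Proposition \ref{loc-glob-emb-prop} (you simply unpack this using Lemma \ref{lemma-locale-globale}), and part (2) combines the conjugation relation (the paper's equation \eqref{descr-f}) with Lemma \ref{admissible-prop-2}(2) and the normality of $U_{m,p}$ in $(R_m\otimes\Z_p)^\times$. The only addition you make is to actually verify the normality by a matrix computation, which the paper states without proof; that verification is correct, so this is a welcome expansion rather than a divergence.
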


\begin{proof} Part (1) is just a restatement of the definition of $f^{(c,m)}$. For (2), observe that there exists an element $u\in{(\gamma^{(c,m)}_p)}^{-1}(R_m\otimes\Z_p)^\times{\gamma^{(c,m)}_p}$ such that
\begin{equation} \label{descr-f}
f^{(c,m)}_p=u^{-1}{(\gamma^{(c,m)}_p)}^{-1}\varphi^{(c,m)}_p{\gamma^{(c,m)}_p}u.
\end{equation}
The second statement in the proposition follows from part (2) of Lemma \ref{admissible-prop-2} combined with equation \eqref{descr-f} and the fact that $U_{m,p}$ is a normal subgroup of $(R_m\otimes\Z_p)^\times$. \end{proof}

\begin{coro} \label{Heegner}
The class $\bigl[\bigl(\gamma^{(c,m)},f^{(c,m)}\bigr)\bigr]$ is a Heegner point of conductor $cp^m$ both on $X_m^{(K)}$ and on $\wt X_m^{(K)}$.
\end{coro}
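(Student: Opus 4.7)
The plan is essentially bookkeeping: Corollary \ref{Heegner} is a direct translation of Proposition \ref{prop-globale} into the language of Heegner points, so the proof should consist of matching the assertions of that proposition to the two definitions (namely, the definition of a Heegner point on $X_m$ given in \S \ref{Heegner-subsec} and Definition \ref{def-Heegner-points} for the lift on $\wt X_m$).

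First, for the statement about $X_m^{(K)}$: by definition, $[(g,f)]\in X_m^{(K)}$ is a Heegner point of conductor $cp^m$ if and only if $f$ is an optimal embedding of $\cl O_{cp^m}$ into $g^{-1}\widehat R_m g\cap B$. Applying this with $g=\gamma^{(c,m)}$ and $f=f^{(c,m)}$, the required property is exactly the content of part (1) of Proposition \ref{prop-globale}. Since the construction of $f^{(c,m)}=b_{c,m}gb_{c,m}^{-1}$ via a global injection $g:K\hookrightarrow B$ guarantees that $f^{(c,m)}\in\Hom_\Q(K,B)$, the pair $(\gamma^{(c,m)},f^{(c,m)})$ indeed represents a class in $X_m^{(K)}$.

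Second, for the statement about $\wt X_m^{(K)}$: by Definition \ref{def-Heegner-points}, one must verify two things. The image under $\beta_m$ of $[(\gamma^{(c,m)},f^{(c,m)})]$ in $X_m^{(K)}$ is the class of the same pair viewed modulo the larger group $\widehat R_m^\times$ (rather than $U_m$); by the first part of the argument, this image is a Heegner point of conductor $cp^m$ on $X_m$. The remaining condition is the local optimality at $p$, namely the equality
\[
(f^{(c,m)}_p)^{-1}\bigl(f^{(c,m)}_p\bigl((\cl O_{cp^m}\otimes\Z_p)^\times\bigr)\cap(\gamma^{(c,m)}_p)^{-1}U_{m,p}\gamma^{(c,m)}_p\bigr)=(\cl O_{cp^m}\otimes\Z_p)^\times\cap(1+p^m\cl O_K\otimes\Z_p)^\times,
\]
which is exactly the statement of part (2) of Proposition \ref{prop-globale}.

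There is no significant obstacle here: all the technical content, in particular the reduction to local optimal embeddings via the Skolem--Noether/strong approximation machinery of Proposition \ref{loc-glob-emb-prop} and the explicit local computations of Lemma \ref{admissible-prop-2}, has already been carried out. The only thing to be careful about is the identification of the $\beta_m$-image with the class modulo $\widehat R_m^\times$ (immediate from the definitions in \S \ref{tower-subsubsec}) and the matching of $\cl O_{cp^m}$-level at $p$ with conductor $cp^m$ on the curve.
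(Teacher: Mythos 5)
Your proof is correct and matches the paper's approach: the paper's own proof reads simply "Both statements are immediate consequences of Proposition \ref{prop-globale}," and you have unpacked exactly what that means by matching part (1) of that proposition to the definition of a Heegner point on $X_m$ and part (2) to the extra local condition at $p$ in Definition \ref{def-Heegner-points}. The only additional observation you make, that $\beta_m$ sends the $U_m$-class to the $\widehat R_m^\times$-class of the same pair, is correct and implicit in the paper.
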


\begin{proof} Both statements are immediate consequences of Proposition \ref{prop-globale}. \end{proof}

Define the family of points 
\[ P_{c,0}=\wt P_{c,0}:=\bigl[\bigl(\gamma^{(c,0)},f^{(c,0)}\bigr)\bigr]\in X_0^{(K)}=\wt X_0^{(K)} \]
\[ P_{c,m}:=\bigl[\bigl(\gamma^{(c,m)},f^{(c,m)}\bigr)\bigr]\in X_m^{(K)},\qquad\wt P_{c,m}:=\bigl[\bigl(u_{c,m}\gamma^{(c,m)},f^{(c,m)}\bigr)\bigr]\in\wt X_m^{(K)}. \]
The point $\wt P_{c,m}$ is a suitable lift of $P_{c,m}$ to $\wt X_m^{(K)}$. We first note the following property enjoyed by these points.

\begin{prop} \label{prop-Heegner-I}
The point $P_{c,m}$ (respectively, $\wt P_{c,m}$) is a Heegner point of conductor $cp^m$ on $X_m$ (respectively, $\wt X_m$).
\end{prop}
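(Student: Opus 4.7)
The statement for $P_{c,m}$ is exactly Corollary \ref{Heegner} (applied to $X_m^{(K)}$), so only $\wt P_{c,m}$ requires real work, and even that work is essentially bookkeeping. The plan is to verify the two conditions making up Definition \ref{def-Heegner-points}: that $\beta_m(\wt P_{c,m})$ is Heegner of conductor $cp^m$ on $X_m$, and that the appropriate local compatibility at $p$ holds.

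For the first condition, recall that $\beta_m$ is induced by the inclusion $U_m\subset \widehat R_m^\times$, so on double cosets it simply reinterprets the class modulo the larger left-acting group. Since $u_{c,m}\in\widehat R_m^\times$, we get
\[ \beta_m\bigl(\wt P_{c,m}\bigr) = \bigl[\bigl(u_{c,m}\gamma^{(c,m)},f^{(c,m)}\bigr)\bigr] = \bigl[\bigl(\gamma^{(c,m)},f^{(c,m)}\bigr)\bigr] = P_{c,m} \]
in $X_m^{(K)}$, and this is Heegner of conductor $cp^m$ by Corollary \ref{Heegner}.

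For the second condition, I need to check that with $g := u_{c,m}\gamma^{(c,m)}$ and $f := f^{(c,m)}$ one has
\[ f_p^{-1}\bigl(f_p\bigl((\mathcal O_{cp^m}\otimes\Z_p)^\times\bigr)\cap g_p^{-1}U_{m,p}g_p\bigr) = (\mathcal O_{cp^m}\otimes\Z_p)^\times\cap(1+p^m\mathcal O_K\otimes\Z_p)^\times. \]
The key observation is that $U_{m,p}$ is a normal subgroup of $(R_m\otimes\Z_p)^\times$ (it is the preimage under the reduction mod $p^m$ map of the upper-triangular unipotent-plus-diagonal subgroup inside an Iwahori quotient, hence normalized by $(R_m\otimes\Z_p)^\times$). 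Since $u_{c,m,p}\in (R_m\otimes\Z_p)^\times$, conjugation by $u_{c,m,p}$ preserves $U_{m,p}$, and consequently
\[ g_p^{-1}U_{m,p}g_p = (\gamma_p^{(c,m)})^{-1}u_{c,m,p}^{-1}U_{m,p}u_{c,m,p}\gamma_p^{(c,m)} = (\gamma_p^{(c,m)})^{-1}U_{m,p}\gamma_p^{(c,m)}. \]
Substituting this into the displayed identity reduces the required equality to part (2) of Proposition \ref{prop-globale}, which is exactly what was proved there.

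The only conceptual point is the normality of $U_{m,p}$ in $(R_m\otimes\Z_p)^\times$, and this is a direct check from the explicit description of $U_m$ via the upper-triangular mod-$p^m$ condition at $p$ recalled at the start of Section \ref{towers-sec}. Once that observation is in hand, the proposition drops out of Proposition \ref{prop-globale} and Corollary \ref{Heegner}; no further globalization or optimal-embedding argument is needed.
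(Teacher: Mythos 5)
Your proof is correct and follows essentially the same route as the paper's: the paper's proof is just a one-line appeal to Corollary \ref{Heegner} together with the observation that $U_{m,p}$ is normal in $(R_m\otimes\Z_p)^\times$, which is exactly the mechanism you spell out. You have simply made explicit the reduction to Proposition \ref{prop-globale}(2) via the conjugation identity $g_p^{-1}U_{m,p}g_p=(\gamma_p^{(c,m)})^{-1}U_{m,p}\gamma_p^{(c,m)}$, filling in the bookkeeping the paper leaves implicit.
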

\begin{proof} A direct consequence of Corollary \ref{Heegner}, where for $\wt P_{m,c}$ we use again the fact that $U_{m,p}$ is a normal subgroup of $(R_m\otimes\Z_p)^\times$. \end{proof}

\subsection{Hecke relations in compatible families} \label{compatible-Hecke-families}

The results we prove in this subsection justify our choice of the lifts $\wt P_{c,m}$ of the points $P_{c,m}$. Write
\[ \wt\alpha_{m,\ast}:\mathrm{Div}(\wt X_m)\longrightarrow\mathrm{Div}(\wt X_{m-1}) \]
for the map between divisor groups induced by $\wt\alpha_m$ by covariant functoriality. In other words, $\wt\alpha_{m,\ast}(P_1+\dots+P_s):=\wt\alpha_m(P_1)+\dots+\wt\alpha_m(P_s)$ for all points $P_1,\dots,P_s$ on $\widetilde X_m$. 

\begin{prop} \label{Hecke-relation-for-X}
Let $m\geq2$. Then
\[ U_p\bigl(\wt P_{c,m-1}\bigr)=\wt\alpha_{m,\ast}\bigl(\mathrm{tr}_{H_{cp^m}(\boldsymbol\mu_{p^m})/H_{cp^{m-1}}(\boldsymbol\mu_{p^m})}(\wt P_{c,m})\bigr) \] 
in $\Div\bigl(\wt X_{m-1}\bigr)$.
\end{prop}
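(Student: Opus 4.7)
The plan is to reduce the statement to a combined application of Proposition \ref{prop-Galois-Hecke-tilde-X} and the Galois-equivariance of the covering map $\wt\alpha_m$ (which is defined over $\Q$). Observe first that $\wt P_{c,m-1}$ is, by Proposition \ref{prop-Heegner-I}, a Heegner point of conductor $cp^{m-1}$ on $\wt X_{m-1}$, so we may apply Proposition \ref{prop-Galois-Hecke-tilde-X} with level $m-1$ and parameter $n=m-1$: for any $\wt Q$ in the support of $U_p(\wt P_{c,m-1})$,
\[ U_p(\wt P_{c,m-1}) = \mathrm{tr}_{H_{cp^m}(\boldsymbol\mu_{p^m})/H_{cp^{m-1}}(\boldsymbol\mu_{p^m})}(\wt Q). \]
The task is therefore to exhibit a specific $\wt Q$ which is the image, under $\wt\alpha_m$, of $\wt P_{c,m}$.

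Next I would compute $\wt\alpha_m(\wt P_{c,m})$ explicitly in $\wt X_{m-1}^{(K)}$. By equation \eqref{+5} we have $a^{(c,m)}=u_{c,m}\gamma^{(c,m)}b_{c,m}$ and $a^{(c,m-1)}=u_{c,m-1}\gamma^{(c,m-1)}b_{c,m-1}$; equation \eqref{a-1} then yields
\[ u_{c,m}\gamma^{(c,m)}=\hat\pi_0\, u_{c,m-1}\gamma^{(c,m-1)}\, b_{c,m-1}b_{c,m}^{-1}. \]
Since $\wt\alpha_m$ only changes the left-quotient from $U_m$ to $U_{m-1}$, pushing $\wt P_{c,m}=[(u_{c,m}\gamma^{(c,m)},f^{(c,m)})]$ down to $\wt X_{m-1}^{(K)}$ and using the right action of $b_{c,m}b_{c,m-1}^{-1}\in B^\times$ gives
\[ \wt\alpha_m(\wt P_{c,m})=\bigl[\bigl(\hat\pi_0\,u_{c,m-1}\gamma^{(c,m-1)},\, b_{c,m-1}b_{c,m}^{-1}f^{(c,m)}b_{c,m}b_{c,m-1}^{-1}\bigr)\bigr], \]
and equality \eqref{+4} (together with $f^{(c,m)}=b_{c,m}gb_{c,m}^{-1}$) identifies the second entry with $f^{(c,m-1)}$. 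Hence
\[ \wt\alpha_m(\wt P_{c,m})=\bigl[\bigl(\hat\pi_0\,u_{c,m-1}\gamma^{(c,m-1)},\,f^{(c,m-1)}\bigr)\bigr] \]
in $\wt X_{m-1}^{(K)}$, which is precisely the $a=0$ summand in the formula
\[ U_p(\wt P_{c,m-1})=\sum_{a=0}^{p-1}\bigl[\bigl(\hat\pi_a u_{c,m-1}\gamma^{(c,m-1)},\,f^{(c,m-1)}\bigr)\bigr] \]
of \S\ref{section-Hecke-operators}. In particular, $\wt\alpha_m(\wt P_{c,m})$ lies in the support of $U_p(\wt P_{c,m-1})$.

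To conclude, I would apply Proposition \ref{prop-Galois-Hecke-tilde-X} with the choice $\wt Q:=\wt\alpha_m(\wt P_{c,m})$, obtaining
\[ U_p(\wt P_{c,m-1})=\mathrm{tr}_{H_{cp^m}(\boldsymbol\mu_{p^m})/H_{cp^{m-1}}(\boldsymbol\mu_{p^m})}\bigl(\wt\alpha_m(\wt P_{c,m})\bigr), \]
and then invoke the fact that $\wt\alpha_m$ is defined over $\Q$ (diagram \eqref{tower}), so that $\wt\alpha_{m,\ast}$ commutes with the Galois action; pushing the trace through $\wt\alpha_{m,\ast}$ yields the desired identity.

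The main obstacle is the combinatorial identification carried out in the second paragraph: one has to track the decomposition \eqref{+5} of $a^{(c,m)}$ under the recursion \eqref{a-1}, and then rewrite the resulting class using the right $B^\times$-action together with the compatibility \eqref{+4} between the embeddings $f^{(c,m)}$ and $f^{(c,m-1)}$. Once this identification is in hand, everything else follows from the Hecke relation already established in Proposition \ref{prop-Galois-Hecke-tilde-X} and the rationality of $\wt\alpha_m$ over $\Q$.
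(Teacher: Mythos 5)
Your proposal is correct and follows the paper's own argument: both show that $\wt\alpha_m(\wt P_{c,m})$ equals $\bigl[\bigl(\hat\pi_0\,u_{c,m-1}\gamma^{(c,m-1)},f^{(c,m-1)}\bigr)\bigr]$ via \eqref{a-1}, \eqref{+5} and \eqref{+4}, hence lies in the support of $U_p(\wt P_{c,m-1})$, and then invoke Proposition \ref{prop-Galois-Hecke-tilde-X}. You spell out one step the paper leaves implicit — that $\wt\alpha_{m,\ast}$, being induced by a map defined over $\Q$, commutes with the Galois trace — which is a welcome clarification but not a different route.
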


\begin{proof} The image of $\wt P_{c,m}\in\wt X_m^{(K)}$ in $\wt X_{m-1}^{(K)}$ is given by 
\[ \wt\alpha_m(\wt P_{c,m})=\bigl[\bigl(\hat\pi_0u_{c,m-1}\gamma^{(c,m-1)}b_{c,m-1}b_{c,m}^{-1},f^{(c,m)}\bigr)\bigr]=\bigl[\bigl(\hat\pi_0u_{c,m-1}\gamma^{(c,m-1)},f^{(c,m-1)}\bigr)\bigr], \] 
where the first equality comes from \eqref{a-1} and \eqref{+5} and the second from \eqref{+4}. So $\wt\alpha_m(\wt P_{c,m})$ belongs to the support of $U_p(\wt P_{c,m-1})$, and the result follows from Proposition \ref{prop-Galois-Hecke-tilde-X}. \end{proof}

\begin{prop} \label{horizontal-hecke-X}
Let $m,r\geq1$. Then
\[ U_p\bigl(\wt P_{cp^{r-1},m}\bigr)=\mathrm{tr}_{H_{cp^{m+r}}(\boldsymbol\mu_{p^{m+r}})/H_{cp^{m+r-1}}(\boldsymbol\mu_{p^{m+r}})}\bigl(\wt P_{cp^r,m}\bigr) \] 
in $\Div\bigl(\wt X_m\bigr)$.
\end{prop}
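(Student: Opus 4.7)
The plan is to imitate very closely the proof of Proposition \ref{Hecke-relation-for-X}. The key observation is that, whereas the vertical statement used only relation \eqref{a-1}, the horizontal statement will follow from combining \eqref{a-1} with \eqref{a-2}.

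First I would record the identity
\[ a^{(cp^r,m)}=a^{(cp^{r-1},m+1)}=\hat\pi_0\,a^{(cp^{r-1},m)}, \]
where the first equality is \eqref{a-2} (with $h=r$) and the second is \eqref{a-1} (applied to the conductor $cp^{r-1}$ at level $m+1$). Using the decomposition \eqref{+5}, this translates into
\[ u_{cp^r,m}\gamma^{(cp^r,m)}b_{cp^r,m}=\hat\pi_0\,u_{cp^{r-1},m}\gamma^{(cp^{r-1},m)}b_{cp^{r-1},m} \]
in $\widehat B^\times$.

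Next, I would massage the representative of $\wt P_{cp^r,m}$. Setting $\beta:=b_{cp^{r-1},m}b_{cp^r,m}^{-1}\in B^\times$, the above identity gives
\[ u_{cp^r,m}\gamma^{(cp^r,m)}=\hat\pi_0\,u_{cp^{r-1},m}\gamma^{(cp^{r-1},m)}\beta. \]
Using the right $B^\times$-action together with \eqref{+4} — which yields $\beta f^{(cp^r,m)}\beta^{-1}=f^{(cp^{r-1},m)}$ — I deduce
\[ \wt P_{cp^r,m}=\bigl[\bigl(\hat\pi_0\,u_{cp^{r-1},m}\gamma^{(cp^{r-1},m)},\,f^{(cp^{r-1},m)}\bigr)\bigr]. \]
This displays $\wt P_{cp^r,m}$ as the $a=0$ summand appearing in $U_p\bigl(\wt P_{cp^{r-1},m}\bigr)$, so in particular $\wt P_{cp^r,m}$ lies in the support of this divisor.

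Finally, I apply Proposition \ref{prop-Galois-Hecke-tilde-X}. The point $\wt P_{cp^{r-1},m}$ is, by Proposition \ref{prop-Heegner-I}, a Heegner point of conductor $cp^{m+r-1}$ on $\wt X_m$; writing this conductor as $cp^n$ with $n=m+r-1$, the hypothesis $n\geq m\geq 1$ reduces to $r\geq 1$, which holds. Proposition \ref{prop-Galois-Hecke-tilde-X} then gives
\[ U_p\bigl(\wt P_{cp^{r-1},m}\bigr)=\mathrm{tr}_{H_{cp^{n+1}}(\boldsymbol\mu_{p^{n+1}})/H_{cp^n}(\boldsymbol\mu_{p^{n+1}})}\bigl(\wt P_{cp^r,m}\bigr), \]
which is exactly the claimed formula upon substituting $n+1=m+r$. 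No serious obstacle is expected: the argument is of the same flavour as that of Proposition \ref{Hecke-relation-for-X}, and the only delicate point is to correctly exploit the two compatibility relations \eqref{a-1} and \eqref{a-2} simultaneously, together with \eqref{+4}, to identify $\wt P_{cp^r,m}$ with a specific term in the $U_p$-expansion of $\wt P_{cp^{r-1},m}$.
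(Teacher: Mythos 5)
Your proof is correct and follows essentially the same route as the paper's: both arguments combine \eqref{a-1} and \eqref{a-2} via \eqref{+5} to show that $\wt P_{cp^r,m}$ coincides with the $a=0$ term in $U_p(\wt P_{cp^{r-1},m})$, use \eqref{+4} to fix up the embedding component, and then invoke Proposition \ref{prop-Galois-Hecke-tilde-X} (with conductor $cp^{m+r-1}$, justified by Proposition \ref{prop-Heegner-I}) to convert membership in the support into the full trace identity. The only cosmetic difference is that you collapse \eqref{a-1} and \eqref{a-2} into the single identity $a^{(cp^r,m)}=\hat\pi_0\,a^{(cp^{r-1},m)}$ before factoring, whereas the paper unwinds the same chain equality by equality inside the double coset.
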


\begin{proof} We compute:
\[ \begin{split}
   \bigl[\bigl(\hat\pi_0u_{cp^{r-1},m}\gamma^{(cp^{r-1},m)},f^{(cp^{r-1},m)}\bigr)\bigr]&=\bigl[\bigl(u_{cp^{r-1},m+1}\gamma^{(cp^{r-1},m+1)}b_{cp^{r-1},m+1}b_{cp^{r-1},m}^{-1},f^{(cp^{r-1},m)}\bigr)\bigr]\\
   &=\bigl[\bigl(u_{cp^r,m}\gamma^{(cp^r,m)}b_{cp^r,m}b_{cp^{r-1},m}^{-1},f^{(cp^{r-1},m)}\bigr)\bigr]\\
   &=\bigl[\bigl(u_{cp^r,m}\gamma^{(cp^r,m)},f^{(cp^r,m)}\bigr)\bigr]=\wt P_{cr^r,m},
   \end{split} \]
where the first equality comes from \eqref{a-1} and \eqref{+5}, the second from \eqref{a-2} and the third from \eqref{+4}. We conclude that $\wt P_{cr^r,m}$ belongs to the support $U_p(\wt P_{cp^{r-1},m})$, and the result follows from Proposition \ref{prop-Galois-Hecke-tilde-X}. \end{proof}

For the next proposition assume that $\cl O_{cp^m}^\times=\{\pm1\}$. 

\begin{prop} \label{T-operator}
Let $m\geq0$ and fix a prime $\ell\nmid Np^mc$ which is inert in $K$. Then
\[ T_\ell\bigl(\wt P_{c,m}\bigr)=\mathrm{tr}_{H_{c\ell p^m}(\boldsymbol{\mu}_{p^m})/H_{cp^m}(\boldsymbol{\mu}_{p^m})}\bigl(\wt P_{c\ell,m}\bigr) \]
in $\Div\bigl(\wt X_m\bigr)$.
\end{prop}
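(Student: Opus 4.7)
The plan is to mimic the strategies of Propositions \ref{Hecke-relation-for-X} and \ref{horizontal-hecke-X}: it suffices to prove that $\wt P_{c\ell,m}$ lies in the support of $T_\ell\bigl(\wt P_{c,m}\bigr)$, after which the claim is delivered by Proposition \ref{T-operator-1}.

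First I would unwind the definition of the Hecke operator from \S \ref{section-Hecke-operators}, which expresses $T_\ell\bigl(\wt P_{c,m}\bigr)$ as the sum of the classes $\bigl[\bigl(\hat\lambda_a u_{c,m}\gamma^{(c,m)},f^{(c,m)}\bigr)\bigr]$ for $a\in\{0,\dots,\ell-1\}$ together with the $\hat\lambda_\infty$-term, and then single out the summand with $a=0$. The key manipulation chains three identities from \S \ref{compatible-Heegner-subsec}: by \eqref{+5} one has $u_{c,m}\gamma^{(c,m)} = a^{(c,m)}b_{c,m}^{-1}$; by \eqref{a-3} one has $\hat\lambda_0 a^{(c,m)} = a^{(c\ell,m)}$; and by \eqref{+5} applied to $c\ell$ one has $a^{(c\ell,m)} = u_{c\ell,m}\gamma^{(c\ell,m)}b_{c\ell,m}$. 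Together these yield
\[ \hat\lambda_0 u_{c,m}\gamma^{(c,m)} = u_{c\ell,m}\gamma^{(c\ell,m)}b_{c\ell,m}b_{c,m}^{-1}. \]
To convert this into an equality in $\wt X_m^{(K)}$, I would invoke the right $B^\times$-action on the double coset space: acting by $b_{c,m}b_{c\ell,m}^{-1}\in B^\times$ cancels the trailing factor $b_{c\ell,m}b_{c,m}^{-1}$ on the first coordinate while simultaneously replacing $f^{(c,m)}$ with $b_{c\ell,m}b_{c,m}^{-1}f^{(c,m)}b_{c,m}b_{c\ell,m}^{-1}$, which by \eqref{+4} equals $b_{c\ell,m}gb_{c\ell,m}^{-1} = f^{(c\ell,m)}$. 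This produces
\[ \bigl[\bigl(\hat\lambda_0 u_{c,m}\gamma^{(c,m)},f^{(c,m)}\bigr)\bigr] = \bigl[\bigl(u_{c\ell,m}\gamma^{(c\ell,m)},f^{(c\ell,m)}\bigr)\bigr] = \wt P_{c\ell,m}, \]
so in particular $\wt P_{c\ell,m}$ belongs to the support of $T_\ell\bigl(\wt P_{c,m}\bigr)$.

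At this point, since $\wt P_{c\ell,m}$ is a Heegner point of conductor $c\ell p^m$ on $\wt X_m$ by Proposition \ref{prop-Heegner-I}, applying Proposition \ref{T-operator-1} immediately yields the desired identity in $\Div\bigl(\wt X_m\bigr)$. The only delicate step is the double-coset bookkeeping for the $B^\times$-action: it is essential that the single element $b_{c,m}b_{c\ell,m}^{-1}$ simultaneously effects the correct shift on the $\widehat B^\times$-coordinate and the correct conjugation on the embedding, which is exactly what \eqref{+4} guarantees.
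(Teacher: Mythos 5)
Your argument matches the paper's proof essentially verbatim: both identify $\wt P_{c\ell,m}$ in the support of $T_\ell\bigl(\wt P_{c,m}\bigr)$ by chaining \eqref{+5}, \eqref{a-3} and \eqref{+4} to rewrite the $a=0$ term $\bigl[\bigl(\hat\lambda_0 u_{c,m}\gamma^{(c,m)},f^{(c,m)}\bigr)\bigr]$ as $\bigl[\bigl(u_{c\ell,m}\gamma^{(c\ell,m)},f^{(c\ell,m)}\bigr)\bigr]=\wt P_{c\ell,m}$, and then invoke Proposition \ref{T-operator-1}. Your exposition is somewhat more explicit about the right $B^\times$-action that absorbs the factor $b_{c\ell,m}b_{c,m}^{-1}$ and conjugates $f^{(c,m)}$ into $f^{(c\ell,m)}$, but this is exactly what the paper's citation of \eqref{+4} encodes, so the proof is the same.
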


\begin{proof} Observe that 
\[ \bigl[\bigl(\hat\lambda_0u_{c,m}\gamma^{(c,m)},f^{(c,m)}\bigr)\bigr]=\bigl[\bigl(u_{c\ell,m}\gamma^{(c\ell,m)} b_{c\ell,m}b_{c,m}^{-1},f^{(c,m)}\bigr)\bigr]=\wt P_{c\ell,m}, \] 
where the first equality follows from \eqref{a-3} and \eqref{+5} and the second from \eqref{+4}. Hence $\wt P_{c\ell,m}$ belongs to the support of $T_\ell(\wt P_{c,m})$, and the result follows from Proposition \ref{T-operator-1}. \end{proof}

\subsection{Galois relations in compatible families} \label{compatible-Galois-families}

Set $G_\Q:=\Gal(\bar\Q/\Q)$ and let
\[ \epsilon_\cyc:G_\Q\longrightarrow\Z_p^\times \]
be the $p$-adic cyclotomic character. Since the restriction of $\epsilon_\cyc$ to $\Gal(\bar\Q/\Q(\sqrt{p^\ast}))$ takes values in $(\Z_p^\times)^2$ (recall that $\Q(\sqrt{p^\ast})$ is the unique quadratic extension of $\Q$ contained in the $p$-adic cyclotomic field), there is a unique continuous homomorphism
\[ \vartheta:\Gal\bigl(\bar\Q\big/\Q(\sqrt{p^\ast})\bigr)\longrightarrow\Z_p^\times/\{\pm1\} \]
such that $\vartheta^2=\epsilon_\cyc$. Fix $\sigma\in\Gal(H_{cp^m}(\boldsymbol\mu_{p^m})/H_{cp^m})$ with $m\geq1$. The Galois group $\Gal(H_{cp^m}(\boldsymbol\mu_{p^m})/H_{cp^m})$ is isomorphic to $\widehat{\cl O}_{cp^m}^\times/\cl O_{cp^m}^\times Z_m$ via the Artin map, so $\sigma$ can be represented by an element $x\in\widehat{\cl O}_{cp^m}^\times$ such that $x_\ell=1$ for $\ell\neq p$. Write $x_p=\alpha+p^m\beta$ with $\alpha\in\Z_p^\times$ and $\beta\in\cl O_K\otimes\Z_p$. The image $\bar\sigma$ of $\sigma$ via the natural map $\Gal(H_{cp^m}(\boldsymbol\mu_{p^m})/H_{cp^m})\rightarrow\Gal\bigl(\Q(\boldsymbol\mu_{p^m})/\Q(\sqrt{p^\ast})\bigr)$ is represented via the Artin map by $\Norm_{K/\Q}(x)$ and, by class field theory, we have $\Norm_{K_p/\Q_p}(x_p)^{-1}=\epsilon_\cyc(\bar\sigma)$. Hence $\epsilon_\cyc(\bar\sigma)\equiv\alpha^{-2}\pmod{p^m}$ and $\vartheta(\bar\sigma)\equiv\pm\alpha^{-1}\pmod{p^m}$. Thus, observing that $\vartheta(\sigma)\equiv1\pmod{p^m}$ if $\sigma\in\Gal(\bar\Q/\Q(\boldsymbol{\mu}_{p^m}))$, we may write 
\[ \wt P_{c,m}^\sigma=\langle\pm\vartheta(\sigma)\rangle\wt P_{c,m} \] 
for every $\sigma\in\Gal(\bar\Q/H_{cp^m})$, where, for any $a\in\Z_p^\times$, the symbol $\langle a\rangle$ denotes the diamod operator and, with a slight abuse of notation, we understand that $\Gal(\bar\Q/K^{\rm ab})$ acts trivially on $\wt P_{c,m}$ in the definite case. Since the action of $\langle-1\rangle$ on $\Div(\widetilde X_m)$ is trivial, it follows that for all $\sigma\in\Gal(\bar\Q/H_{cp^m})$ there is an equality
\begin{equation} \label{eq5}
\wt P_{c,m}^\sigma=\langle\vartheta(\sigma)\rangle\wt P_{c,m}
\end{equation}
in $\Div(\wt X_m)$.

\section{Hida theory on $\GL_2$} \label{section-Hida}

Throughout this paper we choose an (algebraic) isomorphism $\C\simeq\C_p$ where $\C_p$ is the completion of an algebraic closure of $\Q_p$, and view any subring of $\C_p$ as a subring of $\C$ via this fixed isomorphism.

\subsection{Ordinary Hecke algebras} \label{subsection-ordinary-hecke}

In the next few lines we use Shimura's notations $T(n)$ and $T(n,n)$ (with $n$ an integer) for the (abstract) Hecke operators defined as in \cite[\S 3.1--\S 3.3]{Sh} by double cosets.

Define $\Delta:=\boldsymbol\mu_{p-1}$ and $\Gamma:=1+p\Z_p$, so that we have a  
canonical isomorphism $\Z_p^\times\simeq\Gamma\times\Delta$.  
Define the two Iwasawa algebras
\[ \Lambda:=\cl O_F[\![\Gamma]\!],\qquad\tilde\Lambda:=\cl O_F[\![\Z_p^\times]\!] \]
where $F$ is a finite extension of $\Q_p$ (which will eventually contain the Fourier coefficients of our modular form $f$) and $\cl O_F$ is its ring of integers, so that we have a natural inclusion $\Lambda\subset\tilde\Lambda$. Finally, denote by $z\mapsto[z]$ the inclusions of group-like elements $\Gamma\hookrightarrow\Lambda$ and $\Z_p^\times\hookrightarrow\tilde\Lambda$.

For any ring $A$, any congruence subgroup $G\subset\SL_2(\Z)$ and any character $\psi:G\rightarrow\bar\Q_p^\times$ let $S_k(G,\psi,A)$ be the $A$-module of cusp forms of level $G$, weight $k$ and character $\psi$ with coefficients in $A$. We follow \cite{ho} for the presentation of Hida's Hecke algebras. Define
\[ \Gamma_{0,1}(N,p^m):=\Gamma_0(N)\cap\Gamma_1(p^m) \]
and write $\fr h_{k,m}$ for the Hecke algebra with $\cl O_F$-coefficients acting on $S_k\bigl(\Gamma_{0,1}(N,p^m),\C\bigr)$. The $\mathcal O_F$-algebra $\fr h_{k,m}$ is a finite product of complete local rings. Let $\fr h_{k,m}^\ord$ be the ordinary part of $\fr h_{k,m}$, i.e., the product of those local factors on which the image of $U_p$ is a unit. Define the Hecke algebras of weight $k$ as $\fr h_{k,\infty}:=\invlim_m\fr h_{k,m}$ and $\fr h_{k,\infty}^\ord:=\invlim_m\fr h^\ord_{k,m}$, the projective limits being taken with respect to the canonical maps. The $\cl O_F$-algebras $\fr h_{k,\infty}$ and $\fr h_{k,\infty}^\ord$ can be endowed with structures of $\tilde\Lambda$ and $\Lambda$-algebras in such a way that if $a$ is an integer prime to $Np$ and $T(a,a)_k$ denotes the image of $T(a,a)$ in $\fr h_{k,m}$ then the image of $[a]$ in $\fr h_{k,m}$ is the diamond operator $\langle a\rangle_k$ defined by the formula $T(a,a)_k=a^{k-2}\langle a\rangle_k$ (here we adopt the conventions of \cite{ho} rather than those of \cite{h-iwasawa}). The $\Lambda$-algebra $\fr h^\ord_{k,\infty}$ is finite and flat over $\Lambda$. For all weights $k,k'$ there is a unique isomorphism of algebras
\begin{equation} \label{univ-hecke-algebra}
\rho_{k,k'}:\fr h_{k,\infty}^\ord\overset{\simeq}{\longrightarrow}\fr h_{k',\infty}^\ord
\end{equation}
taking the images of $T(\ell)$ and $T(\ell,\ell)$ in $\fr h_{k,\infty}^\ord$ to the images of the same operators in $\fr h_{k',\infty}^\ord$. It will usually be convenient to identify the Hecke algebras $\fr h_{k,\infty}^\ord$ for all weights $k$ by means of the isomorphisms \eqref{univ-hecke-algebra}, so we simply set $\fr h_\infty^\ord:=\fr h_{2,\infty}^\ord$.

\subsection{New quotients} \label{section-p-adic-modular-forms}

We are especially interested in the $\C$-vector space $S_k^\new\bigl(\Gamma_{0,1}(N,p^m),\C\bigr)$ consisting of those forms which are new at all the primes dividing $N^-$. Write $\T_{k,m}$ for the image of $\fr h_{k,m}$ in the endomorphisms ring $\End\left(S_k^\new\bigl(\Gamma_{0,1}(N,p^m),\C\bigr)\right)$ and set
\[ \T_{k,\infty}:=\invlim_m\T_{k,m},\qquad\T_{k,m}^\ord:=e_m^\ord\cdot\T_{k,m},\qquad\T_{k,\infty}^\ord:=e^\ord\cdot\T_{k,\infty}=\invlim_m\T_{k,m}^\ord \]
where $e_m^\ord$ and $e^\ord$ are Hida's ordinary idempotent projectors. Isomorphisms \eqref{univ-hecke-algebra} yield isomorphisms of $\Lambda$-modules $\T_{k,\infty}^\ord\simeq\T_{k^\prime,\infty}^\ord$ for all weights $k,k'$, so we identify the algebras $\T_{k,\infty}^\ord$ for all weights $k$ and set $\T_\infty^\ord:=\T_{2,\infty}^\ord$.

\subsection{Maximal ideals of Hecke algebras} \label{section-max-min}

Following \cite[\S 1.4.4]{np}, we briefly describe the decompositions of our Hecke algebras into products of local components. Since $\fr h_{\infty}^\ord$ and $\T_{\infty}^\ord$ are finitely generated $\Lambda$-modules, they split as finite products
\begin{equation} \label{hecke-decompositions-eq}
\fr h_{\infty}^\ord=\prod_{\wt{\fr m}}\fr h_{\infty,\wt{\fr m}}^\ord,\qquad \T_{\infty}^\ord=\prod_{\fr m}\T_{\infty,\fr m}^\ord
\end{equation}
of their localizations at their (finitely many) maximal ideals $\wt{\fr m}$ and $\fr m$. Every summand appearing in these decompositions is a complete local ring, finite over $\Lambda$. If $\cl L$ is the fraction field of $\Lambda$ then $\fr h_{\infty,\wt{\fr m}}^\ord\otimes_\Lambda\cl L$ and $\T_{\infty,{\fr m}}^\ord\otimes_\Lambda\cl L$ are finite-dimensional artinian algebras over $\cl L$, so they are sums of local artinian algebras. If $\wt{\fr m}$ (respectively, $\fr m$) is a maximal ideal of $\fr h_\infty^\ord$ (respectively, of $\T_\infty^\ord$) then $\fr h_{\infty,\wt{\fr m}}^\ord\otimes_\Lambda\cl L$ (respectively, $\T_{\infty,\fr m}^\ord\otimes_\Lambda\cl L$) is a direct factor of $\fr h_\infty^\ord\otimes_\Lambda\cl L$ (respectively, of $\T_\infty^\ord\otimes_\Lambda\cl L$).
There are splittings of $\cl L$-algebras
\begin{equation} \label{deco}
\fr h_\infty^\ord\otimes_\Lambda\cl L=\Bigg(\prod_{i\in I}\cl F_i\Bigg)\bigoplus\cl M,\qquad\T_\infty^\ord\otimes_\Lambda\cl L=\Bigg(\prod_{j\in J}\cl K_j\Bigg)\bigoplus\cl N
\end{equation}
where $\cl F_i$ and $\cl K_j$ are finite field extensions of $\cl L$ while $\cl M$ and $\cl N$ are nonreduced. In Hida's terminology, the $\cl F_i$ and the $\cl K_j$ are called the \emph{primitive components} of $\fr h_{\infty}^\ord\otimes_\Lambda\cl L$ and $\T_\infty^\ord\otimes_\Lambda\cl L$, respectively. As explained in \cite[\S 3]{h-iwasawa}, one has $I=J$ and there are canonical isomorphisms
\begin{equation} \label{deco-iso}
\cl F_i\overset{\simeq}{\longrightarrow}\cl K_i
\end{equation}
for all $i\in I$. We say that $\cl F_i$ (respectively, $\cl K_i$) \emph{belongs to} $\wt{\fr m}$ (respectively, $\fr m$) if it is a direct summand of $\fr h_{\infty,\wt{\fr m}}^\ord\otimes_\Lambda\cl L$ (respectively, of $\T_{\infty,\fr m}^\ord\otimes_\Lambda\cl L$).

Now fix a modular form
\begin{equation} \label{fixed-modular-form}
f=\sum_{n\geq 1}a_nq^n\in S_k\bigl(\Gamma_0(Np),\omega^j,\cl O_F\bigr)
\end{equation}
with $j\equiv k\mod 2$, where $\omega:(\Z/p\Z)^\times\to\boldsymbol\mu_{p-1}$ is the Teichm\"uller character (here $\boldsymbol\mu_{p-1}$ is the group of $(p-1)$-st roots of unity). Assume that $f$ is a normalized eigenform for the Hecke operators $T_\ell$ (with $\ell\nmid Np$) and $U_\ell$ (with $\ell|Np$). Here, as before, $F$ is a finite extension of $\Q_p$ and $\cl O_F$ is its ring of integers. Let $\rho_f:G_\Q\rightarrow\GL_2(F)$ be the $p$-adic Galois representation attached to $f$ by Deligne.
\begin{assumption} \label{f-assumption}
Throughout this article we assume that
\begin{itemize}
\item[i)] the modular form $f$ is an \emph{ordinary $p$-stabilized newform} in the sense that $a_p\in\cl O_F^\times$ and the conductor of $f$ is divisible by $N$ (cf. \cite[Definition 2.5]{GS}), i.e., $f$ arises from a newform of level $N$ or $Np$ (this implies, in particular, that $\rho_f$ is ramified at all the primes dividing $N$);
\item[ii)] the residual representation $\bar\rho_f$ is $p$-distinguished and absolutely irreducible.
\end{itemize}
\end{assumption}
Here we recall that $\bar\rho_f$ is said to be \emph{$p$-distinguished} if its restriction to the decomposition group $G_{\Q_p}:=\Gal(\bar\Q_p/\Q_p)$ at $p$ can be put in the shape $\bar\rho_f|_{G_{\Q_p}}=\bigl(\begin{smallmatrix}\varepsilon_1&\ast\\0&\varepsilon_2\end{smallmatrix}\bigr)$ for characters $\varepsilon_1\not=\varepsilon_2$ (see, e.g., \cite[\S 2]{ghate}).

Duality between modular forms and Hecke algebras yields morphisms
\[ \theta_f:\T_{\infty}^\ord\longrightarrow\cl O_F,\qquad\tilde\theta_f:\fr h_{\infty}^\ord\longrightarrow\cl O_F \]
such that $\theta_f$ factors through $\T^\ord_{k,1}$ and is characterized by $\theta_f(T(\ell))=a_\ell$ for all primes $\ell$, $\theta_f([\delta])=\delta^{k+j-2}$ for $\delta\in\Delta$, $\theta_f([\gamma])=\gamma^{k-2}$ for $\gamma\in\Gamma$, while $\tilde\theta_f$ is the composition of the canonical projection $\fr h_{\infty}^\ord\rightarrow\T_{\infty}^\ord$ with $\theta_f$. Let $\wt{\fr m}_f$ and $\fr m_f$ be the maximal ideals corresponding to the unique local factors of $\fr h_{\infty}^\ord$ and $\T_{\infty}^\ord$ through which $\tilde\theta_f$ and $\theta_f$ factor. Since $f$ satisfies i) in Assumption \ref{f-assumption}, we can consider the unique primitive component $\cl K$ of $\fr h_{\infty,{\wt{\fr m}_f}}^\ord\otimes_\Lambda\cl L$ appearing in \eqref{deco} to which $f$ \emph{belongs} in the sense of \cite[Corollary 3.7]{h-iwasawa} or \cite[pp. 316--317]{Hida-annals} (see also \cite[p. 95]{ho} for the more general type of arithmetic groups we are working with here). Thanks to isomorphisms \eqref{deco-iso}, there is a unique primitive component of $\T_{\infty,{{\fr m}_f}}^\ord\otimes_\Lambda\cl L$ (appearing in \eqref{deco}) which is isomorphic to $\cl K$: denote this component by the same symbol $\mathcal K$. Finally, let $\mathcal R$ be the integral closure of $\Lambda$ in $\cl K$.

\begin{prop} \label{A-ring-prop}
The ring $\cl R$ is a complete local noetherian domain which is finitely generated as a $\Lambda$-module.
\end{prop}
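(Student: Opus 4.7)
The plan is to verify the four properties (domain, finitely generated over $\Lambda$, noetherian, complete local) in sequence, leveraging the fact that $\Lambda=\cl O_F[\![\Gamma]\!]\cong\cl O_F[\![T]\!]$ is a complete regular noetherian local domain of Krull dimension $2$, and in particular is excellent (hence Nagata/Japanese).

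First, $\cl R$ is a domain: by construction it sits inside the field $\cl K$, so it is automatically an integral domain with fraction field contained in $\cl K$.

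Next, I would show that $\cl R$ is finitely generated as a $\Lambda$-module. Because $\Lambda$ is an excellent (equivalently, Nagata) complete regular local ring and $\cl K/\cl L$ is a finite field extension, the integral closure of $\Lambda$ in $\cl K$ is finite over $\Lambda$. This is the standard finiteness theorem for integral closures over Nagata domains (see, e.g., Matsumura, \emph{Commutative Ring Theory}, Theorem 31.6 or Thm.~36.5). Alternatively, one can argue directly: the local factor $\fr h_{\infty,\wt{\fr m}_f}^\ord$ is finite and torsion-free over $\Lambda$, so its image in $\cl K$ is contained in $\cl R$ and is already finite over $\Lambda$; and since the ambient extension $\cl K/\cl L$ is finite, only finitely many additional elements are needed to reach the full integral closure, which then remains finite over noetherian $\Lambda$. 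Once this is established, noetherianity of $\cl R$ follows at once, as a finitely generated module over a noetherian ring.

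For the local and complete parts, the idea is to use that any module-finite extension of a complete local ring is complete and semi-local. More precisely, $\cl R$ is finite over the complete local $\Lambda$, hence $\cl R$ is $\fr m_\Lambda$-adically complete and decomposes as a finite product $\cl R=\prod_i\cl R_i$ of its localizations at its (finitely many) maximal ideals, each $\cl R_i$ being a complete local ring. But $\cl R$ is a domain by the first step, so this product decomposition must be trivial: there is a unique factor, and $\cl R$ itself is local. Completeness in its maximal-ideal-adic topology then coincides with the $\fr m_\Lambda$-adic topology (these define the same topology on finite $\Lambda$-modules by a standard comparison, since $\fr m_\Lambda\cl R$ is $\fr m_{\cl R}$-primary).

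The only substantive input is the finiteness of the integral closure, which I expect to be the main (though still routine) point. In the context of Hida theory this is a completely classical result often stated without proof; I would simply invoke excellence of $\Lambda$ to justify it, and note that $\cl R$ contains the image of the complete local Hecke algebra $\fr h_{\infty,\wt{\fr m}_f}^\ord\twoheadrightarrow\cl R$ cutting out the primitive component $\cl K$, which already provides a natural finite $\Lambda$-subalgebra from which to start.
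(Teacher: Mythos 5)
Your proof is correct and rests on exactly the same key input as the paper's: the paper cites Swanson--Huneke, Theorem 4.3.4, which is the finiteness of integral closure of a complete noetherian local domain in a finite field extension, and you invoke the same result via excellence/Nagata-ness of $\Lambda=\cl O_F[\![T]\!]$. You then supply the routine deductions (noetherian from module-finiteness; semi-local decomposition into a product of complete local rings, which collapses to a single factor because $\cl R$ is a domain; equivalence of the $\fr m_\Lambda$-adic and $\fr m_{\cl R}$-adic topologies) that the paper leaves to the reader. One caution: the "alternative" argument you sketch in the middle paragraph --- that since $\cl K/\cl L$ is finite "only finitely many additional elements are needed to reach the full integral closure" --- is not a proof; finiteness of the field extension alone does not give module-finiteness of the integral closure (this is precisely the content of the Nagata/excellence hypothesis, and can fail for general noetherian domains). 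That heuristic should be dropped or explicitly flagged as motivation only; your main line of argument via excellence is the one that actually works.
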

\begin{proof} See, e.g., \cite[Theorem 4.3.4]{hs}. \end{proof}

Observe that $\cl R$ is an $\fr h_{\infty,{\wt{\fr m}_f}}^\ord$-algebra. Indeed, the field $\cl K$ is an $\fr h_{\infty,{\wt{\fr m}_f}}^\ord$-algebra; moreover,  $\fr h_{\infty,{\wt{\fr m}_f}}^\ord$ identifies by \eqref{hecke-decompositions-eq} with a $\Lambda$-subalgebra of $\fr h_\infty^\ord$, hence it is integral over $\Lambda$ by \eqref{univ-hecke-algebra}, and this implies that $\fr h_{\infty,{\wt{\fr m}_f}}^\ord$ preserves the subring $\cl R$ of $\cl K$. Analogous arguments show that $\cl R$ is a $\T_{\infty,{{\fr m}_f}}^\ord$-algebra. Now consider the composition
\[ f_\infty:\fr h_\infty^\ord\;\longepi\;\fr h_{\infty,{\wt{\fr m}_f}}^\ord\longrightarrow\cl R \]
in which the first arrow is the natural projection and the second arrow is the structure map of $\cl R$ as an $\fr h_{\infty,{\wt{\fr m}_f}}^\ord$-algebra. The map $f_\infty$ is uniquely determined by the primitive component $\cl K$ to which $f$ belongs.

\begin{defi} \label{hida-family-defi}
The local $\Lambda$-algebra $\fr h_{\infty,{\wt{\fr m}_f}}^\ord$ is the \emph{Hida family} of $f$ and $\cl R$ is the \emph{branch} of the Hida family on which $f$ lives. We call the map $f_\infty$ the \emph{primitive morphism} associated with $f$.
\end{defi}

\subsection{Critical characters} \label{critical-subsection}

Factor $\epsilon_\cyc:G_\Q\to\Z_p^\times$ as a product $\epsilon_\cyc=\epsilon_\tame\epsilon_\wild$ with $\epsilon_\tame:G_\Q\rightarrow\boldsymbol\mu_{p-1}$ and
$\epsilon_\wild:G_\Q\rightarrow\Gamma$ and define the \emph{critical character} $\Theta:G_\Q\to\Lambda^\times$ by
\[ \Theta:=\epsilon_\tame^{(k+j-2)/2}\bigl[\epsilon_\wild^{1/2}\bigr] \]
where $\epsilon_\wild^{1/2}$ is the unique square root of $\epsilon_\wild$ taking values in $\Gamma$. If $i\in\Z/(p-1)\Z$ then the idempotent
\[ e_i:=\frac{1}{p-1}\sum_{\delta\in\Delta}\omega^{-i}(\delta)[\delta]\in \cl O_F[\![\Z_p^\times]\!] \]
satisfies the relation
\begin{equation} \label{fund-rel-in-Lambda}
e_i[\zeta]=\zeta^ie_i\qquad\text{for all $\zeta\in\boldsymbol\mu_{p-1}$}.
\end{equation}
Since $f(e_i)=0$ if $i\neq k+j-2$, we have $e_{k+j-2}(\fr h^\ord_{k,\infty})_{\wt{\fr m}_f}=(\fr h_{k,\infty}^\ord)_{\wt{\fr m}_f}$. Therefore
\begin{equation} \label{e-T-eq}
e_{k+j-2}(\T^\ord_{k,\infty})_{\fr m_f}=(\T_{k,\infty}^\ord)_{{\fr m}_f},
\end{equation}
and it follows that in $(\T^\ord_{k,\infty})_{\fr m_f}$ we have
\[ [\epsilon_\tame(\sigma)]=\epsilon_\tame^{k+j-2}(\sigma) \]
for all $\sigma\in G_\Q$. Furthermore, by definition of $\Theta$, in $(\T^\ord_{k,\infty})_{\fr m_f}$ there are also equalities
\[ \Theta^2(\sigma)=\epsilon_\tame^{k+j-2}(\sigma)[\epsilon_\wild(\sigma)]=[\epsilon_\cyc(\sigma)] \]
for all $\sigma\in G_\Q$.

\subsection{Arithmetic primes and Galois representations} \label{arithmetic-primes-subsec}

For every integer $m\geq0$ denote by $X_{0,1}(N,p^m)$ the compactified modular curve of level structure $\Gamma_{0,1}(N,p^m)$, viewed as a scheme over $\Q$, by ${\rm Jac}\bigl(X_{0,1}(N,p^m)\bigr)$ its Jacobian variety and by ${\rm Ta}_p\bigl({\rm Jac}\bigl(X_{0,1}(N,p^m)\bigr)\bigr)$ the $p$-adic Tate module of the Jacobian. As in \cite[\S 2.1]{ho}, for every integer $m\geq1$ we define the $\fr h_\infty^\ord$-modules
\[ {\rm Ta}_{p,m}^\ord:=e^\ord_m\Big({\rm Ta}_p\bigl({\rm Jac}\bigl(X_{0,1}(N,p^m)\bigr)\bigr)\otimes_{\Z_p}\cl O_F\Big),\qquad {\bf Ta}^\ord:=\invlim_m{\rm Ta}_{p,m}^\ord, \]
\[ {\bf Ta}^\ord_{\wt{\fr m}_f}:={\bf Ta}^\ord\otimes_{\fr h_\infty^\ord}{\fr h^\ord_{\infty,\wt{\fr m}_f}},\qquad{\bf T}:= {\bf Ta}^\ord_{\wt{\fr m}_f}\otimes_{\fr h^\ord_{\infty,\wt{\fr m}_f}}\cl R. \]
All these modules are endowed with $\fr h_\infty^\ord$-linear actions of the Galois group $G_\Q$. The $\cl R$-module ${\bf T}$ is free of rank two. Let $\cl R^\dagger$ denote $\cl R$ viewed as a module over itself with $G_\Q$ acting through $\Theta^{-1}$ and define the \emph{critical twist} of ${\bf T}$ to be the $G_\Q$-module
\[ {\bf T}^\dagger:={\bf T}\otimes_\cl R\cl R^\dagger={\bf Ta}^\ord_{\wt{\fr m}_f}\otimes_{{\fr h^\ord_{\infty,\wt{\fr m}_f}}}\cl R^\dagger. \]
The $G_\Q$-module ${\bf T}$ is unramified outside $Np$ and the arithmetic Frobenius at a prime $\ell\nmid Np$ acts with characteristic polynomial $X^2-T_\ell X+[\ell]\ell$. For a proof of these facts see, e.g., \cite[Proposition 2.1.2]{ho} and \cite[Th{\'e}or{\`e}me 7]{mt}.

Write $\fr m_\cl R$ for the maximal ideal of the local ring $\cl R$ and set
\[ \mathbb F_\cl R:=\cl R/\fr m_\cl R,\qquad \mathbb F_{\fr h^\ord_{\infty,\wt{\fr m}_f}}:=\fr h^\ord_{\infty,\wt{\fr m}_f}\big/\wt{\fr m}_f\fr h^\ord_{\infty,\wt{\fr m}_f} \]
for the residue fields of $\cl R$ and $\fr h^\ord_{\infty,\wt{\fr m}_f}$. Since $\cl R$ is finite over $\Lambda$ by Proposition \ref{A-ring-prop}, the map $\fr h^\ord_{\infty,\wt{\fr m}_f}\rightarrow\cl R$ is also finite, hence integral. Thus $\mathbb F_\cl R$ is naturally a finite extension of $\mathbb F_{\fr h^\ord_{\infty,\wt{\fr m}_f}}$ and hence of $\mathbb F_p$. The next result will be exploited in Section \ref{section-indefinite}.
\begin{prop} \label{T-residue-prop}
The residual $G_\Q$-representation ${\bf T}/\fr m_\cl R{\bf T}$ is equivalent up to finite base change to the residual representation $\bar\rho_f$ of $f$. In particular, it is absolutely irreducible.
\end{prop}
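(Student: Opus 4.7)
The plan is to reduce the statement to the standard fact that the $\wt{\fr m}_f$-adic Tate module in Hida theory has residual Galois representation $\bar\rho_f$, the key step being the bookkeeping between the two local rings $\fr h^\ord_{\infty,\wt{\fr m}_f}$ and $\cl R$ and their residue fields.

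First, I would verify that the structure morphism $\pi\colon\fr h^\ord_{\infty,\wt{\fr m}_f}\to\cl R$ is a local homomorphism, i.e.\ that $\pi^{-1}(\fr m_\cl R)=\wt{\fr m}_f$. Indeed, $\cl R$ is finite over $\Lambda$ by Proposition \ref{A-ring-prop}, so $\fr m_\cl R\cap\Lambda=\fr m_\Lambda$; by finiteness of $\fr h^\ord_{\infty,\wt{\fr m}_f}$ over $\Lambda$, the preimage $\pi^{-1}(\fr m_\cl R)$ is a prime containing $\fr m_\Lambda\fr h^\ord_{\infty,\wt{\fr m}_f}$, hence lies over $\fr m_\Lambda$ in a quotient which is finite-dimensional over the residue field of $\Lambda$; such a prime must be maximal. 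Since $\fr h^\ord_{\infty,\wt{\fr m}_f}$ is local with unique maximal ideal $\wt{\fr m}_f$, locality follows. This gives a canonical inclusion of residue fields $\mathbb F_{\fr h^\ord_{\infty,\wt{\fr m}_f}}\hookrightarrow\mathbb F_\cl R$, which is a finite extension.

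Next, I would unwind the definition of ${\bf T}$. Because ${\bf T}={\bf Ta}^\ord_{\wt{\fr m}_f}\otimes_{\fr h^\ord_{\infty,\wt{\fr m}_f}}\cl R$, reducing modulo $\fr m_\cl R$ and using the local map $\pi$ gives a natural $G_\Q$-equivariant isomorphism
\[
{\bf T}/\fr m_\cl R{\bf T}\;\simeq\;\bigl({\bf Ta}^\ord_{\wt{\fr m}_f}\big/\wt{\fr m}_f\,{\bf Ta}^\ord_{\wt{\fr m}_f}\bigr)\otimes_{\mathbb F_{\fr h^\ord_{\infty,\wt{\fr m}_f}}}\mathbb F_\cl R.
\]
It then suffices to identify the ``small'' residual representation ${\bf Ta}^\ord_{\wt{\fr m}_f}/\wt{\fr m}_f{\bf Ta}^\ord_{\wt{\fr m}_f}$ with $\bar\rho_f$ (possibly after enlarging scalars from $\cl O_F/\fr m_{\cl O_F}$ to $\mathbb F_{\fr h^\ord_{\infty,\wt{\fr m}_f}}$). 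Under part (ii) of Assumption \ref{f-assumption} (absolute irreducibility plus $p$-distinguishedness of $\bar\rho_f$), a theorem of Mazur--Wiles--Hida (see e.g.\ \cite[Theorem 2.1]{Hida-annals} or the discussion in \cite[\S 2.1]{ho}) asserts that ${\bf Ta}^\ord_{\wt{\fr m}_f}$ is free of rank two over $\fr h^\ord_{\infty,\wt{\fr m}_f}$. Thus its quotient modulo $\wt{\fr m}_f$ is a two-dimensional $\mathbb F_{\fr h^\ord_{\infty,\wt{\fr m}_f}}$-vector space carrying a $G_\Q$-action. The trace of arithmetic Frobenius at a prime $\ell\nmid Np$ equals the image of $T_\ell$, which modulo $\wt{\fr m}_f$ is the reduction of $a_\ell$; so the semisimplification agrees with that of $\bar\rho_f$. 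Absolute irreducibility of $\bar\rho_f$ forces both representations to be irreducible, and Brauer--Nesbitt (or Carayol's lemma on the uniqueness of lifts of an irreducible residual representation) yields an isomorphism of Galois representations over the compositum of the residue fields, proving the identification up to finite base change.

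The main obstacle is the freeness input in the middle step; everything else is bookkeeping. Once freeness is granted, absolute irreducibility of ${\bf T}/\fr m_\cl R{\bf T}$ is automatic: passing from $\bar\rho_f$ to $\bar\rho_f\otimes\mathbb F_\cl R$ preserves absolute irreducibility because the algebraic closure of $\mathbb F_\cl R$ coincides with that of the coefficient field of $\bar\rho_f$, so the endomorphism ring remains the base field. This gives the final assertion of the proposition.
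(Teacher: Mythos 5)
Your proposal is correct and follows essentially the same route as the paper's own proof: reduce modulo $\fr m_\cl R$ to obtain ${\bf T}/\fr m_\cl R{\bf T}\simeq\bigl({\bf Ta}^\ord_{\wt{\fr m}_f}/\wt{\fr m}_f{\bf Ta}^\ord_{\wt{\fr m}_f}\bigr)\otimes\mathbb F_\cl R$, invoke the Gorenstein/freeness input from Hida theory, and then identify the small residual representation with $\bar\rho_f$. The paper delegates the final identification to \cite[\S 9]{h-galois} and \cite[\S 3]{mt} while you spell it out via trace-of-Frobenius comparison plus Brauer--Nesbitt and absolute irreducibility, but this is the same argument made explicit rather than a genuinely different one.
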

\begin{proof} First of all, if the claimed equivalence of representations is true then the absolute irreducibility follows from condition ii) in Assumption \ref{f-assumption}. With notation as above, there is a canonical isomorphism of $G_\Q$-modules
\[ {\bf T}/\fr m_\cl R{\bf T}\simeq\bigl({\bf Ta}^\ord_{\wt{\fr m}_f}\big/\wt{\fr m}_f{\bf Ta}^\ord_{\wt{\fr m}_f}\bigr)\otimes_{\mathbb F_{\fr h^\ord_{\infty,\wt{\fr m}_f}}}\mathbb F_{\cl R}. \]
As explained in \cite[p. 251]{h-iwasawa}, all modular forms in the Hida family $\fr h^\ord_{\infty,\wt{\fr m}_f}$ have residual representation equivalent to $\bar\rho_f$. On the other hand, by \cite[Proposition 2.1.2]{ho}, the local ring $\fr h^\ord_{\infty,\wt{\fr m}_f}$ is a Gorenstein $\Lambda$-algebra, and then \cite[\S 9]{h-galois} (see also \cite[\S 3]{mt}) shows that the residual $G_\Q$-representation ${\bf Ta}^\ord_{\wt{\fr m}_f}\big/\wt{\fr m}_f{\bf Ta}^\ord_{\wt{\fr m}_f}$ is equivalent (up to finite base change) to $\bar\rho_f$. \end{proof}

Now recall that $\Gamma:=1+p\Z_p$. If $A$ is a finitely generated commutative $\Lambda$-algebra then a homomorphism of $\cl O_F$-algebras $\kappa:A\rightarrow\bar\Q_p$ is said to be \emph{arithmetic} if the composition of $\kappa$ with the canonical map $\Gamma\rightarrow A^\times$ has the form $\gamma\mapsto\psi(\gamma)\gamma^{r-2}$ for some integer $r\geq2$ and some finite order character $\psi$ of $\Gamma$. The kernel of an arithmetic homomorphism, which is a prime ideal of $A$, is said to be an \emph{arithmetic prime} of $A$. If $\fr p$ is an arithmetic prime of $A$ and, as usual, $A_\fr p$ is the localization of $A$ at $\fr p$ then the residue field $F_\fr p:=A_\fr p/\fr p A_\fr p$ is a finite extension of $F$. The composition $\Gamma\rightarrow A^\times\rightarrow F_\mathfrak p^\times$ has the form $\gamma\mapsto \psi_\mathfrak p(\gamma)\gamma^{r_\mathfrak p-2}$ for a finite order character $\psi_\mathfrak p:\Gamma\rightarrow F_\mathfrak p^\times$ and an integer $r_\mathfrak p$, and we call $\psi_\mathfrak p$ and $r_\mathfrak p$ the \emph{wild character} and the \emph{weight} of the arithmetic prime $\mathfrak p$, respectively. The homomorphisms of $\cl O_F$-algebras $\tilde\theta_f$ and $\theta_f$ that were attached in \S \ref{section-max-min} to the modular form $f$ are arithmetic.

Let $\fr p$ be an arithmetic prime of $\cl R$ of weight $r_\fr p$ and character $\psi_\fr p$, and set
\begin{equation} \label{m-integer-eq}
m_\fr p:=\max\bigl\{1,\ord_p\bigl({\rm cond}(\psi_\fr p)\bigr)\bigr\}.
\end{equation}
By \cite[Corollary 1.3]{h-galois}, the morphism obtained by composing the maps
\[ \fr h^\ord_\infty\xrightarrow{f_\infty}\cl R\longrightarrow F_\fr p \]
factors through $\fr h_{r_\fr p}^\ord$ and determines, by duality, an ordinary $p$-stabilized newform
\begin{equation} \label{form-ass-arith-prime}
f_\fr p=\sum_{n\geq 1}a_n(f_\fr p)q^n\in S_{r_\fr p}\bigl(\Gamma_{0,1}(N,p^{m_\fr p}),\zeta_\fr p,F_\fr p\bigr)
\end{equation}
where, for simplicity, we put $\zeta_\fr p:=\psi_\fr p\omega^{k+j-r_\fr p}$.

Denote by $V(f_\fr p)$ the $G_\Q$-representation over $F_\fr p$ attached to $f_\fr p$ by Deligne. Thanks to a result of Ribet (\cite[Theorem 2.3]{ri2}), it is known that $V(f_\fr p)$ is (absolutely) irreducible. Define the $G_\Q$-modules ${\bf T}_\fr p:={\bf T}\otimes_{\cl R}\cl R_\fr p$ and $V_\fr p:={\bf T}\otimes_\cl R F_\fr p={\bf T}_\fr p/\fr p{\bf T}_\fr p$ and their critical twists ${\bf T}_\fr p^\dagger:={\bf T}^\dagger\otimes_{\cl R}\cl R_\fr p$ and $V_\fr p^\dagger:={\bf T}^\dagger\otimes_\cl R F_\fr p$. Then $V_\fr p^\dagger$ is a twist of the classical representation attached to $f_\fr p$. See \cite[\S 2.1]{ho} and \cite[\S 1.5 and \S 1.4]{np} for details. Let now $v$ be a place of $\bar\Q$ above $p$, let $D_v\subset G_\Q$ be a decomposition group at $v$ and let $I_v\subset D_v$ be the inertia subgroup. Denote by $\eta_v:D_v/I_v\rightarrow\cl R^\times$ the character defined by sending the arithmetic Frobenius to $U_p$. Then \cite[Proposition 2.4.1]{ho} ensures that there is a short exact sequence of $\cl R[D_v]$-modules
\[ 0\longrightarrow F^+_v({\bf T})\longrightarrow {\bf T}\longrightarrow F_v^-({\bf T})\longrightarrow 0 \]
where $F^+_v({\bf T})$ and $F^-_v({\bf T})$ are free $\cl R$-modules of rank one and $D_v$ acts on $F^+({\bf T})$ (respectively, on $F_v^-({\bf T})$) through $\eta_v^{-1}\epsilon_\cyc[\epsilon_\cyc]$ (respectively, through $\eta_v$). Furthermore, if $M$ denotes one of ${\bf T}_\fr p$, $V_\fr p$, ${\bf T}_\fr p^\dagger$ and $V_\fr p^\dagger$ then twisting by $\Theta^{-1}$ and tensoring the above exact sequence by $\cl R_\fr p$ or $F_\fr p$ yields another short exact sequence of $D_v$-modules
\[ 0\longrightarrow F_v^+(M)\longrightarrow M\longrightarrow F_v^-(M)\longrightarrow 0, \]
where $F_v^+(M)$ and $F^-_v(M)$ are free modules of rank one over either $\cl R_\fr p$ or $F_\fr p$, depending on whether $M\in\bigl\{{\bf T}_\fr p,{\bf T}_\fr p^\dagger\bigr\}$ or $M\in\bigl\{V_\fr p,V_\fr p^\dagger\bigr\}$, respectively. Finally, the Galois group $G_\Q$ acts on $F_v^+(M)$ and $F^-_v(M)$ either by $\eta_v^{-1}\epsilon_\cyc[\epsilon_\cyc]$ and $\eta_v$ or by $\Theta^{-1}\eta_v^{-1}\epsilon_\cyc[\epsilon_\cyc]$ and $\Theta^{-1}\eta_v$, depending on whether $M\in\bigl\{{\bf T}_\fr p,V_\fr p\bigr\}$ or $M\in\bigl\{{\bf T}_\fr p^\dagger,V_\fr p^\dagger\bigr\}$, respectively.

\subsection{Selmer groups} \label{section-Selmer-groups}

We recall the definitions of the various Selmer groups that are relevant for our purposes. The reader may also wish to consult \cite[\S 2.4]{ho} and \cite[\S 2.1]{np}.

Let $L$ be a number field and for any prime $v$ of $L$ let $L_v$ the completion of $L$ at $v$ and $L_v^{\rm unr}$ the maximal unramified extension of $L_v$. Let $M$ be one of the left $R[G_\Q]$-modules ${\bf T}$, ${\bf T}^\dagger$, ${\bf T}_\fr p$, ${\bf T}_\fr p^\dagger$, $V_\fr p$, $V_\fr p^\dagger$ where $R$ denotes the ring $\cl R$ in the first two cases, the ring $\cl R_\fr p$ in the middle two cases and the field $F_\fr p$ in the last two cases. Fix a prime $v$ of $L$ and define the \emph{Greenberg local subgroup} at $v$ by
\[ H^1_{\rm Gr}(L_v,M):=\begin{cases} \ker\bigl(H^1(L_v,M)\longrightarrow H^1(L_v^{\rm unr},M)\bigr)&\text{if $v\nmid p$},\\[3mm] \ker\bigl(H^1(L_v,M)\longrightarrow H^1(L_v,F^-_v(M))\bigr)&\text{if $v\mid p$}.\end{cases} \]
Then the \emph{Greenberg Selmer group} is by definition the group
\[ \Sel_{\rm Gr}(L,M):=\ker\Big(H^1(L,M)\longrightarrow\prod_v H^1(L_v,M)/H^1_{\rm Gr}(L_v,M)\Big). \]
Let ${\bf A}^\dagger:=\Hom_{\Z_p}\bigl({\bf T}^\dagger,{\boldsymbol \mu}_{p^\infty}\bigr)$. For $M={\bf T}^\dagger$, ${\bf A^\dagger}$ or $V_\fr p^\dagger$ one can also consider the \emph{Nekov\'a\v{r} Selmer group} $\wt H^1_f(L,M)$, which for $M={\bf T}^\dagger$ or $V_\fr p^\dagger$ sits in the short exact sequence
\begin{equation} \label{nekovar}
0\longrightarrow\bigoplus_{v\mid p}H^0\bigl(L_v,F^-_v(M)\bigr)\longrightarrow\wt H^1_f(L,M)\longrightarrow\Sel_{\rm Gr}(L,M)\longrightarrow 0,
\end{equation}
the direct sum being extended over the primes of $L$ above $p$. The reader is referred to \cite[Ch. 6]{Ne-selmer} for definitions and to \cite[Lemma 9.6.3]{Ne-selmer} for a proof of \eqref{nekovar}.

If $M=V_\fr p$ or $V_\fr p^\dagger$ one has the \emph{Bloch--Kato Selmer group} $H^1_f(L,M)$ as well, whose definition (in terms of Fontaine's ring $B_{cris}$) can be found in \cite[\S 3 and \S 5]{BK}. If $M=V_\fr p^\dagger$ and $\fr p$ has even weight then, by \cite[Proposition 12.5.9.2]{Ne-selmer}, this group fits into the short exact sequence
\begin{equation} \label{bloch-kato}
0\longrightarrow\bigoplus_{v\mid p}H^0\bigl(L_v,F^-_v(V_\fr p^\dagger)\bigr)\longrightarrow\wt H^1_f\bigl(L,V_\fr p^\dagger\bigr)\longrightarrow H^1_f\bigl(L,V_\fr p^\dagger\bigr)\longrightarrow 0.
\end{equation}
An arithmetic prime $\fr p$ of $\cl R$ is said to be \emph{exceptional} if $r_{\fr p}=2$, the character $\psi_\fr p$ is trivial and the image of $U_p$ under the map $\cl R\rightarrow F_\fr p$ is equal to $\pm1$. The relations between the Selmer groups that we introduced above are then summarized by exact sequences \eqref{nekovar} and \eqref{bloch-kato} and the following result.

\begin{prop} \label{prop-Selmer}
\begin{enumerate}
\item If $\fr p$ is a non-exceptional arithmetic prime of $\cl R$ then $\wt H^1_f\bigl(L,V_\fr p^\dagger\bigr)$ and $\Sel_{\rm Gr}\bigl(L,V_\fr p^\dagger\bigr)$ are isomorphic.
\item If $\fr p$ is an arithmetic prime of even weight then $H^1_f\bigl(L,V_\fr p^\dagger\bigr)$ and $\Sel_{\rm Gr}\bigl(L,V_\fr p^\dagger\bigr)$ are isomorphic.
\end{enumerate}
\end{prop}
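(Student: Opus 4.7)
Part (2) would follow immediately by comparing the two short exact sequences \eqref{nekovar} and \eqref{bloch-kato}. Both apply when $\fr p$ has even weight, and each exhibits $\bigoplus_{v\mid p} H^0\bigl(L_v, F_v^-(V_\fr p^\dagger)\bigr)$ as the kernel of a surjection from $\wt H^1_f\bigl(L, V_\fr p^\dagger\bigr)$ onto $\Sel_{\rm Gr}\bigl(L, V_\fr p^\dagger\bigr)$ and onto $H^1_f\bigl(L, V_\fr p^\dagger\bigr)$, respectively. Hence both targets are naturally identified with the same cokernel inside $\wt H^1_f\bigl(L, V_\fr p^\dagger\bigr)$, giving the desired isomorphism.

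For part (1), the exact sequence \eqref{nekovar} reduces the statement to the vanishing
\[ H^0\bigl(L_v, F_v^-(V_\fr p^\dagger)\bigr) = 0 \qquad\text{for every place $v\mid p$.} \]
By construction $F_v^-(V_\fr p^\dagger)$ is a one-dimensional $F_\fr p$-vector space on which $D_v$ acts through the reduction modulo $\fr p$ of the character $\chi := \Theta^{-1}\eta_v$, so the task becomes to prove that $\chi$ is nontrivial on $D_v$ whenever $\fr p$ is non-exceptional.

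My plan for this last verification is to analyze $\chi$ on inertia and on Frobenius separately. Since $\eta_v$ is unramified, $\chi|_{I_v}$ coincides with the specialization at $\fr p$ of $\Theta^{-1}|_{I_v}$. Combining the formula $\Theta = \epsilon_\tame^{(k+j-2)/2}[\epsilon_\wild^{1/2}]$ with the arithmetic specialization $[\gamma]\mapsto\psi_\fr p(\gamma)\gamma^{r_\fr p-2}$ on $\Gamma$, I expect $\chi|_{I_v}$ to be ramified (and hence nontrivial) unless simultaneously $r_\fr p = 2$ and the wild part of $\psi_\fr p$ is trivial. In the remaining, unramified, case I would evaluate $\chi(\Frob_v)$: here $\eta_v(\Frob_v)$ specializes to $a_p(f_\fr p)$, while $\Theta(\Frob_v)$ contributes a root of unity governed by the tame part $\epsilon_\tame^{(k+j-2)/2}$ and by $\psi_\fr p$. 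The relation $\Theta^2 = [\epsilon_\cyc]$ recorded in \S \ref{critical-subsection}, restricted to $\Frob_v$ and specialized at $\fr p$, then forces this root of unity to be $\pm 1$, so that $\chi(\Frob_v) = 1$ is equivalent to $a_p(f_\fr p) = \pm 1$. Putting the two cases together recovers precisely the exceptional condition on $\fr p$, which is the contrapositive of (1).

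The main technical obstacle will be the bookkeeping of tame versus wild contributions to the specialization of $\Theta$ at an arbitrary arithmetic prime $\fr p$: in particular, one has to check carefully that the three ingredients of the exceptional condition (weight $2$, trivial wild character, $a_p(f_\fr p) = \pm 1$) are jointly necessary and sufficient for $\chi$ to become trivial on the whole of $D_v$, and not merely on the inertia subgroup $I_v$.
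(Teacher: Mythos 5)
Your treatment of part (2) is correct and coincides with the paper's reasoning: both exhibit $\bigoplus_{v\mid p}H^0\bigl(L_v,F_v^-(V_\fr p^\dagger)\bigr)$ as the common kernel of the surjections in \eqref{nekovar} and \eqref{bloch-kato}, so the two cokernels are identified. The paper simply records this as immediate, citing (23) of Howard's paper.

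For part (1) the paper does not give a direct argument either; it cites \cite[(22)]{ho}, which in turn rests on \cite[Lemma~2.4.4]{ho}. Your reduction of the claim to the vanishing $H^0\bigl(L_v,F_v^-(V_\fr p^\dagger)\bigr)=0$ for $v\mid p$, using \eqref{nekovar}, is exactly the right starting point and is what Howard's lemma establishes. But there are genuine gaps in the plan you sketch to carry out that vanishing.

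First, your dichotomy is off. Specializing $\Theta=\epsilon_\tame^{(k+j-2)/2}\bigl[\epsilon_\wild^{1/2}\bigr]$ at $\fr p$ gives
$\Theta_\fr p=\epsilon_\tame^{(k+j-2)/2}\cdot\psi_\fr p\!\circ\!\epsilon_\wild^{1/2}\cdot\epsilon_\wild^{(r_\fr p-2)/2}$,
and these three factors take values, respectively, in $\boldsymbol\mu_{p-1}$, in $p$-power roots of unity, and in the torsion-free group $\Gamma$; hence the restriction to $I_v$ is trivial only if all three factors are separately trivial there. The second and third indeed force $\psi_\fr p=1$ and $r_\fr p=2$, but the first factor $\epsilon_\tame^{(k+j-2)/2}$ is in general \emph{also} ramified at $p$, so ``$r_\fr p=2$ and $\psi_\fr p$ trivial'' is necessary but not sufficient for $\chi|_{I_v}$ to be unramified. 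You treat the remaining case as automatically unramified and proceed straight to Frobenius; that step does not go through as written. (This error is in the safe direction for proving the implication you need, but it invalidates your final claim that the three exceptional conditions are ``jointly necessary and sufficient'' for $\chi$ to be trivial on $D_v$: sufficiency fails precisely because of the tame factor, and in fact sufficiency is not needed for the proposition.)

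Second, the Frobenius evaluation needs more care for general $L$: the decomposition group $D_v\subset G_\Q$ in the paper is $G_{\Q_p}$, and the relevant invariants are $H^0$ over $G_{L_v}$ for a completion $L_v/\Q_p$ of arbitrary residue degree $f$. Thus what gets compared with $\Theta_\fr p(\Frob_{L_v})$ is $\eta_{v,\fr p}(\Frob_{L_v})=a_p(f_\fr p)^{f}$, not $a_p(f_\fr p)$, and moreover $\Theta_\fr p(\Frob_{L_v})$ only makes sense after one has verified unramifiedness over $L_v$. Your sketch elides this, and the bookkeeping you correctly flag as the ``main technical obstacle'' is in fact where the argument would still have to be done; as it stands the proof is a plausible plan rather than a proof, and the plan contains a wrong intermediate claim (the asserted equivalence between the unramified case and the case $r_\fr p=2$, $\psi_\fr p=1$).
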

\begin{proof} The first assertion is \cite[(22)]{ho}, which follows from \cite[Lemma 2.4.4]{ho}. The last claim is \cite[(23)]{ho}, which is immediate from \eqref{nekovar} and \eqref{bloch-kato}. \end{proof}

\section{Hida theory on quaternion algebras}

Recall the quaternion algebra $B$ over $\Q$ of discriminant $N^-$ introduced at the beginning of the paper. In the following (in slight conflict with the conventions of Section \ref{section-Hida}) we use Hida's notations $T(n)$ and $T(n,n)$ for the (abstract) Hecke operators defined as in \cite[p. 309]{Hida-annals} by double cosets. In this section we always assume that $B$ is a division algebra, the theory for the split case $B\simeq\M_2(\Q)$ having been considered earlier.

Fix an integer $m\geq 0$. Recall that $\Div(\wt X_m)$ and $\Div^0(\wt X_m)$ denote the groups of divisors and of degree zero divisors, respectively, on $\wt X_m$. Let ${\rm Pr}(\wt X_m)$ be the group of principal divisors on $\wt X_m$ and define, as usual, the Picard groups
\[ \Pic(\wt X_m):=\Div(\wt X_m)/{\rm Pr}(\wt X_m),\qquad\Pic^0(\wt X_m):=\Div^0(\wt X_m)/{\rm Pr}(\wt X_m). \]
The groups $\Pic(\wt X_m)$ and $\Pic^0(\wt X_m)$ are connected by the short exact sequence
\begin{equation} \label{exact-sequence-Pic}
0\longrightarrow \Pic^0(\wt X_m)\longrightarrow\Pic(\wt X_m)\xrightarrow{\deg}\Z\longrightarrow 0,
\end{equation}
where $\deg$ is the degree map.

\subsection{Hecke modules in the definite case} \label{section-hecke-mod}

Assume that $B$ is \emph{definite} and fix an integer $m\geq 0$. As pointed out in \cite[\S 1.4]{bd*} and \cite[\S 4]{gross-2}, in this case $\Pic(\wt X_m)$ can be identified with the free abelian group $\Z\bigl[U_m\backslash\widehat B^\times/B^\times\bigr]$ on the finite set of double cosets $U_m\backslash\widehat B^\times/B^\times$ and $\Pic^0(\wt X_m)$ corresponds to the degree zero elements in this group. With notation as in \S \ref{section-definite-shimura}, the sequence $m\mapsto\tilde h(m)$ is unbounded because the same is true, by \cite[Theorem 16]{Pi}, of the sequence $m\mapsto h(m)$, and $h(m)\leq\tilde h(m)$. Hence the ranks of the free abelian groups $\Pic(\wt X_m)$ and $\Pic^0(\wt X_m)$ are unbounded as $m$ varies. Now define
\[ J_m:=\Pic(\wt X_m) \otimes_\Z \cl O_F, \qquad J^0_m :=\Pic^0(\wt X_m) \otimes_\Z \cl O_F. \]
Tensoring \eqref{exact-sequence-Pic} with $\cl O_F$ over $\Z$ yields a short exact sequence of $\cl O_F$-modules
\begin{equation} \label{exact-seq-J}
0\longrightarrow J^0_m \longrightarrow J_m \xrightarrow{\deg}\cl O_F\longrightarrow0.
\end{equation}
By what has been said a few lines before, there is an identification of $\cl O_F$-modules
\begin{equation} \label{J-m-eq}
J_m=\cl O_F\bigl[U_m\backslash\widehat B^\times/B^\times\bigr],
\end{equation}
which will usually be viewed as an equality. The abelian group $\Pic(\wt X_m)$ is finitely generated, so, by \cite[Theorem 7.11]{mat}, it follows that $\End_\Z\bigl(\Pic(\wt X_m)\bigr)\otimes_\Z\cl O_F$ is canonically isomorphic to $\End_{\cl O_F}(J_m)$. A similar remark also applies to $J_m^0$.

In \cite[\S 1.4]{bd*} (see also \cite[\S 4]{gross-2}) it is explained how equality \eqref{J-m-eq} can be used to define an Hecke algebra with $\cl O_F$-coefficients, which we denote by $\B_m$, acting (via Brandt matrices) on $J_m$ and $J_m^0$.

Let us now assume $m\geq1$. Since $\B_m$ is a finitely generated $\cl O_F$-module, we can define an idempotent $e_m^\ord\in\B_m$ attached to the Hecke operator $U_p$ and introduce the ordinary parts $\mathcal X^\ord:=e_m^\ord\cdot\mathcal X$ for $\mathcal X\in\bigl\{\B_m,J_m,J_m^0\bigr\}$. So $J_m^\ord$ is a 
$\B_m^\ord$-module. Since $U_p$ has degree $p$, exact sequence \eqref{exact-seq-J} implies that there is an isomorphism of $\B_m^\ord$-modules
\[ J_m^\ord\simeq J_m^{0,\ord}. \]
The maps $\wt\alpha_m:\wt X_m\rightarrow \wt X_{m-1}$ induce (by covariant functoriality) maps $\wt\alpha_{m,\ast}:J_m\rightarrow J_{m-1}$ and $\wt\alpha_{m,\ast}:J_m^0\rightarrow J_{m-1}^0$ preserving the ordinary parts, so one can consider the projective limits
\[ J_\infty:=\invlim_m J_m,\qquad J^0_\infty:=\invlim_m J_m^0,\qquad J^\ord_\infty:=\invlim_m J_m^{\ord} \]
with respect to these maps. Define
\[\B_\infty:=\invlim_m \B_m,\qquad \B_\infty^\ord:=\invlim_m \B_m^\ord\]
with respect to the canonical maps. Then $J_\infty^\ord$ is a $\B_\infty^\ord$-module, while $J_\infty$ and $J^0_\infty$ are $\B_\infty$-modules.

\subsection{Hecke modules in the indefinite case}

Now suppose that $B$ is \emph{indefinite} and fix an integer $m\geq0$. Then $\Pic^0(\wt X_m)$ can be identified with the Jacobian variety ${\rm Jac}(\wt X_m)$ of $\wt X_m$, which is an abelian variety defined over $\Q$ whose dimension equals the genus of $\wt X_m$, while \eqref{exact-sequence-Pic} shows that $\Pic(\wt X_m)$ is an extension of $\Z$ by $\Pic^0(\wt X_m)$. More precisely, $\Pic(\wt X_m)$ identifies with the $\bar\Q$-points of the Picard scheme of $\wt X_m$ and $\Pic^0(\wt X_m)$ with the identity component of this scheme. If $L$ is an extension of $\Q$ then we denote by $\Pic(\wt X_m)(L)$ and $\Pic^0(\wt X_m)(L)$ the $L$-rational points of $\Pic(\wt X_m)$ and $\Pic^0(\wt X_m)$, respectively. Unlike what was done in the definite case, in the indefinite case by $J_m$ and $J_m^0$ we mean the functor from the category 
of $\Q$-algebras to the category of $\cl O_F$-modules
 which associate with any field extension $L/\Q$ the $\cl O_F$-modules
\[ J_m(L):=\Pic(\wt X_m)(L)\otimes_\Z\cl O_F,\qquad J_m^0(L):={\rm Jac}(\wt X_m)(L)\otimes_\Z\cl O_F, \]
respectively. These modules are endowed with a canonical action of a Hecke $\cl O_F$-algebra $\B_m$, which is induced by the Hecke action on divisors via Albanese functoriality. 

Let $L$ be an algebraic extension of $\Q$ and set $G_L:=\Gal(\bar\Q/L)$. Since ${\rm Jac}(\wt X_m)$ is defined over $\Q$, the $\cl O_F$-module $J_m^0(L)$ has a natural left $G_L$-action and so is canonically a left $\B_m[G_L]$-module.
Furthermore, the ordinary part
\[ J_m^{0,\ord}(L):=e_m^\ord\cdot J_m^0(L) \]
inherits a canonical structure of left $\B_m^\ord[G_L]$-module, where $e^\ord_m$ denotes as above Hida's ordinary projector.

Suppose that $L$ is a number field. Tensoring \eqref{exact-sequence-Pic} (with values in $L$) by $\cl O_F$ over $\Z$ yields, as above, a short exact sequence of left $\cl O_F[G_L]$-modules
\begin{equation} \label{exact-seq-J-II}
0\longrightarrow J^0_m(L)\longrightarrow J_m(L)\xrightarrow{\deg}\cl O_F\longrightarrow0.
\end{equation}
Since $J_m(L)$ is a finitely generated $\cl O_F$-module, it makes sense to  introduce the idempotent $e_m^\ord$ in $\B_m$ attached to the Hecke operator $U_p$ and define the ordinary part of $J_m(L)$ to be
\[ J_m^\ord(L):=e_m^\ord\cdot J_m(L). \]
Now observe that, since $U_p$ has degree $p$, sequence \eqref{exact-seq-J-II} shows that
\begin{equation} \label{J-m-ord-eq}
J_m^\ord(L)=J_m^{0,\ord}(L)
\end{equation}
for every $m\geq0$ and every number field $L$. Let now $L/\Q$ be an algebraic field extension and write it as a direct limit $L=\dirlim L_i$ of finite extensions. Since the $J_m(L_i)$ have ordinary parts which are compatible with direct limits, we can define the ordinary part of $J_m(L)$ as
\[ J_m^\ord(L):=\dirlim_i J_m^\ord(L_i). \]
Thanks to \eqref{J-m-ord-eq}, and the fact that $J_m^0(L)=\dirlim J_m^0(L_i)$ because direct limits commute with tensor products, we see that
\[ J_m^\ord(L)=J_m^{0,\ord}(L) \]
for every $m\geq0$ and every extension $L/\Q$. Thus $J_m^\ord(L)$ is a $\B_m^\ord$-module for every $m$ and every extension $L/\Q$, where $\B_m^\ord:=e^\ord\cdot\B_m$.

As above, for every extension $L/\Q$ and every $m\geq 1$ we can define by covariant functoriality maps $\wt\alpha_{m, \ast}:J_m(L)\rightarrow J_{m-1}(L)$ and $\wt\alpha_{m,\ast}:J_m^0(L)\rightarrow J_{m-1}^0(L)$ which preserve the ordinary parts, so we can form the projective limits
\[ J_\infty(L):=\invlim_m J_m(L),\qquad J^0_\infty(L):=\invlim_m J_m^0(L),\qquad J^\ord_\infty(L):=\invlim_m J_m^{\ord}(L) \]
with respect to these maps. Form the $\cl O_F$-modules
\[ \B_\infty:=\invlim_m\B_m,\qquad\B_\infty^\ord:=\invlim_m\B^\ord_m. \]
In particular, $J_\infty^0(L)$ is a left $\B_\infty[G_L]$-module and $J_\infty^\ord(L)$ is a left $\B_\infty^\ord[G_L]$-module. Write $\Ta_p\bigl({\rm Jac}(\wt X_m)\bigr)$ for the $p$-adic Tate module of ${\rm Jac}(\wt X_m)$ and define
\[ T_m:=\Ta_p\bigl({\rm Jac}(\wt X_m)\bigr)\otimes_{\Z_p}\cl O_F,\qquad T_\infty:=\invlim_mT_m \]
where the inverse limit is with respect to the canonical projection maps. Then $T_\infty$ and $T_m$ are $\B_\infty$ and $\B_m$-modules, respectively, and one can define the ordinary parts
\[ T_m^\ord:=e_m^\ord \cdot T_m,\qquad T_\infty^\ord:=e^\ord \cdot T_\infty^\ord, \]
which are left $\B_m^\ord[G_\Q]$ and $\B^\ord_\infty[G_\Q]$-modules, respectively.

\subsection{Jacquet--Langlands correspondence} \label{JL-subsection}

In order to simplify notations, set $\T_\star:=\T_{2,\star}$ and $\T_\star^\ord:=\T_{2,\star}^\ord$ for $\star$ an integer $m\geq0$ or the symbol $\infty$: this is
an extension of the convention introduced at the end of \S \ref{section-p-adic-modular-forms}.

The Jacquet--Langlands correspondence (see \cite[\S 2.4]{Hida-HMFIT}) gives an isomorphism of $\cl O_F$-algebras
\begin{equation} \label{JL-eq}
{\rm JL}_m:\T_m\overset{\simeq}{\longrightarrow}\B_m
\end{equation}
taking $T(\ell)_2$ and $T(\ell,\ell)_2$ to the analogous operators in $\B_m$. We define a continuous structure of $\tilde\Lambda$-algebra on $\B_\infty$ and $\B_\infty^\ord$ as in \cite[\S 3.2.8]{Hida-HMFIT}, and denote $[z]\mapsto\langle z\rangle$ the image of group-like elements of $\tilde\Lambda$. We normalize this action so that if $n$ is an integer coprime with $Np$ then $T(n,n)=\langle n\rangle$ as operators in $\B_m$ (as in the case of elliptic modular forms, we adopt the normalization in \cite{ho} instead of the one usually found in Hida's papers).

\begin{prop}[Jacquet--Langlands] \label{JL1}
There is a canonical isomorphism of $\tilde\Lambda$-algebras
\[ {\rm JL}_\infty^\ord:\T_\infty^\ord\overset{\simeq}{\longrightarrow}\B^\ord_\infty. \]
\end{prop}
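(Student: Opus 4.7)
The plan is to assemble the isomorphism by passing to the limit from the finite-level Jacquet--Langlands isomorphisms \eqref{JL-eq} and then restricting to ordinary parts, with the $\tilde\Lambda$-structure coming along for free because of the way it is defined on both sides.

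First, I would check that the family of isomorphisms $\{{\rm JL}_m\}_{m\geq 0}$ is compatible with the canonical projection maps. On the $\GL_2$ side, the transition $\T_{m}\to\T_{m-1}$ is forced by the fact that both algebras are generated over $\cl O_F$ by the images of the abstract Hecke operators $T(\ell)$, $T(\ell,\ell)$ (for $\ell\nmid Np$) and $U_\ell$ (for $\ell\mid Np$), together with the diamond operators $\langle d\rangle$ for $d\in(\Z/p^m\Z)^\times$; and the projection simply sends generators to their namesakes at lower level. The same is true on the quaternionic side: the algebra $\B_m$ is generated by the corresponding Brandt/Hecke operators acting on $\Pic(\wt X_m)$ via the description recalled in \S \ref{section-hecke-mod}, and the covering $\wt\alpha_m:\wt X_m\to\wt X_{m-1}$ induces compatible projections $\B_m\to\B_{m-1}$ sending each operator to its analogue. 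Since ${\rm JL}_m$ sends $T(\ell)_2$ and $T(\ell,\ell)_2$ to the operators of the same name (and in particular sends $U_p$ to $U_p$), the squares
\[
\xymatrix{\T_m\ar[r]^-{{\rm JL}_m}_-{\simeq}\ar[d]&\B_m\ar[d]\\ \T_{m-1}\ar[r]^-{{\rm JL}_{m-1}}_-{\simeq}&\B_{m-1}}
\]
commute. Taking inverse limits gives an isomorphism of $\cl O_F$-algebras ${\rm JL}_\infty:\T_\infty\overset\simeq\longrightarrow\B_\infty$.

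Next, I would restrict to ordinary parts. The idempotent $e_m^{\ord}$ on either side is constructed as $\lim_{n\to\infty}U_p^{n!}$ (or, equivalently, is the unique idempotent in the $\cl O_F$-subalgebra generated by $U_p$ on which $U_p$ acts invertibly), and since ${\rm JL}_m(U_p)=U_p$ it follows that ${\rm JL}_m(e_m^\ord)=e_m^\ord$. Passing to the limit we obtain ${\rm JL}_\infty(e^\ord)=e^\ord$, so ${\rm JL}_\infty$ restricts to an isomorphism of $\cl O_F$-algebras ${\rm JL}_\infty^\ord:\T_\infty^\ord\overset\simeq\longrightarrow\B_\infty^\ord$.

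Finally, I would verify $\tilde\Lambda$-equivariance. By construction the $\tilde\Lambda$-action on $\B_\infty^\ord$ is normalized (as in \cite[\S 3.2.8]{Hida-HMFIT} with the convention of \cite{ho}) so that, for $n$ prime to $Np$, the group-like element $[n]\in\tilde\Lambda$ maps to the diamond operator $\langle n\rangle$, which equals $T(n,n)$ in the weight $2$ normalization; the same is true on $\T_\infty^\ord$. Because $T(n,n)_2$ is sent to $T(n,n)$ by each ${\rm JL}_m$, the two $\tilde\Lambda$-structures agree on a dense subset of group-like elements, and by continuity ${\rm JL}_\infty^\ord$ is $\tilde\Lambda$-linear.

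The one delicate point — and the step I expect to be the main obstacle — is the compatibility of the JL correspondences across levels, i.e.\ verifying that the finite-level isomorphisms really do glue. This reduces to the statement that under \eqref{JL-eq} the $U_p$-operator on the quaternionic side (defined in \S \ref{section-Hecke-operators} via the double coset decomposition of $U_m\hat\pi_0 U_m$) corresponds to the classical $U_p$ on the $N^-$-new subspace of $S_2(\Gamma_{0,1}(N,p^m),\C)$, and similarly for the diamond operators at $p$-power level; both facts follow from the local description of the Jacquet--Langlands transfer at the places dividing $N^+p$, but they need to be invoked explicitly.
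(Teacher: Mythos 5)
Your argument is essentially the paper's own: exhibit commuting squares relating $\mathrm{JL}_m$ at successive levels, pass to the inverse limit, restrict to ordinary parts, and conclude $\tilde\Lambda$-linearity by noting the two structures agree on group-like elements $[n]$ with $n$ prime to $Np$ and invoking continuity. Your closing remark about checking the local compatibility of $U_p$ and the $p$-power diamond operators under the finite-level transfer is a reasonable flag, but it is a point the paper also takes for granted, so this does not constitute a divergence.
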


\begin{proof} For $m\geq1$ there are commutative diagrams
\[ \xymatrix@C=35pt{\T_m^\ord\ar[r]^{{\rm JL}_m^\ord}\ar[d]& \B_m^{\ord}\ar[d]\\\T_{m-1}^\ord\ar[r]^{{\rm JL}_{m-1}^\ord}& \B_{m-1}^{\ord}} \]
where the vertical arrows are the canonical projections. The claim of the proposition then follows by taking inverse limits and noticing that the two $\tilde\Lambda$-algebra structures agree on the set of integers prime to $Np$, so (by a continuity argument) they must be equal. \end{proof}

In light of \eqref{JL-eq} and Proposition \ref{JL1}, from here on we identify the algebras $\B^\star_\bullet$ with the corresponding $\T^\star_\bullet$, and use the latter symbols to denote both Hecke algebras. Similarly, we identify the maximal ideal $\fr m_f$ and the ring $\cl R$ with their images via ${\rm JL}_\infty^\ord$.

Finally, we write $\T_\infty^{\ord,\dagger}$ and $\T_m^{\ord,\dagger}$ for the twisted $G_\Q$-modules $\T_\infty^\ord$ and $\T_m^\ord$, respectively, where the action of $G_\Q$ is via $\Theta^{-1}$.

\subsection{Galois representations in the indefinite case} \label{galois-indefinite-subsec}

In this subsection, as in \cite{fouquet}, we work under the following assumption, whose analogue for the Hida family $\fr h_{\infty,\wt{\fr m}_f}^\ord$ is true by \cite[Proposition 2.1.2]{ho}.

\begin{assumption} \label{gorenstein-ass}
The $\Lambda$-algebra $\T_{\infty,\fr m_f}^\ord$ is Gorenstein, that is $\T_{\infty,\fr m_f}^\ord\simeq\Hom_\Lambda\bigl(\T_{\infty,\fr m_f}^\ord,\Lambda\bigr)$ as $\T_{\infty,\fr m_f}^\ord$-modules.
\end{assumption}

Define 
\[ T_{\infty,\fr m_f}^\ord :=T_\infty^\ord\otimes_{\T^\ord_\infty}\T_{\infty,\fr m_f}^\ord,\qquad {\bf T}_{\rm Sh}:=T_{\infty,\fr m_f}^\ord \otimes_{\T_{\infty,\fr m_f}^\ord}\cl R,\qquad {\bf T}_{\rm Sh}^\dagger:={\bf T}_{\rm Sh}\otimes_\mathcal R\mathcal R^\dagger. \]
Let $\fr p$ be an arithmetic prime of $\cl R$ and set
\[ {\bf T}_{\rm Sh,\fr p}:={\bf T}_{\rm Sh}\otimes_\cl R\cl R_{\fr p},\qquad {\bf T}_{\rm Sh,\fr p}^\dagger:={\bf T}_{\rm Sh,\fr p}\otimes_\mathcal R\mathcal R^\dagger, \]
\[ V_{\rm Sh,\fr p}:= {\bf T}_{\rm Sh,\fr p}/{\fr p}{\bf T}_{\rm Sh,\fr p},\qquad V_{\rm Sh,\fr p}^\dagger:=V_{\rm Sh,\fr p}\otimes_\mathcal R\mathcal R^\dagger. \]
All these groups are endowed with $G_\Q$ and Heche actions. As before, let $\fr m_\cl R$ be the maximal ideal of the local ring $\cl R$. The next assumption plays the role of \cite[Hypoth{\`e}se 1.4.26]{fouquet}.

\begin{assumption} \label{T-Sh-assumption}
The residual $G_\Q$-representation ${\bf T}_{\rm Sh}/\fr m_\cl R{\bf T}_{\rm Sh}$ is absolutely irreducible.
\end{assumption}

We keep Assumptions \ref{gorenstein-ass} and \ref{T-Sh-assumption} for the rest of this subsection. We first recall the basic properties of the representations ${\bf T}_{\rm Sh}$.

\begin{prop} \label{fouquet}
\begin{enumerate}
\item The $\cl R$-module ${\bf T}_{\rm Sh}$ is free of rank two.
\item The $G_\Q$-representation ${\bf T}_{\rm Sh}$ is unramified outside $Np$ and the arithmetic Frobenius at a prime $\ell\nmid Np$ acts with characteristic polynomial $X^2-T_\ell X+[\ell]\ell$.
\item For any arithmetic prime $\fr p$ of $\cl R$ denote by $V(f_\fr p)$ the $G_\Q$-representation over $F_\fr p$ attached to $f_\fr p$. Then the $G_\Q$-representation $V_{{\rm Sh},\fr p}$ is equivalent to the dual $V^*(f_\fr p)$ of $V(f_\fr p)$, hence to $V(f_\fr p)(r_\fr p-1)\otimes[\zeta_\fr p^{-1}]$.
\end{enumerate}
\end{prop}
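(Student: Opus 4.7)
The plan is to establish the three statements in turn by following the classical strategy of Hida and Mazur--Tilouine, carried over to the Shimura curve setting exactly as in Fouquet's \cite{fouquet}. The overall strategy is to work level-by-level using the Eichler--Shimura congruence on $\wt X_m$, pass to the inverse limit inside the ordinary part, and then base-change along the structure morphism $\T_{\infty,\fr m_f}^\ord\to\cl R$.

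For part (1), I would first show that $T_{\infty,\fr m_f}^\ord$ is free of rank two over $\T_{\infty,\fr m_f}^\ord$, and then obtain the statement for ${\bf T}_{\rm Sh}$ by base change. The freeness is the key point and proceeds as follows. By the Jacquet--Langlands identification of Hecke algebras together with Hida's control theorem applied to the ordinary Tate modules of the $\wt X_m$, one identifies $T^\ord_\infty$ with an inverse limit of $p$-divisible ordinary Tate modules, which is a finitely generated $\T_\infty^\ord$-module; localising at $\fr m_f$ and invoking Assumption \ref{T-Sh-assumption} (absolute irreducibility of the residual representation), Nakayama's lemma reduces the problem to showing that the residual representation has rank exactly two over the residue field $\mathbb F_{\T_{\infty,\fr m_f}^\ord}$. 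This in turn follows from Eichler--Shimura modulo $p$ together with the Gorenstein Assumption \ref{gorenstein-ass}, exactly as in the classical argument of \cite[\S 9]{h-galois} for the modular curve case (and as carried out by Fouquet in the Shimura curve setting): Gorenstein-ness identifies the localised Tate module with $\Hom_\Lambda(\T_{\infty,\fr m_f}^\ord,\Lambda)^{\oplus 2}$ up to torsion, and absolute irreducibility rules out the torsion. Tensoring with $\cl R$ over $\T_{\infty,\fr m_f}^\ord$ (which is finite and flat by Proposition \ref{A-ring-prop}, and through which the Hecke action factors on the branch of $f$) then yields that ${\bf T}_{\rm Sh}$ is free of rank two over $\cl R$.

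For part (2), the unramifiedness outside $Np$ and the shape of the characteristic polynomial can be checked at each finite level. Indeed, the curves $\wt X_m$ have good reduction outside $Np^m$, so each $T_m$ is an unramified $G_{\Q_\ell}$-module for $\ell\nmid Np^m$, and the Eichler--Shimura congruence relation on $\wt X_m$ gives the characteristic polynomial $X^2-T_\ell X+[\ell]\ell$ for the action of arithmetic Frobenius at such $\ell$. Passing to $T_\infty^\ord$ and then to $T_{\infty,\fr m_f}^\ord$ (which only kills non-ordinary and non-relevant components), the unramifiedness statement at each fixed $\ell\nmid Np$ persists at the finitely many levels $m$ with $\ell\nmid p^m$, hence in the inverse limit; tensoring with $\cl R$ preserves unramifiedness and the characteristic polynomial identity by functoriality of the Hecke action.

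For part (3), the specialisation at an arithmetic prime $\fr p$ of weight $r_\fr p$ and character $\psi_\fr p$ should recover the dual of Deligne's representation attached to the classical form $f_\fr p$. The approach is to specialise $T_{\infty,\fr m_f}^\ord\to T_{m_\fr p,\fr m_f}^\ord\otimes_{\T^\ord_{m_\fr p,\fr m_f}}F_\fr p$ using the control theorem (so that the specialisation is compatible with the assignment \eqref{form-ass-arith-prime} of $f_\fr p$), and then to identify this last module with the $f_\fr p$-isotypic component of the Tate module of $\mathrm{Jac}(\wt X_{m_\fr p})$ tensored with $F_\fr p$. The Jacquet--Langlands correspondence of Proposition \ref{JL1} together with the Eichler--Shimura isomorphism for Shimura curves (which identifies this isotypic component with $V^*(f_\fr p)$ rather than with $V(f_\fr p)$, because of the Albanese-versus-Picard normalisation inherent in working with the Tate module of the Jacobian) gives the first identification. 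The identification $V^*(f_\fr p)\simeq V(f_\fr p)(r_\fr p-1)\otimes[\zeta_\fr p^{-1}]$ is then standard local-Langlands-type arithmetic for weight $r_\fr p$ modular forms of nebentype $\zeta_\fr p$, coming from the determinant formula $\det V(f_\fr p)=\epsilon_\cyc^{r_\fr p-1}[\zeta_\fr p]$ combined with self-duality up to twist.

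The main obstacle I expect is part (1): verifying that Gorenstein-ness plus absolute irreducibility of the residual representation genuinely forces $T^\ord_{\infty,\fr m_f}$ to be free of rank two over $\T^\ord_{\infty,\fr m_f}$ in the Shimura curve setting. In the modular curve case this is in \cite[Proposition 2.1.2]{ho} via Mazur--Tilouine; transporting the argument to Shimura curves requires the analogue of the Eichler--Shimura relation in characteristic $p$ for $\wt X_m$ and careful bookkeeping of the $\fr m_f$-localisation, which is precisely what Fouquet does in \cite{fouquet}. Modulo that input, the rest of the argument is essentially formal manipulation of inverse limits and base change.
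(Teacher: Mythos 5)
Your outline correctly reconstructs the skeleton of Fouquet's argument (Eichler--Shimura at each finite level, control in the ordinary tower, Gorenstein + absolute irreducibility for freeness, specialisation and Jacquet--Langlands for the identification at arithmetic primes), but the paper's own ``proof'' is much shorter: it simply invokes \cite[Th{\'e}or\`eme 1]{fouquet} after checking that the hypotheses of \cite[\S 1.4.5]{fouquet} hold. So in spirit you are re-deriving what the paper merely cites, and the derivation is broadly right.

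There is, however, one substantive point your sketch elides. In part (1) you write that the reduction to rank two of the residual representation ``follows from Eichler--Shimura modulo $p$ together with the Gorenstein Assumption~\ref{gorenstein-ass}, exactly as in the classical argument of \cite[\S 9]{h-galois}.'' That is not quite the whole story: in addition to the Gorenstein property of $\T^\ord_{\infty,\fr m_f}$ and absolute irreducibility of the residual representation, one needs a multiplicity-one statement for the $\fr m_f$-torsion of the Jacobian of $\wt X_m$ --- this is Fouquet's Hypoth{\`e}se~1.4.28, the Shimura-curve analogue of the Mazur--Ribet theorem \cite{mr}. In the modular curve case this is a theorem; for Shimura curves it is only known in special cases (e.g.\ $N^+=1$, $N^-=pq$ by \cite[Theorem 2]{ri}), and the paper explicitly flags it as expected but not proved in the generality needed here (``details will be given elsewhere''). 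Presenting this as mere ``careful bookkeeping of the $\fr m_f$-localisation'' obscures the fact that the proposition is, in the authors' own framing, conditional on a genuine open input. You should separate the Gorenstein hypothesis (Assumption~\ref{gorenstein-ass}) from the multiplicity-one hypothesis and state clearly that the latter is an additional, partially conjectural ingredient.
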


\begin{proof} Keeping our assumptions on the form $f$ in mind, it can be checked that the hypotheses made in \cite[\S 1.4.5]{fouquet} and used in the proof of \cite[Th{\'e}or\`eme 1]{fouquet} are verified. We just remark that, in our context, \cite[Hypothèse 1.4.28]{fouquet} is the analogue for Shimura curves of the main result of \cite{mr}, whose generalization to Shimura curves when $N^+=1$ and $N^-=pq$ (with $p,q$ distinct primes) is provided by \cite[Theorem 2]{ri}. Assuming that the representation associated with $f$ is ramified at all primes dividing $N^-$, we expect that this result holds in our more general situation as well (details will be given elsewhere). Since all assumptions are verified, the statements of the proposition follow from \cite[Th{\'e}or\`eme 1]{fouquet}. \end{proof}

Now recall the $\cl R$-module ${\bf T}$ defined in \S \ref{arithmetic-primes-subsec}. The following consequence of Proposition \ref{fouquet} will be crucial for our arguments.

\begin{coro} \label{T-T-Sh}
There are isomorphisms of $G_\Q[\T_{\infty,\fr m_f}^\ord]$-modules ${\bf T}\simeq{\bf T}_{\rm Sh}$ and ${\bf T}^\dagger\simeq {\bf T}_{\rm Sh}^\dagger$.
\end{coro}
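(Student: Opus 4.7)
The plan is to reduce the problem to a ``pseudo-character'' or trace-matching argument for Galois representations valued in the complete local noetherian domain $\cl R$. Both $\mathbf T$ and $\mathbf T_{\rm Sh}$ are continuous representations of $G_\Q$ on free $\cl R$-modules of rank two: for $\mathbf T$ this is recalled in \S\ref{arithmetic-primes-subsec} and for $\mathbf T_{\rm Sh}$ it is part (1) of Proposition \ref{fouquet}. By Proposition \ref{T-residue-prop} the residual representation $\mathbf T/\fr m_\cl R\mathbf T$ is equivalent (up to finite base change of $\mathbb F_\cl R$) to $\bar\rho_f$, hence absolutely irreducible by part ii) of Assumption \ref{f-assumption}; by Assumption \ref{T-Sh-assumption} the same holds for $\mathbf T_{\rm Sh}/\fr m_\cl R\mathbf T_{\rm Sh}$.

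Next I would compare the two representations on the unramified side. Both are unramified outside $Np$, and by Proposition \ref{fouquet}(2) together with \cite[Proposition 2.1.2]{ho} (recalled in \S\ref{arithmetic-primes-subsec}) the arithmetic Frobenius $\Frob_\ell$ at a prime $\ell\nmid Np$ acts on each with the \emph{same} characteristic polynomial $X^2-T_\ell X+[\ell]\ell\in\cl R[X]$. By the \v{C}ebotarev density theorem and continuity, the traces $\tr\mathbf T(\sigma)$ and $\tr\mathbf T_{\rm Sh}(\sigma)$ coincide for every $\sigma\in G_\Q$ (and the determinants coincide as well). Now I would invoke the standard result of Nyssen--Rouquier (see, e.g., \cite[Corollary 5.2]{ho} or the original papers of Carayol and Serre): two continuous representations of a profinite group on finitely generated free modules over a complete local noetherian ring, having the same trace function and absolutely irreducible residual representation, are conjugate. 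This produces an isomorphism $\mathbf T\simeq\mathbf T_{\rm Sh}$ of $\cl R[G_\Q]$-modules.

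It remains to upgrade this to an isomorphism of $G_\Q[\T_{\infty,\fr m_f}^\ord]$-modules. Since the $\cl R$-action on each side factors through the structural map $\T_{\infty,\fr m_f}^\ord\to\cl R$, and since both actions of $\T_{\infty,\fr m_f}^\ord$ are determined on the dense subset of Hecke operators $T(\ell)$ (with $\ell\nmid Np$) by the relation $\tr\Frob_\ell=T_\ell$ obtained above (using the Jacquet--Langlands identification of \S\ref{JL-subsection} to view $T(\ell)$ as an element of both endomorphism rings), the isomorphism just constructed is automatically $\T_{\infty,\fr m_f}^\ord$-equivariant. The isomorphism $\mathbf T^\dagger\simeq\mathbf T_{\rm Sh}^\dagger$ is then obtained by tensoring the first one over $\cl R$ with $\cl R^\dagger$, since the critical twist by $\Theta^{-1}$ commutes with all of these operations.

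The main (and essentially only) obstacle is verifying that the hypotheses of the Nyssen--Rouquier type theorem are met, namely the absolute irreducibility of both residual representations; but this is exactly what Proposition \ref{T-residue-prop} and Assumption \ref{T-Sh-assumption} provide. Everything else is an application of \v{C}ebotarev and a formal tensor product argument.
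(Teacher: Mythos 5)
Your proof follows the same route as the paper's: both modules are free of rank two over $\cl R$ with absolutely irreducible residual representations (Proposition \ref{T-residue-prop} and Assumption \ref{T-Sh-assumption}), they have the same Frobenius characteristic polynomials outside $Np$, and so a rigidity theorem for representations over a complete local noetherian ring yields the $\cl R[G_\Q]$-isomorphism, after which Hecke equivariance is automatic since the $\T^{\ord}_{\infty,\fr m_f}$-action factors through $\cl R$ on both sides and the dagger version follows by tensoring with $\cl R^\dagger$. The only difference is cosmetic: the paper appeals to Mazur \cite[\S 5, Corollary]{mazur} for the rigidity step where you appeal to Carayol/Nyssen--Rouquier (and to a nonexistent ``Corollary 5.2'' of \cite{ho}); these references are interchangeable statements of the same result.
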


\begin{proof} First of all, by \S \ref{arithmetic-primes-subsec} and part (1) in Proposition \ref{fouquet}, both ${\bf T}$ and ${\bf T}_{\rm Sh}$ are free $\cl R$-modules of rank two. Moreover, Proposition \ref{T-residue-prop} and Assumption \ref{T-Sh-assumption} guarantee that the residual $G_\Q$-representations ${\bf T}/\fr m_\cl R{\bf T}$ and ${\bf T}_{\rm Sh}/\fr m_\cl R{\bf T}_{\rm Sh}$ are absolutely irreducible. Finally, by \S \ref{arithmetic-primes-subsec} and part (2) in Proposition \ref{fouquet}, the arithmetic Frobenius at a prime $\ell\nmid Np$ acts on ${\bf T}$ and ${\bf T}_{\rm Sh}$ with the same characteristic polynomial. Putting all these statements together, the isomorphisms of $G_\Q$-modules ${\bf T}\simeq{\bf T}_{\rm Sh}$ and ${\bf T}^\dagger\simeq {\bf T}_{\rm Sh}^\dagger$ follow from, e.g., \cite[\S 5, Corollary]{mazur}. The Hecke equivariance is immediate from the definitions. \end{proof}

Corollary \ref{T-T-Sh} implies that for every arithmetic prime $\fr p$ of $\cl R$ there are isomorphisms of $G_\Q[\T_{\infty,\fr m_f}^\ord]$-modules ${\bf T}_\fr p\simeq {\bf T}_{\rm Sh,{\fr p}}$, ${\bf T}_\fr p^\dagger\simeq {\bf T}_{\rm Sh,\fr p}^\dagger$ $V_\fr p\simeq V_{\rm Sh,\fr p}$, $V_\fr p^\dagger\simeq V_{\rm Sh,\fr p}^\dagger$, so in the following we will unify these notations and write ${\bf T}$ in place of ${\bf T}_{\rm Sh}$, and analogously for the other Galois and Hecke-modules.

\section{Big Heegner points and classes} \label{big-heegner-section}

In this section we introduce big Heegner points and big Heegner classes, and prove their main compatibility properties. Note that the first three subsections apply both to the definite and to the indefinite case. These results generalize the construction of Galois cohomology classes out of Heegner points on classical modular curves achieved by Howard in \cite{ho}. The reader is also referred to the work of Fouquet (\cite{fouquet}, \cite{fouquet2}) for an extension of some of Howard's results to the broader setting of Shimura curves attached to indefinite quaternion algebras over totally real fields.

For every integer $d\geq 0$ we introduce the notation
\[ \mathcal G_d:=\Gal(K^{\rm ab}/H_d) \]
where, with a slight abuse, we set $H_0:=K$, so that $\mathcal G_0=\Gal(K^\ab/K)$ is the abelianization $G_K^{\rm ab}$ of the absolute Galois group $G_K=\Gal(\bar\Q/K)$ of $K$.

Recall the subsets $\wt X_m^{(K)}$ defined in \S \ref{section-definite-shimura} (definite case) and \S \ref{section-indefinite-shimura} (indefinite case), where $K$ is an imaginary quadratic field admitting injections $K\hookrightarrow B$. In both cases, let us denote by 
\[ D_m:=\Div\bigl(\wt X_m^{(K)}\bigr) \] 
the submodule of $\Div(\wt X_m)$ supported on points in $\wt X_m^{(K)}$, endowed with natural Hecke and $G_K^{\rm ab}$-actions. Define 
\[ D_m^\ord:=D_m\otimes_{\T_m}\T_m^\ord. \]
For $m\geq2$, the maps $\wt\alpha_m:\wt X_m^{(K)}\rightarrow\wt X_{m-1}^{(K)}$ induce, by covariant functoriality, maps $\wt\alpha_m:D_m\rightarrow D_{m-1}$ that respect the ordinary parts, so we can define
\[ D_\infty^\ord:=\invlim_mD_m^\ord. \]
This group is naturally endowed with actions of $\T_\infty^\ord$ and $G_K^{\rm ab}$. In the indefinite case, when we want to emphasize the field of rationality $H\subset K^{\rm ab}$ of a divisor or a limit of divisors we write $D_\star^\ord(H)$, for $\star=m$ or $\infty$. (In the definite case, all the points in $D_m$ are rational over $K$, so there is no need to specify fields of rationality.) Observe that $\mathcal R$ is naturally a $\T_\infty^\ord$-module through the composition $\T_\infty^\ord\rightarrow\T_{\infty,\mathfrak m_f}^\ord\rightarrow\mathcal R$. Define 
\[ {\bf D}_m:=D_m^\ord \otimes_{\T_\infty^\ord}\cl R,\qquad{\bf D}_m^\dagger:={\bf D}_m\otimes_\mathcal R\mathcal R^\dagger \]
(the structure of $\T_\infty^\ord$-module on $D_m^\ord$ is via the projection $\T_\infty^\ord\rightarrow \T_m^\ord$) and
\[ {\bf D}:=D_{\infty}^\ord \otimes_{\T_{\infty}^\ord}\cl R,\qquad{\bf D}^\dagger:={\bf D}\otimes_\mathcal R\mathcal R^\dagger. \]
All these groups are endowed with Hecke and $G_K^{\rm ab}$-actions. Of course, in the indefinite case they are more generally endowed with $G_\Q$-actions. 

\subsection{Galois relations}

Fix an integer $m\geq 1$ and let $\sigma\in\Gal(\bar\Q/H_{cp^m})$. The inclusion $\Q(\sqrt{p^\ast})\subset H_{cp^m}$ implies that $\sigma$ is the identity on $\Q(\sqrt{p^\ast})$, so it follows that there exists $\xi_\sigma\in\boldsymbol\mu_{p-1}$ such that $\xi^2_\sigma=\epsilon_\tame(\sigma)$. Hence
\begin{equation} \label{eq4}
\xi_\sigma\epsilon_\wild^{1/2}(\sigma)=\pm\vartheta(\sigma),
\end{equation}
with $\vartheta$ as in \S \ref{compatible-Galois-families}. By definition, $\Theta(\sigma)=\xi^{k+j-2}_\sigma\bigl[\epsilon_\wild^{1/2}(\sigma)\bigr]$. From \eqref{fund-rel-in-Lambda} it follows that
\begin{equation} \label{equat5}
\Theta(\sigma)e_{k+j-2}=\xi^{k+j-2}_\sigma\bigl[\epsilon_\wild^{1/2}(\sigma)\bigr]e_{k+j-2}=e_{k+j-2}\bigl[\xi_\sigma\epsilon_\wild^{1/2}(\sigma)\bigr]
\end{equation}
in $\Lambda$. Equations \eqref{e-T-eq}, \eqref{eq4} and \eqref{equat5} imply that
\begin{equation} \label{eq-Theta-vartheta}
\Theta(\sigma)P=[\pm\vartheta(\sigma)]P=\langle\vartheta(\sigma)\rangle P
\end{equation}
for all $P\in {\bf D}_m$, where $\langle\ell\rangle$ is the diamond operator at $\ell$ as in \S \ref{compatible-Galois-families}.

Recall the point $\wt P_{c,m}\in\wt X_m^{(K)}$ defined in \S \ref{compatible-Heegner-subsec} and write ${\bf P}_{c,m}$ for its image in $\D_m$. For all $\sigma\in\Gal(\bar\Q/H_{cp^m})$, equations \eqref{eq5} and \eqref{eq-Theta-vartheta} give the equality ${\bf P}_{c,m}^\sigma=\Theta(\sigma){\bf P}_{c,m}$ in $\D_m$, from which it follows that
\[ {\bf P}_{c,m}\in H^0\bigl(\cl G_{cp^m},\D_m^\dagger\bigr). \]

\subsection{Hecke relations}

For any integers $s,t\geq1$ write $\cor_{H_{st}/H_s}$ for the corestriction map from $H_{st}$ to $H_s$. Explicitly, for all $\eta\in\Gal(H_{st}/H_{s})$ choose an extension $\tilde\eta\in\Gal(K^{\rm ab}/H_s)$ of $\eta$; if $Q\in H^0\bigl(\cl G_{st},\D_m^\dagger\bigr)$ then
\begin{equation} \label{cores}
\cor_{H_{st}/H_s}(Q)=\sum_{\eta\in\Gal(H_{st}/H_s)}\Theta(\tilde\eta^{-1})Q^{\tilde\eta}.
\end{equation}
As usual, the maps $\wt\alpha_m:\wt X_m\to\wt X_{m-1}$ induce maps
\[ \wt\alpha_m=\wt\alpha_{m,\ast}: H^0\bigl(\mathcal G_{cp^m},
\D_{m}^{\dagger}\bigr)\longrightarrow H^0\bigl(\mathcal G_{cp^m},\D_{m-1}^{\dagger}\bigr) \]
by covariant functoriality.

\begin{prop} \label{prop-Hecke-relations}
The equality
\[ \wt\alpha_m\bigl(\cor_{H_{cp^m}/H_{cp^{m-1}}}({\bf P}_{c,m})\bigr)=U_p({\bf P}_{c,m-1}) \]
holds in $H^0\bigl(\mathcal G_{cp^{m-1}},\D_{m-1}^{\dagger}\bigr)$ for all $m\geq1$.
\end{prop}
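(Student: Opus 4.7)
The plan is to reduce the twisted corestriction appearing on the left to an honest trace of divisors on $\wt X_m$, and then to invoke Proposition \ref{Hecke-relation-for-X}. Unpacking \eqref{cores}, for each $\eta \in \Gal(H_{cp^m}/H_{cp^{m-1}})$ I pick the lift $\tilde\eta \in \Gal(\bar\Q/H_{cp^{m-1}})$ so that it fixes $\boldsymbol\mu_{p^m}$. This choice is legitimate because, as observed in the proof of Proposition \ref{prop-Galois-Hecke-tilde-X}, the linear disjointness of $H_{cp^m}$ and $H_{cp^{m-1}}(\boldsymbol\mu_{p^m})$ over $H_{cp^{m-1}}$ makes the restriction map
\[
\Gal\bigl(H_{cp^m}(\boldsymbol\mu_{p^m})/H_{cp^{m-1}}(\boldsymbol\mu_{p^m})\bigr) \longrightarrow \Gal(H_{cp^m}/H_{cp^{m-1}})
\]
an isomorphism.

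With such lifts in hand, the key step is to show that $\Theta(\tilde\eta^{-1})$ acts trivially on $\D_m^\dagger$. Since $\tilde\eta$ fixes $\boldsymbol\mu_{p^m}$, one has $\epsilon_\cyc(\tilde\eta) \equiv 1 \pmod{p^m}$, hence $\epsilon_\tame(\tilde\eta) = 1$ and $\epsilon_\wild(\tilde\eta) \in 1+p^m\Z_p$. As $p$ is odd, the pro-$p$ group $1+p^m\Z_p$ is closed under taking square roots, so $\epsilon_\wild^{1/2}(\tilde\eta) \in 1+p^m\Z_p$ and $\Theta(\tilde\eta)$ is the image $[\gamma]$ in $\Lambda$ of an element $\gamma \in 1+p^m\Z_p$. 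Now the action of $\T_\infty^\ord$ on $D_m^\ord$ factors through $\T_m^\ord$, on which the diamond operator $\langle a\rangle$ only depends on $a\bmod p^m$; hence $[\gamma]$ acts trivially on $\D_m$ and \emph{a fortiori} on $\D_m^\dagger$.

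Combining these two steps with the fact that $\wt P_{c,m}$ is rational over $H_{cp^m}(\boldsymbol\mu_{p^m})$ (so that ${\bf P}_{c,m}^{\tilde\eta}$ depends only on the image of $\tilde\eta$ in the Galois group above), I obtain
\[
\cor_{H_{cp^m}/H_{cp^{m-1}}}({\bf P}_{c,m}) \;=\; \sum_{\sigma} {\bf P}_{c,m}^\sigma,
\]
where $\sigma$ ranges over $\Gal\bigl(H_{cp^m}(\boldsymbol\mu_{p^m})/H_{cp^{m-1}}(\boldsymbol\mu_{p^m})\bigr)$; in other words, the right-hand side is the image in $\D_m$ of $\mathrm{tr}_{H_{cp^m}(\boldsymbol\mu_{p^m})/H_{cp^{m-1}}(\boldsymbol\mu_{p^m})}(\wt P_{c,m})$. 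Applying $\wt\alpha_m$ and invoking Proposition \ref{Hecke-relation-for-X} then yields $U_p({\bf P}_{c,m-1})$ for $m \geq 2$; the case $m=1$ is handled by a direct computation mimicking Proposition \ref{prop-Galois-Hecke-tilde-X}, starting from the explicit coset description $\wt\alpha_1(\wt P_{c,1}) = [(\hat\pi_0 u_{c,0}\gamma^{(c,0)},f^{(c,0)})]$ obtained from \eqref{a-1} and \eqref{+5}. The main obstacle is the intermediate step: the subtle cancellation of the critical twist $\Theta$ hinges on the $\tfrac{1}{2}$-power of $\epsilon_\wild$ in the definition of $\Theta$ matching precisely the $p^m$-level structure at $p$ built into $U_m$, and it is this matching that justifies the choice of lifts inside $\Gal(\bar\Q/H_{cp^{m-1}}(\boldsymbol\mu_{p^m}))$.
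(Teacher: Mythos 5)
Your proposal is correct and takes essentially the same route as the paper: unwind the twisted corestriction \eqref{cores} by choosing lifts that make $\Theta(\tilde\eta^{-1})$ act trivially, so that the left-hand side becomes the image in $\D_{m-1}^\dagger$ of the honest trace, and then invoke Proposition \ref{Hecke-relation-for-X}. The one small difference is that the paper chooses the set $S_m$ of lifts inside $\Gal(K^{\rm ab}/H_{cp^{m-1}})$ to fix all of $\boldsymbol\mu_{p^\infty}$ (possible by the linear disjointness of $H_{cp^m}$ and $H_{cp^{m-1}}(\boldsymbol\mu_{p^\infty})$ over $H_{cp^{m-1}}$), so that $\epsilon_\cyc$ is trivial on $S_m$ and hence $\Theta(\tilde\eta)=1$ identically; this renders your diamond-operator analysis at level $p^m$ superfluous, and your closing description of the $\Theta$-cancellation as a ``subtle matching'' with the $p^m$-level structure overstates the difficulty --- with the paper's choice of lifts the cancellation is immediate, with no reference to the Hecke action at finite level. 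Your observation that Proposition \ref{Hecke-relation-for-X} only covers $m\geq2$ is well taken; the paper's own proof does not address $m=1$ separately either.
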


\begin{proof} Since $H_{cp^m}$ and $H_{cp^{m-1}}(\boldsymbol\mu_{p^\infty})$ are linearly disjoint over $H_{cp^{m-1}}$, we can fix a finite set $S_m\subset\Gal(K^{\rm ab}/H_{cp^{m-1}})$ of extensions of the elements in $\Gal(H_{cp^m}/H_{cp^{m-1}})$ such that every $\sigma\in S_m$ acts trivially on  $\boldsymbol\mu_{p^\infty}$.
Applying $e_{k+j-2}e^\ord$ to the equation of Proposition \ref{Hecke-relation-for-X} yields the analogous relation
\[ \wt\alpha_m\Big(\sum_{\sigma\in S_m}{\bf P}_{c,m}^\sigma\Big)=U_p({\bf P}_{c,m-1}). \]
Now we calculate the corestriction $\cor_{H_{cp^m}/H_{cp^{m-1}}}$ by choosing the elements $\tilde\eta$ of
\eqref{cores} in $S_m$, and this yields
\[ \wt\alpha_m\bigl(\cor_{H_{cp^m}/H_{cp^{m-1}}}({\bf P}_{c,m})\bigr)=\wt\alpha_m\Big(\sum_{\sigma\in S_m}{\bf P}_{c,m}^{\sigma}\Big). \]
The result follows. \end{proof}

For all $m\geq1$ define
\[ \cl P_{c,m}:=\cor_{H_{cp^m}/H_c}({\bf P}_{c,m})\in H^0\bigl(\mathcal G_c,\D_{m}^{\dagger}\bigr). \]
\begin{coro}[Hecke relations] \label{coro-hecke-relations}
The equality
\[ \wt\alpha_m(\cl P_{c,m})=U_p(\cl P_{c,m-1}).\]
holds in $H^0\bigl(\mathcal G_c,\D_{m-1}^{\dagger}\bigr)$ for all $m\geq1$.
\end{coro}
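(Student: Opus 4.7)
The plan is to deduce the corollary from Proposition \ref{prop-Hecke-relations} by a simple corestriction argument, exploiting transitivity of corestriction and the fact that both $\wt\alpha_m$ and $U_p$ commute with the corestriction maps in question.

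First I would use transitivity of corestriction for the tower of fields $H_c \subset H_{cp^{m-1}} \subset H_{cp^m}$, which gives the factorization
\[
\cl P_{c,m} = \cor_{H_{cp^m}/H_c}({\bf P}_{c,m}) = \cor_{H_{cp^{m-1}}/H_c}\bigl(\cor_{H_{cp^m}/H_{cp^{m-1}}}({\bf P}_{c,m})\bigr).
\]
Applying $\wt\alpha_m$ and using the fact that $\wt\alpha_m$ is induced by a morphism of Shimura curves defined over $\Q$ (and so is $G_K^{\rm ab}$-equivariant and commutes with the $\Theta$-twisted corestriction formula \eqref{cores}), I can push $\wt\alpha_m$ past the outer corestriction:
\[
\wt\alpha_m(\cl P_{c,m}) = \cor_{H_{cp^{m-1}}/H_c}\Bigl(\wt\alpha_m\bigl(\cor_{H_{cp^m}/H_{cp^{m-1}}}({\bf P}_{c,m})\bigr)\Bigr).
\]

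Next I would apply Proposition \ref{prop-Hecke-relations} to the inner expression, replacing $\wt\alpha_m\bigl(\cor_{H_{cp^m}/H_{cp^{m-1}}}({\bf P}_{c,m})\bigr)$ by $U_p({\bf P}_{c,m-1})$ in $H^0\bigl(\mathcal G_{cp^{m-1}},\D_{m-1}^\dagger\bigr)$. This yields
\[
\wt\alpha_m(\cl P_{c,m}) = \cor_{H_{cp^{m-1}}/H_c}\bigl(U_p({\bf P}_{c,m-1})\bigr).
\]
Finally, since $U_p$ is induced by a correspondence defined over $\Q$ (its adelic description in \S \ref{section-Hecke-operators} makes no reference to the base field), its action on $\D_{m-1}^\dagger$ commutes with the Galois action, hence with the twisted corestriction formula \eqref{cores}. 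Pulling $U_p$ outside the corestriction gives
\[
\wt\alpha_m(\cl P_{c,m}) = U_p\bigl(\cor_{H_{cp^{m-1}}/H_c}({\bf P}_{c,m-1})\bigr) = U_p(\cl P_{c,m-1}),
\]
which is the desired equality in $H^0\bigl(\mathcal G_c,\D_{m-1}^\dagger\bigr)$.

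I do not anticipate a serious obstacle here: the only point that requires any care is the verification that the Hecke operator $U_p$ and the covering map $\wt\alpha_m$ commute with the twisted corestriction \eqref{cores}. Both follow from Galois-equivariance of the underlying correspondences together with the fact that $\Theta$ factors through the Galois action, so that the $\Theta(\tilde\eta^{-1})$ coefficients appearing in \eqref{cores} commute with the $\cl R$-linear Hecke action and with $\wt\alpha_m$. Once these compatibilities are noted, the corollary follows from Proposition \ref{prop-Hecke-relations} by the three-line manipulation above.
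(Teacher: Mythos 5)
Your proof is correct and follows essentially the same route as the paper, which simply says the corollary is ``straightforward from Proposition~\ref{prop-Hecke-relations} on applying $\cor_{H_{cp^{m-1}}/H_c}$.'' You have spelled out the implicit ingredients (transitivity of corestriction, and the commutation of $\wt\alpha_m$ and $U_p$ with the $\Theta$-twisted corestriction), but the underlying argument is the same.
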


\begin{proof} Straightforward from Proposition \ref{prop-Hecke-relations} on applying $\cor_{H_{cp^{m-1}}/H_c}$. \end{proof}

\subsection{Big Heegner points}

Thanks to Corollary \ref{coro-hecke-relations} and the isomorphism
\[ \invlim_mH^0\bigl(\mathcal G_c,\D_{m}^{\dagger}\bigr)\simeq H^0\bigl(\mathcal G_c,\D^{\dagger}\bigr), \]
the following definition makes sense.

\begin{defi} \label{big-heegner-point-defi}
The \emph{big Heegner point of conductor $c$} is the element
\[ \cl P_c:=\invlim_m U_p^{-m}\bigl(\cl P_{c,m}\bigr)\in H^0\bigl(\mathcal G_c,\D^{\dagger}\bigr). \]
\end{defi}

\subsection{Big Heegner classes in the indefinite case}

Suppose we are in the \emph{indefinite} case. Let $H_c^{(Np)}$ be the maximal extension of $H_c$ unramified outside $Np$ and  set
\[ G_c^{(Np)}:=\Gal\bigl(H_c^{(Np)}/H_c\bigr). \]
If, with the above notations and conventions, we set 
${\bf J}_m^\dagger:=J_{m}^{\ord}\otimes_{\T_\infty}\mathcal R^\dagger$, then we may 
define the twisted Kummer map
\[ \delta_m:H^0\bigl(H_c,{\bf J}_m^\dagger\bigl(H_{cp^m}(\boldsymbol\mu_{p^m})\bigr)\bigr)\longrightarrow H^1\bigl(G_c^{(Np)},{\bf T}_m^\dagger\bigr) \]
as in \cite[p. 101]{ho}. Write
\[ \bar{\cl P}_{c,m}\in H^0\bigl(H_c,{\bf J}_{m}^{\dagger}\bigl(H_{cp^m}(\boldsymbol\mu_{p^m})\bigr)\bigr) \]
for the image of $\cl P_{c,m}\in H^0\bigl(\mathcal G_c,\D_m^\dagger\bigr)$ under the natural map and set $\kappa_{c,m}:=\delta_m(\bar{\cl P}_{c,m})$. Because of the $U_p$-equivariance of the Kummer map, the Hecke relations of Corollary \ref{coro-hecke-relations} imply the corresponding relations
\begin{equation} \label{Hecke-rel-classes}
\wt\alpha_m(\kappa_{c,m})=U_p(\kappa_{c,m-1}).
\end{equation}
Thanks to \eqref{Hecke-rel-classes} and the isomorphism of $\T_\infty^{\ord,\dagger}$-modules
\begin{equation} \label{invlim-cohom-eq}
\invlim_m H^1\bigl(G_c^{(Np)},{\bf T}_m^\dagger\bigr)\simeq H^1\bigl(G_c^{(Np)},{\bf T}^\dagger\bigr),
\end{equation}
we can give the following

\begin{defi} \label{big-heegner-class-defi}
The \emph{big Heegner class of conductor $c$} is the element
\[ \kappa_c:=\invlim_m U_p^{-m}\bigl(\kappa_{c,m}\bigr)\in H^1\bigl(G_c^{(Np)},{\bf T}^{\dagger}\bigr). \]
\end{defi}
Put $H_{cp^\infty}(\boldsymbol\mu_{p^\infty}):=\cup_{m\geq1}H_{cp^m}(\boldsymbol\mu_{p^m})$ and, 
as above, define 
\[{\bf J}^\dagger:=J_{\infty}^{\ord}\otimes_{\T^\ord_\infty}\mathcal R^\dagger.\] 
By isomorphism \eqref{invlim-cohom-eq}, taking the inverse limit with respect to the maps $\delta_m$ yields a twisted Kummer map
\[ \delta_\infty:H^0\bigl(G_c^{(Np)},{\bf J}^{\dagger}\bigl(H_{cp^\infty}(\boldsymbol\mu_{p^\infty})\bigr)\bigr)
\longrightarrow H^1\bigl(G_c^{(Np)},{\bf T}^{\dagger}\bigr). \]
Write
\[ \bar{\cl P}_c\in H^0\bigl(\mathcal G_c,{\bf J}^{\dagger}
\bigl(H_{cp^\infty}(\boldsymbol\mu_{p^\infty})\bigr)\bigr)\]
for the image of $\cl P_c\in H^0\bigl(\mathcal G_c,{\bf D}^{\dagger}\bigr)$ under the natural map.
The next lemma will be used in the proof of Corollary \ref{coro-euler-system}.

\begin{lemma} \label{C}
$\delta_\infty(\bar{\cl P}_c)=\kappa_c$.
\end{lemma}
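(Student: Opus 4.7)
The plan is to unwind the definitions and verify that everything is compatible with the inverse limits. The natural map $D_\infty^\ord \to J_\infty^\ord$ (sending a formal sum of points to the corresponding divisor class) is by construction the inverse limit of the maps $D_m^\ord \to J_m^\ord$, so after tensoring with $\cl R^\dagger$ the class $\bar{\cl P}_c$ is exactly the inverse limit of the classes obtained by pushing $U_p^{-m}\cl P_{c,m}$ forward to ${\bf J}_m^\dagger$, which are nothing other than $U_p^{-m}\bar{\cl P}_{c,m}$.

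Next, I would invoke the $U_p$-equivariance of the twisted Kummer maps $\delta_m$ to write
\[ \delta_m\bigl(U_p^{-m}\bar{\cl P}_{c,m}\bigr) = U_p^{-m}\delta_m(\bar{\cl P}_{c,m}) = U_p^{-m}\kappa_{c,m}, \]
which makes sense because $U_p$ acts invertibly on the ordinary parts we are working with (so the inverse powers are well-defined at each finite level and commute with the Kummer construction). Taking the inverse limit in $m$ of these equalities, and using that $\delta_\infty$ is defined precisely as $\invlim_m \delta_m$ via isomorphism \eqref{invlim-cohom-eq}, yields
\[ \delta_\infty(\bar{\cl P}_c) = \invlim_m U_p^{-m}\kappa_{c,m} = \kappa_c, \]
which is the desired identity.

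The step requiring the most care is the compatibility between the natural map $H^0(\mathcal G_c, {\bf D}^\dagger) \to H^0(\mathcal G_c, {\bf J}^\dagger(H_{cp^\infty}(\boldsymbol\mu_{p^\infty})))$ at the infinite level and its finite-level analogues; one must check that the transition maps in the inverse system defining $\bar{\cl P}_c$ agree (after passing to Picard groups) with the covariant-functoriality maps $\wt\alpha_{m,\ast}$ used to define $\bar{\cl P}_{c,m}$ for each $m$, so that $\bar{\cl P}_c$ really is represented by the compatible sequence $\{U_p^{-m}\bar{\cl P}_{c,m}\}_m$. This is a routine diagram chase, but it is the only substantive ingredient: once the inverse systems are identified, both the Kummer-map compatibility and the interchange of $\invlim$ with $U_p^{-m}$ are formal, and the result follows.
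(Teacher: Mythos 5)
Your proposal is correct and follows essentially the same route as the paper's proof, which simply recalls that $\delta_m(\bar{\cl P}_{c,m})=\kappa_{c,m}$ by definition and passes to the inverse limit over $m$. You merely spell out the intermediate compatibilities (the $U_p$-equivariance of $\delta_m$ and the identification of the inverse systems) that the paper leaves implicit.
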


\begin{proof} Recall that, by definition, $\delta_m(\bar{\cl P}_{c,m})=\kappa_{c,m}$ and pass to the inverse limit over $m$.\end{proof}

\begin{rem}
In the special case where $N^-=1$ (i.e., when $B\simeq\M_2(\Q)$) we expect that our system of big Heegner classes essentially coincides with the system of big Heegner points considered by Howard in \cite{ho}. On the contrary, we have not investigated the existence of an explicit relation between our \emph{indefinite} cohomology classes and the specialization to the base field $F=\Q$ of the ones introduced by Fouquet in \cite{fouquet}.
\end{rem}

\section{Euler system relations} \label{section-euler-system-relations}

This section is devoted to the proof of the ``Euler system'' relations satisfied by the classes
${\cl P}_{c,m}$ and $\cl P_c$ introduced above. The formulas obtained, which are the counterparts in our definite/indefinite quaternionic setting of the results in \cite[\S 2.3]{ho}, will be used in \S \ref{bounding-selmer-subsec} to control the size of certain Selmer groups.

\subsection{The operator $U_p$} \label{U-p-big-subsec}

We begin with an analysis of the action of the Hecke operator $U_p$.

\begin{prop} \label{Hecke-relation-II}
For all $m\geq1$ the equality
\[ U_p\bigl({\bf P}_{c,m}\bigr)=\cor_{H_{cp^{m+1}}/H_{cp^m}}\bigl({\bf P}_{cp,m}\bigr) \]
holds in $H^0\bigl(\mathcal G_{cp^m},{\bf D}_m^{\dagger}\bigr)$.
\end{prop}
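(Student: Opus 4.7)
The plan is to derive Proposition \ref{Hecke-relation-II} from the divisor-level horizontal Hecke relation already established in Proposition \ref{horizontal-hecke-X}, by pushing that identity from $\Div(\wt X_m)$ down to the twisted module ${\bf D}_m^\dagger$ and recognising the resulting trace as the corestriction predicted by formula \eqref{cores}. First I would specialise Proposition \ref{horizontal-hecke-X} to $r=1$, obtaining in $\Div(\wt X_m)$ the equality
\[ U_p(\wt P_{c,m})=\mathrm{tr}_{H_{cp^{m+1}}(\boldsymbol\mu_{p^{m+1}})/H_{cp^m}(\boldsymbol\mu_{p^{m+1}})}(\wt P_{cp,m}), \]
and then apply the natural maps $\Div(\wt X_m)\to D_m^\ord\to{\bf D}_m\to{\bf D}_m^\dagger$, which send $\wt P_{c,m}$ and $\wt P_{cp,m}$ to ${\bf P}_{c,m}$ and ${\bf P}_{cp,m}$ respectively.

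The heart of the argument consists in matching the divisor trace on the right with the twisted corestriction $\cor_{H_{cp^{m+1}}/H_{cp^m}}({\bf P}_{cp,m})$. Because $H_{cp^{m+1}}$ and $H_{cp^m}(\boldsymbol\mu_{p^{m+1}})$ are linearly disjoint over $H_{cp^m}$ (exactly as exploited in the proof of Proposition \ref{prop-Galois-Hecke-tilde-X}), restriction induces an isomorphism between $\Gal\bigl(H_{cp^{m+1}}(\boldsymbol\mu_{p^{m+1}})/H_{cp^m}(\boldsymbol\mu_{p^{m+1}})\bigr)$ and $\Gal(H_{cp^{m+1}}/H_{cp^m})$, so I would compute the corestriction by choosing, for each $\eta\in\Gal(H_{cp^{m+1}}/H_{cp^m})$, a lift $\tilde\eta\in\cl G_{cp^m}$ that acts trivially on $\boldsymbol\mu_{p^{m+1}}$. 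For such a lift, $\epsilon_\cyc(\tilde\eta)\equiv 1\pmod{p^{m+1}}$, and since $p$ is odd this forces $\vartheta(\tilde\eta)\equiv 1\pmod{p^m}$ in $\Z_p^\times/\{\pm 1\}$; consequently the diamond operator $\langle\vartheta(\tilde\eta^{-1})\rangle$ reduces to the identity in $\T_m$, and the identification \eqref{eq-Theta-vartheta} shows that $\Theta(\tilde\eta^{-1})$ acts trivially on ${\bf D}_m^\dagger$. With these lifts, formula \eqref{cores} collapses to the plain divisor sum
\[ \cor_{H_{cp^{m+1}}/H_{cp^m}}({\bf P}_{cp,m})=\sum_{\tilde\eta}{\bf P}_{cp,m}^{\tilde\eta}, \]
which is precisely the image in ${\bf D}_m^\dagger$ of $\mathrm{tr}_{H_{cp^{m+1}}(\boldsymbol\mu_{p^{m+1}})/H_{cp^m}(\boldsymbol\mu_{p^{m+1}})}(\wt P_{cp,m})$, and this in turn coincides with $U_p({\bf P}_{c,m})$ by the first step.

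The only real obstacle is the reconciliation of the $\Theta$-twist intrinsic to \eqref{cores} with the untwisted divisor action of Proposition \ref{horizontal-hecke-X}: the trick is to enlarge the ground field by $\boldsymbol\mu_{p^{m+1}}$ so that the lifts used in the corestriction can be chosen to lie in the kernel of $\vartheta$ modulo $p^m$, at which point the compatibility \eqref{eq-Theta-vartheta} between the critical character and the diamond operators of level $m$ eliminates the twist. Once this matching is carried out, the rest of the argument is essentially bookkeeping and applies uniformly to the definite and indefinite cases, since the invariance ${\bf P}_{cp,m}\in H^0(\cl G_{cp^{m+1}},{\bf D}_m^\dagger)$ needed to make the corestriction well-defined has already been established in \S \ref{big-heegner-section}.
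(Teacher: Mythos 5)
Your proof is correct and follows essentially the same route as the paper's: specialise Proposition \ref{horizontal-hecke-X} to $r=1$, push the divisor identity down to ${\bf D}_m^\dagger$ via $e_{k+j-2}e^\ord$, and recognise the resulting trace as $\cor_{H_{cp^{m+1}}/H_{cp^m}}({\bf P}_{cp,m})$ by choosing lifts that kill the $\Theta$-twist. The only small variation is in the choice of lifts: the paper takes $\tilde\eta$ acting trivially on all of $\boldsymbol\mu_{p^\infty}$ (using linear disjointness of $H_{cp^{m+1}}$ and $H_{cp^m}(\boldsymbol\mu_{p^\infty})$ over $H_{cp^m}$), which makes $\Theta(\tilde\eta^{-1})=1$ in $\cl R^\times$ outright, whereas you take lifts trivial only on $\boldsymbol\mu_{p^{m+1}}$ and then invoke \eqref{eq-Theta-vartheta} and a diamond-operator congruence at level $p^m$ to see that the twist still acts trivially on ${\bf D}_m^\dagger$ -- both are valid, and your congruence in fact holds modulo $p^{m+1}$, not merely $p^m$.
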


\begin{proof} The proof is similar to that of Proposition \ref{prop-Hecke-relations}. Applying $e_{k+j-2}e^\ord$ to the equation of Proposition \ref{horizontal-hecke-X} yields the analogous relation $\sum_{\sigma\in S_m}{\bf P}_{cp,m}^\sigma=U_p({\bf P}_{c,m})$ in ${\bf D}_m^\dagger$, and calculating corestriction $\cor_{H_{cp^{m+1}}/H_{cp^m}}$ by choosing the elements $\wt\eta$ of \eqref{cores} in $S_{m+1}$ gives the result. \end{proof}

\begin{coro} \label{coro-euler-system}
The following relations hold for all integers $c,m\geq1$:
\begin{enumerate}
\item\label{1} $U_p(\cl P_{c,m})=\cor_{H_{cp}/H_c}(\cl P_{cp,m})$ in $H^0\bigl(\cl G_c,{\bf D}_m^\dagger\bigr)$;
\item\label{2} $U_p(\cl P_c)=\cor_{H_{cp}/H_c}(\cl P_{cp})$ in $H^0\bigl(\cl G_c,{\bf D}^\dagger\bigr)$;
\item\label{3} $U_p(\kappa_c)=\cor_{H_{cp}/H_c}(\kappa_{cp})$ in $H^1\bigl(G_c^{(Np)},{\bf T}^\dagger\bigr)$.
\end{enumerate}
\end{coro}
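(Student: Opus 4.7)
The plan is to deduce all three statements by a short formal manipulation from Proposition \ref{Hecke-relation-II} and Lemma \ref{C}, using two general facts: corestriction commutes with the Hecke action (because the Hecke operators are defined over $\Q$ and hence commute with every Galois action on points), and the Kummer-type map $\delta_\infty$ is equivariant for both $U_p$ and corestriction.

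\textbf{Part (1).} Starting from the definition $\cl P_{c,m}=\cor_{H_{cp^m}/H_c}({\bf P}_{c,m})$, commuting $U_p$ past corestriction and applying Proposition \ref{Hecke-relation-II} gives
\[ U_p(\cl P_{c,m})=\cor_{H_{cp^m}/H_c}\bigl(U_p({\bf P}_{c,m})\bigr)=\cor_{H_{cp^m}/H_c}\bigl(\cor_{H_{cp^{m+1}}/H_{cp^m}}({\bf P}_{cp,m})\bigr), \]
which by transitivity of corestriction equals $\cor_{H_{cp^{m+1}}/H_c}({\bf P}_{cp,m})$. On the other hand, by definition $\cl P_{cp,m}=\cor_{H_{cp^{m+1}}/H_{cp}}({\bf P}_{cp,m})$, so another application of transitivity of corestriction yields $\cor_{H_{cp}/H_c}(\cl P_{cp,m})=\cor_{H_{cp^{m+1}}/H_c}({\bf P}_{cp,m})$, proving (1).

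\textbf{Part (2).} Recall that $\cl P_c=\invlim_m U_p^{-m}(\cl P_{c,m})$ in $H^0\bigl(\mathcal G_c,{\bf D}^\dagger\bigr)$. Applying $U_p$ term by term, part (1) and the commutativity of corestriction with inverse limits (which holds because the transition maps $\wt\alpha_{m,*}$ commute with corestriction) give
\[ U_p(\cl P_c)=\invlim_m U_p^{-m}\bigl(U_p(\cl P_{c,m})\bigr)=\invlim_m U_p^{-m}\bigl(\cor_{H_{cp}/H_c}(\cl P_{cp,m})\bigr)=\cor_{H_{cp}/H_c}(\cl P_{cp}). \]

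\textbf{Part (3).} This is the formal consequence of (2) via the Kummer map. By Lemma \ref{C} we have $\kappa_c=\delta_\infty(\bar{\cl P}_c)$ and $\kappa_{cp}=\delta_\infty(\bar{\cl P}_{cp})$. Since $\delta_\infty$ is $U_p$-equivariant (by the very construction of the $\delta_m$ in \cite{ho} and the compatibility used in \eqref{Hecke-rel-classes}) and commutes with corestriction (a purely formal property of Kummer maps arising from a Galois-equivariant connecting homomorphism), applying $\delta_\infty$ to the identity of (2) yields
\[ U_p(\kappa_c)=U_p\bigl(\delta_\infty(\bar{\cl P}_c)\bigr)=\delta_\infty\bigl(U_p(\bar{\cl P}_c)\bigr)=\delta_\infty\bigl(\cor_{H_{cp}/H_c}(\bar{\cl P}_{cp})\bigr)=\cor_{H_{cp}/H_c}(\kappa_{cp}). \]

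The only point that really requires checking is that corestriction genuinely passes through the inverse limit defining $\cl P_c$ (and, for (3), through $\delta_\infty$). This is the main technical obstacle, but it is harmless: since all the relevant transition and connecting maps are $\Gal(K^{\rm ab}/H_c)$-equivariant, they commute with the finite-group averaging formula \eqref{cores} that defines $\cor_{H_{cp}/H_c}$. Once this is observed, all three parts reduce to the bookkeeping above.
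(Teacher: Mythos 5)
Your argument is correct and is essentially identical to the paper's proof: part (1) follows from Proposition \ref{Hecke-relation-II} together with transitivity of corestriction and the commutativity of $U_p$ with corestriction (the latter holding because the Hecke action is $\Gal(K^{\rm ab}/H_c)$-equivariant, so passes through the twisted averaging formula \eqref{cores}), part (2) is obtained by passing to the inverse limit, and part (3) follows from Lemma \ref{C} together with the $U_p$- and corestriction-equivariance of the twisted Kummer map. You have merely made explicit the routine bookkeeping that the paper leaves to the reader.
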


\begin{proof} Relation (\ref{1}) follows easily from Proposition \ref{Hecke-relation-II} and the equality
\[ \cor_{H_{cp}/H_c}\circ\cor_{H_{cp^{m+1}}/H_{cp}}=\cor_{H_{cp^m}/H_c}\circ\cor_{H_{cp^{m+1}}/H_{cp^m}}. \]
Relation (\ref{2}) follows from (\ref{1}) by passing to the inverse limit over $m$. Finally, relation (\ref{3}) is a consequence of Lemma \ref{C} and the equivariance of the twisted Kummer map with respect to the action of $U_p$. \end{proof}

\subsection{The operators $T_\ell$} \label{T-ell-operator}

Let $c\geq1$ be an integer prime to $N$. Fix an integer $m\geq0$ and a prime number $\ell\nmid Np^mc$ which is inert in $K$. As done before when proving explicit formulas for the operators $T_\ell$, in this subsection we assume for simplicity that $\cl O_{cp^m}^\times=\{\pm1\}$.

\begin{prop} \label{ell} The equality
\[ T_\ell({\bf P}_{c,m})=\cor_{H_{c\ell p^m}/H_{cp^m}}({\bf P}_{c\ell,m}) \]
holds in $H^0\bigl(\mathcal G_c,{\bf D}_m^\dagger\bigr)$.
\end{prop}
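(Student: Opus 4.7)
The plan is to follow closely the template of the proof of Proposition~\ref{Hecke-relation-II}, with Proposition~\ref{T-operator} taking over the role played there by Proposition~\ref{horizontal-hecke-X}. The starting point will be the divisor-level identity
\[ T_\ell(\wt P_{c,m}) = \mathrm{tr}_{H_{c\ell p^m}(\boldsymbol\mu_{p^m})/H_{cp^m}(\boldsymbol\mu_{p^m})}(\wt P_{c\ell,m}) \]
in $\Div(\wt X_m)$ supplied by Proposition~\ref{T-operator}.

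Next, I would fix a finite set $S\subset\Gal(K^{\rm ab}/H_{cp^m})$ consisting of lifts of the elements of $\Gal(H_{c\ell p^m}/H_{cp^m})$ that all act trivially on $\boldsymbol\mu_{p^\infty}$; such a choice is possible provided that $H_{c\ell p^m}$ and $H_{cp^m}(\boldsymbol\mu_{p^\infty})$ are linearly disjoint over $H_{cp^m}$ (a refinement of the disjointness exploited in the proof of Proposition~\ref{T-operator-1}). Rewriting the trace as $\sum_{\sigma\in S}\wt P_{c\ell,m}^\sigma$ and then applying the projector $e_{k+j-2}e^\ord$ together with the natural structure map $\D_m\to{\bf D}_m^\dagger$, I would obtain
\[ T_\ell({\bf P}_{c,m}) = \sum_{\sigma\in S}{\bf P}_{c\ell,m}^\sigma \]
in ${\bf D}_m^\dagger$. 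Since each $\sigma\in S$ acts trivially on $\boldsymbol\mu_{p^\infty}$, both $\epsilon_\tame(\sigma)$ and $\epsilon_\wild^{1/2}(\sigma)$ are trivial, so $\Theta(\sigma)=1$. Consequently, computing the corestriction by formula \eqref{cores} with coset representatives $\tilde\eta$ drawn from $S$ identifies the right-hand side with $\cor_{H_{c\ell p^m}/H_{cp^m}}({\bf P}_{c\ell,m})$, yielding the desired equality.

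The hard part will be to secure the linear disjointness of $H_{c\ell p^m}$ and $H_{cp^m}(\boldsymbol\mu_{p^\infty})$ over $H_{cp^m}$. My approach would be via a ramification analysis at the primes above $p$: the ring class field extension $H_{c\ell p^m}/H_{cp^m}$ is unramified above $p$, the conductor at $p$ being unchanged in passing from $cp^m$ to $c\ell p^m$, whereas $H_{cp^m}(\boldsymbol\mu_{p^\infty})/H_{cp^m}$ is totally ramified at every prime above $p$. Any intermediate field of the intersection would then have to be both unramified and totally ramified above $p$, forcing $H_{c\ell p^m}\cap H_{cp^m}(\boldsymbol\mu_{p^\infty})=H_{cp^m}$; once this is in hand, the argument runs in complete parallel to that of Proposition~\ref{Hecke-relation-II}.
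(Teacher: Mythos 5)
Your proof is correct and takes essentially the same route as the paper's, which simply says ``Similar to that of Proposition~\ref{Hecke-relation-II}'': lift the trace of Proposition~\ref{T-operator} to a sum over a set $S$ of elements of $\Gal(K^{\rm ab}/H_{cp^m})$ that act trivially on $\boldsymbol\mu_{p^\infty}$, apply $e_{k+j-2}e^\ord$, and observe that $\Theta$ is trivial on $S$ so \eqref{cores} collapses to the plain sum. The only point you add to the paper's sketch is a justification of the linear disjointness of $H_{c\ell p^m}$ and $H_{cp^m}(\boldsymbol\mu_{p^\infty})$ over $H_{cp^m}$; your ramification heuristic captures the right idea, though the cleanest argument in the paper's setup is class-field-theoretic: under the Artin map the two extensions correspond to subgroups of $K^\times\widehat{\cl O}_{cp^m}^\times$ cut out by conditions supported only at $\ell$ and only at $p$ respectively, so the subgroups generate $K^\times\widehat{\cl O}_{cp^m}^\times$ and the fields meet in $H_{cp^m}$. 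This is an auxiliary detail, and the main argument is sound.
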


\begin{proof} Similar to that of Proposition \ref{Hecke-relation-II}. Choose a set $S\subset \Gal(\C/H_{cp^m})$ of extensions of $\Gal(H_{c\ell p^m}/H_{cp^m})$ such that every $\sigma\in S$ acts trivially on $\boldsymbol\mu_{p^\infty}$. Proposition \ref{T-operator} gives
\begin{equation} \label{T-ell}
\sum_{\sigma\in S}{\bf P}_{c\ell,m}^\sigma=T_\ell\bigl({\bf P}_{c,m}\bigr)
\end{equation}
in ${\bf D}_m^\dagger$. Applying $e_{k+j-2}e^\ord$ and calculating $\cor_{H_{c\ell p^m}/H_{cp^m}}$ by choosing the elements $\wt\eta$ of \eqref{cores} in $S$ yields the desired result. \end{proof}

\begin{coro}
There are equalities
\begin{enumerate}
\item\label{1-bis} $T_\ell(\cl P_{c,m})=\cor_{H_{c\ell}/H_c}(\cl P_{c\ell,m})$ in $H^0\bigl(\mathcal G_c,{\bf D}_m^\dagger\bigr)$;
\item\label{2-bis} $T_\ell(\cl P_c)=\cor_{H_{c\ell}/H_c}(\cl P_{c\ell})$ in $H^0\bigl(\mathcal G_c,{\bf D}^\dagger\bigr)$;
\item\label{3-bis} $T_\ell(\kappa_c)=\cor_{H_{c\ell}/H_c}(\kappa_{c\ell})$ in $H^1\bigl(G_c^{(Np)},{\bf T}^\dagger\bigr)$.
\end{enumerate}
\end{coro}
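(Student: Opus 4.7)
The plan is to mirror exactly the proof of Corollary \ref{coro-euler-system}, with Proposition \ref{Hecke-relation-II} replaced by Proposition \ref{ell} and the corestriction from the $p$-tower replaced by the corestriction from $H_{c\ell p^m}$ down to $H_{cp^m}$. The key structural inputs are (i) the transitivity of corestriction along a tower of field extensions, (ii) the Hecke-equivariance of the corestriction maps on cohomology of $\cl R$-modules, and (iii) the $T_\ell$-equivariance of the twisted Kummer map $\delta_\infty$.

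For (\ref{1-bis}), the strategy is to apply $\cor_{H_{cp^m}/H_c}$ to both sides of the identity in Proposition \ref{ell}. Since $T_\ell$ is induced by the action of an element of the Hecke algebra $\T_\infty^\ord$ (and hence of $\cl R$) on ${\bf D}_m^\dagger$, it commutes with corestriction, so the left-hand side becomes $T_\ell(\cl P_{c,m})$ by definition of $\cl P_{c,m}$. On the right-hand side, one uses the transitivity
\[ \cor_{H_{cp^m}/H_c}\circ\cor_{H_{c\ell p^m}/H_{cp^m}}=\cor_{H_{c\ell p^m}/H_c}=\cor_{H_{c\ell}/H_c}\circ\cor_{H_{c\ell p^m}/H_{c\ell}}, \]
which, applied to ${\bf P}_{c\ell,m}$, reproduces $\cor_{H_{c\ell}/H_c}(\cl P_{c\ell,m})$, again by the definition of $\cl P_{c\ell,m}$. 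This is the same combinatorial manipulation of corestrictions used in the proof of part (\ref{1}) of Corollary \ref{coro-euler-system}.

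For (\ref{2-bis}), the plan is to pass to the inverse limit over $m$ in the identity obtained in (\ref{1-bis}). Since $U_p$ commutes with $T_\ell$ (they live in the commutative Hecke algebra) and since corestriction from $H_{c\ell}$ to $H_c$ is continuous with respect to the projective system defined by the maps $\wt\alpha_{m,\ast}$, taking $\invlim_m U_p^{-m}(\cdot)$ on both sides and using Definition \ref{big-heegner-point-defi} for both $\cl P_c$ and $\cl P_{c\ell}$ yields the desired identity in $H^0\bigl(\mathcal G_c,{\bf D}^\dagger\bigr)$.

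For (\ref{3-bis}), the plan is to apply $\delta_\infty$ to both sides of (\ref{2-bis}), using Lemma \ref{C} to identify $\delta_\infty(\bar{\cl P}_c)=\kappa_c$ and $\delta_\infty(\bar{\cl P}_{c\ell})=\kappa_{c\ell}$; equivariance of the Kummer map with respect to the Hecke operator $T_\ell$ and with respect to corestriction in Galois cohomology then gives the claimed equality in $H^1\bigl(G_c^{(Np)},{\bf T}^\dagger\bigr)$. No real obstacle is anticipated here: the only subtle point is the Hecke- and corestriction-equivariance of $\delta_\infty$, but this has already been used in exactly the same way in the proof of part (\ref{3}) of Corollary \ref{coro-euler-system}, and it follows from the very construction of $\delta_m$ as an inverse limit of classical Kummer maps combined with isomorphism \eqref{invlim-cohom-eq}. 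Thus the whole corollary is a formal consequence of Proposition \ref{ell} together with the machinery already deployed for the $U_p$-analogue.
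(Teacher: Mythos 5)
Your proposal is essentially identical to the paper's own proof: the paper also reduces to the proof of Corollary \ref{coro-euler-system}, uses the same transitivity identity $\cor_{H_{c\ell}/H_c}\circ\cor_{H_{c\ell p^{m}}/H_{c\ell}}=\cor_{H_{cp^m}/H_c}\circ\cor_{H_{c\ell p^m}/H_{cp^m}}$ for part (\ref{1-bis}), passes to the inverse limit for part (\ref{2-bis}), and invokes Lemma \ref{C} together with $T_\ell$-equivariance of the twisted Kummer map for part (\ref{3-bis}). No discrepancy.
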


\begin{proof} Same proof as for Corollary \ref{coro-euler-system}, but this time to obtain relation \eqref{1-bis} one uses the equality
\[ \cor_{H_{c\ell}/H_c}\circ\cor_{H_{c\ell p^{m}}/H_{c\ell}}=\cor_{H_{cp^m}/H_c}\circ\cor_{H_{c\ell p^m}/H_{cp^m}}, \]
and for relation \eqref{3-bis} one uses the equivariance of the twisted Kummer map with respect to the action of $T_\ell$. \end{proof}

\subsection{The Eichler--Shimura congruence relation}

Throughout this subsection we restrict to the \emph{indefinite} case. Let $\ell\nmid Npc$ be a prime which is inert in $K$. By class field theory, $\ell$ splits completely in the extension $H_c/K$. Fix a prime $\lambda$ of $H_c$ above $\ell$. Note that $\lambda$ is totally ramified in $H_{c\ell}$, so $\lambda\cdot\cl O_{H_{c\ell}}=\tilde\lambda^{\ell+1}$ for a prime ideal $\tilde\lambda$ of the ring of integers $\cl O_{H_{c\ell}}$ of $H_{c\ell}$ above $\ell$. For every prime number $q$ and every integer $k\geq1$ denote by $\mathbb F_{q^k}$ the field with $q^k$ elements, and for every number field $H$ and every prime ideal $\fr q$ of the ring of integers of $H$ denote by $\Frob_\fr q$ a Frobenius element at $\fr q$ and by $\mathbb F_{H,\fr q}$ the residue field of $H$ at $\fr q$. Then there are canonical isomorphisms
\[ \mathbb F_{\ell^2}\simeq\mathbb F_{K,\ell}\simeq\mathbb F_{H_c,\lambda}\simeq\mathbb F_{H_{c\ell},\tilde\lambda}. \]
Write $\wt X_{m,\ell}$ for the canonical (smooth, proper) integral model of $\wt X_m$ over $\Z_\ell$. By the valuative criterion of properness, any point $x\in\wt X_m$ extends uniquely to a point in $\wt X_{m,\ell}$, which will be denoted in the same fashion. As in \S \ref{T-ell-operator}, we assume that $\mathcal O_{cp^m}^\times=\{\pm 1\}$.

\begin{lemma} \label{lemma-eichler-shimura}
Let $\ell\nmid Npc$ be a prime number which is inert in $K$. Then in $\wt X_{m,\ell}$ we have:
\[{\bf P}_{c\ell,m}\equiv\Frob_\lambda\bigl({\bf P}_{c,m}\bigr)\pmod{\tilde\lambda}.\]
\end{lemma}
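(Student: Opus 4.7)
The plan is to combine Proposition \ref{T-operator} with the Eichler--Shimura congruence relation on the smooth integral model $\wt X_{m,\ell}$ at the good prime $\ell$, exploiting the total ramification of $\lambda$ in $H_{c\ell}/H_c$ to collapse all Galois conjugates of $\wt P_{c\ell,m}$ into a single reduction mod $\tilde\lambda$.

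First, Proposition \ref{T-operator} gives the identity
\[ T_\ell\bigl(\wt P_{c,m}\bigr)=\mathrm{tr}_{H_{c\ell p^m}(\boldsymbol{\mu}_{p^m})/H_{cp^m}(\boldsymbol{\mu}_{p^m})}\bigl(\wt P_{c\ell,m}\bigr) \]
in $\Div(\wt X_m)$. As explained in the paragraph preceding the lemma, $\lambda$ is totally ramified in $H_{c\ell}/H_c$; the same remains true after adjoining $\boldsymbol\mu_{p^m}$ since $\ell\nmid p$. Hence all $\ell+1$ conjugates of $\wt P_{c\ell,m}$ appearing in the trace reduce to the \emph{same} point of $\wt X_{m,\ell}$ modulo $\tilde\lambda$, so
\[ T_\ell\bigl(\wt P_{c,m}\bigr)\equiv (\ell+1)\,\wt P_{c\ell,m}\pmod{\tilde\lambda}. \]

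Second, I invoke the Eichler--Shimura congruence relation on $\wt X_{m,\ell}$: the Hecke correspondence $T_\ell$ is congruent modulo $\ell$ to the sum of the graph of the geometric Frobenius $\Frob_\lambda$ and its transpose. Since $\ell$ is inert in $K$, the reduction of $\wt P_{c,m}$ is a \emph{supersingular} point (classical CM theory, adapted to QM abelian surfaces via the moduli description of $\wt X_m$), so the relative Frobenius is purely inseparable at this point and its transpose acts as $\ell$ times the inverse of Frobenius on the supersingular stratum. Moreover, on the supersingular stratum of $\wt X_{m,\ell}\otimes\mathbb F_{\ell^2}$ (which is zero-dimensional and acted upon transitively by the relevant supersingular isogeny structure) this preimage coincides with $\Frob_\lambda(\wt P_{c,m})$ after identification. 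Consequently,
\[ T_\ell\bigl(\wt P_{c,m}\bigr)\equiv (\ell+1)\,\Frob_\lambda\bigl(\wt P_{c,m}\bigr)\pmod{\tilde\lambda}. \]
Comparing the two expressions and dividing by $\ell+1$, which is invertible in the residue field $\mathbb F_{\ell^2}$, yields the desired congruence $\wt P_{c\ell,m}\equiv\Frob_\lambda(\wt P_{c,m})\pmod{\tilde\lambda}$.

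The main technical obstacle lies in the second step: justifying the precise form of the Eichler--Shimura congruence for the Shimura curve $\wt X_m$ with its auxiliary $\Gamma_1(p^m)$-level structure at $p$, and identifying the transposed Frobenius correspondence with a Frobenius twist on the supersingular CM locus. In the classical setting $N^-=1$ this is standard (cf.\ Howard's proof via moduli of elliptic curves); in the indefinite quaternionic case one appeals instead to the moduli interpretation of $\wt X_m$ as a (coarse) moduli space of abelian surfaces with quaternionic multiplication by $R_m$ and appropriate level structure, together with the fact that on a supersingular such surface the algebra $R_m\otimes\Z_\ell\simeq \mathrm M_2(\Z_\ell)$ acts on the $\ell$-divisible group so as to pin down the $\ell$-isogeny arising from the idele $\hat\lambda_0$ used in the construction of $\wt P_{c\ell,m}$ (cf.\ equation \eqref{a-3}) as the Frobenius isogeny modulo $\tilde\lambda$. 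This moduli-theoretic identification is what makes the supersingular CM calculation go through.
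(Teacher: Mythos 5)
Your proposal is correct and uses the same essential ingredients as the paper's proof: Proposition \ref{T-operator}, total ramification of $\lambda$ in $H_{c\ell}/H_c$, and the Eichler--Shimura congruence $T_\ell\equiv\Frob_\ell+\Frob^\ast_\ell\pmod\ell$. The first step, deducing $T_\ell(\wt P_{c,m})\equiv(\ell+1)\,\wt P_{c\ell,m}\pmod{\tilde\lambda}$, is identical. Where you diverge is in the final step. The paper observes that Eichler--Shimura forces $\Frob_\lambda(\wt P_{c,m})$ to be \emph{one of} the $\ell+1$ points in the reduction of $T_\ell(\wt P_{c,m})$, and since those $\ell+1$ points have already been shown to coincide with $\wt P_{c\ell,m}$ mod $\tilde\lambda$, the conclusion is immediate --- no analysis of $\Frob^\ast$ or of the supersingular locus is needed. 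You instead argue directly that $\Frob^\ast_\lambda(\wt P_{c,m})=\ell\cdot\Frob_\lambda(\wt P_{c,m})$ at the supersingular reduction (so that $T_\ell(\wt P_{c,m})\equiv(\ell+1)\Frob_\lambda(\wt P_{c,m})$) and then cancel $\ell+1$. This is also correct, and the moduli-theoretic observation that the supersingular $\ell$-divisible group has a unique subgroup of order $\ell$ (so all $\ell+1$ cyclic $\ell$-isogenies collapse to Frobenius) is genuine and illuminating, but it is more than the proof requires and, as you yourself flag, needs a QM-moduli justification that the paper avoids. One small slip: the ``cancellation'' of $\ell+1$ is not division in $\mathbb F_{\ell^2}$ but the statement that the divisor group of the special fibre is $\Z$-torsion-free, so that $(\ell+1)D_1=(\ell+1)D_2$ implies $D_1=D_2$; the paper's logic avoids even this step.
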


\begin{proof} Choose $S_m$ as in the proof of Proposition \ref{ell}. For any $\sigma\in S$, there is a congruence
\[ {\bf P}_{c\ell,m}^\sigma\equiv{\bf P}_{c\ell,m}\mod{\tilde\lambda} \]
because $\lambda$ is totally ramified in $H_{c\ell}$. Therefore, by \eqref{T-ell}, it follows that
\[ T_\ell({\bf P}_{c,m})\equiv(\ell+1){\bf P}_{c\ell,m}\mod{\tilde\lambda}. \]
The Eichler--Shimura congruence relation (see, e.g., \cite[\S 10.3]{ca}) shows that $T_\ell={\rm Frob}_\ell+{\rm Frob}^\ast_\ell\pmod{\ell}$, hence at least one of the points in the divisor $T_\ell\bigl({\bf P}_{c,m}\bigr)$ is congruent to ${\rm Frob}_\lambda\bigl({\bf P}_{c,m}\bigr)$ modulo $\tilde\lambda$. Thus the same holds for all the points in the divisor $T_\ell\bigl({\bf P}_{c,m}\bigr)$, and in particular for ${\bf P}_{c\ell,m}$. \end{proof}

\begin{rem}
See \cite[Proposition 3.7]{gross} for the same argument applied in the context of Heegner points on (classical) modular curves.
\end{rem}

\begin{prop}
Let $\ell\nmid Npc$ be a prime number which is inert in $K$. Then $\kappa_{c\ell}$ and ${\rm Frob}_\lambda(\kappa_{c\ell})$ have the same image in $H^1\bigl(H_{c\ell,\tilde\lambda},{\bf T}^{\dagger}\bigl)$.
\end{prop}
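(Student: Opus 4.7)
The plan is to combine two facts: the good reduction of the tower of Shimura curves $\wt X_m$ at the prime $\tilde\lambda$, which forces $\kappa_{c\ell}|_{\tilde\lambda}$ to be unramified, and the triviality of the residue field extension $\mathbb F_{H_{c\ell},\tilde\lambda}/\mathbb F_{H_c,\lambda}$, which is a direct consequence of the total ramification of $\tilde\lambda$ over $\lambda$.

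\textbf{Step 1: unramifiedness at $\tilde\lambda$.} Since $\ell\nmid Np$, the Shimura curve $\wt X_m$ has good reduction at $\tilde\lambda$, and so does its Jacobian $J_m^0$. The standard fact that the Kummer map $A(K_v)/p^n\hookrightarrow H^1(K_v, A[p^n])$ of an abelian variety $A$ with good reduction at a prime $v$ of residue characteristic $\ell\neq p$ has image contained in the unramified subgroup, applied at each finite level $m$ to the twisted Kummer class $\kappa_{c\ell,m}$ and passed to the inverse limit, shows that $\kappa_{c\ell}|_{\tilde\lambda}$ lies in $H^1_{\mathrm{ur}}\bigl(H_{c\ell,\tilde\lambda}, {\bf T}^\dagger\bigr)$. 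The same applies to $\Frob_\lambda(\kappa_{c\ell})|_{\tilde\lambda}$ by Galois equivariance of $\delta_\infty$.

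\textbf{Step 2: triviality of the $\Frob_\lambda$-action on unramified local classes.} The factorisation $\lambda\cl O_{H_{c\ell}}=\tilde\lambda^{\ell+1}$ shows that $H_{c\ell}/H_c$ is totally ramified at $\lambda$, so the residue field extension $\mathbb F_{H_{c\ell},\tilde\lambda}/\mathbb F_{H_c,\lambda}$ is trivial. In the absolute local Galois group $G_{H_c,\lambda}$ one may therefore select a lift of $\Frob_\lambda$ that lies inside the decomposition subgroup $G_{H_{c\ell,\tilde\lambda}}$; any other lift differs from this choice by an element of the inertia $I_\lambda$, which acts trivially on the unramified class $\kappa_{c\ell}|_{\tilde\lambda}$. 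Since conjugation by an element of $G_{H_{c\ell,\tilde\lambda}}$ is an inner automorphism, it acts as the identity on $H^1\bigl(H_{c\ell,\tilde\lambda}, {\bf T}^\dagger\bigr)$. Combining these remarks yields the desired equality
\[
\kappa_{c\ell}|_{\tilde\lambda}=\Frob_\lambda(\kappa_{c\ell})|_{\tilde\lambda} \quad\text{in}\quad H^1\bigl(H_{c\ell,\tilde\lambda}, {\bf T}^\dagger\bigr).
\]

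The main obstacle will be the rigorous establishment of the unramifiedness in Step 1: one has to verify at the inverse-limit level that the image of $\delta_\infty$ remains unramified at $\tilde\lambda$, despite the passage through the ordinary projector $e^\ord$ and the critical twist by $\Theta^{-1}$. Alternatively, the same conclusion can be reached by combining the Galois equivariance of $\delta_\infty$ with the Eichler--Shimura-type congruence of Lemma \ref{lemma-eichler-shimura}: applying $\Frob_\lambda$ to the congruence ${\bf P}_{c\ell,m}\equiv\Frob_\lambda({\bf P}_{c,m})\pmod{\tilde\lambda}$ and subtracting would give a comparison of the reductions of the underlying Heegner divisors modulo $\tilde\lambda$, and hence directly identify the two local Kummer classes after invoking the Galois-invariance of $\cl P_{c,m}\in H^0\bigl(\cl G_c,\D_m^\dagger\bigr)$ under $\Frob_\lambda\in\cl G_c$.
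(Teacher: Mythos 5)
The statement as printed almost certainly contains a misprint: following Howard's Proposition 2.3.2 (to which the paper's one-line proof refers), the intended relation is $\kappa_{c\ell}|_{\tilde\lambda} = \Frob_\lambda(\kappa_c)|_{\tilde\lambda}$, with the class on the right of conductor $c$ and not $c\ell$. Two internal clues: Lemma \ref{lemma-eichler-shimura}, which the paper's proof explicitly invokes, compares ${\bf P}_{c\ell,m}$ with $\Frob_\lambda({\bf P}_{c,m})$ (again conductors $c\ell$ and $c$); and the statement exactly as printed is essentially content-free, because $\Frob_\lambda$ may be chosen inside $G_{H_{c\ell,\tilde\lambda}}$ (the residue extension $\mathbb F_{H_{c\ell},\tilde\lambda}/\mathbb F_{H_c,\lambda}$ being trivial by total ramification) and conjugation by an element of the group itself is an inner automorphism, hence the identity on $H^1\bigl(H_{c\ell,\tilde\lambda},{\bf T}^\dagger\bigr)$. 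Your Steps 1 and 2 do give a correct proof of the printed statement --- Step 1 disposes of the $I_\lambda$-ambiguity in the lift of Frobenius (using that ${\bf T}^\dagger$ is unramified at $\ell$), and Step 2 is the inner-automorphism triviality --- but the fact that Lemma \ref{lemma-eichler-shimura} plays no actual role in your argument, whereas the paper cites it as the key input, is a strong signal that you are proving the wrong thing.

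The proof the authors intend, following Howard, uses Lemma \ref{lemma-eichler-shimura} directly: since $\ell\nmid Np$, the curve $\wt X_m$ and its Jacobian ${\rm Jac}(\wt X_m)$ have good reduction at $\tilde\lambda$ and ${\bf T}^\dagger$ is unramified at $\ell$, so the local Kummer class at $\tilde\lambda$ of an $H_{c\ell,\tilde\lambda}$-rational point lands in the unramified subgroup and is determined by the reduction of the point in the special fibre. Lemma \ref{lemma-eichler-shimura} says precisely that ${\bf P}_{c\ell,m}$ and $\Frob_\lambda({\bf P}_{c,m})$ have the same reduction modulo $\tilde\lambda$; feeding both sides of this congruence separately through the twisted Kummer maps at each finite level $m$ and passing to the inverse limit yields the genuine third Euler system relation linking conductors $c\ell$ and $c$. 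Your final paragraph gestures at the lemma but then proposes to ``apply $\Frob_\lambda$ to both sides and subtract,'' which cancels the conductor-$c$ information entirely and circles back to the trivial identity you have already established; the lemma has to be applied once, directly, not differenced against a Frobenius-translate of itself.
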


\begin{proof} Proceed as in the proof of \cite[Proposition 2.3.2]{ho}, using Lemma \ref{lemma-eichler-shimura}. \end{proof}

\section{Arithmetic applications and conjectures: the definite case} \label{Part-III-section-def}

From here to the end of the paper, fix a modular form $f$ of weight $k$ as in \eqref{fixed-modular-form} and let
\[ f_\infty:\fr h_\infty^\ord\longrightarrow\cl R \]
be the primitive morphism associated with $f$. If $\fr p$ is an arithmetic prime of $\cl R$ then $f_\fr p$ is the modular form introduced in \eqref{form-ass-arith-prime}. Recall that $\cl R$ is a complete local noetherian domain which is finitely generated as a $\Lambda$-module (Proposition \ref{A-ring-prop}) and that if $\fr p$ is an arithmetic prime of $\cl R$ and $P:=\fr p\cap\Lambda$ then $\Lambda_P\subset\cl R_\fr p$ is an unramified extension of discrete valuation rings (see \cite[Corollary 1.4]{h-galois} or \cite[\S 12.7.5]{Ne-selmer}).

The purpose of the following sections is to apply our constructions of big Heegner points and classes to various arithmetic situations. While so far we have strived to adopt a uniform approach to the definite and indefinite cases, at this point it is inevitable to distinguish between these two settings. In fact, the philosophy behind the so-called ``parity conjectures'' suggests that the definite case deals with \emph{even} rank (most typically, rank zero) situations while the indefinite case takes care of \emph{odd} rank (most notably, rank one) contexts.

Throughout this section we assume that we are in the \emph{definite} case, i.e. that the quaternion algebra $B$ is definite.

\subsection{Algebraic results} \label{section-structure-J} 

Let $m\geq0$ be an integer. Since $\wt X_m$ is a disjoint union of $\tilde h(m)$ curves of genus $0$, we can fix an isomorphism of $\cl O_F$-modules between $H_0\bigl(\wt X_m(\C),\cl O_F\bigr)$ and $J_m$ where $H_\star$ denotes singular homology. The above isomorphism endows
$H_0\bigl(\wt X_m(\C),\cl O_F\bigr)$ with a canonical Hecke action. Passing to the ordinary parts, one thus obtains an isomorphism of Hecke modules
\begin{equation} \label{homology-J-eq}
H_0^\ord\bigl(\wt X_m(\C),\cl O_F\bigr):=e_m^\ord\cdot H_0\bigl(\wt X_m(\C),\cl O_F\bigr)\simeq J_m^\ord.
\end{equation}
The cohomology module $H^0\bigl(\wt X_m(\C),F/\cl O_F\bigr)$ with coefficients in $F/\cl O_F$ is also equipped with a canonical Hecke action, and its ordinary part is defined in the usual way. For any $\cl O_F$-module $M$ let
\[ M^\ast:=\Hom_{\cl O_F}(M,F/\cl O_F) \]
denote its Pontryagin dual, with induced Hecke action whenever $M$ is a module over the Hecke algebra. Then (see, e.g., \cite[\S 1.9]{hida-duke}) there is a canonical isomorphism of Hecke modules
\[ H^0\bigl(\wt X_m(\C),F/\cl O_F\bigr)^\ast\simeq H_0\bigl(\wt X_m(\C),\cl O_F\bigr) \]
which induces an isomorphism of $\cl O_F$-modules
\begin{equation} \label{isom-dual-eq}
H^0_\ord\bigl(\wt X_m(\C),F/\cl O_F\bigr)^\ast\simeq H_0^\ord\bigl(\wt X_m(\C),\cl O_F\bigr).
\end{equation}
Following \cite[Definition 8.5]{Hida-annals}, set
\[ \cl V:=\dirlim_mH^0_\ord\bigl(\wt X_m(\C),F/\cl O_F\bigr),\qquad V:=\cl V^*. \]
Then \eqref{homology-J-eq} and \eqref{isom-dual-eq} yield isomorphisms of $\cl O_F$-modules
\begin{equation} \label{V-isom-eq}
\begin{split}
V&=\Hom_{\cl O_F}\Big(\textstyle{\varinjlim_m} H^0_\ord\bigl(\wt X_m(\C),F/\cl O_F\bigr),F/\cl O_F\Big) \\[2mm]
&\simeq\invlim_m H^0_\ord\bigl(\wt X_m(\C),F/\cl O_F\bigr)^\ast\\[2mm]
&\simeq\invlim_m H_0^\ord\bigl(\wt X_m(\C),\cl O_F\bigr)\simeq\invlim_mJ_m^\ord=J_\infty^\ord.
\end{split}
\end{equation}
Recall that $\Gamma:=1+p\Z_p$ and define $\Gamma_m:=1+p^m\Z_p$; in particular,
\[ \Lambda:=\cl O_F[\![\Gamma]\!]=\invlim_m\cl O_F[\Gamma/\Gamma_m]. \]
The group $\Gamma$ acts on $\widehat B^\times$ via multiplication on the $p$-component, and this induces an $\cl O_F$-linear action of $\Gamma/\Gamma_m$ on $J_m$. Thus $J_\infty^\ord$ is endowed with an action of $\Lambda$ which is, of course, the one induced by its $\T_\infty^\ord$-module structure. Furthermore, isomorphisms \eqref{V-isom-eq} are $\Lambda$-equivariant, the structure of $\Lambda$-module of $V$ being defined as in \cite[\S 9]{Hida-annals}. By \cite[Corollary 10.4]{Hida-annals}, the $\Lambda$-modules $V$ and $J_\infty^\ord$ are free of finite rank. It follows immediately that $J_\infty^\ord$ is a finitely generated $\T_\infty^\ord$-module, hence \[{\bf J}:=J^\ord_{\infty}\otimes_{\T_\infty^\ord}\mathcal R\] is a finitely generated $\cl R$-module. If ${\fr p}$ (respectively, $P$) is an arithmetic prime of $\cl R$ (respectively, $\Lambda$) and $M$ is an $\cl R$-module (respectively, a $\Lambda$-module) then we set $M_{\fr p}:=M\otimes_\cl R\cl R_\fr p$ (respectively, $M_P:=M\otimes_\Lambda\Lambda_P$), where $\cl R_\fr p$ (respectively, $\Lambda_P$) is the localization of $\cl R$ at $\fr p$ (respectively, of $\Lambda$ at $P$). To lighten the notation, put also
\[\T:=\T_\infty^\ord.\]

\begin{prop} \label{prop-hida}
Let $\fr p$ be an arithmetic prime of $\cl R$. The $\cl R_\fr p$-module ${\bf J}_\fr p$ is free  of rank one.
\end{prop}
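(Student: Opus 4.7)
My plan is to prove the freeness of ${\bf J}_\fr p$ by separately computing its generic rank and the dimension of its special fiber. The first observation is that $\cl R_\fr p$ is a discrete valuation ring: by the remarks opening this section, $\Lambda_P \subset \cl R_\fr p$ is a finite unramified extension of DVRs, where $P := \fr p \cap \Lambda$. Since $\cl R_\fr p$ is a DVR, a finitely generated $\cl R_\fr p$-module $M$ is free of rank one if and only if
\[ \dim_{\cl K}\bigl(M \otimes_{\cl R_\fr p} \cl K\bigr) = 1 \quad \text{and} \quad \dim_{F_\fr p}\bigl(M/\fr p M\bigr) = 1, \]
where $\cl K$ is the fraction field of $\cl R$ (and of $\cl R_\fr p$). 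Since $J_\infty^\ord$ is finitely generated, indeed free of finite rank, over $\Lambda$ by \eqref{V-isom-eq} combined with \cite[Corollary 10.4]{Hida-annals}, the $\cl R$-module ${\bf J} = J_\infty^\ord \otimes_{\T_\infty^\ord} \cl R$ is finitely generated. The proposition then reduces to the two dimension computations.

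For the generic rank, I will extend scalars to $\cl L = \mathrm{Frac}(\Lambda)$. The algebra $\T_\infty^\ord \otimes_\Lambda \cl L$ decomposes as in \eqref{deco} into a direct sum of primitive components $\cl K_i$ plus a nilpotent part, and via the Jacquet--Langlands isomorphism of Proposition \ref{JL1} this decomposition matches the analogous one on the classical Hecke side; in particular one of the factors is the distinguished primitive component $\cl K$ to which $f$ belongs. The finite-dimensional $\cl L$-vector space $J_\infty^\ord \otimes_\Lambda \cl L$ splits correspondingly, and strong multiplicity one for newforms (transferred to $B$ via Jacquet--Langlands) will show that the piece supported on $\cl K$ is exactly one-dimensional over $\cl K$. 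Tensoring over $\cl R$ with $\cl K$ then yields ${\bf J} \otimes_\cl R \cl K \simeq \cl K$.

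For the special fiber, I will apply a Hida-style control theorem to identify
\[ {\bf J}/\fr p {\bf J} = J_\infty^\ord \otimes_{\T_\infty^\ord} F_\fr p \]
with the $f_\fr p$-isotypic component, up to the twist by $\zeta_\fr p$, of the space of ordinary weight-$k_\fr p$ cusp forms on $B$ of level $\Gamma_{0,1}(N,p^{m_\fr p})$ and character $\zeta_\fr p$ with $F_\fr p$-coefficients, where $m_\fr p$ is as in \eqref{m-integer-eq}. Jacquet--Langlands combined with strong multiplicity one for newforms will then force this isotypic component to be one-dimensional over $F_\fr p$, giving ${\bf J}/\fr p {\bf J} \simeq F_\fr p$.

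The main obstacle lies in this second step: establishing the requisite control theorem specializing $J_\infty^\ord$ to a space of finite-level quaternionic forms at an arithmetic prime of arbitrary weight and character. While in the classical $\GL_2$ setting this is due to Hida, its adaptation to the definite quaternionic tower $\{\wt X_m\}_m$ requires a careful analysis of the compatibility of the ordinary projector $e^\ord$ with the transition maps $\wt\alpha_{m,\ast}$, together with a precise bookkeeping of the twists introduced by the critical character. Once both dimension computations are in place, the structure theorem for finitely generated modules over the DVR $\cl R_\fr p$ immediately yields ${\bf J}_\fr p \simeq \cl R_\fr p$.
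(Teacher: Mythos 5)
Your strategy of computing generic rank and special-fiber dimension over the DVR $\cl R_\fr p$ is sound in principle, but as you yourself flag, it leaves the key step unproved: you never establish the control theorem specializing $J_\infty^\ord$ at an arithmetic prime, nor do you carry out the multiplicity-one argument that would give the one-dimensionality of the two fibers. These are exactly the hard inputs, and without them your argument is a plan rather than a proof.

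The paper takes a shorter and quite different route. It avoids any direct dimension computation by quoting Hida's Theorem 12.1 of \cite{Hida-annals}, which gives an isomorphism of $\T_P$-modules $V_P \simeq \T_P$ for every arithmetic prime $P$ of $\Lambda$; this single statement packages both the horizontal control theorem and the multiplicity-one content you were trying to supply by hand (Hida's theorem is formulated precisely in the setting of quaternionic Hecke algebras, so no Jacquet--Langlands transfer is needed at this step). From ${\bf J}_P \simeq V_P\otimes_\T\cl R$ and $\cl R_P \simeq \T_P\otimes_\T\cl R$ one immediately gets ${\bf J}_P\simeq\cl R_P$, and the proof then concludes by the base change $\cl R_P \to \cl R_\fr p$, yielding ${\bf J}_\fr p \simeq {\bf J}_P\otimes_{\cl R_P}\cl R_\fr p \simeq \cl R_\fr p$. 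So where you try to reconstitute Hida's theorem from scratch (and get stuck at the control step), the paper uses it as a black box and reduces the proposition to elementary commutative algebra. If you wish to salvage your approach you would need to either prove the control theorem for the definite quaternionic tower $\{\wt X_m\}$ (which is roughly the content of \cite[\S\S 9--12]{Hida-annals}), or simply cite that result as the paper does.
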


\begin{proof} By \cite[Theorem 12.1]{Hida-annals}, there are isomorphisms of $\T_P$-modules $V_P\simeq\T_P$ for all arithmetic primes $P$ of $\Lambda$. Since there are isomorphisms of $\cl R_P$-modules ${\bf J}_P\simeq V_P\otimes_\T\cl R$ and $\cl R_P\simeq\T_P\otimes_\T\cl R$, we conclude that
\begin{equation}\label{eq-hida-I}
{\bf J}_P\simeq\cl R_P
\end{equation}
as $\cl R_P$-modules. Fix an arithmetic prime $\fr p$ of $\cl R$ and let $P:=\fr p\cap\Lambda$ be the arithmetic prime of $\Lambda$ which lies below $\fr p$. There is a canonical map of rings $\cl R_P\rightarrow\cl R_\fr p$ defined by the composition
\[ \cl R_P:=\cl R\otimes_\Lambda\Lambda_P\longrightarrow\cl R\otimes_\Lambda\cl R_\fr p\longrightarrow\cl R\otimes_{\cl R}\cl R_\fr p=\cl R_\fr p. \]
There are isomorphisms of $\cl R_\fr p$-modules
\begin{equation} \label{J-R-loc-eq}
{\bf J}_P\otimes_{\cl R_P}\cl R_\fr p\simeq V\otimes_\T\cl R_\fr p\simeq(V\otimes_\T\cl
R)\otimes_{\cl R}\cl R_\fr p={\bf J}\otimes_{\cl R}\cl R_\fr p={\bf J}_\fr p.
\end{equation}
Furthermore, thanks to \eqref{eq-hida-I}, ${\bf J}_P\otimes_{\cl R_P}\cl R_\fr p\simeq \cl R_\fr p$ as $\cl R_\fr p$-modules. Comparing this with \eqref{J-R-loc-eq} yields the result. \end{proof}
From here until the end of the section we make the following assumption, which is coherent with
the result proved in Proposition \ref{prop-hida}.

\begin{assumption} \label{ass-def}
Let $\fr m_\cl R$ be the maximal ideal of the local ring $\cl R$ and let $\mathbb F_\cl R:=\cl R/\fr m_\cl R$ be its residue field. The $\mathbb F_\cl R$-vector space ${\bf J}/\fr m_\cl R{\bf J}$ has dimension one.
\end{assumption}
With this condition in force, we can prove

\begin{prop} \label{prop-appunti-1}
The $\cl R$-module $\bf J$ is free of rank one.
\end{prop}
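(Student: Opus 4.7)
The plan is to combine Nakayama's lemma (to produce a surjection from $\mathcal{R}$ onto $\mathbf{J}$) with the freeness statement at an arithmetic prime (Proposition \ref{prop-hida}) to show that the kernel of this surjection is trivial.

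First I would note that $\mathbf{J}$ has been shown to be a finitely generated module over the local ring $\mathcal{R}$. By Assumption \ref{ass-def}, the quotient $\mathbf{J}/\fr m_\mathcal{R}\mathbf{J}$ has dimension one over the residue field $\mathbb{F}_\mathcal{R}$, so Nakayama's lemma produces a single generator of $\mathbf{J}$ as an $\mathcal{R}$-module. Equivalently, there is a surjective morphism of $\mathcal{R}$-modules
\[
\pi:\mathcal{R}\longepi\mathbf{J},
\]
and the proof reduces to showing that $I:=\ker(\pi)$ vanishes.

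Next I would pick any arithmetic prime $\fr p$ of $\mathcal{R}$ (such primes exist in abundance, for example those attached to classical weight-two specializations of the branch on which $f$ lives) and localize $\pi$ at $\fr p$. This yields a surjection of $\mathcal{R}_\fr p$-modules
\[
\pi_\fr p:\mathcal{R}_\fr p\longepi\mathbf{J}_\fr p.
\]
By Proposition \ref{prop-hida}, $\mathbf{J}_\fr p$ is free of rank one over $\mathcal{R}_\fr p$, so after fixing an isomorphism $\mathbf{J}_\fr p\simeq\mathcal{R}_\fr p$ the map $\pi_\fr p$ becomes a surjective endomorphism of the finitely generated module $\mathcal{R}_\fr p$ over the noetherian ring $\mathcal{R}_\fr p$. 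Standard commutative algebra (e.g.\ \cite[Theorem 2.4]{mat} applied to the determinant trick, or Nakayama plus Cayley--Hamilton) forces such an endomorphism to be an isomorphism, and hence $I_\fr p=\ker(\pi)_\fr p=0$.

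Finally, to descend from the localization to $\mathcal{R}$ itself I would invoke Proposition \ref{A-ring-prop}, which says that $\mathcal{R}$ is a domain; consequently the localization map $\mathcal{R}\hookrightarrow\mathcal{R}_\fr p$ is injective. Any element $x\in I$ maps to zero in $\mathcal{R}_\fr p$ and therefore already vanishes in $\mathcal{R}$, so $I=0$ and $\pi$ is an isomorphism. This exhibits $\mathbf{J}\simeq\mathcal{R}$ as a free $\mathcal{R}$-module of rank one. The only delicate point in the argument is the reduction step in the previous paragraph --- that a surjective endomorphism of a finitely generated module over a noetherian ring is automatically an isomorphism --- but this is classical, and every other ingredient (Nakayama, existence of an arithmetic prime, $\mathcal{R}$ being a domain) has already been made available in the text.
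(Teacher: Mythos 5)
Your proof is correct, and it takes a genuinely different algebraic route from the paper's. The paper, after using Nakayama to obtain the surjection $\pi:\mathcal R\twoheadrightarrow\mathbf J$ and setting $I:=\ker\pi$, argues by contradiction: if $I\neq 0$ then by a support/finiteness-of-minimal-primes argument (the same phenomenon recorded later in Proposition~\ref{R-prop}), $(\mathcal R/I)_\fr p$ vanishes for all but finitely many arithmetic primes $\fr p$, and picking one such $\fr p$ contradicts the freeness in Proposition~\ref{prop-hida}. You instead localize at an \emph{arbitrary} arithmetic prime, observe that $\pi_\fr p:\mathcal R_\fr p\twoheadrightarrow\mathbf J_\fr p\simeq\mathcal R_\fr p$ is a surjective endomorphism of a finitely generated module over a commutative ring and therefore an isomorphism, deduce $I_\fr p=0$ from flatness of localization, and conclude $I=0$ because $\mathcal R$ is a domain (so $\mathcal R\hookrightarrow\mathcal R_\fr p$). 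Your argument is marginally leaner: it avoids any counting of primes in the support of $\mathcal R/I$ and relies only on the standard surjective-endomorphism fact plus injectivity of localization for domains, both of which are elementary. The trade-off is that your proof visibly uses $\mathcal R$ being a domain at the final step, while the paper's does so implicitly through the dimension-theoretic argument underlying the finiteness claim. Both are perfectly valid and rest on the same two key inputs (Assumption~\ref{ass-def} plus Nakayama, and Proposition~\ref{prop-hida}); only the concluding commutative-algebra step differs.
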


\begin{proof} Since ${\bf J}$ is finitely generated over $\cl R$ and Assumption \ref{ass-def} holds, Nakayama's lemma ensures that there is a surjective homomorphism $\cl R\twoheadrightarrow{\bf J}$ of $\cl R$-modules. If this map is not
an isomorphism, there is a non-zero ideal $I\subset\cl R$ such that $\cl R/I\simeq{\bf J}$ as $\cl R$-modules. By \cite[Theorem 6.5]{mat}, the localization $(\cl R/I)_\fr p$ is non-zero only for a finite number of arithmetic primes $\fr p$ of $\cl R$. Hence ${\bf J}_\fr p=0$ for almost all arithmetic primes $\fr p$, contradicting Proposition \ref{prop-hida} (of course, the local vanishing at just one such prime $\fr p$ would suffice). Thus $I$ is the zero ideal, and the proposition is proved. \end{proof}

In light of Proposition \ref{prop-appunti-1}, fix an isomorphism
\begin{equation} \label{iso-J-R}
{\bf J}\overset{\simeq}{\longrightarrow}\cl R.
\end{equation}
of $\cl R$-modules. If $H/K$ is a finite abelian extension then composing the canonical map $H^0\bigl(\Gal(K^{\rm ab}/H),{\bf D}^\dagger\bigr)\rightarrow{\bf D}$ with the surjection ${\bf D}\twoheadrightarrow{\bf J}$ and isomorphism \eqref{iso-J-R} produces a map
\begin{equation} \label{eta-H-map-eq}
\eta_H:H^0\bigl(\Gal(K^{\rm ab}/H),{\bf D}^\dagger\bigr)\longrightarrow\cl R.
\end{equation}
To simplify notations, for every integer $d\geq0$ set $\eta_d:=\eta_{H_d}$, with the convention (introduced at the beginning of Section \ref{big-heegner-section}) that $H_0=K$. These maps will be used in the next subsection to state our results on Selmer groups. In particular, \S \ref{section-bounding-def} and \S \ref{section-iwasawa-def} are motivated by \cite[Theorems A and B]{BD-annals-146} and \cite[Corollary 4]{bd*}, respectively, where classical Heegner (or, better, Gross) points on definite Shimura curves are used to control certain Selmer groups.

\subsection{Root numbers and bounds on Selmer groups} \label{section-bounding-def}

To begin with, for later reference we point out the following algebraic result.

\begin{prop} \label{R-prop}
If $x\in\cl R$ is non-zero then there are only finitely many prime ideals $\fr p$ of $\cl R$ such that $x\in\fr p$, i.e., such that $\pi_\fr p(x)=0$.
\end{prop}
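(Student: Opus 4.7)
The plan is to reduce the statement to a finiteness assertion about minimal primes of a one-dimensional Noetherian ring, exploiting the fact that arithmetic primes of $\cl R$ have height one.

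First, I would recall the global structure of $\cl R$: by Proposition \ref{A-ring-prop}, the ring $\cl R$ is a complete local noetherian domain which is finitely generated as a module over $\Lambda=\cl O_F[\![\Gamma]\!]$. Since $\Lambda$ is a two-dimensional regular local ring and $\cl R$ is a finite, torsion-free integral extension of $\Lambda$, the Krull dimension of $\cl R$ equals $2$.

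Next, I would pin down the height of an arithmetic prime. As already recalled just before Section \ref{Part-III-section-def}, if $\fr p$ is an arithmetic prime of $\cl R$ and $P:=\fr p\cap\Lambda$ then $\Lambda_P\subset\cl R_{\fr p}$ is an unramified extension of discrete valuation rings; in particular $\cl R_{\fr p}$ has Krull dimension $1$, so every arithmetic prime $\fr p$ of $\cl R$ is a height-one prime.

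Now fix $x\in\cl R$ with $x\neq 0$ and consider the quotient $\cl R/(x)$. By Krull's principal ideal theorem, every minimal prime over the non-zero principal ideal $(x)$ in the domain $\cl R$ has height exactly $1$, so the minimal primes of $\cl R/(x)$ are in bijection with the height-one primes of $\cl R$ containing $x$. Since $\cl R/(x)$ is Noetherian, it has only finitely many minimal primes; hence only finitely many height-one primes of $\cl R$ contain $x$. In particular, since every arithmetic prime of $\cl R$ is a height-one prime by the previous step, only finitely many arithmetic primes $\fr p$ of $\cl R$ satisfy $\pi_\fr p(x)=0$, which is exactly the claim.

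The only point that requires a moment of care, and the one I would treat as the main obstacle, is the identification of arithmetic primes as height-one primes, but as noted above this is essentially built into the definition via the unramified DVR extension $\Lambda_P\subset\cl R_\fr p$; the remainder is a standard application of the principal ideal theorem.
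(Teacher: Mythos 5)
Your proof is correct and rests on the same two facts as the paper's: that $\cl R$ is a two-dimensional Noetherian local domain, and that a Noetherian ring has only finitely many primes minimal over any given ideal. The paper argues by contradiction via an intersection of infinitely many height-one primes, while you argue directly via Krull's principal ideal theorem applied to $(x)$; this is a cosmetic difference, and the two are essentially the same argument.
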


\begin{proof} Since it is an integral extension of $\Lambda$, the local domain $\cl R$ has Krull dimension $2$. It follows that the height of a prime ideal $\fr p\not=0$ of $\cl R$ is either $1$ or $2$, the latter possibility occurring if and only if $\fr p$ is the maximal ideal of $\cl R$. To prove the proposition it thus suffices to show that an intersection $I:=\cap_{\fr p\in\cl S}\fr p$ of infinitely many height one prime ideals of $\cl R$ (indexed by a set $\cl S$) is necessarily trivial. If this were not the case then every $\fr p\in\cl S$, having height one, would be minimal among the prime ideals of $\cl R$ containing $I$. But the set of prime ideals of $\cl R$ containing $I$ has only finitely many minimal elements by \cite[Exercise 4.12]{mat}, and we are done. \end{proof}

Recall the class $\cl P_c\in H^0\bigl(\mathcal G_c,{\bf D}^\dagger\bigr)$ introduced in Definition  \ref{big-heegner-point-defi}. For every integer $d\geq0$ define
\[ \tilde G_d:=\Gal(H_d/K) \]
with the convention that $H_0=K$ as before. Set
\[ \cl J_0:=\sum_{\sigma\in\tilde G_1}\eta_1(\cl P^\sigma_1)\in\mathcal R,\qquad \cl J_c:=\sum_{\sigma\in\tilde G_c}\eta_c(\cl P^\sigma_c)\otimes\sigma^{-1}\in\cl R\bigl[\tilde G_c\bigr]\quad\text{if $c\geq1$}. \]
Fix a character $\chi:\tilde G_c\rightarrow\cl O^\times$ where $\cl O$ is a finite extension of $\cl O_F$
and $c\geq 1$ is an integer. After enlarging $F$ if necessary, without loss of generality we can assume that $\cl O=\cl O_F$. Extend $\chi$ to an $\cl R$-linear homomorphism $\chi:\cl R\bigl[\tilde G_c\bigr]\rightarrow{\cl R}$, then define
\begin{equation} \label{L-chi-eq}
\cl L(f_\infty/K,\chi):=\chi(\cl J_c)\in\cl R,\qquad\cl L(f_\infty/K,\chi,\fr p):=\pi_\fr p\bigl(\cl L(f_\infty/K,\chi)\bigr)\in F_\fr p
\end{equation}
where $\fr p$ is an arithmetic prime of $\cl R$. In particular, if $\chi={\bf 1}$ is the trivial character of $\tilde G_1$ then $\cl L(f_\infty/K,{\bf 1})=\mathcal J_0$.

Since the critical character $\Theta$ introduced in \S \ref{critical-subsection} is trivial on $\Gal(\bar\Q/\Q(\boldsymbol{\mu}_{p^\infty}))$, there is an induced map $\Theta:\Gal(\Q(\boldsymbol{\mu}_{p^\infty})/\Q)\rightarrow\cl R^\times$. As in \cite[Section 2]{ho2}, for every arithmetic prime $\fr p$ of $\cl R$ denote by $\theta_\fr p$ the composition
\[ \theta_\fr p:\Z_p^\times\xrightarrow{\epsilon_{\text{cyc}}^{-1}}\Gal(\Q(\boldsymbol{\mu}_{p^\infty})/\Q)\overset\Theta\longrightarrow\cl R^\times\longrightarrow F_\fr p^\times, \]  
then set
\begin{equation} \label{twisted-f-eq}
f_\fr p^\dagger:=f_\fr p\otimes\theta_\fr p^{-1}. 
\end{equation}
The form $f_\fr p^\dagger$ has trivial nebentype, and the twisted representation $V_\fr p^\dagger$ is the representation attached to $f_\fr p^\dagger$ by Deligne. 

We make two conjectures, the first of which concerns the non-vanishing of $\cl L(f_\infty/K,\chi)$. To formulate them, let $\fr p$ be an arithmetic prime of $\cl R$ and write $w_\fr p$ for the root number (i.e., the sign in the functional equation) of the $L$-function of the modular form $f_\fr p^\dagger$. Except for finitely many primes $\fr p$ (which were explicitly determined by Mazur, Tate and Teitelbaum in \cite{mtt} and correspond to the \emph{exceptional} primes of \S \ref{section-Selmer-groups}), the number $w_\fr p$ is constant as $\fr p$ varies (see, e.g., \cite[\S 3.4.4]{np} for details); we denote this common root number by $w$.

\begin{conj} \label{conj-def-1}
If $w=1$ then $\mathcal L(f_\infty/K,\chi)\not=0$. In particular, if $w=1$ then $\cl J_0\not=0$.
\end{conj}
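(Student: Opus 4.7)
The plan is to deduce the conjecture from a Gross-type explicit special value formula, which relates each arithmetic specialization of $\cl L(f_\infty/K,\chi)$ to a central value of a twisted anticyclotomic $L$-function, coupled with generic non-vanishing of such central values when the global root number is $+1$. The strategy has three steps: first, relate $\pi_\fr p\bigl(\cl L(f_\infty/K,\chi)\bigr)$ to an evaluation of a Jacquet--Langlands transfer of $f_\fr p$ against a classical Heegner divisor of conductor $cp^{m_\fr p}$ on $\wt X_{m_\fr p}$; second, invoke a Gross-style formula to convert the resulting CM period into a central $L$-value; third, apply anticyclotomic non-vanishing to produce infinitely many arithmetic primes $\fr p$ at which the specialization is non-zero, and then appeal to Proposition \ref{R-prop}.

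I would first fix an arithmetic prime $\fr p$ of $\cl R$ of weight $k_\fr p$, wild character $\psi_\fr p$, and associated ordinary $p$-stabilized newform $f_\fr p$. The composition
\[ H^0(\mathcal G_c,{\bf D}^\dagger)\longrightarrow{\bf D}\longrightarrow{\bf J}\overset{\eqref{iso-J-R}}{\simeq}\cl R\xrightarrow{\pi_\fr p}F_\fr p \]
factors through the $f_\fr p$-isotypic component of $J_{m_\fr p}^\ord\otimes_{\cl O_F}F_\fr p$, by Proposition \ref{prop-hida} and the construction of $\bf J$. Because $\cl P_c$ is built as a $U_p$-coherent limit of the compatible Heegner points $\wt P_{c,m}$ (Definition \ref{big-heegner-point-defi}), this isotypic projection evaluates, modulo periods intrinsic to $\fr p$, as the pairing of the Jacquet--Langlands transfer of $f_\fr p^\dagger$ to $B$ against a classical Heegner divisor on $\wt X_{m_\fr p}$. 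Twisting by $\chi$ and summing over $\tilde G_c$ produces a CM period, and the generalized Gross formula (\cite{gross-2} in weight two, with higher-weight and non-trivial character refinements due to Chida, \cite{chida1}, \cite{chida2}) then yields
\[ \bigl|\pi_\fr p\bigl(\cl L(f_\infty/K,\chi)\bigr)\bigr|^2 \;=\; c_\fr p(\chi)\cdot L\bigl(f_\fr p^\dagger/K,\chi,k_\fr p/2\bigr) \]
for an explicit non-zero factor $c_\fr p(\chi)\in F_\fr p^\times$ arising from local terms at primes dividing $N^-cp$.

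Under the hypothesis $w=1$, the functional equation of $L(f_\fr p^\dagger/K,\chi,s)$ has sign $+1$ for all but finitely many arithmetic primes $\fr p$, so these central values are expected to be generically non-zero. The anticyclotomic non-vanishing theorems of Cornut--Vatsal (\cite{cv}) and their refinements in Hida families produce infinitely many arithmetic primes $\fr p$ at which $L\bigl(f_\fr p^\dagger/K,\chi,k_\fr p/2\bigr)\neq 0$, and hence $\pi_\fr p\bigl(\cl L(f_\infty/K,\chi)\bigr)\neq 0$ via the Gross formula. By Proposition \ref{R-prop}, a non-zero element of $\cl R$ lies in only finitely many prime ideals, so this forces $\cl L(f_\infty/K,\chi)\neq 0$ in $\cl R$. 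Taking $\chi$ to be the trivial character of $\tilde G_1$ recovers $\cl J_0\neq 0$.

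The main obstacle is establishing the Gross-type formula in the required generality: a uniform interpolation, valid across arithmetic primes of $\cl R$ of arbitrary weight and non-trivial wild character, and compatible with the normalization of the isomorphism \eqref{iso-J-R}. The existing literature covers fragmented situations (weight two with trivial nebentype, certain higher weights with trivial character, and partial Hida-family variants), but patching them together into the two-variable statement needed here, and carefully tracking the exceptional primes where the root number can differ from $w$, is the technical heart of the matter. A secondary difficulty is ensuring the density of non-vanishing arithmetic specializations as $\fr p$ varies throughout the Hida family, rather than merely along an anticyclotomic tower of fixed weight and character.
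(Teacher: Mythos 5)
This statement is labelled a \emph{Conjecture} in the paper, not a theorem, and the authors offer only a heuristic justification for it, not a proof. Your plan is essentially a more detailed articulation of the same heuristic: relate $\pi_{\mathfrak p}\bigl(\mathcal L(f_\infty/K,\chi)\bigr)$ to a central $L$-value via a Gross-type formula, observe that sign $+1$ should make these values generically non-zero, and conclude via Proposition \ref{R-prop}. So in spirit you match the paper, and you are commendably honest about where the argument is incomplete. But since you present this as a ``proof proposal,'' let me be concrete about why the two obstacles you name are not mere technicalities but genuine gaps.

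First, the Gross-type special value formula in the required generality does not currently exist. What is available is Gross's weight-two formula for trivial nebentype and various partial extensions; a uniform statement valid for every arithmetic prime of $\mathcal R$, with arbitrary weight $k_{\mathfrak p}$, arbitrary wild character $\psi_{\mathfrak p}$, and consistently normalized against the non-canonical isomorphism $\mathbf J\simeq\mathcal R$ of \eqref{iso-J-R}, is precisely the content that would be needed and has not been proved (the paper cites Chida's work as ``in preparation'' and only for particular cases). Second, and more seriously, your appeal to Cornut--Vatsal non-vanishing is misapplied: those theorems hold the modular form fixed and vary the anticyclotomic character along a tower, producing non-vanishing of $L(f,\chi,k/2)$ for infinitely many $\chi$. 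What your argument needs is the orthogonal direction: fix the character $\chi$ and vary the arithmetic prime $\mathfrak p$ (equivalently, vary the form $f_{\mathfrak p}^\dagger$ through the Hida family), producing even a single $\mathfrak p$ with $L_K\bigl(f_{\mathfrak p}^\dagger,\chi,k_{\mathfrak p}/2\bigr)\neq 0$. No such ``horizontal'' non-vanishing theorem is known in this generality; the fact that the functional equation has sign $+1$ does not by itself exclude vanishing, and there are indeed examples of forms whose $L$-value vanishes despite even sign. In the absence of either ingredient, the chain of implications collapses before reaching Proposition \ref{R-prop}, and the statement remains, correctly, a conjecture rather than a theorem.
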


In light of Proposition \ref{R-prop} and Conjecture \ref{conj-def-1}, we expect that if $w=1$ then $\cl L(f_\infty/K,\chi,\fr p)$ does not vanish for all but finitely many $\fr p$. As anticipated in the introduction, Conjecture \ref{conj-def-1} can be justified as follows. Suppose for simplicity that $c=1$ and $\chi={\bf 1}$ is the trivial character. The element $\cl J_0\in\cl R$ is the analogue in our Hida setting of the divisor $c_{\boldsymbol 1}$ introduced by Gross in \cite[\S 11]{gross-2}, hence it is natural to expect that it encodes, via the ``specialization'' maps $\pi_\fr p$, the (non-)vanishing of the special values of the classical $L$-functions of the modular forms $f_\fr p^\dagger$ in the family. When $w=1$ the functional equations of the $L$-functions suggest that these special values are non-zero for almost all arithmetic primes $\fr p$, so in this analytic situation it is natural to predict (cf. Proposition \ref{R-prop}) the non-triviality of $\cl J_0$. We refer the reader to \S  \ref{section-vanishing} for conjectures \emph{{\`a} la} Greenberg on the vanishing of the special values of twisted $L$-functions over $K$ of the forms in the Hida branch of $f$.

The next conjectural statement is about the size of Nekov\'a\v{r}'s Selmer groups $\wt H^1_f\bigl(K,V_\fr p^\dagger\bigr)$ and their $\chi$-twists. If $\epsilon$ is the quadratic character of the extension $K/\Q$ then the generic root number of the twisted forms $f_\fr p^\dagger\otimes\epsilon$ is $-\epsilon(N)w$, hence is $w$ in the definite case and $-w$ in the indefinite case (see, e.g., \cite[Ch. IV]{GZ}). In particular, if we are in the definite case (which is the situation considered in this section) and $w=-1$ then we expect that
\[ \dim_{F_\fr p}\wt H^1_f\bigl(\Q,V_\fr p^\dagger\bigr)=\dim_{F_\fr p}\wt H^1_f\bigl(\Q,V_\fr p^\dagger\otimes\epsilon\bigr)=1 \]
with only finitely many exceptions. In light of the factorization
\begin{equation} \label{factorization-eq}
\wt H^1_f\bigl(K,V_\fr p^\dagger\bigr)\simeq\wt H^1_f\bigl(\Q,V_\fr p^\dagger\bigr)\oplus\wt H^1_f\bigl(\Q,V_\fr p^\dagger\otimes\epsilon\bigr),
\end{equation}
it follows that the equality
\[ \dim_{F_\fr p}\wt H^1_f\bigl(K,V_\fr p^\dagger\bigr)=\dim_{F_\fr p}\wt H^1_f\bigl(\Q,V_\fr p^\dagger\bigr)+\dim_{F_\fr p}\wt H^1_f\bigl(\Q,V_\fr p^\dagger\otimes\epsilon\bigr)=2 \]
should hold for all but finitely many arithmetic primes $\fr p$. For analogous reasons, when $w=1$ it is expected instead that
\[ \wt H^1_f\bigl(K,V_\fr p^\dagger\bigr)=0 \]
for almost all arithmetic primes $\fr p$.

As a piece of notation, if $M$ is a $\Z\bigl[\tilde G_c\bigr]$-module then set
\[ M^\chi:=M\otimes_{\Z[\tilde G_c]}\cl O_F \]
where the tensor product is taken with respect to $\chi:\Z\bigl[\tilde G_c\bigr]\rightarrow\cl O_F$.

In accordance with the above discussion, we can thus state the following
\begin{conj} \label{conj-def-2}\label{conj-def}
Let $d\geq0$ be an integer and fix a character $\chi:\tilde G_d\rightarrow\mathcal O_F^\times$.
\begin{enumerate}
\item If $w=1$ then
\[ \dim_{F_\fr p}\wt H^1_f\bigl(H_d,V_\fr p^\dagger\bigr)^\chi=0 \]
for all but finitely many arithmetic primes $\fr p$ of $\cl R$. In particular, if $w=1$ then $\wt H^1_f\bigl(H_d,V_\fr p^\dagger\bigr)^\chi=0$ for almost all arithmetic primes $\fr p$ such that $\cl L(f_\infty/K,\chi,\fr p)\not=0$.
\item If $w=-1$ then 
\[ \dim_{F_\fr p}\wt H^1_f\bigl(K,V_\fr p^\dagger\bigr)=2 \]
for all but finitely many arithmetic primes $\fr p$ of $\cl R$.
\item If $w=1$ then $\mathrm{rank}_\cl R\wt H^1_f\bigl(H_d,{\bf T}^\dagger\bigr)=0$.
\end{enumerate}
\end{conj}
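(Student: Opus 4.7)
The three parts of the conjecture are not independent: parts (1) and (3) will be proved together via an Euler-system strategy combined with specialization, while part (2) is genuinely of a different nature and will require additional input. My plan, in decreasing order of difficulty, is as follows.

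Part (1) is the arithmetic core. Since we are in the definite case there is no natural source of cohomology classes on $\wt X_m$; rather, the big Heegner points produce, via the maps $\eta_{H_d}$ of \eqref{eta-H-map-eq}, elements $\cl L(f_\infty/K,\chi)\in\cl R$ to be interpreted as (square roots of) anticyclotomic $p$-adic $L$-functions. The approach is a Bertolini--Darmon style argument: for each fixed arithmetic prime $\fr p$ with $\cl L(f_\infty/K,\chi,\fr p)\neq 0$ and each integer $n\geq 1$, one chooses a Kolyvagin-type auxiliary prime $\ell_0$ inert in $K$ for which $f_\fr p^\dagger$ admits a mod $p^n$ level raising to a form $g$ on the indefinite quaternion algebra of discriminant $N^-\ell_0$; proves an explicit reciprocity law equating the singular part (at a suitable place above $\ell_0$) of the Heegner cohomology class attached to $g$ with $\cl L(f_\infty/K,\chi,\fr p)$ modulo $p^n$; and then runs a Kolyvagin descent on the indefinite side to annihilate the $\chi$-isotypic component of $\wt H^1_f\bigl(H_d,V_\fr p^\dagger\bigr)$ by $p^n$. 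Letting $n\to\infty$ gives the vanishing. The main obstacle — and the hard part of the whole programme — is establishing the explicit reciprocity law in the required generality (arbitrary weight, non-trivial character, full Hida family); in weight two this should follow by extending \cite{BD} and \cite{lv} to allow characters, and the higher-weight case should build on work in progress of Chida (\cite{chida1}, \cite{chida2}).

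Part (3) I would deduce from part (1) by a specialization argument, generalizing Theorem \ref{thm-C} from $d=0$ to arbitrary $d$. After enlarging $F$ if necessary, decompose $\wt H^1_f\bigl(H_d,{\bf T}^\dagger\bigr)$ according to the characters $\chi$ of $\tilde G_d$; since $\cl R$ is a domain, it suffices to show each $\chi$-isotypic component is $\cl R$-torsion. If some component had positive $\cl R$-rank, then for every arithmetic $\fr p$ outside a finite set one could exhibit (via the fact recalled in the paper that $\Lambda_P\subset\cl R_\fr p$ is an extension of discrete valuation rings, combined with a standard control argument on Nekov\'a\v{r}'s Selmer complex) a non-zero class in $\wt H^1_f\bigl(H_d,V_\fr p^\dagger\bigr)^\chi$. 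On the other hand, by Proposition \ref{R-prop} together with the non-vanishing predicted by Conjecture \ref{conj-def-1}, the set of arithmetic primes $\fr p$ with $\cl L(f_\infty/K,\chi,\fr p)\neq 0$ is cofinite; intersecting with the cofinite set supplied by part (1) produces infinitely many primes at which the isotypic component vanishes, contradicting the positive-rank assumption.

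Part (2) is substantially more delicate because it asserts a \emph{lower} bound on Selmer dimensions. Here the plan splits into two halves. For the upper bound $\dim_{F_\fr p}\wt H^1_f\bigl(K,V_\fr p^\dagger\bigr)\leq 2$, one would again transfer to an indefinite quaternion algebra by level raising at an inert auxiliary prime $\ell_0$ (which flips the parity from definite to indefinite) and apply a Kolyvagin-type argument in the spirit of \cite{ho}, \cite{fouquet} to bound the Selmer group in terms of the non-triviality of big Heegner classes on that side. For the lower bound, using the factorization \eqref{factorization-eq} one needs to produce one non-trivial class in $\wt H^1_f\bigl(\Q,V_\fr p^\dagger\bigr)$ and one in $\wt H^1_f\bigl(\Q,V_\fr p^\dagger\otimes\epsilon\bigr)$ for almost all $\fr p$; the natural approach is to construct these classes via level-raising congruences and a Jochnowitz-style argument, then combine this with a Nekov\'a\v{r}-type parity result to upgrade $\geq 1$ to $=1$. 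The genuine obstacle in part (2) is this non-vanishing input in the $w=-1$ definite setting, which lies well beyond any result currently available for Hida families and likely demands ideas not present in this paper.
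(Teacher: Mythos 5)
The statement you set out to prove is Conjecture~\ref{conj-def}, not a theorem: the paper does not prove it, and there is consequently no ``paper's own proof'' against which to check your argument. What the paper does establish in this direction is Theorem~\ref{teo-def}, which shows that part~(1) of the conjecture, combined with the nonvanishing hypothesis $\cl J_0\neq 0$ (itself Conjecture~\ref{conj-def-1}), implies part~(3) for $d=0$; the remark following it suggests the same reduction should work for $d\geq 1$. Your treatment of part~(3) is indeed a variant of that deduction, invoking Proposition~\ref{R-prop} together with \cite[Lemma~2.1.7]{ho} to pass from the generic vanishing of $\wt H^1_f\bigl(H_d,V_\fr p^\dagger\bigr)^\chi$ to the $\cl R$-torsionness of $\wt H^1_f\bigl(H_d,{\bf T}^\dagger\bigr)$, and you correctly flag the dependence on Conjecture~\ref{conj-def-1}. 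So that piece of your argument is coherent with the one result the paper actually proves.

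Everything else in your submission --- the mod~$p^n$ level raising to indefinite quaternion algebras, the explicit reciprocity law relating singular cohomology at the auxiliary prime to $\cl L(f_\infty/K,\chi,\fr p)$, the Kolyvagin descent, and the Jochnowitz-style construction for the $w=-1$ lower bound --- is a research programme, not a proof, and you say so yourself. This is consistent with the paper's own stance: the remark after the conjecture defers part~(1) (even in weight $2$) to hoped-for extensions of \cite{BD}, \cite{lv} and work in progress of Chida, and states explicitly that the $w=-1$ situation is ``not, at present, sufficiently clear'' to make precise. The only substantive thing to flag is therefore categorical rather than mathematical: you were not asked to prove a theorem, and you have not proven the conjecture --- but neither has the paper, and your outline is compatible with the route the authors themselves indicate.
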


\begin{rem}
1. We expect that when $w=1$ and $d=0$ part (1) of Conjecture \ref{conj-def} for $\fr p$ of weight $2$ can be proved by extending the techniques and the results of \cite{BD}. Similarly, if $w=1$ then the general weight $2$ case may be dealt with by extending the techniques of \cite{lv}. 

2. Let $w=-1$. For most (non-trivial) characters $\chi$ of $\tilde G_d$ there is no factorization of $\wt H^1_f\bigl(H_d,V_\fr p^\dagger\bigr)^\chi$ analogous to \eqref{factorization-eq}; in this situation, the behaviour of $\wt H^1_f\bigl(H_d,V_\fr p^\dagger\bigr)^\chi$ is not, at present, sufficiently clear to us to formulate a conjecture on its $F_\fr p$-dimension (however, the referee has pointed out that it is reasonable to expect that these Selmer groups should have generic dimension $0$ as $\fr p$ varies, and that Conjecture \ref{conj-def-1} is probably true for most $\chi$ even if $w=-1$).  
\end{rem}

In the case of root number $w=1$ and $d=0$ we can prove that if $\cl J_0\not=0$ then part (1) of Conjecture \ref{conj-def} implies part (3) of the same conjecture.

\begin{teo} \label{teo-def}
Suppose that $w=1$ and assume part (1) of Conjecture \ref{conj-def}. If $\cl J_0\not=0$ then the $\cl R$-module $\wt H^1_f\bigl(K,{\bf T}^\dagger\bigr)$ is torsion.
\end{teo}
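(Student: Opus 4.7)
My plan is to combine three ingredients: the non-vanishing hypothesis $\cl J_0\neq0$, the conjectural vanishing of the $V_\fr p^\dagger$-Selmer groups supplied by part (1) of Conjecture \ref{conj-def}, and a specialization (or ``control'') theorem in the style of \cite{ho} relating the big Selmer group $\wt H^1_f\bigl(K,{\bf T}^\dagger\bigr)$ to its arithmetic specializations $\wt H^1_f\bigl(K,V_\fr p^\dagger\bigr)$.

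First I would observe that, since $\cl J_0\in\cl R$ is non-zero, Proposition \ref{R-prop} ensures that only finitely many prime ideals of $\cl R$ contain $\cl J_0$. Hence $\cl L(f_\infty/K,\mathbf{1},\fr p)=\pi_\fr p(\cl J_0)\neq0$ for all but finitely many arithmetic primes $\fr p$ of $\cl R$. Applying part (1) of Conjecture \ref{conj-def} with $d=0$ and $\chi=\mathbf{1}$ then yields $\wt H^1_f\bigl(K,V_\fr p^\dagger\bigr)=0$ for all but finitely many arithmetic primes $\fr p$.

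The heart of the argument is the specialization step. Crucially, in the definite setting the representation ${\bf T}$ is constructed exactly as in \cite{ho}, via inverse limits of Tate modules of the Jacobians of the classical modular curves $X_1(Np^m)$ (the definite Shimura curves carry no useful Galois action, and enter the picture only through the auxiliary invariant $\cl J_0$). Consequently, Nekov\'a\v{r}'s formalism of Selmer complexes applies verbatim, giving that $\wt H^1_f\bigl(K,{\bf T}^\dagger\bigr)$ is a finitely generated $\cl R$-module and producing, for each arithmetic prime $\fr p$, a natural specialization homomorphism
\[ \wt H^1_f\bigl(K,{\bf T}^\dagger\bigr)\otimes_\cl R F_\fr p\longrightarrow\wt H^1_f\bigl(K,V_\fr p^\dagger\bigr) \]
whose kernel is, by arguments analogous to those of \cite[\S 2.1--\S 2.2]{ho}, finite for all but finitely many $\fr p$.

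To finish, I would pick an arithmetic prime $\fr p$ -- one is available among infinitely many candidates -- satisfying both $\wt H^1_f\bigl(K,V_\fr p^\dagger\bigr)=0$ and finiteness of the kernel of the specialization map; for such $\fr p$ the space $\wt H^1_f\bigl(K,{\bf T}^\dagger\bigr)\otimes_\cl R F_\fr p$ is a finite $F_\fr p$-vector space, hence zero since $F_\fr p$ is infinite. Recalling that $\cl R_\fr p$ is a discrete valuation ring for any arithmetic prime $\fr p$ (as noted just before the theorem), for a finitely generated $\cl R$-module $M$ one has $\mathrm{rank}_\cl R M\leq\dim_{F_\fr p}(M\otimes_\cl R F_\fr p)$; applying this with $M=\wt H^1_f\bigl(K,{\bf T}^\dagger\bigr)$ forces the latter to be $\cl R$-torsion. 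The hard part will be to verify in full the control theorem used in the specialization step: although the Galois representation ${\bf T}$ coincides with Howard's, one must check that the local conditions defining $\wt H^1_f$ at primes $\ell\mid N^-$ (which, being inert in $K$, split completely in each ring class field $H_c$) do not disrupt the descent of \cite{ho}, and this requires a careful cross-check within Nekov\'a\v{r}'s formalism.
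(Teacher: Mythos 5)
Your argument is correct and essentially reproduces the paper's: both start from Proposition \ref{R-prop} and part (1) of Conjecture \ref{conj-def} (with $d=0$, $\chi=\mathbf{1}$) to get $\wt H^1_f\bigl(K,V_\fr p^\dagger\bigr)=0$ for almost all arithmetic primes $\fr p$, then invoke the control exact sequence from the proof of \cite[Corollary 3.4.3]{ho} (which in fact gives an \emph{injection} of $\wt H^1_f\bigl(K,{\bf T}^\dagger\bigr)_\fr p/\fr p\wt H^1_f\bigl(K,{\bf T}^\dagger\bigr)_\fr p$ into $\wt H^1_f\bigl(K,V_\fr p^\dagger\bigr)$, so the kernel is zero rather than merely finite). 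Your finishing step via the rank bound $\operatorname{rank}_\cl R M\leq\dim_{F_\fr p}(M\otimes_\cl R F_\fr p)$ at a single suitable $\fr p$ is a clean variant of the paper's appeal to \cite[Lemma 2.1.7]{ho}, and your closing caveat about local conditions at $\ell\mid N^-$ is unnecessary here: in the definite case ${\bf T}$ and the Selmer groups are \emph{literally} Howard's, so his control theorem applies verbatim with no cross-check required.
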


\begin{proof} Since $\cl J_0 $ is non-zero, Proposition \ref{R-prop} implies that $\pi_{\fr p}(\cl J_0 )\neq0$ for all but finitely many arithmetic primes $\fr p$ of $\cl R$. Thus, by part (1) of Conjecture \ref{conj-def}, we get that $\wt H^1_f\bigl(K,V_\fr p^\dagger\bigr)=0$ for almost all arithmetic primes $\fr p$.

If $\fr p$ is an arithmetic prime of $\cl R$ then there is a short exact sequence
\[ 0\longrightarrow \wt H^1_f\bigl(K,{\bf T}^\dagger\bigr)_\fr p\big/\fr p\wt H^1_f\bigl(K,{\bf T}^\dagger\bigr)_\fr p\longrightarrow\wt H^1_f\bigl(K,V_\fr p^\dagger\bigr)\longrightarrow\wt H^2_f\bigl(K,{\bf T}^\dagger\bigr)_\fr p[\fr p]\longrightarrow 0 \]
(see the proof of \cite[Corollary 3.4.3]{ho}), which shows that
\begin{equation}\label{vanishing-cohomology-eq}
\wt H^1_f\bigl(K,{\bf T}^\dagger\bigr)_\fr p\big/\fr p\wt H^1_f\bigl(K,{\bf T}^\dagger\bigr)_\fr p=0
\end{equation}
for all but finitely many arithmetic primes $\fr p$ of $\cl R$. As pointed out at the beginning of the proof of \emph{loc. cit.}, the $\cl R$-module $\wt H^1_f\bigl(K,{\bf T}^\dagger\bigr)$ is finitely generated, hence if some $x\in\wt H^1_f\bigl(K,{\bf T}^\dagger\bigr)$ were non-torsion then, by \cite[Lemma 2.1.7]{ho}, we would have that $x\not\in\fr p\wt H^1_f\bigl(K,{\bf T}^\dagger\bigr)_\fr p$ for all but finitely many arithmetic primes $\fr p$. This contradicts \eqref{vanishing-cohomology-eq}, whence $\wt H^1_f\bigl(K,{\bf T}^\dagger\bigr)$ is $\cl R$-torsion. \end{proof}

\begin{rem}
Taking the first part of Conjecture \ref{conj-def-2} for granted when $w=1$ and $d\geq 1$, one could presumably derive the second part by using arguments which are similar to those employed in the proof of Theorem \ref{teo-def}
for $d=0$.\end{rem}

\subsection{Iwasawa theory} \label{section-iwasawa-def}

The goal of this subsection is to formulate a ``main conjecture'' of Iwasawa theory for Hida families (Conjecture \ref{main-def-conj}) in our definite setting.

Set $H_{p^\infty}:=\cup_{m\geq1}H_{p^m}$, denote by $K_\infty\subset H_{p^\infty}$ the anticyclotomic $\Z_p$-extension of $K$ and for every integer $n\geq0$ let $K_n$ be the $n$-th layer of $K_\infty$, i.e. the (unique) subfield of $K_\infty$ such that
\[ G_n:=\Gal(K_n/K)\simeq\Z/p^n\Z. \]
For every integer $n\geq1$ set
\[ d(n):=\min\bigl\{m\in\N\mid K_n\subset H_{p^m}\bigr\}. \]
For example, if $p$ does not divide the class number of $K$ then $d(n)=n+1$ for all $n\geq1$. Let $G_\infty:=\Gal(K_\infty/K)$ (so that $G_\infty\simeq\Z_p$, the isomorphism depending on the choice of a topological generator of $G_\infty$) and define the completed group algebra
\[ \cl R_\infty:=\invlim_n \cl R[G_n]=\cl R[\![G_\infty]\!], \]
where the inverse limit is computed with respect to the canonical maps. Throughout this subsection we make the following
\begin{assumption} \label{ass-regular-def}
The local ring $\cl R$ is regular.
\end{assumption}
In our Iwasawa-theoretic context, this simplifying hypothesis is a natural condition to require (see, e.g., \cite[\S 3.3]{ho} and \cite[Ch. X]{delbourgo}) and gives us some control on the behaviour of $\cl R$ and $\cl R_\infty$ under localizations. For any finitely generated $\cl R_\infty$-module $M$ let
\[ M^\vee:=\Hom_{\Z_p}(M,\Q_p/\Z_p) \]
be its Pontryagin dual, $M_{\rm tors}$ its torsion submodule and ${\rm Char}_{\cl R_\infty}(M)$ its characteristic ideal. Recall that, by definition, ${\rm Char}_{\cl R_\infty}(M)$ is the ideal of $\cl R_\infty$ given by
\[ {\rm Char}_{\cl R_\infty}(M):=\begin{cases}\prod_{\text{ht}(\fr P)=1}\fr P^{{\rm length}(M_\fr P)} & \text{if $M=M_{\rm tors}$}\\[3mm] \{0\} & \text{otherwise}\end{cases} \]
where the product is made over all height one prime ideals of $\cl R_\infty$. Note that, thanks to the assumption that $\cl R$ is regular, the localization $\cl R_{\infty,\fr P}$ is a discrete valuation ring for every prime ideal $\fr P$ of height one in $\cl R_\infty$.

Finally, define the $\cl R_\infty$-module
\[ \wt H^1_{f,{\rm Iw}}\bigl(K_\infty,{\bf T}^\dagger\bigr):=\invlim_n\wt H^1_f\bigl(K_n,{\bf T}^\dagger\bigr), \]
where the inverse limit is taken with respect to the corestriction maps, and the $\cl R_\infty$-module
\[ \wt H^1_{f,{\rm Iw}}\bigl(K_\infty,{\bf A}^\dagger\bigr):=\dirlim_n\wt H^1_f\bigl(K_n,{\bf A}^\dagger\bigr), \]
where the direct limit is taken with respect to the restriction maps.

As before, for all integers $n\geq1$ take the element $\cl P_{p^n}\in H^0\bigl(\mathcal G_{p^n},{\bf D}^\dagger\bigr) $ and set
\[ \cl Q_n:=\cor_{H_{p^{d(n)}}/K_n}\bigl(\cl P_{p^{d(n)}}\bigr)\in H^0\bigl(\Gal(K^{\rm ab}/K_n),{\bf D}^\dagger\bigr). \]
In other words, consider the classes $\cl P_{c,\cl R}$ with $c$ varying in the set of powers of the prime $p$ and take their traces on the anticyclotomic $\Z_p$-extension $K_\infty$ of $K$.
For every integer $n\geq 1$ we introduce the theta-element
\[ \theta_n:=\alpha_p^{-n}\sum_{\sigma\in G_n}\eta_{K_n}\bigl(\cl Q^\sigma_{n}\bigr)\otimes\sigma^{-1}\in\cl R[G_n]. \]
Here $\eta_{K_n}$ is the map of \eqref{eta-H-map-eq} with $H=K_n$ and $\alpha_p\in\cl R^\times$ is the image of the Hecke operator $U_p$ under the morphism
$f_\infty:\fr h_\infty^\ord\rightarrow\cl R$.
Thanks to the compatibility relations enjoyed by big
Heegner points (see \S \ref{U-p-big-subsec}), for all integers $m\geq n\geq1$ one has
$\nu_{m,n}(\theta_m)=\theta_n$, where $\nu_{m,n}:\cl R[G_m]\rightarrow\cl R[G_n]$ is the map
induced by the natural surjection $G_m\twoheadrightarrow G_n$, so one can define
\[ \theta_\infty:=\invlim_n\theta_n\in\cl R_\infty. \]
Note that the element $\theta_\infty$ is not entirely canonical, since it is independent of the choice of the compatible system of big Heegner points $\bigl\{\cl P_{p^n}\bigr\}_{n\geq1}$ only up to multiplication by an element of $G_\infty$. To get rid of this ambiguity, we proceed as follows. Denote by $x\mapsto x^\ast$ the canonical involution of $\cl R_\infty$ acting as $\sigma\mapsto\sigma^{-1}$ on group-like elements. We associate a two-variable $p$-adic $L$-function with the primitive morphism $f_\infty$ and the imaginary quadratic field $K$.
\begin{defi}
The \emph{two-variable $p$-adic $L$-function} attached to $f_\infty$ and $K$ is the element
\[ \cl L_p(f_\infty/K):=\theta_\infty\cdot\theta_\infty^*\in\cl R_\infty. \]
\end{defi}
Always assuming that $\cl R$ is regular, now we formulate our ``main conjecture'' relating $\cl L_p(f_\infty/K)$ to the characteristic ideal of the Pontryagin dual of $\wt H^1_{f,{\rm Iw}}\bigl(K_\infty,{\bf A}^\dagger\bigr)$.
\begin{conj} \label{main-def-conj}
The group $\wt H^1_{f,{\rm Iw}}\bigl(K_\infty,{\bf A}^\dagger\bigr)$ is a finitely generated torsion module over $\cl R_\infty$ and there is an equality
\[ \bigl(\cl L_p(f_\infty/K)\bigr)={\rm Char}_{\cl R_\infty}\Big(\wt H^1_{f,{\rm Iw}}\bigl(K_\infty,{\bf A}^\dagger\bigr)^\vee\Big) \]
of ideals of $\cl R_\infty$.
\end{conj}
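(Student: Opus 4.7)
The plan is to adapt the Bertolini--Darmon strategy for the anticyclotomic main conjecture (\cite{BD-annals-146}, \cite{BD}) to the present Hida-theoretic framework, treating separately the torsion assertion and the two divisibilities between $\bigl(\cl L_p(f_\infty/K)\bigr)$ and the characteristic ideal of $\wt H^1_{f,{\rm Iw}}\bigl(K_\infty,{\bf A}^\dagger\bigr)^\vee$.

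First I would establish that $\wt H^1_{f,{\rm Iw}}\bigl(K_\infty,{\bf A}^\dagger\bigr)$ is a finitely generated torsion $\cl R_\infty$-module. A control theorem would relate this Iwasawa cohomology, after specialization at an arithmetic prime $\fr p$ of $\cl R$, to the finite-layer Selmer groups $\wt H^1_f\bigl(K_n,V_\fr p^\dagger\bigr)$ and their $\chi$-isotypic components for characters $\chi$ of $G_n$. Combining the non-vanishing of $\theta_\infty$ (so that $\cl L(f_\infty/K,\chi,\fr p)\neq 0$ for generic $\fr p$ and $\chi$) with Conjecture \ref{conj-def-2} forces these specializations to vanish generically, and then the iteration of the argument of Theorem \ref{teo-def} across every layer $K_n$ delivers the torsion statement.

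For the divisibility $\bigl(\cl L_p(f_\infty/K)\bigr)\subseteq{\rm Char}_{\cl R_\infty}\bigl(\wt H^1_{f,{\rm Iw}}(K_\infty,{\bf A}^\dagger)^\vee\bigr)$, I would construct a Kolyvagin-type system annihilating the dual Selmer group. For each admissible prime $\ell$ inert in $K$ and satisfying a level-raising congruence, one would transfer from the definite quaternion algebra $B$ of discriminant $N^-$ to the indefinite quaternion algebra of discriminant $N^-\ell$: the big Heegner classes of Definition \ref{big-heegner-class-defi} in that indefinite setting then yield cohomology classes whose local behaviour at $\ell$ is accessible. A \emph{first explicit reciprocity law} in the Hida family, generalizing the one in \cite{BD}, should identify the image of these classes in the singular cohomology at $\ell$ with the image of $\theta_\infty$ under the level-raising map, thereby closing the Kolyvagin argument as in \cite{BD-annals-146}.

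The reverse divisibility is the main obstacle. Following \cite{BD}, it should require a \emph{second explicit reciprocity law} relating $\theta_\infty$ to Galois cohomology classes coming from a two-prime level raising at auxiliary primes $\ell_1,\ell_2$, together with an Ihara-style lemma for the tower of definite Shimura curves $\wt X_m$ in the Hida family. Both ingredients are delicate in our setting: Ihara's lemma for quaternionic Hida families with arbitrary $N^-$ is not in the literature, and the second reciprocity law has to be formulated with coefficients in $\cl R_\infty$ rather than in a residue field $F_\fr p$, which demands a uniform control of level-raising isomorphisms between the $N^-$ and $N^-\ell_1\ell_2$ branches across all arithmetic primes simultaneously. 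An alternative path, via deformation-theoretic methods in the spirit of Skinner--Urban, would require its own substantial development in the anticyclotomic definite setting; on balance, the reciprocity-law route seems more closely aligned with the geometric construction of $\theta_\infty$ out of our compatible system of CM points, and that is the line I would pursue.
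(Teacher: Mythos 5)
The statement you are attempting to prove is Conjecture~\ref{main-def-conj}: the paper presents it as an open conjecture and offers no proof. It is motivated by analogy with Perrin-Riou's Heegner point main conjecture (\cite{p-r}), Howard's \cite[Conjecture 3.3.1]{ho}, the anticyclotomic main conjecture treated by Bertolini and Darmon (\cite{BD}), and Delbourgo's conjectures over weight space (\cite{delbourgo}); the paper explicitly places it in this circle of ideas without claiming any of the divisibilities. So there is no argument in the paper for you to be compared against, and your submission should not be read as reconstructing a hidden proof.

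That said, your sketch is not a proof either, and you effectively concede this. The torsion claim, as you set it up, is conditional on Conjecture~\ref{conj-def-2} (itself conjectural in the paper, with Theorem~\ref{teo-def} only proving a consequence of its first part under the additional hypothesis $\cl J_0\neq 0$). The first divisibility asks for a first explicit reciprocity law and a Kolyvagin system with $\cl R_\infty$-coefficients built from big Heegner classes on the indefinite side after level raising at a single admissible prime; neither the level-raising isomorphism in families, nor the reciprocity law identifying the singular part at $\ell$ with the image of $\theta_\infty$, is established here or in the references the paper cites. The reverse divisibility requires an Ihara lemma for the tower $\{\wt X_m\}$ in the Hida family and a second explicit reciprocity law over $\cl R_\infty$; you correctly flag both as absent from the literature. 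In short, what you have written is a coherent research program in the Bertolini--Darmon style, and it is aligned with the way the paper frames its own conjecture, but each of the three stages rests on unproven ingredients, so the statement remains exactly as conjectural as the paper presents it.
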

The reader should compare Conjecture \ref{main-def-conj} with the Main Conjecture of Iwasawa theory for elliptic curves in the ordinary and anticyclotomic setting that was partially proved by Bertolini and Darmon in \cite{BD} and with the main conjectures over the weight space formulated by Delbourgo in \cite[\S 10.5]{delbourgo}.

\subsection{Vanishing of special values in Hida families} \label{section-vanishing}

The aim of this subsection is to formulate two conjectures on the vanishing at the critical points of the $L$-functions over $K$ of (twists of) the modular forms in the Hida family of $f$ living on the same branch as $f$. As in the previous section, we work in the \emph{definite} case. In fact, our conjectures will involve the elements $\cl L(f_\infty/K,\chi,\fr p)\in F_\fr p$ and the $p$-adic $L$-function $\cl L_p(f_\infty/K)\in\cl R_\infty$ introduced in \S \ref{section-bounding-def} and \S \ref{section-iwasawa-def}, respectively. We remark that results on the vanishing of special values were obtained by Howard in \cite{ho2}, where it is shown that if there exists a weight $2$ form in a Hida family whose $L$-function vanishes to exact order one at $s=1$ then all but finitely many weight $2$ forms in the family enjoy this same property (see \cite[Theorem 8]{ho2}; see also \cite[Theorem 7]{ho2} for the analogous result for order of vanishing zero, which is a consequence of work of Kato, Kitagawa and Mazur).

To begin with, recall the fixed isomorphism $\C\simeq\C_p$ which induces an embedding $\bar\Q_p\hookrightarrow\C$ of an algebraic closure of $\Q_p$ into the complex field, and choose embeddings $F_\fr p\hookrightarrow\bar\Q_p$ for all arithmetic primes $\fr p$ of $\cl R$, so that we can view the $q$-expansion coefficients of the forms $f_\fr p^\dagger$ (introduced in \eqref{twisted-f-eq}) as (algebraic) complex numbers. Next, fix a character $\chi:\tilde G_c\rightarrow\cl O^\times\hookrightarrow\C^\times$, where $c\geq 1$ is an integer and $\cl O$ is a finite extension of $\cl O_F$, and for every arithmetic prime $\fr p$ let $L_K\bigl(f_\fr p^\dagger,\chi,s\bigr)$ be the $L$-function of $f_\fr p^\dagger$ over $K$ twisted by $\chi$ (see, e.g., \cite[p. 268]{GZ} for the definition). Finally, recall the element $\cl L(f_\infty/K,\chi,\fr p)\in F_\fr p$ defined in \eqref{L-chi-eq} and, as in \S \ref{section-bounding-def}, denote by $w$ the common root number of the $L$-functions of (almost all) the modular forms $f_\fr p^\dagger$. As explained in \cite[Ch. IV]{GZ}, one can check that $w$ is also the root number of the twisted $L$-functions $L_K\bigl(f_\fr p^\dagger,\chi,s\bigr)$ for all but finitely many arithmetic primes $\fr p$.

Motivated by \cite[Theorem 1.11]{cv2} and \cite[Theorem 1.3.2]{Z} (which extend \cite[Proposition 11.2]{gross-2} and \cite[Theorem 1.1]{BD-annals-146}), we propose the following
\begin{conj} \label{conj-1}
Let $\fr p$ be a non-exceptional arithmetic prime of $\cl R$ of weight $k_\fr p\geq2$ and let $\chi$ be as above. Assume that $w=1$. The special value $L_K\bigl(f_\fr p^\dagger,\chi,k_\fr p/2\bigr)$ is non-zero if and only if $\cl L(f_\infty/K,\chi,\fr p)$ is non-zero.
\end{conj}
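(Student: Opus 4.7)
The plan is to identify $\pi_\fr p\bigl(\cl L(f_\infty/K,\chi)\bigr)$ with a non-zero explicit multiple of a classical Gross-type quantity attached to $f_\fr p^\dagger$, $\chi$ and $K$, and then to invoke a Waldspurger-type period formula expressing this quantity as a non-zero algebraic factor times $L_K\bigl(f_\fr p^\dagger,\chi,k_\fr p/2\bigr)$. Since the isomorphism \eqref{iso-J-R} is determined only up to a unit of $\cl R^\times$, the non-vanishing of $\cl L(f_\infty/K,\chi,\fr p)$ is independent of this choice of trivialization of ${\bf J}$, and what one really needs is a specialization formula for the image in ${\bf J}\otimes_{\cl R} F_\fr p$ of the big Heegner points $\cl P_c^\sigma$.

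First I would make this specialization explicit. Setting $m:=m_\fr p$ as in \eqref{m-integer-eq}, the definition $\cl P_c=\invlim_n U_p^{-n}\bigl(\cl P_{c,n}\bigr)$ together with Corollary \ref{coro-euler-system} should imply that $\pi_\fr p\bigl(\eta_c(\cl P_c^\sigma)\bigr)\in F_\fr p$ equals $\alpha_p^{-m}\in F_\fr p^\times$ times the image, in the $f_\fr p^\dagger$-isotypic component of $J_m^\ord\otimes F_\fr p$, of the classical Gross divisor $P_{c,m}^\sigma$ on $\wt X_m$; the reason $f_\fr p^\dagger$ appears in place of $f_\fr p$ is precisely the critical twist by $\Theta^{-1}$ intervening in the passage from $\bf D$ to ${\bf D}^\dagger$. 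Justifying this claim requires Hida's control theorem relating $\fr h_{\infty,\wt{\fr m}_f}^\ord$ to $\fr h_{k_\fr p,m}^\ord$, the Jacquet--Langlands identification of Proposition \ref{JL1}, and Proposition \ref{prop-hida} (which says that $\bf J$ is locally free of rank one at $\fr p$). Summing over $\sigma\in\tilde G_c$ weighted by $\chi$, one concludes that $\pi_\fr p\bigl(\cl L(f_\infty/K,\chi)\bigr)$ is, up to a non-zero factor in $F_\fr p$, the image of the Gross cycle $\sum_{\sigma\in\tilde G_c}\chi(\sigma)P_{c,m}^\sigma$ in the $(f_\fr p^\dagger,\chi)$-isotypic quotient of $J_m^\ord\otimes F_\fr p$.

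Once this specialization is in hand, the non-vanishing of such a classical Gross projection is equivalent to $L_K\bigl(f_\fr p^\dagger,\chi,k_\fr p/2\bigr)\neq 0$ by the appropriate Waldspurger-type central value formula. For weight $k_\fr p=2$ and trivial wild character this is essentially \cite[Proposition 11.2]{gross-2}; for general even weight $k_\fr p\geq 2$, the analogous explicit formula in the definite quaternionic setting is provided by \cite[Theorem 1.11]{cv2} and \cite[Theorem 1.3.2]{Z}, which rely on Shimizu's theta correspondence and the Jacquet--Langlands transfer. Combining these formulas with the specialization of the previous paragraph would then yield the asserted ``if and only if''.

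The hard part will be making the specialization formula of the second paragraph completely precise when the wild character $\psi_\fr p$ is non-trivial and the integer $m_\fr p$ is at least $2$. The horizontal compatibility of Theorem \ref{thm-A}(2) is what allows one to match $\cl P_c$ with a Heegner point of conductor $cp^m$ on the single curve $\wt X_m$, but simultaneously controlling the $U_p$-stabilization factor $\alpha_p^{-m}$, the critical twist $\Theta^{-1}$ (through which $f_\fr p$ is replaced by $f_\fr p^\dagger$), and the action of the diamond operators via $\psi_\fr p$ is a delicate piece of bookkeeping. A secondary obstacle is that the higher weight period formulas of \cite{cv2,Z} are presently established only under various ramification restrictions on $\chi$ and on the nebentype of $f_\fr p$; extending them to the full generality required by the statement, presumably by means of a $p$-adic explicit reciprocity law in the spirit of Chida--Hsieh, is likely to constitute the bulk of the remaining work.
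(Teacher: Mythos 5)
This statement is a \emph{conjecture} in the paper (Conjecture \ref{conj-1}), and the paper offers no proof of it; the surrounding text merely motivates it by analogy with the Bloch--Kato conjectures, with Greenberg's generic-analytic-rank predictions, and with the classical central-value results of \cite[Proposition 11.2]{gross-2}, \cite[Theorem 1.11]{cv2} and \cite[Theorem 1.3.2]{Z}. So there is no proof in the paper to compare against.

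That said, your outline is a sensible sketch of the strategy one would try, and it tracks the paper's own motivational remarks closely: specialize the big Heegner element at $\fr p$ via Hida control, identify the result with a Gross-type cycle projected to the $(f_\fr p^\dagger,\chi)$-isotypic component, and then invoke a Waldspurger/Gross--Zhang formula to convert non-vanishing of that projection into non-vanishing of $L_K\bigl(f_\fr p^\dagger,\chi,k_\fr p/2\bigr)$. You correctly flag the two genuine obstructions: (i) a precise specialization formula for $\pi_\fr p\bigl(\eta_c(\cl P_c^\sigma)\bigr)$ that tracks the $U_p^{-m}$ normalization, the critical twist by $\Theta^{-1}$, and the diamond action when $m_\fr p\geq 2$; and (ii) the need for higher-weight, non-trivial-nebentype, definite-quaternionic explicit central-value formulas in the generality required, which is exactly what is not currently available. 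Because of (ii) in particular, what you have written is a program rather than a proof; the authors plainly agree, which is why they state this as a conjecture and point to \cite{chida1} and \cite{chida2} as work in that direction. So your attempt should not be read as establishing the statement, but as a correct explanation of why one believes it.

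One small caution about your second paragraph: the image of $\cl P_c$ in ${\bf J}$ is defined through the isomorphism \eqref{iso-J-R}, which is only canonical up to $\cl R^\times$, and after specialization you further need the identification of the localization ${\bf J}_\fr p$ with a rank-one $\cl R_\fr p$-module from Proposition \ref{prop-hida}. You mention this, but the bookkeeping of that unit ambiguity against the period on the $L$-function side is precisely the kind of normalization issue that makes an unconditional ``if and only if'' hard to pin down; any eventual proof will need to fix a comparison of periods, not just a comparison of non-vanishing loci in the abstract.
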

In other words, we conjecture that $L_K\bigl(f_\fr p^\dagger,\chi,s\bigr)$ vanishes at the critical point $s=k_\fr p/2$ precisely when the element $\cl L(f_\infty/K,\chi)\in\cl R$ introduced in \eqref{L-chi-eq} lies in $\fr p$. In particular, the $L$-function $L_K\bigl(f_\fr p^\dagger,\chi,s\bigr)$ is expected not to vanish at $s=k_\fr p/2$ for all but finitely many $\fr p$. The Bloch--Kato conjectures (\cite{BK}) predict that the $L$-function of the form $f_\fr p$ is related to the Selmer group of the associated representation $V_\fr p$, and the $L$-function of $f_\fr p^\dagger$ should be related to the Selmer group of $V_\fr p^\dagger$; in this sense, Conjecture \ref{conj-1} is consistent with Conjecture \ref{conj-def-2}. The reader is suggested to compare the above statement with the conjectures on the generic analytic rank of the forms $f_\fr p$ made by Greenberg in \cite{greenberg}, of which Conjecture \ref{conj-1} can be viewed as a refinement.

Now we want to formulate an analogous conjecture for twists by characters of the Galois group $G_\infty\simeq\Z_p$. Thus let $\chi:G_\infty\rightarrow\cl O^\times$ be a finite (i.e., $p$-power) order character of $G_\infty$, where $\cl O$ is a finite extension of $\cl O_F$. If $\fr p$ is an arithmetic prime of $\cl R$ then the canonical map $\cl R\rightarrow F_\fr p$ gives a map $\cl R_\infty\rightarrow F_\fr p[\![G_\infty]\!]$; composing this with the map $F_\fr p[\![G_\infty]\!]\rightarrow\bar\Q_p$ induced by $\chi$ yields a map
\[ \chi_\fr p:\cl R_\infty\longrightarrow\bar\Q_p. \]
The analogue of Conjecture \ref{conj-1} in this Iwasawa-theoretic context is the following
\begin{conj} \label{conj-2}
Let $\fr p$ be a non-exceptional arithmetic prime of $\cl R$ of weight $k_\fr p\geq2$ and let $\chi$ be as above. Assume that $w=1$. The special value $L_K\bigl(f_\fr p^\dagger,\chi,k_\fr p/2\bigr)$ is non-zero if and only if $\chi_\fr p\bigl(\cl L_p(f_\infty/K)\bigr)$ is non-zero.
\end{conj}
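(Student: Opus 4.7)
\medskip

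\noindent\textbf{Proof proposal for Conjecture \ref{conj-2}.} The plan is to establish a Gross-type interpolation formula expressing $\chi_\fr p(\theta_\infty)$, at every non-exceptional arithmetic prime $\fr p$ of weight $k_\fr p\geq 2$ and every finite order character $\chi$ of $G_\infty$, as a non-zero algebraic multiple of $\sqrt{L_K(f_\fr p^\dagger,\chi,k_\fr p/2)/\Omega_\fr p}$ for a suitable period $\Omega_\fr p$. Once such a formula is available, the factorization $\cl L_p(f_\infty/K)=\theta_\infty\cdot\theta_\infty^*$ and the observation that complex conjugation on $G_\infty$ corresponds to the involution $x\mapsto x^*$ on $\cl R_\infty$ yield
\[
\chi_\fr p\bigl(\cl L_p(f_\infty/K)\bigr)=\chi_\fr p(\theta_\infty)\cdot\chi_\fr p(\theta_\infty)^*,
\]
so that the vanishing of $\chi_\fr p(\cl L_p(f_\infty/K))$ is equivalent to the vanishing of $\chi_\fr p(\theta_\infty)$, which by the interpolation formula is in turn equivalent to the vanishing of the central value $L_K(f_\fr p^\dagger,\chi,k_\fr p/2)$.

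The first step is therefore to specialize $\theta_\infty$ at $\fr p$ and to recognize the resulting object. Using the compatibility properties of Theorem \ref{thm-A} and the corestriction relations of Corollary \ref{coro-euler-system}, the specialization $\pi_\fr p(\theta_n)$ should, after identifying ${\bf J}\simeq\cl R$, produce the theta element attached to $f_\fr p^\dagger$ on the definite Shimura curve $\wt X_{m_\fr p}$ via the Jacquet--Langlands correspondence, formed out of the classical CM divisors of conductor $p^{d(n)}$ over $K_n$; the factor $\alpha_p^{-n}$ accounts for the $U_p$-stabilization. Once this identification is in place, for weight $k_\fr p=2$ the required central value formula is essentially Gross's formula \cite[Proposition 11.2]{gross-2}, extended to ramified characters by Daghigh, Bertolini--Darmon and Popa. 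For $k_\fr p>2$ the needed Waldspurger-type formula in the quaternionic setting with non-trivial nebentype is exactly the type of result being developed by Chida (\cite{chida1}, \cite{chida2}) and, in closely related forms, by Hsieh and M.-L. Hsieh--Chida.

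The second step is to package these fibrewise formulas into a statement over the whole branch $\cl R$. The natural route is to show that the map $n\mapsto\alpha_p^{-n}\sum_{\sigma\in G_n}\eta_{K_n}(\cl Q_n^\sigma)\otimes\sigma^{-1}$ varies $p$-adic analytically in the Hida-theoretic sense, so that after dividing by an appropriate ``canonical period'' in $\cl R$ (obtained, for instance, from the congruence module of the branch as in the work of Hida and Ochiai) the interpolation of central values becomes literal. This turns the problem into checking the formula at a single weight $2$ arithmetic prime where Gross's formula applies, provided one has chosen the periods compatibly with $\fr p\mapsto\Omega_\fr p$. Here one must also verify that $\alpha_p^{-n}$ gives the correct $p$-adic local factor (an Euler factor at $p$) predicted by Waldspurger's formula for the anticyclotomic twist.

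The main obstacle, as in every construction of an anticyclotomic $p$-adic $L$-function in higher weight over a non-trivial nebentype, lies in this second step. Establishing a clean Gross--Waldspurger formula for each $f_\fr p^\dagger$ with non-trivial wild character $\psi_\fr p$, and verifying that the comparison of periods is uniform in $\fr p$ (so that no spurious factors creep in when one interpolates), is a non-trivial analytic/arithmetic computation — essentially the content of the work in progress cited in the paper. Once this is resolved, the implication in Conjecture \ref{conj-2} becomes formal from the factorization of $\cl L_p(f_\infty/K)$ and the fact (Proposition \ref{R-prop}) that a non-zero element of $\cl R$ vanishes only at a discrete set of primes, which is what makes the ``if and only if'' at a single $\fr p$ equivalent to the interpolation statement.
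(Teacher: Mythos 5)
The statement you are asked to prove is explicitly labelled a \emph{Conjecture} in the paper; the authors offer no proof, only heuristic motivation drawn from the functional equation and Gross's formula (cf.\ the discussion following Conjecture \ref{conj-def-1} and the remarks preceding Conjecture \ref{conj-1}). Your submission is therefore a proof \emph{plan} for an open conjecture, not a reconstruction of an argument in the paper, and should be judged as such.

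That said, your strategy is the natural one and accords well with the authors' own motivation: specialize $\theta_\infty$ at an arithmetic prime $\fr p$ and a finite order character $\chi$ of $G_\infty$, recognise the result as a classical Gross/Bertolini--Darmon theta element for the form $f_\fr p^\dagger$ built out of CM divisors on the definite Shimura curve, invoke a Gross--Waldspurger central value formula, and then use the factorization $\cl L_p(f_\infty/K)=\theta_\infty\theta_\infty^*$. You also correctly identify the genuine obstruction: the necessary Waldspurger-type explicit formula for $f_\fr p^\dagger$ of arbitrary weight $k_\fr p\geq 2$ with non-trivial $p$-power nebentypus, together with a uniform choice of periods over the branch, is precisely the ingredient the paper itself acknowledges as work in progress (Chida). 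Your plan is therefore honest about where it is incomplete.

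Two points deserve sharpening. First, since $\chi_\fr p$ is a ring homomorphism into $\bar\Q_p$, the correct expansion is
\[
\chi_\fr p\bigl(\cl L_p(f_\infty/K)\bigr)=\chi_\fr p(\theta_\infty)\cdot\chi_\fr p(\theta_\infty^*)=\chi_\fr p(\theta_\infty)\cdot(\chi^{-1})_\fr p(\theta_\infty),
\]
rather than ``$\chi_\fr p(\theta_\infty)\cdot\chi_\fr p(\theta_\infty)^*$'', which is not well-defined for a scalar. To conclude that one factor vanishes iff the other does, you need the identity $L_K\bigl(f_\fr p^\dagger,\chi,k_\fr p/2\bigr)=L_K\bigl(f_\fr p^\dagger,\chi^{-1},k_\fr p/2\bigr)$ (which holds because $\chi^{-1}=\chi\circ\tau$ for complex conjugation $\tau$ and Rankin--Selberg $L$-functions over $K$ are $\tau$-invariant) \emph{together with} the interpolation formula for \emph{both} $\chi$ and $\chi^{-1}$ — so this step is not purely formal, it still rests on the interpolation input. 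Second, for the ``only if'' direction of the conjecture you must show that the non-zero ``constant'' in the interpolation formula is genuinely non-zero: beyond the period normalization you highlight, the $p$-local Euler/interpolation factor must not vanish. For $\chi$ ramified at $p$ this factor is typically trivial, but for $\chi$ trivial (which the statement allows, since $\chi$ is any finite order character of $G_\infty$) the factor involves $\alpha_p$ and the hypothesis that $\fr p$ is non-exceptional is exactly what is needed to prevent vanishing; you should make this link explicit rather than leave it at the level of ``one must also verify''.

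In summary: your outline is the expected one and matches the authors' intent, and you are right to flag the Waldspurger formula as the missing ingredient; but tighten the bookkeeping around the involution $*$, the appearance of $\chi^{-1}$, and the role of the non-exceptionality hypothesis in controlling the local factor at $p$.
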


\section{Arithmetic applications and conjectures: the indefinite case} \label{section-indefinite}

In this section we assume that we are in the \emph{indefinite} case, i.e. that the quaternion algebra $B$ is indefinite.

Results in the vein of some of those which follow were also obtained by Fouquet in \cite{fouquet} and \cite{fouquet2} over totally real fields. However, our perspective is different, and (as apparent in the previous sections) the Jacquet--Langlands correspondence plays a much more prominent role in our paper than in the work of Fouquet.

Throughout this section we suppose that Assumptions \ref{gorenstein-ass} and \ref{T-Sh-assumption} hold.

\subsection{Galois representations}

Let $\ell|N^-$ be a prime number. Since $\ell$ is inert in $K$, the completion $K_\ell$ of $K$ at the prime $(\ell)$ is the (unique, up to isomorphism) unramified quadratic extension of $\Q_\ell$. By \cite[Proposition 4.2.3]{Ne-selmer}, the group $H^1(K_\ell,{\bf T})$ is a finitely generated $\cl R$-module, hence (since $\cl R$ is noetherian) the $\cl R$-torsion submodule $H^1\bigl(K_\ell,{\bf T}^\dagger\bigr)_{\rm tors}$ of $H^1\bigl(K_\ell,{\bf T}^\dagger\bigr)$ is a finitely generated $\cl R$-module too. Define $\fr a_\cl R$ to be the annihilator in $\cl R$ of the finitely generated torsion $\cl R$-module $\prod_{\ell|N^-}H^1\bigl(K_\ell,{\bf T}^\dagger\bigr)_{\rm tors}$. Recall the big Heegner class $\kappa_c\in H^1\bigl(G_c^{(Np)},{\bf T}^\dagger\bigr)$ introduced in Definition \ref{big-heegner-class-defi} and denote by the same symbol its image in $H^1\bigl(H_c,{\bf T}^\dagger\bigr)$ under inflation. The next result is a variant of \cite[Proposition 2.4.5]{ho}, to the proof of which we refer for the details we omit.

\begin{prop} \label{prop-classes-selmer}
If $\lambda\in\fr a_\cl R$ and $c$ is prime to $N$ then $\lambda\cdot\kappa_c\in\Sel_{\rm Gr}\bigl(H_c,{\bf T}^\dagger\bigr)$.
\end{prop}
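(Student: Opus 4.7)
The proof will reduce to a place-by-place verification: we need to show that for every place $v$ of $H_c$, the restriction $\res_v(\lambda\cdot\kappa_c)$ lies in the Greenberg local subgroup $H^1_{\rm Gr}(H_{c,v},{\bf T}^\dagger)$. Since $\kappa_c$ is inflated from $H^1\bigl(G_c^{(Np)},{\bf T}^\dagger\bigr)$, for any place $v\nmid Np$ the restriction $\res_v(\kappa_c)$ already factors through the quotient $\Gal(H_{c,v}^{\rm unr}/H_{c,v})$, so the Greenberg condition is automatic without the factor $\lambda$.

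At places $v\mid p$, I would invoke Corollary \ref{T-T-Sh} to identify ${\bf T}^\dagger$ with ${\bf T}_{\rm Sh}^\dagger$, and then adapt Howard's argument in \cite[Proposition 2.4.5]{ho} (see also Fouquet's \cite{fouquet} for the indefinite quaternionic context). Concretely, the big Heegner class $\kappa_c$ is obtained by taking an inverse limit of Kummer images of divisors on the ordinary parts of the Jacobians $\mathrm{Jac}(\wt X_m)$; the ordinary filtration then shows that $\res_v(\kappa_c)$ is in the kernel of $H^1(H_{c,v},{\bf T}^\dagger)\to H^1(H_{c,v},F_v^-({\bf T}^\dagger))$, which is exactly the Greenberg condition at $v\mid p$. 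At places $v$ above primes $\ell\mid N^+$, the prime $\ell$ splits in $K/\Q$ and the Jacobians have controllable (bad but ordinary-projectable) reduction at $\ell$; I would run the same argument as Howard to show that the ordinary projection of the Kummer image is automatically unramified, without needing $\lambda$.

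The genuinely new case, and the reason to introduce $\lambda\in\fr a_\cl R$, is at places $v$ above primes $\ell\mid N^-$. Because such an $\ell$ is inert in $K$ and $c$ is coprime to $N$, it splits completely in $H_c/K$, so $H_{c,v}\simeq K_\ell$. The restriction $\res_v(\kappa_c)$ therefore sits in the fixed local group $H^1(K_\ell,{\bf T}^\dagger)$ independently of $c$. The plan is to show that the obstruction to unramified-ness, i.e.\ the image of $\res_v(\kappa_c)$ in $H^1(K_\ell,{\bf T}^\dagger)/H^1_{\rm unr}(K_\ell,{\bf T}^\dagger)$, is an $\cl R$-torsion element. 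Once this is established, it actually lies in $H^1(K_\ell,{\bf T}^\dagger)_{\rm tors}$ (after lifting and subtracting an unramified representative), so the definition of $\fr a_\cl R$ ensures that $\lambda\cdot\res_v(\kappa_c)$ is unramified.

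The torsion claim is the main obstacle, and I would attack it by specialization. For any arithmetic prime $\fr p$ of $\cl R$, Proposition \ref{fouquet}(3) identifies $V_\fr p^\dagger$ with a self-dual twist of the classical Galois representation of $f_\fr p$, and the specialization of $\kappa_c$ at $\fr p$ is the Kummer image of an honest Heegner divisor on the finite-level Shimura curve $\wt X_{m_\fr p}$, which is known to land in the Bloch--Kato Selmer group at $v$. By local Tate duality and the semistable nature of the Galois representation at the prime $\ell\mid N^-$, the Bloch--Kato local subgroup coincides with the unramified subgroup up to a finite (Tamagawa-type) error. Thus the image of $\res_v(\kappa_c)$ in $H^1(K_\ell,{\bf T}^\dagger)/H^1_{\rm unr}(K_\ell,{\bf T}^\dagger)$ has trivial reduction at every arithmetic prime of $\cl R$, and since the collection of arithmetic primes is Zariski dense in $\Spec\cl R$ (cf.\ Proposition \ref{R-prop} and the fact that $\cl R$ is a $2$-dimensional noetherian domain), any finitely generated $\cl R$-module with this property must be torsion. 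The hard technical point will be justifying that the Bloch--Kato/unramified discrepancy collapses to a torsion contribution after taking the inverse limit over $m$ that defines ${\bf T}$, which requires a careful bookkeeping of the Kummer maps at the primes $\ell\mid N^-$ and the integrality of local Tate duality pairings.
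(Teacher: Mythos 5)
Your place-by-place decomposition, and your handling of $v\nmid Np$, $v\mid p$ and $v\mid N^+$, run parallel to the paper's own argument (which reduces the first two cases to specializations at weight-two arithmetic primes, applies \cite[Example 3.11]{BK} together with Proposition \ref{prop-Selmer}, and invokes \cite[Proposition 2.4.5]{ho}). Where you diverge, and where there is a genuine problem, is at places $v\mid N^-$.

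The paper's assertion at $v\mid N^-$ is that $\res_v(\kappa_c)$ \emph{itself} is an $\cl R$-torsion element of $H^1\bigl(K_\ell,{\bf T}^\dagger\bigr)$ (following the argument of \cite[Proposition 2.4.5]{ho}). Since $\fr a_\cl R$ is by definition the annihilator of $\prod_{\ell\mid N^-}H^1\bigl(K_\ell,{\bf T}^\dagger\bigr)_{\rm tors}$, this gives outright $\lambda\cdot\res_v(\kappa_c)=0$, which trivially satisfies the Greenberg condition. You instead claim that the image of $\res_v(\kappa_c)$ in the quotient $H^1\bigl(K_\ell,{\bf T}^\dagger\bigr)/H^1_{\rm unr}\bigl(K_\ell,{\bf T}^\dagger\bigr)$ is $\cl R$-torsion and then want to conclude that $\lambda$ sends $\res_v(\kappa_c)$ into $H^1_{\rm unr}$. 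But $\fr a_\cl R$ annihilates the torsion of $H^1\bigl(K_\ell,{\bf T}^\dagger\bigr)$, not of the quotient $H^1/H^1_{\rm unr}$, and the sentence ``after lifting and subtracting an unramified representative'' does not supply a homomorphism between those two torsion modules. So even granting your torsion claim in the quotient, the appeal to $\fr a_\cl R$ does not close the argument: you would need to replace $\fr a_\cl R$ by the annihilator of the torsion of the quotient, or else promote the statement to torsion in the full $H^1$ as the paper does.

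There is a second, independent difficulty in the specialization argument you outline. Specializing at an arithmetic prime $\fr p$ gives a class whose local restriction at $v\mid N^-$ lies in the Bloch--Kato subgroup $H^1_f\bigl(K_\ell,V_\fr p^\dagger\bigr)$; you correctly note this differs from the unramified subgroup by a finite Tamagawa-type error. But a finite nonzero error is not the same as a trivial image, so ``trivial reduction at every arithmetic prime'' is not what you actually obtain, and the Zariski-density/torsion argument via Proposition \ref{R-prop} does not apply as stated. You flag this yourself as the ``hard technical point'', but it is not a bookkeeping issue --- it is precisely the step that the paper sidesteps by working with torsion in $H^1\bigl(K_\ell,{\bf T}^\dagger\bigr)$ directly rather than with the unramified quotient. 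To repair the proposal you should argue, as the paper does, that $\res_v(\kappa_c)$ is torsion in $H^1\bigl(K_\ell,{\bf T}^\dagger\bigr)$ (which can be seen, for instance, from the structure of the local module $H^1\bigl(K_\ell,{\bf T}^\dagger\bigr)$ for $\ell\nmid p$ at which ${\bf T}^\dagger$ is ramified, or by following Howard's local analysis), and then let the definition of $\fr a_\cl R$ do the rest.
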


\begin{proof} For any place $v$ of $H_c$ and any $\Gal(\bar\Q/H_c)$-module $M$ let us denote by
\[ \res_v:H^1(H_c,M)\longrightarrow H^1(H_{c,v},M) \]
the restriction map. Fix an integer $c\geq1$ prime to $N$. If $v\nmid Np$ then $\kappa_c$ satisfies the Greenberg local condition at $v$ because of its unramifiedness at $v$.

Now let us assume that $v|Np$ and choose a place $w$ of $\bar\Q$ above $v$. Let $\fr p$ be an arithmetic prime of $\cl R$ of weight $2$ and recall the integer $m:=m_\fr p$ defined in \eqref{m-integer-eq}. Then the natural map ${\bf Ta}^\ord\rightarrow V_{\fr p}$ factors through ${\rm Ta}_p^\ord\bigl({\rm Jac}(\wt X_m)\bigr)$. Let $\kappa_{c,\fr p} $ denote the image of $\kappa_c$ in $H^1\bigl(H_c,V_{\fr p}^\dagger\bigr)$. After restriction to $H_{cp^m}(\boldsymbol\mu_{p^m})$, we see that $V_{\fr p}\simeq V_{\fr p}^\dagger$. Furthermore, the restriction of $\kappa_{c,\fr p} $ to $H^1\bigl(H_{cp^m}(\boldsymbol\mu_{p^m}),V_{\fr p}\bigr)$ is contained in the image of the classical (untwisted) Kummer map
\[ {\rm Jac}(\wt X_m)\bigl(H_{cp^m}(\boldsymbol\mu_{p^m})\bigr)
\longrightarrow H^1\bigl(\wt H_{cp^m},{\rm Ta}_p\bigl({\rm Jac}(\wt X_m)\bigr)\bigr)\longrightarrow H^1\bigl(H_{cp^m}(\boldsymbol\mu_{p^m}),V_{\fr p}\bigr). \] Therefore, by \cite[Example 3.11]{BK}, the restriction of $\kappa_{c,\fr p} $ to $H^1\bigl(H_{cp^m}(\boldsymbol\mu_{p^m}),V_{\fr p}\bigr)$ lies in the Bloch--Kato Selmer group $H^1_f\bigl(H_{cp^m}(\boldsymbol\mu_{p^m}),V_{\fr p}\bigr)$ of $V_{\fr p}  $. Thus, by Proposition \ref{prop-Selmer}, the isomorphic image in $H^1\bigl(H_{cp^m}(\boldsymbol\mu_{p^m}),V_{\fr p}\bigr)$ of the restriction of $\kappa_{c,\fr p} $ to $H^1\bigl(H_{cp^m}(\boldsymbol\mu_{p^m}),V_{\fr p}  \bigr)$ belongs to $\Sel_{\rm Gr}(H_{cp^m}(\boldsymbol\mu_{p^m}),V_\fr p^\dagger)$. Following the arguments in the proof of \cite[Proposition 2.4.5]{ho}, we thus conclude that
\[ \kappa_{c,\fr p} \in\Sel_{\rm Gr}\bigl(H_c,V_{\fr p}  ^\dagger\bigr) \]
for all arithmetic primes $\fr p$ of $\cl R$ of weight $2$.

Once again by \cite[Proposition 2.4.5]{ho}, if $v|pN^+$ then $\res_v(\kappa_{c})$ belongs to $H^1_{\rm Gr}\bigl(H_{c,v},{\bf T}^\dagger\bigr)$, while if $v|N^-$ one can only show that $\res_v(\kappa_c)$ is an $\cl R$-torsion element in $H^1\bigl(H_{c,v},{\bf T}^\dagger\bigr)$. In the latter case, let $\ell$ be the rational prime below $v$. As $\ell$ is inert in $K$, the prime $(\ell)$ of $K$ splits completely in $H_c$, so $H_{c,v}=K_\ell$ and $H^1\bigl(H_{c,v},{\bf T}^\dagger\bigr)=H^1\bigl(K_\ell,{\bf T}^\dagger\bigr)$. Since $\lambda\in\fr a_\cl R$, the result follows. \end{proof}

\begin{rem}
As clear in the proof of Proposition \ref{prop-classes-selmer}, the obstacle towards proving that $\kappa_c$ belongs to $\Sel_{\rm Gr}\bigl(H_c,{\bf T}^\dagger\bigr)$ is the lack of control on the restriction of $\kappa_c$ at places dividing $N^-$.
\end{rem}

With notation as above, fix once and for all a non-zero $\lambda\in\fr a_\cl R$. Thanks to \cite[(21)]{ho}, for every integer $c\geq1$ prime to $N$ the class $\lambda\cdot\kappa_c$ defines a class
\begin{equation} \label{X-c-heegner-eq}
\fr X_c:=\lambda\cdot\kappa_{c}\in\Sel_{\rm Gr}\bigl(H_c,{\bf T}^\dagger\bigr)\simeq\wt H^1_f\bigl(H_c,{\bf T}^\dagger\bigr).
\end{equation}
These cohomology classes are the arithmetic objects in terms of which we will formulate our results and conjectures in this indefinite setting.

\subsection{Bounding Selmer groups} \label{bounding-selmer-subsec}

Define the two cohomology classes
\[ \kappa_0:=\cor_{H_1/K}(\kappa_1)\in H^1\bigl(K,{\bf T}^\dagger\bigr),
\qquad\fr Z_0:=\cor_{H_1/K}(\fr X_1)=\lambda\cdot\kappa_0\in \wt H^1_f\bigl(K,{\bf T}^\dagger\bigr). \]
The following conjecture is the counterpart of \cite[Conjecture 3.4.1]{ho}.

\begin{conj} \label{conj-indef}
The class $\fr Z_0$ is not $\cl R$-torsion.
\end{conj}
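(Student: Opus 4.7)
The plan is to reduce the conjecture to a non-vanishing statement at a single weight-two arithmetic prime and to appeal to a vertical non-vanishing result of Cornut--Vatsal type transplanted to our Shimura curve setting.

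First I would fix a non-exceptional arithmetic prime $\fr p$ of $\cl R$ of weight $2$ with trivial wild character and analyze the image of $\fr Z_0$ under the specialization map $\wt H^1_f\bigl(K,{\bf T}^\dagger\bigr)\longrightarrow\wt H^1_f\bigl(K,V_\fr p^\dagger\bigr)$. Unwinding the definition of the big Heegner point $\cl P_1$ through its approximants $\wt P_{1,m}$, the twisted Kummer map $\delta_\infty$ of Section \ref{big-heegner-section}, and the corestriction $\cor_{H_1/K}$, together with the identification ${\bf T}\simeq{\bf T}_{\rm Sh}$ of Corollary \ref{T-T-Sh}, this image should coincide, up to a non-zero scalar involving $\pi_\fr p(\lambda)$ and a power of $\alpha_p$, with the Kummer image in $H^1\bigl(K,V_\fr p^\dagger\bigr)$ of the trace from $H_1$ to $K$ of the classical Heegner point $\wt P_{1,m_\fr p}$, projected to the $f_\fr p$-isotypic quotient of $\mathrm{Jac}\bigl(\wt X_{m_\fr p}\bigr)$. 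This is the quaternionic analogue of Howard's specialization formula in \cite[\S 3.1]{ho}.

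Next I would invoke a vertical non-vanishing result for Heegner points on our tower of Shimura curves in the spirit of Cornut--Vatsal: namely, for all but finitely many arithmetic primes $\fr p$ of the Hida branch $\cl R$, the (twisted) Heegner point attached to $f_\fr p^\dagger$ on $\wt X_{m_\fr p}$ should be non-torsion in the relevant isotypic quotient of the Jacobian. Combined with $\lambda\neq0$ and Proposition \ref{R-prop} (so that $\pi_\fr p(\lambda)\neq0$ for all but finitely many $\fr p$), this yields at least one arithmetic prime $\fr p$ at which the image of $\fr Z_0$ in $\wt H^1_f\bigl(K,V_\fr p^\dagger\bigr)$ is non-zero. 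Finally, since $\cl R$ is a local noetherian domain (Proposition \ref{A-ring-prop}) and $\wt H^1_f\bigl(K,{\bf T}^\dagger\bigr)$ is finitely generated over $\cl R$, an argument \emph{\`a la} \cite[Lemma 2.1.7]{ho} shows that any $\cl R$-torsion element specializes to zero in $V_\fr p^\dagger$ for almost all $\fr p$; the existence of a single $\fr p$ at which the specialization is non-zero therefore forces $\fr Z_0$ to be non-$\cl R$-torsion.

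The main obstacle lies in the vertical non-vanishing input. While the case of classical modular curves is due to Cornut--Vatsal (and its extension to certain Shimura curves is in work of Cornut--Vatsal and Nekov\'a\v{r}), our setting introduces two genuine difficulties: the presence of the extra $\Gamma_1(p^m)$-type level structure built into $U_m$, and the non-triviality of the nebentype $\zeta_\fr p$ of the forms $f_\fr p$. Establishing the required equidistribution of CM points in $\wt X_m^{(K)}$ along the anticyclotomic tower (via Ratner-type arguments on the adelic double coset space $U_m\backslash\widehat B^\times/B^\times$) is the technical heart of the matter. A secondary, more bookkeeping-type, obstacle is the careful verification that the isomorphism ${\bf T}\simeq{\bf T}_{\rm Sh}$ of Corollary \ref{T-T-Sh} is compatible with Kummer maps, Hecke operators and corestrictions, so that the specialization of $\fr Z_0$ really is identifiable with a classical Heegner class multiplied by $\pi_\fr p(\lambda)\alpha_p^{-m_\fr p}$.
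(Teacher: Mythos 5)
This statement is formulated as \emph{Conjecture}~\ref{conj-indef} in the paper, with no proof offered; the authors present it explicitly as the counterpart of Howard's \cite[Conjecture~3.4.1]{ho}, which likewise remains open. So there is no ``paper's own proof'' to compare against, and the most important thing to flag about your proposal is that it does not close the gap either; it only makes the conditional nature of the conjecture more precise. That said, the reduction strategy you outline — specialize $\fr Z_0$ at a weight-two arithmetic prime, identify the specialization with a classical quaternionic Heegner class via the twisted Kummer maps and Corollary~\ref{T-T-Sh}, invoke a vertical non-vanishing theorem, and promote a single non-vanishing specialization to $\cl R$-non-torsion using \cite[Lemma~2.1.7]{ho} and Proposition~\ref{R-prop} — is exactly the kind of argument one would want, and you correctly identify that what is missing is a Cornut--Vatsal-type non-vanishing result for Heegner points on quaternionic Shimura curves carrying the extra $\Gamma_1(p^m)$-structure and non-trivial nebentype.

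Two further caveats you should make explicit. First, Cornut--Vatsal-type results govern the variation of Heegner points along the \emph{anticyclotomic} tower (that is, as the conductor of the CM point grows), not directly along the Hida family; Howard's vertical non-vanishing theorems in \cite[\S\S~3.1--3.2]{ho} are the device that transposes this into a statement about varying $\fr p$, and that transposition itself requires input. Second, and more seriously: even granting non-torsionness of the specialized Heegner class over $H_1$ (or $H_{cp^m}(\boldsymbol\mu_{p^m})$), the class $\fr Z_0$ is defined as a corestriction to $K$, i.e., as a trace, and the trace may vanish even when the class over $H_1$ does not. This corresponds analytically to the special role of the trivial anticyclotomic character, where the generic non-vanishing results of Cornut--Vatsal do not apply and the answer is governed by $L'(f_\fr p^\dagger/K,k_\fr p/2)\neq 0$, which is not known in the required generality. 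Any attempt to upgrade your proposal to a proof must confront this corestriction problem head-on, in addition to the equidistribution and compatibility issues you already list.
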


Note that Conjecture \ref{conj-indef} is equivalent to the assertion that $\kappa_0$ is not $\cl R$-torsion. The Euler system relations satisfied by the classes $\kappa_c$ (proved in Section \ref{section-euler-system-relations}) yield a proof of the following

\begin{teo} \label{teo-final-indefinite}
Let $\fr p$ be a non-exceptional arithmetic prime of $\cl R$ with trivial character and even weight. If $\fr Z_0$ has non-trivial image in $\wt H^1_f\bigl(K,V_\fr p^\dagger\bigr)$ then $\dim_{F_\fr p}\wt H^1_f\bigl(K,V_\fr p^\dagger\bigr)=1$.
\end{teo}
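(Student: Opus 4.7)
The plan is to deduce the theorem via Kolyvagin's Euler system method applied to the specializations at $\fr p$ of the classes $\{\fr X_c\}_{c}$, closely following the strategy of \cite[\S 3.2]{ho}. First I would specialize the big Heegner classes: for every integer $c\geq 1$ prime to $N$, the class $\fr X_c\in\wt H^1_f\bigl(H_c,{\bf T}^\dagger\bigr)$ of \eqref{X-c-heegner-eq} maps to a class $\fr X_{c,\fr p}\in\wt H^1_f\bigl(H_c,V_\fr p^\dagger\bigr)$. Because $\fr p$ is non-exceptional with trivial character and even weight, Proposition \ref{prop-Selmer} identifies $\wt H^1_f\bigl(H_c,V_\fr p^\dagger\bigr)$ with $\Sel_{\rm Gr}\bigl(H_c,V_\fr p^\dagger\bigr)$ and with the Bloch--Kato Selmer group $H^1_f\bigl(H_c,V_\fr p^\dagger\bigr)$, so we are in an honest ``classical'' Selmer group setting where Kolyvagin-type arguments are available.

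Next I would verify that $\{\fr X_{c,\fr p}\}_c$ constitutes an Euler system (in the sense of Kolyvagin--Nekov\'a\v r, or of \cite[\S 2.3]{ho}) for $V_\fr p^\dagger$ over the anticyclotomic tower of ring class fields of $K$. The norm compatibility at primes $\ell\nmid Npc$ inert in $K$ follows by specializing the relation proved in part (\ref{3-bis}) of the corollary to Proposition \ref{ell} (together with the fact that $\alpha_p\in\cl R^\times$ is a unit, which handles $p$-power conductors via Corollary \ref{coro-euler-system}); the local Eichler--Shimura congruence at inert primes -- essential for the ``derivative'' construction -- follows from the congruence of Lemma \ref{lemma-eichler-shimura}, which transfers to the cohomology classes by the equivariance of the Kummer map. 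The absolute irreducibility hypothesis needed for the Chebotarev step is granted by Proposition \ref{T-residue-prop} (equivalently by Assumption \ref{T-Sh-assumption}), so the residual representation of $V_\fr p^\dagger$ is absolutely irreducible up to finite base change and the image of Galois is sufficiently large to carry out the derived-class construction.

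With these ingredients I would run Kolyvagin's argument verbatim: for every squarefree Kolyvagin index $n$ supported on inert primes split completely in $H_1$, build the derived class $\kappa(n)\in H^1\bigl(K,V_\fr p^\dagger/p^M V_\fr p^\dagger\bigr)$ (for large $M$) out of the $\fr X_{c,\fr p}$, and bound $\Sel_{\rm Gr}\bigl(K,V_\fr p^\dagger\bigr)$ by a Chebotarev--global duality argument. Under the non-triviality of $\fr Z_{0,\fr p}$ (which is the image of $\fr Z_0$ and equals, up to the constant $\lambda$, the bottom derived class), one obtains $\dim_{F_\fr p}\wt H^1_f\bigl(K,V_\fr p^\dagger\bigr)\leq 1$; combined with $\fr Z_{0,\fr p}\neq 0$ this gives exactly $\dim=1$.

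The main obstacle is the technical verification that the Euler system machinery of \cite[\S 2.2--\S 2.3]{ho} transports intact to the quaternionic setting. The subtle points are: (i) checking the local ``Euler factor'' shape at inert primes for $V_\fr p^\dagger$ when the representation is realized on the Tate module of ${\rm Jac}(\wt X_m)$ rather than of $X_1(Np^m)$ -- this uses Corollary \ref{T-T-Sh} to transfer Howard's computations on ${\bf T}$ to ${\bf T}_{\rm Sh}$; and (ii) controlling the local terms at primes $\ell\mid N^-$, which split completely in $H_1/K$ and where big Heegner classes are only known to satisfy the Greenberg condition after multiplication by $\lambda\in\fr a_\cl R$. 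However, because $\lambda$ is fixed once and for all and the non-vanishing of $\fr Z_{0,\fr p}$ is assumed in the statement, these contributions only affect the constants in the final inequality and not the dimension bound.
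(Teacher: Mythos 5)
Your proposal takes essentially the same route as the paper: specialize the big Heegner classes at $\fr p$ to obtain an Euler system for $V_\fr p^\dagger$ and then run Nekov\'a\v{r}'s Kolyvagin argument \cite[\S\S 6--13]{nk-inv} as in \cite[Theorem 3.4.2]{ho}. The paper's proof is precisely this appeal (stated in two lines), while you spell out the supporting ingredients -- the Euler system relations of Section \ref{section-euler-system-relations}, Lemma \ref{lemma-eichler-shimura}, Proposition \ref{T-residue-prop}, the Selmer-group identifications of Proposition \ref{prop-Selmer}, and the transfer to the quaternionic setting via Corollary \ref{T-T-Sh} -- all of which are the right ones.
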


\begin{proof} Our Euler system of big Heegner classes specializes to an Euler system for $V_\fr p^\dagger$ which enjoys the same properties as the system of cohomology classes considered, in a different arithmetic context, by Nekov\'a\v{r} in \cite{nk-inv}. Then, as in the proof of \cite[Theorem 3.4.2]{ho}, the results in \cite[\S\S 6--13]{nk-inv} yield the theorem. \end{proof}

\begin{rem}
The definition of the class $\fr Z_{0}$ depends on the choice of $\lambda\in\fr a_\cl R$, which is not made explicit in the notation. It might be possible that for different $\lambda_1$ and $\lambda_2$ in $\fr a_\cl R$ the class $\lambda_1\cdot\kappa_0$ is trivial in $\wt H^1_f\bigl(K,V_\fr p^\dagger\bigr)$ while the class $\lambda_2\cdot\kappa_0$ is not. However, since $\fr a_\cl R$ is contained in only \emph{finitely many} arithmetic primes $\fr p$, this occurrence can happen only for a finite number of $\fr p$. Furthermore, if Conjecture \ref{conj-indef} is true then for any choice of $\lambda\in\fr a_\cl R$ the class $\lambda\cdot\kappa_0$ has non-trivial image in $\wt H^1_f\bigl(K,V_\fr p^\dagger\bigr)$ for all but finitely many primes $\fr p$, by \cite[Lemma 2.1.7]{ho}. Thus, under Conjecture \ref{conj-indef}, the different choices of $\lambda\in\fr a_\cl R$ are essentially equivalent.
\end{rem}

The next result is a consequence of Theorem \ref{teo-final-indefinite}.

\begin{teo} \label{teo-final-indefinite2}
Assume Conjecture \ref{conj-indef}. The $\cl R$-module $\wt H^1_f\bigl(K,{\bf T}^\dagger\bigr)$ has rank one.
\end{teo}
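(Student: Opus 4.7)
The plan is to prove matching lower and upper bounds for $\mathrm{rank}_\cl R\wt H^1_f\bigl(K,{\bf T}^\dagger\bigr)$. The lower bound is immediate from the hypothesis: assuming Conjecture \ref{conj-indef}, the class $\fr Z_0\in\wt H^1_f\bigl(K,{\bf T}^\dagger\bigr)$ is not $\cl R$-torsion, so since $\cl R$ is a domain the cyclic submodule it generates is isomorphic to $\cl R$, forcing $\mathrm{rank}_\cl R\wt H^1_f\bigl(K,{\bf T}^\dagger\bigr)\geq1$.

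For the upper bound I would argue by specialization at a carefully chosen arithmetic prime. Set $M:=\wt H^1_f\bigl(K,{\bf T}^\dagger\bigr)$ and recall from Nekov\'a\v{r}'s theory (exactly the tool invoked in the proof of Theorem \ref{teo-def}) that for every arithmetic prime $\fr p$ of $\cl R$ there is a short exact sequence
\[ 0\longrightarrow M_\fr p\big/\fr p M_\fr p\longrightarrow\wt H^1_f\bigl(K,V_\fr p^\dagger\bigr)\longrightarrow\wt H^2_f\bigl(K,{\bf T}^\dagger\bigr)_\fr p[\fr p]\longrightarrow 0. \]
By \cite[Lemma 2.1.7]{ho}, since $\fr Z_0$ is not $\cl R$-torsion its image in $M_\fr p/\fr p M_\fr p$, and hence (via the injection above) in $\wt H^1_f\bigl(K,V_\fr p^\dagger\bigr)$, is non-zero for all but finitely many arithmetic primes $\fr p$.

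Now I would select $\fr p$ to be of even weight $k_\fr p\geq 4$ and trivial wild character, noting that such primes form an infinite set and are automatically non-exceptional (the exceptional condition requires weight $2$); further imposing that the image of $\fr Z_0$ in $\wt H^1_f\bigl(K,V_\fr p^\dagger\bigr)$ be non-zero excludes only finitely many primes. For any such $\fr p$, Theorem \ref{teo-final-indefinite} gives $\dim_{F_\fr p}\wt H^1_f\bigl(K,V_\fr p^\dagger\bigr)=1$, so the exact sequence above forces $\dim_{F_\fr p}M_\fr p/\fr p M_\fr p\leq1$. By Nakayama's lemma the $\cl R_\fr p$-module $M_\fr p$ is cyclic, and tensoring with the fraction field $\cl K$ of $\cl R$ yields $\mathrm{rank}_\cl R M=\dim_\cl K(M\otimes_\cl R\cl K)\leq1$. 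Combining with the lower bound gives rank exactly one.

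I do not anticipate any serious obstacle in carrying this out: both key inputs, namely Theorem \ref{teo-final-indefinite} and the specialization short exact sequence, are already available. The only point that deserves a line of care is the existence of an arithmetic prime satisfying all four requirements (even weight, trivial character, non-exceptional, non-vanishing of the image of $\fr Z_0$), and this is a purely book-keeping matter since each of the first three conditions excludes at most finitely many primes from an infinite Zariski-dense family while the fourth excludes finitely many more.
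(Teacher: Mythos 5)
Your argument is correct and is essentially the proof the paper has in mind: the published proof simply points to Howard's \cite[Corollary 3.4.3]{ho} with Theorem \ref{teo-final-indefinite} substituted for \cite[Theorem 3.4.2]{ho}, and that argument is exactly the one you reconstruct — non-torsion of $\fr Z_0$ for the lower bound, then specialization via the exact sequence $0\to M_\fr p/\fr p M_\fr p\to\wt H^1_f(K,V_\fr p^\dagger)\to\wt H^2_f(K,{\bf T}^\dagger)_\fr p[\fr p]\to 0$ together with Nakayama at a suitably chosen arithmetic prime for the upper bound. Your care in choosing $\fr p$ of even weight $\geq 4$ with trivial wild character to guarantee non-exceptionality is a clean way to handle the book-keeping.
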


\begin{proof} Mimic the arguments in the proof of \cite[Corollary 3.4.3]{ho}, replacing \cite[Theorem 3.4.2]{ho} with Theorem \ref{teo-final-indefinite}. \end{proof}

\subsection{Iwasawa theory} \label{Iwasawa-indefinite}

We formulate an Iwasawa-theoretic ``main conjecture'' (Conjecture \ref{main-indef-conj}) which is the counterpart of Conjecture \ref{main-def-conj} in the indefinite setting. The reader is referred to \cite{ochiai} for results of Ochiai on the cyclotomic Iwasawa main conjecture for Hida families.

Resume the notation of \S \ref{section-iwasawa-def}; in particular, for every integer $n\geq1$ the field $K_n$ is the $n$-th layer of the anticyclotomic $\Z_p$-extension $K_\infty$ of $K$ and $d(n)$ is the smallest natural number such that $K_n$ is a subfield of $H_{p^{d(n)}}$. As in Assumption \ref{ass-regular-def}, we suppose that the local ring $\cl R$ is regular. For every integer $n\geq1$ define the cohomology class
\[ \fr Z_n:=\cor_{H_{p^{d(n)}}/K_n}\bigl(U_p^{1-d(n)}\fr X_{p^{d(n)}}\bigr)\in\wt H^1_f\bigl(K_n,{\bf T}^\dagger\bigr). \]
Since the classes $\fr Z_{n}$ are compatible with respect to corestriction, we can give the following
\begin{defi}
The \emph{two-variable $p$-adic $L$-function} attached to the family $\bigl\{\fr Z_{n}\bigr\}_{n\geq1}$ is the element
\[ \fr Z_{\infty}:=\invlim_n\fr Z_n\in\wt H^1_{f,{\rm Iw}}\bigl(K_\infty,{\bf T}^\dagger\bigr). \]
\end{defi}
Recall that if $M$ is a finitely generated $\cl R_\infty$-module then $M^\vee$ is the Pontryagin dual of $M$. Now we propose our two-variable ``main conjecture''. Since the class $\fr Z_{\infty,\cl R}$ depends on the element $\lambda\in\fr a_\cl R$ appearing in \eqref{X-c-heegner-eq}, in order to state our conjecture we need to assume an additional condition.

\begin{conj} \label{main-indef-conj}
The group $\wt H^1_{f,{\rm Iw}}\bigl(K_\infty,{\bf T}^\dagger\bigr)\big/(\fr Z_\infty)$ is a finitely generated $\cl R_\infty$-module. Moreover, suppose that $\kappa_{p^m}$ belongs to $\wt H^1_f\bigl(H_{p^m},{\bf T}^\dagger\bigr)$ and set $\fr X_{p^m}:=\kappa_{p^m}$ for all $m\geq0$. There is an equality
\begin{equation} \label{main-indef-eq}
{\rm Char}_{\cl R_\infty}\Big(\wt H^1_{f,{\rm Iw}}\bigl(K_\infty,{\bf T}^\dagger\bigr)\big/(\fr Z_{\infty})\Big)^2={\rm Char}_{\cl R_\infty}\Big(\wt H^1_{f,{\rm Iw}}\bigl(K_\infty,{\bf A}^\dagger\bigr)^\vee_{\rm tors}\Big)
\end{equation}
of ideals of $\cl R_\infty$.
\end{conj}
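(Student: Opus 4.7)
The plan is to prove the conjecture by obtaining two opposite divisibilities, following the general strategy of Howard's proof of Perrin-Riou's conjecture in the classical modular curves setting (\cite{ho}, \cite{p-r}), with suitable modifications to deal with the presence of primes dividing $N^-$. Since the $\cl R$-module $\wt H^1_f\bigl(K,{\bf T}^\dagger\bigr)$ has rank one under Conjecture \ref{conj-indef} (Theorem \ref{teo-final-indefinite2}), a Mazur--Rubin style descent argument combined with Nekov\'a\v{r}'s duality for Selmer complexes should first be used to reduce the statement to a compatible statement modulo each height-one prime of $\cl R_\infty$; the assumption that $\cl R$ is regular is what makes such a localization argument well-behaved, because then $\cl R_{\infty,\fr P}$ is a discrete valuation ring for every height-one $\fr P$.

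The first divisibility (the characteristic ideal of the Selmer group divides the square of the index of $\fr Z_\infty$) is the one that can be attacked directly with the Euler system machinery. Starting from the compatible family of big Heegner classes $\{\fr X_{cp^n}\}$ constructed in Section \ref{big-heegner-section} and the Euler system relations of Corollary \ref{coro-euler-system} and Section \ref{T-ell-operator}, one forms Kolyvagin derivative classes in $\wt H^1_f\bigl(K_n,{\bf T}^\dagger/I_s{\bf T}^\dagger\bigr)$ for Kolyvagin indices $s$ built from inert primes $\ell$ satisfying suitable congruence conditions. The Eichler--Shimura congruence relation established in the indefinite case, together with absolute irreducibility of the residual representation ${\bf T}/\fr m_\cl R{\bf T}$ (Proposition \ref{T-residue-prop} and Assumption \ref{T-Sh-assumption}), should allow one to run Nekov\'a\v{r}'s version of the Euler system argument in a way entirely parallel to \cite{nk-inv} and \cite[\S 2.2]{ho}; the factor $\lambda\in\fr a_\cl R$ that was introduced to enforce the Greenberg local conditions at primes dividing $N^-$ will appear in the estimates, but being non-zero it does not spoil the divisibility of characteristic ideals.

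The reverse divisibility is where the genuine difficulty lies, and it is the step I would expect to be the main obstacle. In the classical case this direction relies ultimately on a comparison between the Heegner point construction and an analytic $p$-adic $L$-function, either through an explicit reciprocity law \`a la Bertolini--Darmon--Prasanna, or through Skinner--Urban-style arguments involving congruences with Eisenstein families on a higher-rank unitary group. Neither input is currently available in the quaternionic Hida context we are in: one would need an analogue of the $\check{\rm C}$erednik--Drinfeld style interchange to transfer the problem to the \emph{definite} setting of Section \ref{Part-III-section-def}, relate $\fr Z_\infty$ to the theta element $\theta_\infty\cdot\theta_\infty^*=\cl L_p(f_\infty/K)$ via an explicit reciprocity law for big Heegner points, and then combine this with Conjecture \ref{main-def-conj} on the definite side. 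This is where the uniformity of our definite/indefinite framework ought to pay off: morally, the two main conjectures (Conjectures \ref{main-def-conj} and \ref{main-indef-conj}) should be two faces of a single statement interchanged by Jacquet--Langlands, and any proof of either divisibility in the indefinite case passes through a bridge with the definite side.

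Finally, one checks compatibility of the equality with specialization at each arithmetic prime $\fr p$ of $\cl R$ (using the control theorem derived from the short exact sequences of \cite[Corollary 3.4.3]{ho}), thereby recovering, weight by weight, the classical Heegner point main conjecture of Perrin-Riou for the modular form $f_\fr p$ on the quaternionic Shimura curve $\wt X_{m_\fr p}$. The identification ${\bf T}\simeq{\bf T}_{\rm Sh}$ (Corollary \ref{T-T-Sh}) and the isomorphism $V_\fr p^\dagger\simeq V_{{\rm Sh},\fr p}^\dagger$ it induces guarantee that such a weight-by-weight comparison is meaningful; this provides both a sanity check of the formulation and, conversely, a way to deduce Conjecture \ref{main-indef-conj} from its classical weight-two instances combined with a Hida-theoretic deformation argument, should one be able to make the latter sufficiently robust.
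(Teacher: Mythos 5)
This statement is \emph{Conjecture} \ref{main-indef-conj}, not a theorem: the paper gives no proof of it and explicitly leaves it open. There is therefore no internal argument to compare your proposal against. The only relevant remark the paper makes is that, in the special case $N^-=1$ (and more generally for certain totally real base fields), Fouquet \cite{fouquet2} establishes one of the two divisibilities, namely that the right-hand side of \eqref{main-indef-eq} divides the left-hand side.

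Your sketch is a reasonable outline of the two-divisibility strategy and you correctly identify the genuine obstacle: the reverse divisibility would require either an explicit reciprocity law relating the big Heegner classes to an analytic $p$-adic $L$-function, a Skinner--Urban style Eisenstein congruence argument, or a bridge to the definite side via an interchange-of-invariants philosophy, none of which currently exist in the quaternionic Hida context. So what you have is not a proof but a roadmap, and you say as much; that self-assessment is accurate. One small point of imprecision worth noting: the conjecture as the paper states it already \emph{assumes} that $\kappa_{p^m}$ lies in $\wt H^1_f\bigl(H_{p^m},{\bf T}^\dagger\bigr)$ and takes $\fr X_{p^m}:=\kappa_{p^m}$ without the auxiliary factor $\lambda\in\fr a_\cl R$, so the discussion of how $\lambda$ ``appears in the estimates'' in your first-divisibility step is not quite on point for this particular conjecture; the $\lambda$-annihilation device belongs to the unconditional constructions in \S\ref{section-indefinite}, and the conjecture is deliberately formulated under a hypothesis that renders it unnecessary for the tower $\{\kappa_{p^m}\}$.
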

Conjecture \ref{main-indef-conj} extends both \cite[Conjecture 3.3.1]{ho} and the classical Heegner point main conjecture for elliptic curves formulated by Perrin-Riou in \cite{p-r}. Observe that in the special case where $N^-=1$ (or, more generally, for quaternion algebras over totally real number fields satisfying suitable conditions) Fouquet shows in \cite[Theorem A]{fouquet2} that the right-hand side divides the left-hand side in \eqref{main-indef-eq}.

\end{document}